\def\e{\epsilon}
\def\R{{ \mathbb{R}}}
\def\N{\mathbb{N}}
\def\II{{\rm I\kern-0.5exI}}
\def\III{{\rm I\kern-0.5exI\kern-0.5exI}}
\newcommand{\norm}[1]{\lVert #1 \rVert}
\newcommand{\RR}{\mathbb{R}}
\DeclareMathOperator*{\argmin}{argmin}
\DeclareMathOperator*{\esssup}{ess\, sup}
\DeclareMathOperator*{\essinf}{ess\, inf}
\DeclareMathOperator*{\argmax}{argmax}
\DeclareSymbolFont{bbold}{U}{bbold}{m}{n}
\DeclareSymbolFontAlphabet{\mathbbold}{bbold}
\newcommand{\id}{id}
\newcommand{\spt}{\textup{spt}\,}
\numberwithin{equation}{section}
\newtheorem{theorem}{Theorem}[section]
\newtheorem{lemma}[theorem]{Lemma}
\newtheorem{prop}[theorem]{Proposition}
\newtheorem{corollary}[theorem]{Corollary}
\theoremstyle{remark}
\newtheorem{remark}[theorem]{Remark}
\theoremstyle{definition}
\newtheorem{definition}[theorem]{Definition}
\begin{document}

\title{Darcy's Law with a Source term}
\author[M. Jacobs]{Matt Jacobs}
\address{Department of Mathematics, UCLA, 520 Portola Plaza, Los Angeles, CA 90095, USA}
\email{majaco@math.ucla.edu}

\author[I. Kim]{Inwon Kim}
\address{Department of Mathematics, UCLA, 520 Portola Plaza, Los Angeles, CA 90095, USA}
\email{ikim@math.ucla.edu}

\author[J. Tong]{Jiajun Tong}
\address{Department of Mathematics, UCLA, 520 Portola Plaza, Los Angeles, CA 90095, USA}
\email{jiajun@math.ucla.edu}

\begin{abstract}
We introduce a novel variant of the JKO scheme to approximate Darcy's law with a pressure dependent source term.  By introducing a new variable that implicitly controls the source term, our scheme is still able to use the standard Wasserstein-2-metric even though the total mass changes over time.   Leveraging the dual formulation of our scheme,  we show that the discrete-in-time approximations satisfy many useful properties expected for the continuum solutions, such as a comparison principle and uniform $L^1$-equicontinuity.   Many of these properties are new even in the well-understood case where the growth term is absent.   Finally, we show that our discrete approximations converge to a solution of the corresponding PDE system, including a tumor growth model with a general nonlinear source term.
 
\end{abstract}

\maketitle

\section{Introduction}
\label{sec: introduction}

In this paper, we study Darcy's law with a pressure-dependent growth term, or more precisely the following equations
$$
\rho_t - \nabla \cdot(\rho \nabla p ) =\rho G(p,x)\quad \hbox{ and } p\in \partial s(\rho) \; \quad\hbox{ in } \R^d\times [0,T], \leqno (P)
$$
with initial data $\rho_0$. Here $\rho = \rho(x,t)$ represents the density of a flowing material, $p$ is the pressure generated by the internal energy
\begin{equation}\label{eq:energy}
E(\rho) =
\int_{\R^d} s(\rho(x)) \,dx,
\end{equation}
and $G$ is a pressure dependent growth term.  $(P)$ can be used to describe tumor growth models, where the malignant growth is limited only by the buildup of pressure when cells become too densely packed \cite{perthame2014}.  In order to capture this behavior, it is natural to restrict $s$ to be a convex, increasing and superlinear function.  Examples that we have in mind include the {\it R\'{e}nyi} energy given by
 $$
 s_m(\rho): =\frac{1}{m-1}\rho^m \,\,\hbox{ when } \rho\geq 0, \quad\hbox{ otherwise }+\infty.
 $$
 for $m>1$,
and its singular limit obtained as $m\to\infty$ given by
$$
 s_{\infty}(\rho): = 0 \,\,\hbox{ when } 0\leq \rho\leq 1,\quad\hbox{ otherwise }+\infty.
 $$
$s=s_{\infty}$ is a natural choice of internal energy for incompressible tumor growth models (see for instance \cite{PQV}).

\medskip

Our goal is to introduce a discrete-in-time variational scheme ({\it minimizing movements}) to approximate solutions of $(P)$. When the growth term $G$ is absent, the system of equations $(P)$ can be formally written as the gradient flow of the internal energy in 2-Wasserstein space (see \cite{otto_pme} for the case $s=s_m$ and \cite{maury,AKY14} for $s=s_{\infty}$).
With the presence of $G$, it is no longer clear whether the equation can be realized as a gradient flow.  First of all, in most scenarios, the energy $E$ increases along the flow.  As such, any gradient flow formulation must locate a different ``energy'' that is actually dissipated along the flow.     Secondly, one must also deal with the fact that the total mass of the density is not constant in time. This obstructs a straightforward application of a 2-Wasserstein minimizing movements scheme (a.k.a.\;the JKO scheme \cite{jko}), as the standard notion of optimal transport is only defined between densities with the same mass.  While it is possible to consider a modified version of Wasserstein distance to allow for changing mass (see for instance  \cite{glop_2019, chizat_unbalanced_interpolation, benamou_unbalanced}), the resulting gradient flow cannot capture the full generality of $(P)$.    For instance, in the case of the specific choice of $s=s_{\infty}$, the gradient flow formulation restricts the growth term to be linear and homogenous with respect to the pressure (see \cite{DMS, GLM}).  

\medskip

In this paper, we introduce a new discrete-in-time variational scheme for approximating the equation ($P$).
In contrast to previous results (e.g.\;\cite{liero2016optimal, DMS, GLM}), we do not modify the Wasserstein metric. Insted, we introduce an additional variational term that allows us to implicitly solve for the growth rate at each time step.  The advantage of this perspective is that our scheme can approximate any flow of the form ($P$).       Furthermore, the dual problem associated to our scheme has a very efficient numerical implementation using the recently introduced \emph{back-and-forth method} \cite{fast_ot, bfm_got}.   In particular, the numerical implementation via the back-and-forth method does not require introducing an additional time dimension, which allows for a faster computation time than schemes based around the Benamou-Brenier formula.  In addition, our scheme has no difficulty with the singular energy $s_{\infty}$ and produces a sharp boundary (see Proposition \ref{density_monotone}). 
\medskip

   In what follows, we will also show that our scheme captures many of the favorable properties of the underlying PDE ($P$), such as the comparison principle, finite propagation properties, and various uniform bounds,   as well as an $L^1$-equicontinuity property that generalizes the $BV$ bounds obtained in \cite{PQV}.  Let us emphasize that some of these properties for discrete-time solutions are new even when $G=0$.  Using these properties, we then show that the scheme converges to a solution of the continuum PDE ($P$) as the time step tends to zero.

\subsection{The discrete-in-time scheme}

The following discussion is a quick introduction of our scheme and its dual formulation. A detailed analysis, including  existence and uniqueness of the extreme values, will be delayed until Section \ref{sec:dual}.

\medskip

Given a time horizon $T>0$,  we choose a smooth, convex, and bounded domain $\Omega$ that is sufficiently large to contain the the flow in the time window $[0,T]$  (see Theorem \ref{main:3} and the discussion in the beginning of Section \ref{sec: equicontinuity} for more information on the choice of $\Omega$).   We then construct a discrete-in-time approximation to ($P$) as follows.   For a fixed time step size $\tau>0$, we define $\rho^{0,\tau} := \rho_0$ and then iterate the variational problem:
\begin{equation}\label{eq:mms}
(\rho^{n+1,\tau}, \mu^{n+1,\tau}):=\argmin_{\rho\in X, \mu\in \textup{AC}(\rho^{n,\tau})} \; J(\rho, \mu, \rho^{ n,\tau})
\end{equation}
where $X := \{\rho\in L^1(\Omega): E(\rho) <\infty\}$ and $AC(\rho)$ denotes the space of measures absolutely continuous with respect to $\rho$,
\begin{equation}\label{eq:primal_problem}
 J(\rho, \mu, \rho^{n,\tau}):=E(\rho)+ \tau F(\mu, \rho^{n,\tau})+\frac{1}{2\tau}W_2^2(\rho, \rho^{n,\tau}+\tau\mu),
\end{equation}
and
\begin{equation*}%\label{eq:growth_functional}
F(\mu, \rho^{n,\tau}):=
\begin{cases}
\int_{\Omega} \rho^{n,\tau}(x) f\left(\frac{\mu(x)}{\rho^{n,\tau}(x)},x\right)\, dx & \textrm{if} \;\; \mu\in \textrm{AC}(\rho^{n,\tau}),\\
+\infty & \textrm{else.}%\\
\end{cases}
\end{equation*}
Here $f=f(z,x)$ is the unique function defined by
\begin{equation}\label{f_def}
\left\{
\begin{array}{l}
f\big(G(0,x),x\big) = 0; \\
\partial_z f(z,x) =\{-b: z = G(b,x)\} \hbox{  when the set is nonempty, otherwise } +\infty.
\end{array}\right.
\end{equation}

\medskip

Our scheme differs from the usual JKO scheme due to the terms involving the variable $\mu$.  Indeed, $\mu(x)$ represents the amount of additional mass added at location $x$, and $f$ is a term that encourages growth at the locations where $\rho^{n,\tau}(x)\neq 0$.  Due to the growth term, we no longer expect to have the dissipation property $E(\rho^{n+1,\tau})\leq E(\rho^{n,\tau})$.  

We can recover the discrete analogue of the pressure variable in $(P)$ by applying convex duality to our scheme.  Indeed, the pressure at the $(n+1)$-th step solves the dual problem to \eqref{eq:mms},
\begin{equation}\label{eq:dual_scheme}
p_{n+1, \tau}\in\argmax_{p \in X^*} J^*(p,\rho^{n,\tau}).
\end{equation}

Here,
\begin{equation}\label{eq:dual_problem}
J^*(p,\rho^{n,\tau})=\int_{\Omega} \rho^{n,\tau}(x)\Big( p^c(x)+\tau \bar{G}\big(p^c(x),x\big)\Big)\, dx -\int_{\Omega} s^*(p(x)) dx,
\end{equation}
where $\partial_z \bar{G}(z,x)=G(z,x)$, $s^*$ is the Legendre transform of $s$, and
\begin{equation*}
p^{c}(x):=\inf_y p(y)+\frac{1}{2\tau}|y-x|^2,
\end{equation*}
is the quadratic {\it $c$-transform} which plays an essential role in optimal transport. In Section \ref{sec:dual}, we will derive the connection between the primal and dual problems, including the relation $p_{n+1}\in \partial s(\rho_{n+1})$  (see Proposition~\ref{primal_dual}).

Much of our subsequent analysis will focus on the dual problem \eqref{eq:dual_scheme}, which in many ways is easier to study than the primal problem.  This is due to the fact that variations of the $c$-transform are easier to study than variations of the 2-Wasserstein distance (which essentially requires introducing a dual variable anyway).   Of course, one could have also chosen problem \eqref{eq:dual_scheme} as the starting point for the scheme, however, the physical interpretation of the primal problem is much clearer than that of the dual problem.    Note that when $s$ and $s^*$ are differentiable, the second condition in $(P)$ yields
\begin{equation*}
 p = s'(\rho), \quad \rho = (s^*)'(p).
 \end{equation*}
   Hence $(P)$ can be written in a weak form as
\begin{equation}\label{weak00}
((s^*)'(p))_t - \Delta s^*(p)=  \rho G(p,x).
\end{equation}
This is a nonlinear parabolic equation in terms of $p$, whose particular structure is discussed in the classical paper \cite{Alt_Luckhaus} .   This perspective further clarifies why it is easier to work with the dual problem.  Indeed, many of the beneficial properties that we develop from the dual problem are related to the parabolic structure of the pressure equation.

\subsection{Assumptions and Main results}

 Throughout the paper, we require the energy density function $s:\RR\to \RR\cup\{+\infty\}$ to be proper, convex, lower semi-continuous, superlinear, and satisfy
$$
s(y)=+\infty\hbox{ if } \{y<0\},\;  s(0)=0, \; s(\rho)\mbox{ is increasing in }[0,\infty), \; \textrm{and} \; \lim_{z\to 0^+}\frac{s(z)-s(0)}{z}=0.
$$
The first condition forces the density function to be nonnegative, while the second and third conditions, $s(0)=0$ and $s(\rho)$ increasing, are physically natural for tumor growth models.  In addition, these two conditions ensure that solutions stay compactly supported if initially so (see Section \ref{sec: finite speed propagation}).  The last condition is only for notational convenience --- this condition ensures that the density variable is positive whenever the pressure is positive.  Without this assumption, one would have to constantly refer to pressure values in the set $\partial s((0,\infty))$, which quickly gets quite cumbersome.

As for $G$ we assume the following conditions:
\begin{enumerate}[label=(G\arabic*)]
\item \label{assumption: growth at zero pressure} $G(0,x)$ is strictly positive for all $x$.
\item\label{assumption: regularity} $G(z,x)$ is Lipschitz continuous with respect to $(z,x)\in\RR\times\R^d$ and decreasing with respect to $z$.
\item\label{assumption: death when large pressure} for all $x$ there exists $b(x)$ such that $ G(b(x),x)=0$, and $0 \leq b_0 \leq b(x) \leq b_1<\infty$ for all $x\in\R^d$.
\item\label{assumption: bound} $B:=\sup_{(z,x)\in\RR^+\times \R^d} |G(z,x)|<\infty$ .
\end{enumerate}
Assumptions \ref{assumption: growth at zero pressure}-\ref{assumption: death when large pressure} are physically natural and correspond to the assumptions that growth occurs when the pressure is zero,  growth slows continuously as the pressure increases, and at each location there is a threshold value where growth will cease if the pressure becomes too high. \ref{assumption: bound} is a technical condition bounding the growth rate, which will streamline our subsequent analysis. This condition could almost certainly be weakened, for instance to local bounds within the range of pressure. However, in the context of tumor growth models, we do not believe unbounded growth is relevant enough to justify the additional complication.
\medskip

To approximate the equation ($P$), we define the piecewise-constant-in-time interpolations
\begin{equation}
\label{interpolation}
\begin{split}
\rho^{\tau}(x,t)&:=\rho^{n+1, \tau}(x)\quad \textrm{if} \; t\in [n\tau, (n+1)\tau),\\
\mu^{\tau}(x,t)&:=\mu^{n+1,\tau}(x)\quad \textrm{if} \; t\in [n\tau, (n+1)\tau),\\
p^{\tau}(x,t)&:=p_{n+1,\tau}(x)\quad \textrm{if} \; t\in [n\tau, (n+1)\tau),
\end{split}
\end{equation}
starting with some given nonnegative initial data $\rho^{0,\tau} = \rho_0$. We assume that $\rho_0$ is compactly supported with
\begin{equation}\label{good_data_0}
\inf \partial s(\hat{M}_0) <\infty \hbox{ where } \hat{M}_0:= \norm{\rho_0}_{\infty}.
\end{equation}
Note that in particular $\hat{M}_0<\infty$.

\medskip

We first point out that the choice of $\Omega$, if sufficiently large for the given time range, does not affect the discrete-time solutions. Indeed, if $\rho_0$ is compactly supported and bounded, then $(\rho^{\tau}, \mu^{\tau}, p^{\tau})$ stay compactly supported and bounded (see Theorem~\ref{main:3}
and the discussion in the beginning of Section \ref{sec: equicontinuity}).
\begin{comment}
\begin{theorem}\label{thm: domain indpendence}
There exists $R(t) =R_1+R_2 t: [0,\infty)\to \R^
 +$ that depends on $\rho_0$, $G$ and $s$, such that the following holds. Let $(\rho^{\tau}, \mu^{\tau}, p^{\tau})$ be as given in \eqref{interpolation} for $0\leq t\leq T$ with any choices of domain $\Omega$ containing the ball $B_{R(T)}(0)$. Then they are compactly supported in $B_{R(T)}(0)$ for $0\leq t\leq T$, and is unique independent of $\Omega$.
\end{theorem}
\end{comment}

\medskip

In the next two theorems, we will denote $Q:=   \R^d\times [0,T]$, with $\Omega$ sufficiently large . We will then show that $(\rho^\tau,\mu^\tau,p^\tau)$ converge to a continuum solutions of $(P)$ as  we take $\tau\to 0$. To be more precise, we have
\begin{theorem}\label{main:1}
Let $M_0:= \max(b_1,\inf \partial s(\norm{\rho_0}_{L^\infty(\R^d)}))$, where $b_1$ is given in \ref{assumption: death when large pressure}.
Suppose either $s \in C^1_{loc}([0,\infty))$ or $G(\cdot,x)$ is affine in $[0, M_0]$ for all $x\in \R^d$. Then for any $T>0$,
\begin{itemize}
\item[(a)] For all $\tau \leq (2B)^{-1}$, $\rho^{\tau}, \mu^{\tau}, p^{\tau}_+$ are uniformly bounded in $L^{\infty}(Q)$.
\end{itemize}
There exists $\rho, p \in L^{\infty}(Q)$ such that, as $\tau \to 0$, up to a subsequence,
\begin{itemize}
\item[(b)] $\mu^{\tau}\rightharpoonup \rho G(p,x)$ in $L^1([0,T]; W^{-1,1}(\R^d))$;
\item[(c)] $\rho^{\tau}\to \rho$ in $L^1(Q)$;
\item[(d)] $p^{\tau}_+ \rightharpoonup p$ and $s^*(p^{\tau})\rightharpoonup s^*(p)$ in $L^1(Q)$. They also a.e.\;converge provided $s \in C^1_{loc}([0,\infty))$.
\end{itemize}
Moreover,
\begin{itemize}
\item[(e)] $(\rho,p)$ is a very weak solution of $(P)$ in the sense that $p \in \partial s(\rho)$ a.e.\;and
$$
\int_0^{t_0}\int_{\R^d}\rho \phi_t + s^*(p)\Delta \phi  + G(p,x)\rho\phi\, dxdt=  \int_{\R^d} (\rho\phi)(x,t_0)-(\rho\phi)(x,0)\, dx,
$$
for any $\phi\in C^{\infty}(Q)$ and for a.e.\;$t_0\in [0,T]$.
\item[(f)] If $\rho_0 \in BV$, we also have $\rho(t,\cdot)\in BV$ with its BV norm growing at most exponentially in time.
\end{itemize}
\end{theorem}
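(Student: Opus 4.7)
The plan is to leverage the a priori estimates coming from the dual formulation of Section~\ref{sec:dual} together with the $L^1$-equicontinuity estimates of Section~\ref{sec: equicontinuity}, and then pass to the limit in the discrete Euler-Lagrange system via classical Fr\'echet-Kolmogorov compactness.

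For part (a), I would obtain $\|p^\tau_+\|_\infty \leq M_0$ from a discrete comparison principle applied to the dual problem \eqref{eq:dual_scheme}, with $p\equiv M_0$ acting as a supersolution: \ref{assumption: death when large pressure} gives $G(M_0,\cdot)\leq 0$, and $\tau\leq(2B)^{-1}$ prevents a single step from overshooting. The bound on $\rho^\tau$ then follows from $p^\tau\in\partial s(\rho^\tau)$ and monotonicity of $\partial s$, while the bound on $\mu^\tau$ follows from the first-order condition in $\mu$ for \eqref{eq:primal_problem} which, together with \eqref{f_def}, gives $\mu^{n+1,\tau}(x) = \rho^{n,\tau}(x)\,G(p^{n+1,\tau}(x),x)$; then \ref{assumption: bound} yields $\|\mu^\tau\|_\infty \leq B\|\rho^\tau\|_\infty$.

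For parts (b)--(d), the $L^\infty$ bounds of (a) yield weak-$\ast$ compactness for $\mu^\tau$, $p^\tau_+$, and $s^*(p^\tau)$; strong $L^1$ convergence of $\rho^\tau$ follows from Fr\'echet-Kolmogorov using the uniform compact support (finite propagation) combined with the $L^1$-equicontinuity in space and time. When $s\in C^1_{loc}$, $p = s'(\rho)$ is a single-valued continuous function of $\rho$, so a.e.\ convergence of $\rho^\tau$ upgrades to a.e.\ convergence of $p^\tau_+$ and $s^*(p^\tau)$. For part (e), the Euler-Lagrange system derived in Section~\ref{sec:dual} gives that the optimal transport map between $\rho^{n+1,\tau}$ and $\rho^{n,\tau}+\tau\mu^{n+1,\tau}$ has the form $T_{n+1} = \id + \tau\nabla p^{n+1,\tau}$, together with the chain-rule identity $\rho^{n+1,\tau}\nabla p^{n+1,\tau} = \nabla s^*(p^{n+1,\tau})$. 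Testing against $\phi(x,t)\in C^\infty(Q)$, Taylor-expanding $\phi\circ T_{n+1}$ to second order, and telescoping over $n=0,\dots,N-1$ with $t_0=N\tau$ produces
\begin{equation*}
\int_{\R^d}\bigl[(\rho^\tau\phi)(\cdot,t_0)-(\rho^\tau\phi)(\cdot,0)\bigr]\,dx = \iint_{\R^d\times[0,t_0]}\Bigl(\rho^\tau\phi_t + s^*(p^\tau)\Delta\phi + \mu^\tau\phi\Bigr)\,dx\,dt + O(\tau),
\end{equation*}
where the error is controlled by the uniform $L^\infty$ bounds together with the discrete energy-dissipation estimate on $\sum_n\tau\int\rho^{n+1,\tau}|\nabla p^{n+1,\tau}|^2\,dx$. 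Passing to the limit is immediate in the linear terms by (b)--(d), and $p\in\partial s(\rho)$ follows from the strong $L^1$ convergence of $\rho^\tau$ and graph-closedness of the maximal monotone operator $\partial s$. Part (f) is obtained by dividing the translation estimate from Section~\ref{sec: equicontinuity} (of the form $\|\rho^\tau(t,\cdot)-\rho^\tau(t,\cdot+h)\|_{L^1}\lesssim e^{Ct}|h|\,|\rho_0|_{BV}$) by $|h|$, sending $h\to 0$, and invoking lower semicontinuity of BV under $L^1$ convergence.

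The main obstacle is the nonlinear identification $\lim_\tau \mu^\tau = \rho G(p,x)$, which requires passing to the limit in the product $\rho^{n,\tau}G(p^{n+1,\tau},x)$; without strong convergence of one of the factors this is not possible, and the dichotomy in the hypotheses is designed precisely to address this. Under $s\in C^1_{loc}$, $p = s'(\rho)$ is a continuous function of $\rho$, so strong $L^1$ (hence a.e.) convergence of $\rho^\tau$ forces a.e.\ convergence of $p^\tau_+$, and dominated convergence identifies the product limit. Under $G$ affine in $p$ on $[0,M_0]$, one writes $G(p,x) = a(x)p + c(x)$ and combines $\rho^\tau\to\rho$ strongly in $L^1$ with $p^\tau_+\rightharpoonup^\ast p$ in $L^\infty$: the product $a(x)\rho^\tau p^\tau_+$ then converges weakly against any $L^\infty$ test function, which is sufficient for the distributional identity in (e).
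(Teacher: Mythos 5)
Your architecture matches the paper's: uniform bounds from the dual problem, strong $L^1$ compactness of $\rho^\tau$ from the space--time equicontinuity estimates, an approximate weak formulation obtained by testing the pushforward relation and Taylor-expanding with errors controlled by the dissipation estimate $\sum_n \tau\int\rho^{n+1,\tau}|\nabla p_{n+1,\tau}|^2$, and the same dichotomy ($s\in C^1_{loc}$ versus $G$ affine) to identify the limit of the source term. You also correctly identify that product identification as the main obstacle. However, there are two genuine gaps in the degenerate case where $s\notin C^1_{loc}$ (the affine-$G$ branch), both concerning the nonlinear quantity $s^*(p^\tau)$.

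First, you claim "weak-$*$ compactness for $s^*(p^\tau)$" and then treat $\int s^*(p^\tau)\Delta\phi$ as a "linear term" in the passage to the limit. Compactness only gives a subsequential weak limit $\sigma$; since $s^*$ is nonlinear and you only have weak-$*$ convergence of $p^\tau_+$, nothing you have written identifies $\sigma$ with $s^*(p)$, and this identification is precisely assertion (d) and is needed for (e). The paper closes this by using the pointwise duality identity from the scheme, $\rho^\tau p^\tau = s(\rho^\tau)+s^*(p^\tau_+)$ a.e.: since $\rho^\tau\to\rho$ strongly (hence $s(\rho^\tau)\to s(\rho)$ strongly) and $\rho^\tau p^\tau_+\rightharpoonup \rho p$ (strong times weak-$*$), one gets $s^*(p^\tau_+)\rightharpoonup \rho p - s(\rho)$, which equals $s^*(p)$ once $p\in\partial s(\rho)$ is known; the latter follows from the same identity together with weak lower semicontinuity of $\int s^*(\cdot)$ (equivalently, your maximal-monotonicity argument). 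Second, the "chain-rule identity" $\rho^\tau\nabla p^\tau=\nabla s^*(p^\tau)$, which converts the transport term into $-\int s^*(p^\tau)\Delta\phi$, is not automatic: $\rho^\tau\in\partial s^*(p^\tau)$ with $\partial s^*$ possibly multivalued, and even $s\in C^1_{loc}$ does not make $s^*$ differentiable. The paper proves this identity by mollifying $s^*$, using that $\nabla p^\tau=0$ a.e.\ on the level sets $\{p^\tau=a_i\}$ corresponding to the (countably many) non-differentiability points $a_i$ of $s^*$, and passing to the limit by monotone convergence. Minor further imprecisions: the optimality condition gives $\mu^{n+1,\tau}=\rho^{n,\tau}G(p^c_{n+1,\tau},x)$ with the $c$-transform of the pressure, not $p_{n+1,\tau}$ itself, so relating $\mu^\tau$ to $\rho^\tau G(p^\tau,x)$ requires an extra $O(\sqrt{\tau})$ estimate shifting both the density index and $p^c$ to $p$ (the paper's Lemma \ref{source_conv}); and the pressure bound in (a) does not need $\tau\le(2B)^{-1}$ nor a supersolution comparison --- the paper argues directly by mass balance on the set $\{p_{n+1}>\max(v_n,b_1)\}$.
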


When $s=s_{\infty}$, despite the irregular nature of the energy functional, strong monotonicity properties holds for $\rho^{\tau}$ and $p^{\tau}$, which leads to convergence results. In this case the initial data we consider is a compactly supported function $\rho_0$ in $\R^d$ with $\rho_0\leq 1$.

\begin{theorem}\label{main:2} Let $s=s_{\infty}$. For any $T>0$, there exists $\rho\in L^{\infty}(Q)$ and $p\in L^2_{loc}([0,T];H^1_{loc}(\R^d))$ such that $(a),(b)$ and $(f)$ in Theorem~\ref{main:1}.
Moreover,
\begin{itemize}
\item[(c')] $\rho^{\tau}\in[0, 1]$ is monotone increasing in time, and converges to $\rho$ in $L^1(Q)$. If $\rho_0\in \{0,1\}$ a.e., then $\rho^{\tau}, \rho \in \{0,1\}$ a.e..
\item[(d')]  $p^{\tau}_+$ is monotone increasing in time, and converges to $p$ in $L^2(Q)$. Moreover $\nabla p^{\tau}_+\rightharpoonup \nabla p$ in $L^2(Q)$.
\item[(e')] $(\rho,p)$ is a weak solution of $(P)$ in the sense that $p(1-\rho)=0$ a.e.\;and
  \begin{equation*}
 \int_0^{t_0} \int_{\R^d} \rho\partial_t \phi -  \nabla p\cdot\nabla \phi +G(p,x)\rho\phi\, dx dt=  \int_{\R^d} (\rho \phi)(x,t_0) dx - (\rho\phi)(x,0)\, dx,
 \end{equation*}
 for any $\phi\in C^{\infty}(Q)$ and for a.e.\;$t_0\in [0,T]$.
 \end{itemize}
 \end{theorem}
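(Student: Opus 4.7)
The plan is to build on results already proved in earlier sections for the general scheme (comparison principle, density monotonicity, and $L^1$-equicontinuity) and to exploit the additional monotone structure that $s=s_{\infty}$ brings. Items (a) and (f) follow almost immediately: the bound $\rho^{\tau}\leq 1$ is forced by finiteness of $E(\rho^{n+1,\tau})$ under $s=s_{\infty}$; the uniform bound on $p^{\tau}_+$ comes from the discrete comparison principle applied to the constant supersolution $p\equiv b_1$ (so that $G(b_1,\cdot)\leq 0$ by \ref{assumption: death when large pressure}); the bound on $\mu^{\tau}$ follows from \ref{assumption: bound}; and the $BV$ statement is a direct consequence of the $L^1$-equicontinuity bounds from Section~\ref{sec: equicontinuity}.

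For (c'), I would invoke Proposition~\ref{density_monotone} to get $\rho^{n+1,\tau}\geq \rho^{n,\tau}$ pointwise. Heuristically the scheme behaves like a one-step Hele--Shaw flow: $\mu^{n+1,\tau}$ is nonnegative by the structure of $f$ in \eqref{f_def} and assumption \ref{assumption: growth at zero pressure}, and it is pushed out of the saturated region by the $W_2^2$-penalty, so the constraint $\rho\leq 1$ is preserved while the support only grows. The patch property when $\rho_0\in\{0,1\}$ then follows by comparing $\rho^{n+1,\tau}$ with the characteristic function of its support. Combining monotonicity with $\rho^{\tau}\leq 1$, the pointwise limit $\rho:=\lim_{\tau\to 0}\rho^{\tau}$ (along a subsequence) exists, and $L^1(Q)$ convergence follows by dominated convergence.

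The time-monotonicity of $p^{\tau}_+$ in (d') is obtained by a comparison argument on the dual problem: the discrete pressure at step $n+1$ is a subsolution of the analogous problem at step $n+2$, since the source has only grown and the previous density has only grown, yielding $p_{n+2,\tau}\geq p_{n+1,\tau}$ wherever $p_{n+1,\tau}>0$. The crucial $L^2_tH^1_x$ bound on $p^{\tau}_+$ comes from testing the discrete pressure equation against $p^{\tau}$ itself, exploiting the saturation identity $p^{\tau}(1-\rho^{\tau})=0$ and the monotonicity of $\rho^{\tau}$. Together with monotonicity in $t$, this gives strong $L^2(Q)$ convergence of $p^{\tau}_+$ and weak $L^2(Q)$ convergence of $\nabla p^{\tau}_+$. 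For (e') and (b) I then start from the Euler--Lagrange equation for the dual problem \eqref{eq:dual_scheme}, which after summation in $n$ encodes the discrete analogue of $\partial_t\rho-\Delta p=\rho G(p,x)$; passing $\tau\to 0$ uses (c'), (d') and the Lipschitz regularity \ref{assumption: regularity} of $G$ to identify the limit of the nonlinear source, and the saturation $p(1-\rho)=0$ is preserved in the limit because both factors converge strongly in $L^2(Q)$. Item (b) then follows since $\mu^{\tau}$ differs from $\rho^{\tau}G(p^{\tau},x)$ only by a discrete error that vanishes in $L^1([0,T];W^{-1,1})$ by the equicontinuity estimates.

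The principal obstacle I anticipate is producing the $L^2_tH^1_x$ bound on $p^{\tau}_+$ directly at the discrete level. In the continuum formulation this is a one-line energy identity from multiplying the PDE by $p$, but at the discrete level one must carefully track cross terms coming from the $W_2^2$-penalty and the $c$-transform in \eqref{eq:dual_problem}. I would address this by perturbing the dual functional along $p\mapsto p-\varepsilon p_+$, expanding the $c$-transform to first order in $\varepsilon$, and invoking the optimality of $p_{n+1,\tau}$ to extract the required energy-type inequality; alternatively one can test the primal first-order optimality condition directly against the pressure, taking advantage of the fact that $\rho^{\tau}=1$ on $\{p^{\tau}>0\}$ to greatly simplify the transport contribution.
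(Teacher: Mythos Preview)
Your overall architecture matches the paper's: density monotonicity from Proposition~\ref{density_monotone}, pressure monotonicity from the comparison principle (Corollary~\ref{pressure_monotone}), an $H^1$ bound on $p^\tau_+$, and passage to the limit in the discrete continuity equation. Two points, however, deserve correction.

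First, there is a genuine gap in your argument for (c'). You write that ``combining monotonicity with $\rho^{\tau}\leq 1$, the pointwise limit $\rho:=\lim_{\tau\to 0}\rho^{\tau}$ (along a subsequence) exists.'' But the monotonicity from Proposition~\ref{density_monotone} is in $n$ (equivalently in $t$) for \emph{fixed} $\tau$; it says nothing about the behaviour of $\rho^\tau(x,t)$ as $\tau\to 0$ at a fixed $(x,t)$. There is no monotone relationship between the discrete solutions for different step sizes, so dominated convergence is not available here. The paper obtains the $L^1(Q)$ convergence of $\rho^\tau$ via the equicontinuity route (Corollary~\ref{L^1_conv}, built on Proposition~\ref{equicontinuity} and Lemma~\ref{time_density}), exactly as in the general case; time-monotonicity plays no role in this step. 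You could salvage your idea by observing that monotonicity in $t$ gives a uniform $L^1$ bound on $\partial_t\rho^\tau$ and then combining this with spatial equicontinuity via an Aubin--Lions argument, but that is not what you wrote.

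Second, you significantly overestimate the difficulty of the $L^2_tH^1_x$ bound on $p^\tau_+$. It is not a ``principal obstacle'' requiring a delicate perturbation of the dual functional. The paper simply takes the standard JKO energy dissipation inequality (Lemma~\ref{lem: energy dissipation inequality} and Corollary~\ref{edi}), which gives a uniform bound on $\int_0^T\int \rho^\tau|\nabla p^\tau|^2$, and then invokes the saturation identity $(1-\rho^\tau)p^\tau_+=0$ to conclude $\int|\nabla p^\tau_+|^2=\int\rho^\tau|\nabla p^\tau_+|^2$. That is the entire argument; no perturbation along $p\mapsto p-\varepsilon p_+$ is needed. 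A minor related point: the BV bound in (f) comes from the separate estimate Lemma~\ref{BV}/Corollary~\ref{BV2}, not from the $L^1$-equicontinuity of Section~\ref{sec: equicontinuity}.
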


One of the key ingredients in establishing the above results is the comparison principle among the discrete solutions, which is of independent interest. Similar results have been obtained for the case of $E=E_m$ in \cite{AKY14}, but our argument generalizes and simplifies the original proof by arguing through the dual formulation of the scheme. As a consequence, we can construct barriers to show the density propagates with finite speed.   In particular, this allows us to ensure that the evolution remains bounded in any finite time horizon.    Another main ingredient used to establish the convergence result is the spatial equicontinuity of the density variables. The equicontinuity estimate is motivated from \cite{JKT}, but requires a substantially different argument in this setting due to the presence of the growth term.  We collect these results in the following theorem.

\begin{theorem}\label{main:3}
\begin{itemize}
\item[(a)]{\rm(Comparison principle)} If $(\rho_1)^{n,\tau} \leq (\rho_2)^{n,\tau}$ both satisfying \eqref{good_data}, then $(\rho_1)^{n+1,\tau} \leq (\rho_2)^{n+1,\tau}$ and $(p_1)_{n+1,\tau} \leq (p_2)_{n+1, \tau}.$
\item[(b)]{\rm (Finite-speed propagation)} If $\rho_0$ is supported in $B_{R_0}$ satisfying \eqref{good_data_0}, then there exists $R_1,R_2>0$ indepedent of $\tau$ such that
$$
\spt \rho^\tau (\cdot,t) \subset B_{R_0+ R_1+ R_2t}.
$$
\item[(c)] {\rm ($L^1$-equicontinuity)} For any $y\in \R^d$ and sufficiently small $\tau$ we have
$$
\lim_{\e\to 0 } \int_0^T \int_{\R^d} |\rho^{\tau}(x+\e y,t) - \rho^{\tau}(x,t)| dx dt = 0.
$$
 \end{itemize}
\end{theorem}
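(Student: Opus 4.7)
The plan is to work entirely in the dual formulation \eqref{eq:dual_scheme}, testing the two optimizers $p_1, p_2$ (associated with $(\rho_1)^{n,\tau}, (\rho_2)^{n,\tau}$) against the lattice competitors $p_1 \wedge p_2$ and $p_1 \vee p_2$. Since $s^\ast$ is a scalar convex function, $s^\ast(p_1) + s^\ast(p_2) = s^\ast(p_1 \wedge p_2) + s^\ast(p_1 \vee p_2)$ pointwise, so the entropy contributions cancel exactly when the two optimality inequalities are summed. For the transport piece, the $c$-transform enjoys the genuine lattice identity $(p_1 \wedge p_2)^c = p_1^c \wedge p_2^c$ (immediate from $\inf_y \min(f(y),g(y)) = \min(\inf_y f, \inf_y g)$ applied to the displacement-penalized potentials), together with the one-sided bound $(p_1 \vee p_2)^c \geq p_1^c \vee p_2^c$. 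With $h(z,x) := z + \tau \bar G(z,x)$, strictly increasing in $z$ for $\tau \leq (2B)^{-1}$ by \ref{assumption: bound}, I would combine these with the elementary rearrangement $a\, h(x \wedge y) + b\, h(x \vee y) \geq a\, h(x) + b\, h(y)$ (valid for $a \leq b$) to force the resulting chain of inequalities to close into equalities. Pointwise tightness of the rearrangement then delivers $p_1^c \leq p_2^c$ almost everywhere on $\{(\rho_1)^{n,\tau} < (\rho_2)^{n,\tau}\}$; the $c$-concavity of the optimizers established in Section \ref{sec:dual} upgrades this to $p_1 \leq p_2$ everywhere, and the primal-dual relation then yields $(\rho_1)^{n+1,\tau} \leq (\rho_2)^{n+1,\tau}$.

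\textbf{Part (b), finite speed of propagation.} The plan is to apply (a) against an explicit radial barrier. Theorem \ref{main:1}(a) provides a uniform bound $\|p^\tau\|_\infty \leq M_0$, so the transport-map displacement $|\tau \nabla p^\tau|$ near the free boundary is controlled on any finite time horizon, while \ref{assumption: bound} controls the mass-creation rate by $B$. Choosing barrier data $\bar\rho_0 = M \cdot \mathbf{1}_{B_{R_0 + R_1}}$ with $M$ an admissible density above $\|\rho_0\|_\infty$ and $R_1$ large enough to dominate the initial discrepancy, one verifies directly that the piecewise-constant iterate $\bar\rho^\tau$ supported in $B_{R_0 + R_1 + R_2 t}$ is a supersolution of the discrete scheme, provided $R_2$ dominates both $\|\nabla p^\tau\|_\infty$ at the free boundary and the radius growth induced by the bounded source. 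Part (a) then gives $\rho^\tau \leq \bar\rho^\tau$ pointwise, and in particular $\spt \rho^\tau(\cdot, t) \subset B_{R_0 + R_1 + R_2 t}$.

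\textbf{Part (c), $L^1$-equicontinuity.} Following the strategy of \cite{JKT}, I would compare $\rho^\tau(x,t)$ with its spatial translate $\tilde\rho^\tau(x,t) := \rho^\tau(x - \epsilon y, t)$. The translate solves the same discrete scheme but with shifted initial data $\rho_0(\cdot - \epsilon y)$ and shifted growth term $\tilde G(z,x) := G(z, x - \epsilon y)$. In the fully translation-invariant case, an $L^1$-contraction inherited directly from (a) closes the argument; in general \ref{assumption: regularity} supplies $\|\tilde G - G\|_\infty \leq \mathrm{Lip}_x(G)\,\epsilon$, which must enter the estimate as an $O(\epsilon\tau)$ forcing at each step. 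A Kato-type inequality at the discrete level --- obtained by applying the same lattice-competitor technique as in (a), this time in the primal problem to $\rho \wedge \tilde\rho$ and $\rho \vee \tilde\rho$ --- should yield the quasi-contraction
\begin{equation*}
\|\rho^{n+1,\tau} - \tilde\rho^{n+1,\tau}\|_{L^1(\R^d)} \leq (1 + C\tau)\|\rho^{n,\tau} - \tilde\rho^{n,\tau}\|_{L^1(\R^d)} + C\tau\epsilon.
\end{equation*}
Iterating this for $n \leq T/\tau$ and integrating in time bounds $\int_0^T\!\!\int_{\R^d} |\rho^\tau(x+\epsilon y, t) - \rho^\tau(x,t)|\,dx\,dt$ by $e^{CT}\bigl(\|\rho_0(\cdot+\epsilon y) - \rho_0\|_{L^1} + C\epsilon T\bigr)$, which tends to zero as $\epsilon \to 0$ by the $L^1$-continuity of translations.

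\textbf{Main obstacle.} I expect (c) to be the technical heart of the theorem: the implicit, nonlinear coupling between $\mu^{n+1,\tau}$, $\rho^{n+1,\tau}$ and $p^{n+1,\tau}$ via \eqref{f_def} means a spatial shift propagates through a single JKO step in a nontrivial way, and one must track simultaneously the sensitivity of the transport and of the growth variable to the perturbation $\epsilon y$, working only with comparison-principle–level information at the discrete level (no PDE characterization is available there). The dual/lattice-competitor approach of (a) supplies the right template, but closing the quasi-contraction in the presence of the $x$-dependent $G$ requires genuinely new bookkeeping beyond \cite{JKT}.
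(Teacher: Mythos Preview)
Your lattice-competitor route is a genuinely different idea from the paper's, and the algebraic identities you state for $s^*$ and the $c$-transform are correct. However, the closing step has a gap: from the tightness of the rearrangement you only extract $p_1^c \leq p_2^c$ on the set $\{\rho_1^{n,\tau} < \rho_2^{n,\tau}\}$, and $c$-concavity alone does not upgrade a comparison of $c$-transforms on a \emph{subset} to $p_1 \leq p_2$ globally (the $\bar c$-transform is a supremum over all $y$). What your argument actually yields is that $p_1 \wedge p_2$ is itself a maximizer of $J_1^*$; to finish you would need to invoke the minimal-pressure selection (Lemma~\ref{pressure_special}), after which $(p_1\wedge p_2)^{c\bar c}$ is a $c$-concave maximizer no larger than $p_1$, forcing $p_1\le p_2$. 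The paper instead tests the two Euler--Lagrange equations against the characteristic function of $U=\{p_0>p_1\}$ and uses the transport-map monotonicity property \eqref{observation}: if $T_{p_0}(x)\in U$ then $T_{p_1}(x)\in U$. This gives directly that $U\cap\spt\rho_0^*$ is null, whence $(p_0)_+\le(p_1)_+$; the density comparison then follows on the coincidence set via $\nabla p_0=\nabla p_1$ there.

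\textbf{Part (b).} Your sketch is too coarse to close. A flat-top indicator $M\cdot\mathbf 1_{B_R}$ is not preserved by one step of the scheme (except when $s=s_\infty$), so ``verifying directly'' that its expanding version is a supersolution is where all the work lies. The paper constructs the barrier backwards from the \emph{pressure}: it posits a radial $c$-concave $q_R$ built from the solution $Q$ of the ODE $-Q''=\tilde G(Q)+w$ (equation \eqref{eqn: def of Q}), reads off the corresponding density $\rho_R$ from the optimality condition \eqref{eqn: relation between old density and pressure q radial case}, and then checks via pointwise estimates (Lemma~\ref{lem: new density is dominated by another barrier}) that one JKO step sends $\rho_R$ below $\rho_{R+c_*\tau}$. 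The constants $R_*,c_*$ come from the ODE and are $\tau$-independent.

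\textbf{Part (c).} Here the paper takes a route that avoids your main obstacle entirely. Rather than comparing $\rho^\tau$ with its translate (which, as you note, forces you to track a shifted growth term $G(z,x-\epsilon y)$), the paper first proves an $L^1$-stability estimate between two solutions with the \emph{same} $G$ but different initial data (Lemma~\ref{lem: contraction}): for ordered data this follows from the comparison principle and the key inequality $q_0^{k+1}\le q_1^{k+1}$ on $\spt\rho_1^k$, and the unordered case is handled by inserting $\rho_\dag=\min(\rho_0,\rho_1)$. Equicontinuity is then obtained by mollifying $\rho_0$ to a BV approximant $\rho_1$, applying the uniform-in-$\tau$ BV bound of Corollary~\ref{BV2} to $\rho_1^\tau$, and controlling $\|\rho^\tau-\rho_1^\tau\|_{L^1}$ by the stability estimate. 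Your proposed Kato-type quasi-contraction with the $O(\epsilon\tau)$ forcing from the $G$-shift is plausible in spirit, but you have not indicated how to obtain it at the discrete level---the primal Wasserstein term is not a pointwise functional, so lattice competitors in the primal problem do not behave as cleanly as in the dual, and this is precisely the difficulty the paper's BV-approximation strategy is designed to bypass.
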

Further characteristics of $R_1,R_2$ and its dependence on $\rho_0$, $G$ and $s$ are given in Section \ref{sec: finite speed propagation} as well as in the Appendix \ref{appendix: improved barriers}.

\medskip

Lastly, we briefly study coincidence of our continuum solutions with other notions of solutions.
Both weak solutions and viscosity solutions approach are available for the well-posedness of the tumor model with $s=s_{\infty}$ \cite{KP,MPQ,PQV,AKY14,CKY}. There its well-posedness and coincidence are established, as well as its characterization as the limit of weak solutions with $s=s_m$ as $m\to\infty$.  Given the extensive analysis on the continuum solutions in aforementioned references, we do not pursue a qualitative analysis at the continuum level.

\begin{theorem}[Coincidence]
\label{thm: coincidence}

\begin{itemize}
\item[(a)] Suppose $s\in C^1_{loc}([0,\infty))$. Then under a condition \eqref{extra} that includes $s=s_m$ for $1<m<\infty$, the continuum pair $(\rho,p)$ obtained in Theorem~\ref{main:1} is the unique weak solution of $(P)$.  In particular they are the limit of the entire sequence $(\rho^{\tau}, p^{\tau})$ as $\tau\to 0$.
\item[(b)] When $s=s_{\infty}$ the pair $(\rho,p)$ obtained in Theorem~\ref{main:2} coincides with the unique weak solution obtained in \cite{PQV}. In particular they are the limit of the entire sequence $(\rho^{\tau}, p^{\tau})$ as $\tau\to 0$.
 \end{itemize}
  \end{theorem}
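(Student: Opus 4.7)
The plan is to handle parts (a) and (b) separately by matching the constructed limit with the known well-posedness theories, and in each case deduce convergence of the entire sequence from uniqueness plus the precompactness already shown in Theorems \ref{main:1} and \ref{main:2}.

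For part (a), I would first rewrite the very weak formulation in part (e) of Theorem \ref{main:1} in the pressure variable. Since $s\in C^1_{loc}([0,\infty))$ and $p^{\tau}_+\to p$, $s^*(p^{\tau})\to s^*(p)$ a.e., the relation $\rho = (s^*)'(p)$ holds a.e., so the identity
\begin{equation*}
\int_0^{t_0}\!\!\!\int_{\R^d} (s^*)'(p)\phi_t + s^*(p)\Delta\phi + G(p,x)(s^*)'(p)\phi\, dx\, dt = \int_{\R^d}((s^*)'(p)\phi)(x,t_0)\, dx - \int_{\R^d}((s^*)'(p)\phi)(x,0)\, dx
\end{equation*}
identifies $(\rho,p)$ as a solution to the generalized porous medium equation in the sense of Alt--Luckhaus. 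I would then invoke the uniqueness theory for this class of equations; the extra hypothesis \eqref{extra} is what guarantees that the standard doubling-of-variables / Kru\v{z}kov argument closes. More concretely, I expect \eqref{extra} to force $s^*$ to be strictly monotone where it matters (which is automatic for $s=s_m$) so that subtracting two solutions $(\rho_1,p_1),(\rho_2,p_2)$, testing with a regularized sign function applied to $s^*(p_1)-s^*(p_2)$, and absorbing the Lipschitz source term via Gronwall produces an $L^1$-contraction in $\rho$. Since every subsequential limit is the same, the whole family $(\rho^\tau,p^\tau)$ converges.

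For part (b), the strategy is to directly check that the pair $(\rho,p)$ produced in Theorem \ref{main:2} satisfies each defining property of the PQV weak solution to the Hele--Shaw tumor model. From (c') we have $\rho\in[0,1]$, from (d') we have $p\geq 0$ and $p\in L^2([0,T];H^1_{\mathrm{loc}})$, the complementarity $p(1-\rho)=0$ a.e.\ follows from (e') together with the monotone $L^1$-convergence of $\rho^\tau$ and strong $L^2$-convergence of $p^\tau_+$ (which upgrades the weak product $p^\tau(1-\rho^\tau)$), and the weak identity in (e') is precisely the PQV formulation. Once these are verified, the uniqueness theorem from \cite{PQV} applies verbatim and gives coincidence; the full-sequence convergence again follows because the limit along any subsequence is forced to be this unique solution.

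The main obstacle I anticipate is part (a): beyond writing down the weak PDE, one must check that the regularity of our limit is compatible with whatever uniqueness framework \eqref{extra} targets. In particular, standard Alt--Luckhaus uniqueness demands either $\Delta s^*(p)\in L^2_t H^{-1}_x$ or a comparable integrability, while the approximations only deliver $L^1$ weak bounds on $\mu^\tau\rightharpoonup \rho G(p,x)$ in $W^{-1,1}$. Bridging this gap will likely require the comparison principle and finite-speed propagation from Theorem \ref{main:3} (to localize and avoid growth-at-infinity issues), together with the BV estimate from (f) to upgrade weak convergence of $s^*(p^\tau)$ and $\nabla$-type quantities where needed. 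For (b), the only delicate point is to pass to the limit in the product $p^\tau(1-\rho^\tau)$, for which the monotonicity in (c')--(d') is exactly what is needed, so that step should be straightforward once (a) is settled.
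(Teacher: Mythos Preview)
Your overall architecture (check the limit satisfies the relevant weak formulation, then invoke uniqueness to upgrade subsequential to full convergence) matches the paper, but two concrete points deserve correction.

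For part (a), you misidentify condition \eqref{extra}. It is not a strict-monotonicity statement for $s^*$; it is the pointwise bound $x|s'(x)-s'(y)|\leq M|x-y|$ on $[0,C]$. Its role is to convert the ``bad'' term $\rho_1|p_1-p_2|$ into $M|\rho_1-\rho_2|$, after which Gronwall closes. The paper does not use a Kru\v{z}kov doubling argument; it cites the Hilbert duality method (as in \cite{vazquez}, Theorems 6.5--6.6) to first obtain
\[
\int_{\R^d}(\rho_1-\rho_2)_+(t_0)\,dx\leq \int_0^{t_0}\!\!\int_{\R^d}(\rho_1 G(p_1,x)-\rho_2 G(p_2,x))_+\,dx\,dt,
\]
and only then uses the Lipschitz bound on $G$ together with \eqref{extra} to reduce the right-hand side to $A\int_0^{t_0}\int|\rho_1-\rho_2|$. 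Your worry about needing $\Delta s^*(p)\in L^2_tH^{-1}_x$ is therefore misplaced: the Hilbert duality argument works at the level of the very weak formulation already established, and no additional regularity upgrade is required.

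For part (b), your checklist is incomplete in one essential respect. The uniqueness proof in \cite{PQV} (reproduced in the paper) requires not just $p\in L^2_tH^1_x$ and the complementarity relation, but also $\rho_t,\,p_t\in L^1(\R^d\times[0,T])$; this is what makes the coefficient $C=-\rho_2\frac{G(p_1,x)-G(p_2,x)}{p_1-p_2}$ in the dual equation satisfy $\partial_t C\in L^1$, without which the approximation argument in the Hilbert duality method does not close. In the paper this time regularity is obtained precisely from the monotonicity of $\rho^\tau$ and $p^\tau_+$ in time (Corollary \ref{pressure_monotone}), so you should make that link explicit rather than treating (c')--(d') as relevant only to the product $p(1-\rho)$.
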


\subsection{Organization of the paper} The remainder of the paper is organized as follows.
In section \ref{sec:background}, we recall basic properties of optimal transport and convex duality. In Section \ref{sec:dual}, we develop properties of the primal and dual variational problems.  In particular, we show that the primal and dual problems are linked by strong duality, and we establish uniform bounds for discrete densities. In Section \ref{sec: comparison}, we show the comparison principle, Theorem~\ref{main:3}(a), based on properties of the dual problem.  This generalizes and simplifies the comparison principle proof developed in \cite{AKY14}.  Next, Section \ref{sec: finite speed propagation} establishes  the finite propagation property, Theorem~\ref{main:3}(b), based on the comparison principle and barrier constructions. Interestingly, the barriers are constructed using the dual problem and are backwards-in-time, which is natural in view of the duality approach.  The last two sections, Sections \ref{sec: equicontinuity} and \ref{sec: continuum limit},  establish the compactness properties of discrete solutions and then show their convergence to the continuum limit as $\tau$ tends to zero. Section \ref{sec: equicontinuity} focuses on strong compactness of the density variable in $L^1$. The arguments in this section follow the ideas from \cite{JKT}, however, we need to introduce significantly new ingredients, as the growth term prevents the use of $L^1$-contraction argument from \cite{JKT}. Section \ref{sec: continuum limit} collects the results from the previous sections to derive the main convergence theorems Theorem~\ref{main:1} and Theorem~\ref{main:2}.
Section \ref{sec: coincidence of solutions} discusses coincidence of our weak solutions with other existing notions of solution stated in Theorem \ref{thm: coincidence}.
Finally, we construct in the Appendix \ref{appendix: improved barriers} a more refined version of barriers from Section \ref{sec: finite speed propagation}, which give a finer characterization of the propagation of the density support.

\subsection{Acknowledgement} M.J. is  supported by ONR N00014-18-1-2527 and AFOSR MURI FA9550-18-1-0502. I.K is supported by NSF grant DMS-1900804 and the Simons Fellowship.

\section{Preliminary Results}\label{sec:background}

We begin with recalling some essential properties of optimal transport and dual functions. Since we primarily work with optimal transport in its dual formulation, we shall work extensively with the $c$-transform.  Here we focus on the specific cost $c(x,y) := \frac{|x-y|^2}{2\tau}$ for some $\tau>0$. We follow the notations given in \cite{JKT}.

\begin{definition} Given a function $p:\Omega\to\RR$ the {\it $c$-transform} of $p$ is given by
\begin{equation*}
p^{c}(y)=\inf_{x\in\Omega} p(x)+c(x,y).
\end{equation*}
Given a function $q:\Omega\to\RR$  the {\it  conjugate $c$-transform} is given by
\begin{equation*}
q^{\bar{c}}(x):=\sup_{y\in \Omega} q(y)-c(x,y).
\end{equation*}
\end{definition}

\begin{lemma}[\cite{otam}]\label{lem:ccc}
Given functions $p, q:\Omega\to\RR$, we have
\[
p^{c\bar{c}}\leq p, \quad q\leq q^{\bar{c}c},
\]
and
\[
p^{c\bar{c}c}=p^{c},\quad q^{\bar{c}c\bar{c}}=q^{\bar{c}}.
\]
\end{lemma}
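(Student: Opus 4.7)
The plan is to derive all four statements from the definitions by exploiting the elementary \emph{inf-sup} duality underlying the $c$-transforms, together with the monotonicity of those transforms. All four assertions are standard consequences of the Legendre-Fenchel type structure, so no substantive machinery will be needed; the work consists of unwinding the two definitions and applying the same idea twice.

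First I would establish the two inequalities $p^{c\bar c}\leq p$ and $q\leq q^{\bar c c}$. Fix $x\in\Omega$. From the definition of the $c$-transform, for every $y\in\Omega$ we have $p^c(y)\leq p(x)+c(x,y)$, simply by using $x$ itself as a candidate in the infimum. Rearranging and taking the supremum over $y$ yields
\begin{equation*}
p^{c\bar c}(x)=\sup_{y\in\Omega}\bigl(p^c(y)-c(x,y)\bigr)\leq p(x).
\end{equation*}
The companion inequality $q\leq q^{\bar c c}$ follows by the symmetric argument: fix $y\in\Omega$, use $y$ as a candidate in the supremum defining $q^{\bar c}(x)$ to get $q^{\bar c}(x)+c(x,y)\geq q(y)$, then take the infimum over $x$.

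Next I would observe that both $(\,\cdot\,)^c$ and $(\,\cdot\,)^{\bar c}$ are order-preserving: if $p_1\leq p_2$ pointwise then $p_1(x)+c(x,y)\leq p_2(x)+c(x,y)$ for every $x$, and taking the infimum in $x$ preserves the inequality, giving $p_1^c\leq p_2^c$; the analogous statement for $\bar c$ follows from the same observation applied to the supremum. With these two facts in hand, the identities $p^{c\bar c c}=p^c$ and $q^{\bar c c \bar c}=q^{\bar c}$ drop out in two lines each. Specifically, applying the already-proved inequality $p^{c\bar c}\leq p$ and then the $c$-transform yields $p^{c\bar c c}\leq p^c$, while applying the inequality $q\leq q^{\bar c c}$ to the choice $q=p^c$ gives the reverse bound $p^c\leq p^{c\bar c c}$. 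The identity for $q^{\bar c c\bar c}$ is established by the mirror argument, using $\bar c$ in place of $c$.

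There is essentially no hard step here; the only point to be careful about is maintaining the correct orientation of the inequalities when composing the transforms, since $c$ is defined via an infimum and $\bar c$ via a supremum. Once the monotonicity of each transform is noted explicitly, the four statements follow uniformly from the two basic candidate-substitution inequalities derived in the first paragraph.
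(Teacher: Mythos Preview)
Your argument is correct and is precisely the standard derivation of these facts. Note, however, that the paper does not supply its own proof of this lemma: it is quoted directly from the reference \cite{otam} without argument, so there is no in-paper proof to compare against. Your writeup would serve perfectly well as a self-contained proof.
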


\begin{definition}
We say that a function $p:\Omega\to \RR$ is {\it $c$-concave} if $p^{c\bar{c}}=p$, and we say a pair of functions $p, q:\Omega\to\RR$ are {\it $c$-conjugate} if $p^c=q$ and $q^{\bar{c}}=p$.
\end{definition}

The following regularity result is a well-known consequence of the $c$-transform definition.

\begin{lemma}\label{lem:lip}
If $p$ is $c$-concave, then $p$ is Lipschitz and the Lipschitz constant depends only on $c$ and $\Omega$.
\end{lemma}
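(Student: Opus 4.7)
The plan is to exploit the $c$-concavity assumption $p = p^{c\bar c}$ to realize $p$ as a supremum of a family of smooth functions with uniformly bounded gradients, from which the Lipschitz bound follows immediately.

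More concretely, by the definition of $c$-concavity,
\begin{equation*}
p(x) \;=\; p^{c\bar c}(x) \;=\; \sup_{y\in\Omega}\Big( p^{c}(y) - c(x,y)\Big) \;=\; \sup_{y\in\Omega}\Big( p^{c}(y) - \tfrac{1}{2\tau}|x-y|^2\Big),
\end{equation*}
so $p$ is the pointwise supremum over $y\in\Omega$ of the affinely-parametrized smooth functions $\varphi_y(x) := p^c(y) - \tfrac{1}{2\tau}|x-y|^2$. For each fixed $y\in\Omega$ the gradient is $\nabla\varphi_y(x) = -(x-y)/\tau$, which for $x\in\Omega$ satisfies $|\nabla\varphi_y(x)| \le \mathrm{diam}(\Omega)/\tau$. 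Thus every $\varphi_y$ is $L$-Lipschitz on $\Omega$ with $L := \mathrm{diam}(\Omega)/\tau$, a constant depending only on $c$ and $\Omega$.

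Finally, I would invoke the standard fact that a pointwise supremum of a family of $L$-Lipschitz functions is itself $L$-Lipschitz: for any $x_1,x_2\in\Omega$ and any $y\in\Omega$,
\begin{equation*}
\varphi_y(x_1) \;\le\; \varphi_y(x_2) + L|x_1-x_2| \;\le\; p(x_2) + L|x_1-x_2|,
\end{equation*}
and taking the supremum in $y$ on the left gives $p(x_1) \le p(x_2) + L|x_1-x_2|$; swapping $x_1,x_2$ yields the Lipschitz bound. Since this step is entirely routine and no genuine obstacle arises (boundedness of $\Omega$ is the only input beyond $c$-concavity), the proof will be essentially a two-line argument.
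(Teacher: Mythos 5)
Your argument is correct and is exactly the standard one the paper has in mind (the paper states this lemma without proof as a well-known consequence of the $c$-transform definition): writing $p=p^{c\bar c}$ as a supremum of the functions $y\mapsto p^c(y)-\frac{1}{2\tau}|x-y|^2$, each of which is Lipschitz on the bounded set $\Omega$ with constant $\mathrm{diam}(\Omega)/\tau$, and using that a pointwise supremum of uniformly Lipschitz functions is Lipschitz with the same constant. No gaps.
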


The following two lemmas establish the fundamental relationship between optimal transport and the $c$-transform.
\begin{lemma}\label{lem:c_duality}  If $\mu$ is a nonnegative measure, then for any bounded function $p:\Omega\to \RR$,
\[
\inf_{\rho\in L^1(\Omega), \rho(\Omega)=\mu(\Omega)}\, \int_{\Omega}p(x)\rho(x)\, dx+ \frac{1}{2\tau}W_2^2(\rho, \mu)=\int_{\Omega} p^c(y)\, d\mu(y).
\]
\end{lemma}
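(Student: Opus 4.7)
The plan is to prove the two inequalities separately using the definition of $p^c$ and the Monge--Kantorovich formulation of $W_2$.

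For the easy direction $\geq$, I would take any admissible $\rho \in L^1(\Omega)$ with $\rho(\Omega) = \mu(\Omega)$ and let $\pi$ be an optimal transport plan realizing $W_2^2(\rho,\mu)$, with marginals $\rho$ and $\mu$. Since the first marginal of $\pi$ is $\rho$, we have $\int p\,d\rho = \int p(x)\,d\pi(x,y)$, so
\[
\int_\Omega p\,d\rho + \frac{1}{2\tau}W_2^2(\rho,\mu) \;=\; \int_{\Omega\times\Omega}\bigl(p(x)+c(x,y)\bigr)\,d\pi(x,y) \;\geq\; \int_{\Omega\times\Omega} p^c(y)\,d\pi(x,y) \;=\; \int_\Omega p^c\,d\mu,
\]
where the middle inequality is the pointwise bound $p^c(y)\leq p(x)+c(x,y)$. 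Taking the infimum over $\rho$ yields LHS $\geq$ RHS.

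For the reverse direction $\leq$, I would construct near-optimal competitors by a measurable selection argument. Fix $\eps>0$. Since $p$ is bounded and $c(\cdot,y)$ is continuous with $\bar\Omega$ bounded (or, up to taking the closure, compact), the set-valued map $y\mapsto A_\eps(y):=\{x\in\Omega: p(x)+c(x,y)<p^c(y)+\eps\}$ has nonempty open values. A standard selection theorem (e.g.\ Kuratowski--Ryll-Nardzewski) then provides a Borel map $T_\eps:\Omega\to\bar\Omega$ with $T_\eps(y)\in A_\eps(y)$ for every $y$. Setting $\pi_\eps:=(T_\eps\times\mathrm{id})_\#\mu$ and $\rho_\eps:=(T_\eps)_\#\mu$, the marginals of $\pi_\eps$ are $\rho_\eps$ and $\mu$, so
\[
\int_\Omega p\,d\rho_\eps + \frac{1}{2\tau}W_2^2(\rho_\eps,\mu) \;\leq\; \int_\Omega \bigl(p(T_\eps(y))+c(T_\eps(y),y)\bigr)\,d\mu(y) \;\leq\; \int_\Omega p^c\,d\mu + \eps\,\mu(\Omega).
\]
To land inside $L^1(\Omega)$ as required by the infimum in the statement, I would then mollify $\rho_\eps$ at scale $\delta\to 0$ to obtain absolutely continuous competitors $\rho_{\eps,\delta}$; boundedness of $p$ together with $W_2$-continuity under mollification transfers the estimate to $\rho_{\eps,\delta}$ with an error $o(1)$ as $\delta\to 0$. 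Sending first $\delta\to 0$ and then $\eps\to 0$ gives LHS $\leq$ RHS.

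The only nontrivial point is the measurable selection step for $T_\eps$, since $p$ is only assumed bounded. However, the sublevel set structure above makes $A_\eps$ lower semicontinuous in an appropriate sense, so Kuratowski--Ryll-Nardzewski applies directly; alternatively one can reduce to the $c$-concave envelope $p^{c\bar c}\leq p$, for which the analogous selection is a consequence of the Lipschitz regularity given by Lemma~\ref{lem:lip}. The subsequent mollification in $\delta$ is routine. Apart from this, the argument is a standard one-sided Kantorovich duality calculation, so I would not dwell on technicalities beyond citing \cite{otam}.
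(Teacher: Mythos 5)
The paper states this lemma without proof, so the only question is whether your argument is sound. Your ``$\geq$'' direction is correct and complete. The ``$\leq$'' direction, however, contains a genuine gap --- and in fact the identity is \emph{false} for a general bounded measurable $p$, so no argument can close that gap without an extra hypothesis. Take $p\equiv 1$ on $\Omega\setminus\{x_0\}$ and $p(x_0)=0$, with $\mu$ the Lebesgue measure on $\Omega$. Then $p^c(y)=\min\bigl(|y-x_0|^2/(2\tau),\,1\bigr)$, so the right-hand side is strictly less than $\mu(\Omega)$; but every competitor $\rho\in L^1(\Omega)$ gives $\int_\Omega p\rho\,dx=\mu(\Omega)$ because $\{x_0\}$ is Lebesgue-null, so the left-hand side equals $\mu(\Omega)$. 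Your proof breaks at exactly the two points this example exposes: first, $A_\eps(y)$ is \emph{not} open for merely bounded $p$ (openness needs upper semicontinuity of $p$), and it can be a single point, so the near-optimal mass cannot be carried by any $L^1$ density; second, the mollification step requires $\int p\,d\rho_{\eps,\delta}\to\int p\,d\rho_\eps$, which fails when $p$ is discontinuous and $\rho_\eps=(T_\eps)_\#\mu$ is singular --- ``boundedness of $p$'' is not enough to transfer the estimate. Your proposed fallback via the envelope $p^{c\bar c}\leq p$ does not repair this either: it yields $\int p\,d\rho_\eps\geq\int p^{c\bar c}\,d\rho_\eps$, an inequality pointing the wrong way for the $\leq$ direction of the original $p$.

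The honest conclusion is that the lemma should be read with $p$ continuous (or at least lower semicontinuous), in which case your scheme works: $A_\eps(y)$ is then open and nonempty, a measurable selection exists, and the mollification transfers both terms since $p\ast\check\eta_\delta\to p$ uniformly on $\overline\Omega$ and $W_2(\rho_{\eps,\delta},\rho_\eps)\to 0$. This restriction is harmless for the paper: the only invocation with a general $p\in X^*$ is in Lemma~\ref{derivation}, where only your (correct) ``$\geq$'' direction is actually needed to establish weak duality, and everywhere else the lemma is applied to $c$-concave, hence Lipschitz, pressures. You should state this continuity caveat explicitly rather than claim the result ``for any bounded function.''
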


\begin{lemma}[\cite{gangbo_polar, gangbo_thesis, gangbo_mccann}]
\label{gangbo}
Let $p:\Omega\to\RR$ be $c$-concave with $c(x,y) = \frac{|x-y|^2}{2\tau}$. Define $T_p:\Omega\to\Omega$ be the unique solution to
\begin{equation}\label{eq:forward}
T_p(y)=y-\tau\nabla p^c(y).
\end{equation}
Then $T_p$ is invertible a.e., and $T_p^{-1}$ is the unique solution to
\begin{equation*}
T_p^{-1}(x)=x+\tau\nabla p(x).
\end{equation*}

Moreover, if $\mu$ is a nonnegative measure and if $\phi:\Omega\to\RR$ is a continuous function, then
\begin{equation*}
\lim_{t\to 0^+} \int_{\Omega} \frac{(p+t\phi)^c(y)-p^c(y)}{t}\,d\mu(y)=\int_{\Omega} \phi(T_{p}(y))\,d\mu(y)
\end{equation*}
\end{lemma}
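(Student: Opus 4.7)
The plan is to split the lemma into two claims, handled by distinct techniques. The invertibility of $T_p$ will rely on the standard reformulation of $c$-concavity with quadratic cost as ordinary convexity, plus Legendre duality. The derivative formula will follow from an envelope-type sandwich combined with dominated convergence.

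\emph{Invertibility.} Expanding the square in $p^c(y)=\inf_x p(x)+\frac{|x-y|^2}{2\tau}$ shows that $\Psi(y):=\frac{|y|^2}{2\tau}-p^c(y)$ is a supremum of affine functions in $y$, hence convex. By $c$-concavity and Lemma \ref{lem:ccc}, the symmetric argument starting from $p(x)=p^{c\bar{c}}(x)=\sup_y p^c(y)-\frac{|x-y|^2}{2\tau}$ gives that $\Phi(x):=p(x)+\frac{|x|^2}{2\tau}$ is convex as well, and the computation identifies $\Phi$ and $\Psi$ as Legendre conjugates (up to the rescaling $y\mapsto y/\tau$). Rewriting,
$$T_p(y)\;=\;y-\tau\nabla p^c(y)\;=\;\tau\nabla\Psi(y),\qquad x+\tau\nabla p(x)\;=\;\tau\nabla\Phi(x),$$
so the assertion that these maps are mutual inverses a.e.\ is exactly the classical gradient-inverse duality between Legendre-dual convex functions, valid on the full-measure set where both $\Phi$ and $\Psi$ are differentiable. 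Differentiability a.e.\ on $\Omega$ follows from the Lipschitz property (Lemma \ref{lem:lip}) via Rademacher's theorem, or alternatively from Alexandrov's theorem for convex functions.

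\emph{Envelope formula.} For each $y$, let $x_t^*$ denote any minimizer of $x\mapsto p(x)+t\phi(x)+c(x,y)$. Testing $(p+t\phi)^c(y)$ against the choice $x=T_p(y)$ produces the upper bound $(p+t\phi)^c(y)\leq p^c(y)+t\phi(T_p(y))$, while comparing $p^c(y)\leq p(x_t^*)+c(x_t^*,y)$ with the identity $(p+t\phi)^c(y)=p(x_t^*)+c(x_t^*,y)+t\phi(x_t^*)$ produces the lower bound. For $t>0$, dividing by $t$ yields
$$\phi(x_t^*)\;\leq\;\frac{(p+t\phi)^c(y)-p^c(y)}{t}\;\leq\;\phi(T_p(y)).$$
For $y$ in the full-measure set where $T_p(y)$ is the unique minimizer of $p(\cdot)+c(\cdot,y)$, compactness of $\bar{\Omega}$ and continuity of the functional force $x_t^*\to T_p(y)$ as $t\to 0^+$: any subsequential limit point is itself a minimizer, hence equal to $T_p(y)$ by uniqueness. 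Continuity of $\phi$ then gives $\phi(x_t^*)\to\phi(T_p(y))$, and the sandwich yields pointwise convergence of the difference quotients to $\phi(T_p(y))$. The same sandwich provides the uniform bound $\|\phi\|_{L^\infty(\Omega)}$, so dominated convergence pushes the limit through the $\mu$-integral.

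\emph{Main obstacle.} The delicate point is ensuring $\mu$-a.e.\ uniqueness of $T_p(y)$, since nonuniqueness of the $c$-transform minimizer can occur on the Lebesgue-negligible nondifferentiability set of $\Psi$. In the applications of this paper $\mu$ will be absolutely continuous with respect to Lebesgue measure (arising as an iterate $\rho^{n,\tau}$ of the JKO scheme), so the bad set is $\mu$-null and the argument goes through directly. In a fully general setting one observes that every subsequential limit of $\phi(x_t^*)$ lies in $\phi\bigl(\mathrm{argmin}(p(\cdot)+c(\cdot,y))\bigr)$, and the sandwich still forces upper and lower envelopes to agree after integration.
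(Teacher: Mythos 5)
The paper gives no proof of this lemma at all: it is quoted verbatim from the cited references (Brenier/Gangbo--McCann style results), so there is nothing internal to compare against. Your argument is the standard self-contained derivation and is essentially correct. The reduction of $c$-concavity for quadratic cost to ordinary convexity of $\Phi(x)=p(x)+\frac{|x|^2}{2\tau}$ and $\Psi(y)=\frac{|y|^2}{2\tau}-p^c(y)$, the identification of these as Legendre conjugates up to the scaling $y\mapsto y/\tau$, and the gradient-inverse duality $\nabla\Phi^*\circ\nabla\Phi=\mathrm{id}$ a.e.\ is exactly how the cited results are proved; the envelope sandwich
$\phi(x_t^*)\leq t^{-1}\bigl((p+t\phi)^c(y)-p^c(y)\bigr)\leq\phi(T_p(y))$
together with compactness of $\bar\Omega$, uniqueness of the minimizer at differentiability points of $p^c$, and dominated convergence is likewise the standard Danskin-type argument.

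One soft spot, which you partially flag yourself: the final sentence of your ``main obstacle'' paragraph (``the sandwich still forces upper and lower envelopes to agree after integration'') is not a proof and is in fact false for arbitrary nonnegative $\mu$. If $\mu$ charges the set where the minimizer of $p(\cdot)+c(\cdot,y)$ is non-unique (e.g.\ an atom at a kink of $p^c$), the one-sided limit of the difference quotient equals $\min_{x\in\mathrm{argmin}}\phi(x)$ by the envelope theorem, which need not equal $\phi$ evaluated at any preferred selection of $T_p(y)$ --- indeed $T_p(y)$ is not even defined there, since the lemma defines it through $\nabla p^c(y)$. The honest statement is that the formula holds for any $\mu$ giving zero mass to the ($\mathcal{L}^d$-null) nondifferentiability set of $p^c$; since every measure to which the paper applies the lemma is absolutely continuous (the iterates $\rho^{n,\tau}$ and $(1+\tau G)\rho^{n,\tau}$), this restriction is harmless, but you should state it as a hypothesis rather than gesture at a repair that does not exist in full generality. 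A similar implicit restriction underlies ``invertible a.e.'': the clean assertion is that $\nabla\Phi^*\circ\nabla\Phi=\mathrm{id}$ on the full-measure set where $\Phi$ is differentiable and $\Phi^*$ is differentiable at the image point, which again is all that the paper ever uses.
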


\begin{remark}\label{dual2}
The maps $T_p$ and $T_p^{-1}$ can additionally be characterized as the unique solutions to the optimization problems:
\begin{equation*}
T_p(y)=\argmin_{x\in\Omega} p(x)+c(x,y),\quad T_p^{-1}(x)=\argmax_{y\in\Omega} p^c(y)-c(x,y).
\end{equation*}
\end{remark}

Now we can finally state the fundamental result guaranteeing the existence and uniqueness of optimal transport maps.
\begin{theorem}[\cite{brenier_polar,gangbo_thesis,gangbo_mccann}]
\label{thm:fund_ot} If $\mu, \nu\in L^1(\Omega)$ are nonnegative densities with the same mass, then there exists a $c$-concave function $p^*:\Omega\to\RR$ such that
\[
p^*\in\argmax_p \int_{\Omega} p^c(y) \mu(y)\, dy-\int_{\Omega} p(x)\nu(x)\, dx,
\]
\[
W_2^2(\mu,\nu)=\int_{\Omega} (p^*)^c(y) \mu(y)\, dy-\int_{\Omega} p^*(x)\nu(x)\, dx,
\]
Moreover $T_{p^*}$ is the unique optimal map transporting $\mu$ to $\nu$, and $T_{p^*}^{-1}$
is the unique optimal map transporting $\nu$ to $\mu$.  Conversely, if $\tilde{p}$ is a $c$-concave function such that $T_{\tilde{p}\,\#}\mu=\nu$ then $T_{\tilde{p}}$ is the unique optimal map transporting $\mu$ to $\nu$ and $T_{\tilde{p}}^{-1}$ is the unique optimal map transporting $\nu$ to $\mu$.
\end{theorem}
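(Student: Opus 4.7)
The plan is to prove this classical Brenier/Gangbo--McCann theorem in three stages: existence of a $c$-concave maximizer together with the duality identity, identification of $T_{p^*}$ as the transport map, and uniqueness.

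For existence, I would first reduce the supremum to $c$-concave competitors. For any $p$, Lemma~\ref{lem:ccc} gives $p^{c\bar c}\le p$ and $(p^{c\bar c})^c = p^c$, so replacing $p$ by $p^{c\bar c}$ leaves the first integral unchanged while (since $\nu\ge 0$) decreasing $\int p\,d\nu$. Hence I may restrict to $c$-concave functions, which by Lemma~\ref{lem:lip} are uniformly Lipschitz on the bounded domain $\Omega$. The functional $\mathcal{J}(p):=\int_\Omega p^c\mu\,dy-\int_\Omega p\,\nu\,dx$ is invariant under adding a constant to $p$ (because $\mu(\Omega)=\nu(\Omega)$), so after normalizing $p_n(x_0)=0$, a maximizing sequence is uniformly bounded and equicontinuous. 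Arzel\`a--Ascoli extracts a uniform limit $p^*$; uniform convergence preserves $c$-concavity and is compatible with the $c$-transform, so $p^*$ attains the supremum. The identity $W_2^2(\mu,\nu)=\int(p^*)^c\mu-\int p^*\nu$ then comes from Kantorovich duality: applying Lemma~\ref{lem:c_duality} with the mass-preserving constraint gives a lower bound, and the variational problem realizes it, so the sup equals the transport cost.

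To identify the transport map, I would perturb $p^*\to p^*+t\phi$ for a continuous test function $\phi$ and differentiate using Lemma~\ref{gangbo}:
\begin{equation*}
0=\frac{d}{dt}\Big|_{t=0}\mathcal{J}(p^*+t\phi)=\int_\Omega \phi(T_{p^*}(y))\,\mu(y)\,dy-\int_\Omega \phi(x)\,\nu(x)\,dx.
\end{equation*}
Since $\phi$ is arbitrary, this is exactly $T_{p^*\#}\mu=\nu$. The inverse statement $T_{p^*}^{-1}(x)=x+\tau\nabla p^*(x)$ is supplied by Lemma~\ref{gangbo} as well, which then pushes $\nu$ to $\mu$.

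For uniqueness, I would argue via the contact set. Any optimal Kantorovich plan $\pi$ must saturate the constraint $(p^*)^c(y)-p^*(x)\le c(x,y)$ $\pi$-a.e., hence is concentrated on $\Sigma:=\{(x,y): (p^*)^c(y)=p^*(x)+c(x,y)\}$. Writing $p^*(x)+c(x,y)=u(x)-x\cdot y/\tau+|y|^2/(2\tau)$ with $u(x):=p^*(x)+|x|^2/(2\tau)$ convex (a consequence of $c$-concavity for the quadratic cost), the condition that $x$ minimize $p^*(\cdot)+c(\cdot,y)$ is equivalent to $y/\tau\in\partial u(x)$. Since $u$ is convex and $\mu\in L^1(\Omega)$ is absolutely continuous, Rademacher's theorem gives that $u$ is differentiable $\mu$-a.e., forcing the minimizer to be unique and equal to $T_{p^*}(y)$ by Remark~\ref{dual2}. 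Thus $\Sigma$ is (modulo a $\pi$-null set) the graph of $y\mapsto T_{p^*}(y)$, so $\pi=(T_{p^*},\mathrm{id})_{\#}\mu$ is the unique optimal coupling, and $T_{p^*}$ is the unique optimal map. For the converse, if $\tilde p$ is $c$-concave with $T_{\tilde p\#}\mu=\nu$, then $(T_{\tilde p},\mathrm{id})_{\#}\mu$ is a coupling with cost $\int c(T_{\tilde p}(y),y)\,\mu(y)\,dy$; using the identity $\tilde p(T_{\tilde p}(y))+c(T_{\tilde p}(y),y)=\tilde p^c(y)$ (the defining property of $T_{\tilde p}$) and integrating against $\mu$ reproduces $\int\tilde p^c\mu-\int\tilde p\,\nu$, which by duality equals $W_2^2(\mu,\nu)$; hence this coupling is optimal, and by the uniqueness just established $T_{\tilde p}=T_{p^*}$ $\mu$-a.e.

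The main obstacle I anticipate is the uniqueness step, specifically controlling the non-differentiability locus of $p^*$: semi-convexity of $c$-concave functions combined with Rademacher handles Lebesgue-a.e.\ points, and absolute continuity of $\mu\in L^1$ upgrades this to $\mu$-a.e., which is exactly what is needed for $\Sigma$ to coincide $\pi$-a.e.\ with the graph of $T_{p^*}$. The rest of the argument is a fairly standard chain of Kantorovich-duality and envelope-theorem computations.
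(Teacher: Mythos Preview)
The paper does not actually prove this theorem: it is quoted in the preliminaries section with citations to \cite{brenier_polar,gangbo_thesis,gangbo_mccann} and no argument is given. Your proposal reproduces the standard Brenier/Gangbo--McCann proof found in those references---reduction to $c$-concave competitors via Lemma~\ref{lem:ccc}, compactness by Lemma~\ref{lem:lip} and Arzel\`a--Ascoli, first-variation via Lemma~\ref{gangbo} to obtain the pushforward relation, and uniqueness through the contact-set/semi-convexity argument---so there is nothing to compare against in the paper itself.

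Two minor remarks on execution. First, Lemma~\ref{gangbo} only supplies the one-sided limit $t\to 0^+$, so the first-variation step literally yields an inequality; you should test with both $\phi$ and $-\phi$ to recover the equality $T_{p^*\#}\mu=\nu$. Second, in the converse step the sentence ``which by duality equals $W_2^2(\mu,\nu)$'' is slightly compressed: what you actually have is the sandwich
\[
\int c(T_{\tilde p}(y),y)\,\mu(y)\,dy \;=\;\int \tilde p^{\,c}\,d\mu - \int \tilde p\,d\nu \;\le\; W_2^2(\mu,\nu)\;\le\;\int c(T_{\tilde p}(y),y)\,\mu(y)\,dy,
\]
the first inequality by weak duality and the second since any coupling dominates the optimal cost; this forces equality throughout. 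With these small clarifications the argument is complete.
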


We conclude this section with some results from convex duality theory that we will use extensively in our arguments.

\begin{lemma}[\cite{JKT}]\label{dual_relation}
For any proper, lower semi-continuous, convex function $h:\RR\to\RR\cup\{+\infty\}$, we have $p\in \partial h(z)$ if and only if $pz=h(z)+h^*(p)$.  Here $h^*$ is the convex dual of $h$ defined by $s^*(p) := \sup_{\rho \in \RR}\{ \rho p - s(\rho)\}$.
\end{lemma}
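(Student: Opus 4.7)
The statement is the classical Fenchel--Young equality characterization of the subdifferential, so my plan is simply to unpack both directions directly from the two definitions in play: the subgradient inequality
$$
p\in\partial h(z) \iff h(y)\geq h(z)+p(y-z) \text{ for all } y\in\RR,
$$
and the Legendre transform $h^*(p)=\sup_{y\in\RR}\{py-h(y)\}$. In particular no nontrivial property of $h$ beyond properness, convexity, and lower semi-continuity is needed (those hypotheses ensure $h^{**}=h$ but we will not even need that here). Throughout I work at a point $z$ with $h(z)<\infty$, since otherwise $\partial h(z)$ is empty and $h(z)+h^*(p)=+\infty$, making the stated equivalence vacuous on both sides.

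For the forward direction, assume $p\in\partial h(z)$. Rearranging the subgradient inequality gives $py-h(y)\leq pz-h(z)$ for every $y\in\RR$, so $pz-h(z)$ is an upper bound for $\{py-h(y):y\in\RR\}$. Taking $y=z$ shows that this upper bound is attained, so $h^*(p)=pz-h(z)$, which is exactly $pz=h(z)+h^*(p)$.

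For the reverse direction, assume $pz=h(z)+h^*(p)$. Then $pz-h(z)=h^*(p)\geq py-h(y)$ for all $y\in\RR$ by definition of $h^*$. Rearranging yields $h(y)\geq h(z)+p(y-z)$ for every $y$, which is precisely $p\in\partial h(z)$.

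There is really no obstacle here; the only subtlety is bookkeeping around $+\infty$ values, handled by the observation above. I would therefore present the argument as two short paragraphs mirroring the two implications, and otherwise cite \cite{JKT} (or any standard convex analysis reference) as the origin of the statement.
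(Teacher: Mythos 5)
Your proof is correct and complete: the paper gives no proof of this lemma (it is cited from \cite{JKT}), and your argument is the standard Fenchel--Young characterization of the subdifferential, with the edge case $h(z)=+\infty$ correctly dispatched using properness of $h$. Nothing further is needed.
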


\begin{lemma}[\cite{JKT}]\label{lem:s_star_increasing}
Suppose $h:\RR\to\RR\cup\{+\infty\}$ is  proper, lower semi-continuous, convex, and $h(z) \equiv +\infty$ if $z<0$.
Then $h^*$ is increasing, and it is strictly increasing on $\partial h((0,\infty))$.
\end{lemma}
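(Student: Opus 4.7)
The plan is to deduce both claims directly from the definition $h^*(p) = \sup_{z \in \R}\{zp - h(z)\}$ combined with the Fenchel--Young equality supplied by Lemma \ref{dual_relation}. No extra machinery is required.

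First, I would establish monotonicity. Because $h(z) \equiv +\infty$ for $z<0$, the supremum defining $h^*(p)$ is effectively taken over $z \geq 0$. Whenever $p_1 \leq p_2$, one has $zp_1 - h(z) \leq zp_2 - h(z)$ for every $z \geq 0$, so taking the supremum yields $h^*(p_1) \leq h^*(p_2)$. This settles the first assertion.

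For the strict monotonicity on $\partial h((0,\infty))$, suppose $p_1 \in \partial h(z_1)$ for some $z_1 > 0$ and let $p_2 > p_1$. Lemma \ref{dual_relation} converts the subdifferential relation into the Fenchel identity
\begin{equation*}
h^*(p_1) = z_1 p_1 - h(z_1).
\end{equation*}
Plugging the specific value $z_1$ into the variational definition of $h^*(p_2)$ gives
\begin{equation*}
h^*(p_2) \geq z_1 p_2 - h(z_1) = h^*(p_1) + z_1 (p_2 - p_1) > h^*(p_1),
\end{equation*}
where the strict inequality uses $z_1 > 0$ and $p_2 - p_1 > 0$. Applying this with any $p_1, p_2 \in \partial h((0,\infty))$ satisfying $p_1 < p_2$ (so that the hypothesis $p_1 \in \partial h(z_1)$ for some $z_1 > 0$ holds) yields strict monotonicity on $\partial h((0,\infty))$.

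There is no genuine obstacle in this proof: the content is entirely packaged in Lemma \ref{dual_relation}, and the positivity $z_1 > 0$ coming from the restriction to $\partial h((0,\infty))$ is exactly what upgrades the weak inequality to a strict one.
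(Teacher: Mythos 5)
Your proof is correct. The paper itself does not reproduce a proof of this lemma (it is cited directly from \cite{JKT}), and your argument is the standard one: restricting the supremum to $z\ge 0$ gives monotonicity, and the Fenchel--Young equality from Lemma \ref{dual_relation} at a point $z_1>0$ of the subdifferential supplies the strict gain $z_1(p_2-p_1)>0$. Note also that $h^*(p_1)=z_1p_1-h(z_1)$ is finite because $\partial h(z_1)\neq\varnothing$ forces $h(z_1)<\infty$, so the strict inequality is meaningful; your argument in fact proves the slightly stronger statement that $h^*(p_2)>h^*(p_1)$ whenever $p_1\in\partial h((0,\infty))$ and $p_2>p_1$, regardless of whether $p_2$ lies in that set.
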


\begin{comment} Next we discuss conditions for $f$ the dual growth function defined by \eqref{f_def}. The following properties of $f$ follows from (G1)-(G4) for the original growth term $G$. On the other hand, for a given $f$ satisfying (F1)-(F4) one can generate $G$ satisfying (G1)-(G4).

\begin{itemize}
\item[(F1)] $\partial_z f(0,x)$ is negative and uniformly bounded in $\R^d$.
\item[(F2)] $f(\cdot,x)$ is convex. Moreover it is strictly convex on the feasible set
\[
\{z\in \RR: f(z,x)<\infty\} = G(\RR,x),
\]
\item[(F3)] $f(\cdot, x)$ has its minimum at $z(x)>0$, and $\inf_{x\in\R^d} f(z(x),x) >-\infty$.
\item[(F4)] There exists $B>0$ such that $f(z,x)=+\infty$ if $|z|>B$.
\end{itemize}
\end{comment}

\section{Properties of the Primal and Dual Problems}\label{sec:dual}

In this section, we study properties of solutions $(\rho^{n,\tau}, \mu^{n,\tau})$ to the primal problem \eqref{eq:mms}, and their relationship to the dual pressure variables $p_{n,\tau}$ that maximize the dual problem \eqref{eq:dual_scheme}. Since $\tau$ is fixed for the results of this section, we denote $\rho^{n}:= \rho^{n,\tau}$ for simplicity.

\medskip

\subsection{Equivalence and well-posedness}

We begin by showing the existence of a unique minimizer for the primal problem \eqref{eq:primal_problem}. Recall that $X:=\{\rho\in L^1(\Omega): E(\rho)<\infty\}$. For $\rho \in X$ and $\Sigma\subset\Omega$, define
\begin{equation*}
\rho(\Sigma):= \int_{\Sigma} \rho\, dx.
\end{equation*}
Define the dual energy $E^*: X^* \to \R$ such that
\begin{equation*}
E^*(p):= \int_{\Omega} s^*(p(x)) dx, \quad s^*(p):= \sup_{y\in \RR} \{py - s(y)\},
\end{equation*}
where $X^*$ is the convex dual of $X$ with respect to $E$, namely
$$
X^*:=\{p:\Omega\to [-\infty,+\infty]:\;p\;\mathrm{is\;measurable},\;E^*(p)<+\infty\}.
$$

We begin with the following simple lemma, which establishes weak duality between the primal and dual problems.
\begin{lemma}\label{derivation}
Suppose $\rho^n \in X$. Then
$$
\inf_{(\rho,\mu)\in X\times AC(\rho^n)} J(\rho, \mu,\rho^n)\geq \sup_{p\in X^*}J^*(p,\rho^n).
$$
\end{lemma}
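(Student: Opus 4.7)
The plan is to prove weak duality termwise, by applying a Fenchel--Young inequality to each of the three components of $J$ and matching them against the three components of $J^*$. Fix an arbitrary admissible pair $(\rho,\mu)\in X\times\mathrm{AC}(\rho^n)$ and any $p\in X^*$; the goal is to show $J(\rho,\mu,\rho^n)\geq J^*(p,\rho^n)$, after which taking infima on the left and suprema on the right yields the lemma.

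First I would handle the energy term using Lemma \ref{dual_relation}: the Young inequality $s(\rho(x))+s^*(p(x))\geq p(x)\rho(x)$ integrates to $E(\rho)\geq \int_\Omega p\rho\,dx - \int_\Omega s^*(p)\,dx$. Next I would handle the transport term via Lemma \ref{lem:c_duality} (Kantorovich duality), applied to the nonnegative measure $\nu:=\rho^n+\tau\mu$: for every admissible $\rho$ with $\rho(\Omega)=\nu(\Omega)$ (which we may assume, otherwise $W_2^2=\infty$ and there is nothing to show),
\begin{equation*}
\int_\Omega p(x)\rho(x)\,dx+\frac{1}{2\tau}W_2^2(\rho,\nu)\geq \int_\Omega p^c(y)\,d\nu(y)=\int_\Omega p^c\rho^n\,dx+\tau\int_\Omega p^c\mu\,dx.
\end{equation*}
Finally, I need the analogous inequality linking $f$ and $\bar G$ pointwise:
\begin{equation*}
f(z,x)+q\,z\geq \bar G(q,x)\qquad \text{for all } z\in\RR,\; q\in\RR.
\end{equation*}
To derive this, observe from the definition \eqref{f_def} that $\partial_z f(z,x)$ is (minus) the inverse of $b\mapsto G(b,x)$, so a direct computation of the Legendre transform gives $f^*(-q,x)=-qG(q,x)-f(G(q,x),x)$; differentiating in $q$ yields $G(q,x)$, and the normalization $f(G(0,x),x)=0$ forces $-f^*(-q,x)=\bar G(q,x)$ once we fix $\bar G(0,x)=0$. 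The Fenchel--Young inequality $f(z,x)+qz\geq -f^*(-q,x)$ is then exactly what we want. Applying this pointwise with $z=\mu(x)/\rho^n(x)$ and $q=p^c(x)$, multiplying by $\rho^n(x)$, and integrating gives
\begin{equation*}
\tau F(\mu,\rho^n)+\tau\int_\Omega p^c\mu\,dx\geq \tau\int_\Omega \rho^n\bar G(p^c,x)\,dx.
\end{equation*}

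Adding the three inequalities, the cross terms $\int p\rho\,dx$ cancel between the first two, and the cross terms $\tau\int p^c\mu\,dx$ cancel between the last two, leaving precisely
\begin{equation*}
J(\rho,\mu,\rho^n)\geq \int_\Omega \rho^n\bigl(p^c+\tau\bar G(p^c,x)\bigr)\,dx-\int_\Omega s^*(p)\,dx=J^*(p,\rho^n),
\end{equation*}
which is the claim. The only delicate point I expect is fixing the additive normalization of $\bar G$ so that $\bar G(0,x)=0$ (equivalently, $\bar G = -f^*(-\cdot,x)$); this is really a convention, but it should be stated carefully since only $\partial_z \bar G=G$ is declared in \eqref{eq:dual_problem}. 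Everything else is routine Fenchel--Young applied to the three natural convex pairs $(s,s^*)$, $(\tfrac{1}{2\tau}|\cdot|^2,\tfrac{1}{2\tau}|\cdot|^2)$ through Kantorovich duality, and $(f,-\bar G(-\cdot,\cdot))$.
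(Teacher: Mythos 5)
Your proof is correct and is essentially the paper's argument: the paper writes $E=(E^*)^*$, swaps $\inf$ and $\sup$, evaluates the inner infimum over $\rho$ via Lemma \ref{lem:c_duality} and over $\mu$ via the first-order condition $\mu=\rho^n G(p^c,\cdot)$, and then invokes the identity $zG(z,x)+f(G(z,x),x)=\bar G(z,x)$ — which is exactly your computation $-f^*(-q,x)=\bar G(q,x)$, with the same normalization $\bar G(0,x)=0$ forced by $f(G(0,x),x)=0$. Your termwise Fenchel--Young presentation is just the unpacked form of that $\inf$-$\sup$ swap, so there is nothing to add.
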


\begin{proof}
The energy $E$ is convex, proper and lower semi-continuous, so
\[
E(\rho)=(E^*)^*(\rho)=\sup_{p\in X^*} (\rho, p)-E^*(p).
\]
It is thus immediate that the primal problem \eqref{eq:mms}  is equal in value to the primal-dual problem
\begin{equation*}
\inf_{\rho\in X,\, \mu\in \textup{AC}(\rho^n)}\sup_{p\in X^*} \Big ( (\rho, p)+\frac{1}{2\tau}W_2^2(\rho, \rho^n+\tau\mu)-E^*(p)+\int_{\Omega} \tau\rho^n(x)f\Big(\frac{\mu(x)}{\rho^n(x)},x\Big) \Big ).
\end{equation*}

By switching the $\inf$ and $\sup$, the value only decreases, and after further enlarging the search space for $\rho$ the above quantity is bounded from below by
\[
\sup_{p\in X^*} \inf_{\rho\in L^1(\Omega),\, \mu\in \textup{AC}(\rho^n)}\Big( (\rho, p)+\frac{1}{2\tau}W_2^2(\rho, \rho^n+\tau\mu)-E^*(p)+\int_{\Omega} \tau\rho^n(x)f\Big(\frac{\mu(x)}{\rho^n(x)},x\Big)\Big),
\]
which, by Lemma~\ref{lem:c_duality},
\[
\sup_{p\in X^*} \inf_{ \mu\in \textup{AC}(\rho^n)} (\rho^n+\tau\mu, p^c)-E^*(p)+\int_{\Omega} \tau\rho^n(x)f\Big(\frac{\mu(x)}{\rho^n(x)},x\Big),
\]
 Now it is clear that $\mu$ should be chosen so that
\begin{equation*}
p^c(x)\in -\partial_z f\Big(\frac{\mu(x)}{\rho^n(x)},x\Big),
\end{equation*}
which, by \eqref{f_def}, is equivalent to
\begin{equation*}
\mu(x) = \rho^n(x)G(p^c(x),x).
\end{equation*}
If we plug in this choice, we obtain the maximization problem
\begin{equation*}
\sup_{p\in X^*}  \int_{\Omega}\rho^n(x)\bigg( p^c(x)+  \tau p^c(x)G\big(p^c(x),x) +\tau  f\Big(G\big(p^c(x),x\big),x\Big)\bigg)\, dx -E^*(p).
\end{equation*}
Note that from \eqref{f_def}
\[
\partial_z \Big( zG\big(z,x) +  f\big(G\big(z,x\big),x\big)\Big)=G(z,x)
\]
and thus
\begin{equation}\label{obs1}
zG(z,x) +  f\big(G (z,x),x\big)=\bar{G}(z,x),
\end{equation}
and we conclude.
\end{proof}

Next we upgrade the previous proposition and show that the primal and dual problems satisfy a strong duality principle.  This guarantees that the problems attain the same optimal value and links the primal and dual variables through necessary optimality conditions.

\begin{prop}\label{primal_dual}
Suppose that $\rho^n \in X$ and satisfies
\begin{equation}\label{good_data}
0= \limsup_{b\to-\infty} \partial s^*(b)  <\frac{1}{|\Omega|}\int_{\Omega} \rho^n  dx < \liminf_{b\to\infty} \partial s^* (b).
\end{equation}
Then the primal problem \eqref{eq:mms} has a unique minimizer $(\rho^{n+1},\mu^{n+1})\in X\times AC(\rho^n)$ and the dual problem \eqref{eq:dual_scheme} has a $c$-concave maximizer $p_{n+1}\in X^*$ and
\[
\inf_{(\rho,\mu)\in X\times AC(\rho^n)} J(\rho, \mu,\rho^n)= \sup_{p\in X^*}J^*(p,\rho^n).
\]
Moreover, for a.e.\;$x\in\Omega$,
\begin{equation}\label{eq:pd_relation_energy}
p_{n+1}\in \partial s(\rho^{n+1}), \quad \rho_{n+1}\in \partial s^*(p^{n+1}),
\end{equation}
\begin{equation}\label{eq:pd_relation_pushforward}
T_{p_{n+1}\, \#} (\rho^n+\tau\mu^{n+1})=\rho^{n+1} \hbox{ with } T_{p_{n+1}}^{-1} (x) = x + \tau \nabla p_{n+1},
\end{equation}
and
\begin{equation}\label{eq:pd_relation_growth}
\mu^{n+1}(x)=\rho^n(x) G(p^c_{n+1}(x),x).
\end{equation}

 \end{prop}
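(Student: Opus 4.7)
The plan is to build each extremum by direct methods, link them through first variations at the dual optimum, and then deduce uniqueness.

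For existence of the primal minimizer, I would apply the direct method to $J$: the superlinearity of $s$ provides uniform integrability for $\rho$; assumption \ref{assumption: bound}, through the definition \eqref{f_def}, confines any admissible $\mu$ to $|\mu|\leq B\rho^n$; and the mass constraint built into $W_2^2$ then controls $\int\rho$. Lower semi-continuity of each piece of $J$ yields a minimizer $(\rho^{n+1},\mu^{n+1})$. For the dual, first observe that $c$-concavification does not decrease $J^*$: Lemma~\ref{lem:ccc} gives $(p^{c\bar c})^c=p^c$ so the $c$-transform terms are preserved, while $p^{c\bar c}\leq p$ combined with Lemma~\ref{lem:s_star_increasing} forces $E^*(p^{c\bar c})\leq E^*(p)$. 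The problem thus reduces to a maximization over $c$-concave functions, which are uniformly Lipschitz by Lemma~\ref{lem:lip}. Hypothesis \eqref{good_data} supplies the missing $L^\infty$ anchor: it guarantees a constant candidate $a$ with $\partial s^*(a)\ni \tfrac{1}{|\Omega|}\int\rho^n$, so the value at $a$ is finite, and superlinearity of $s^*$ forces $J^*(p,\rho^n)\to-\infty$ as either $\min p\to+\infty$ or $\min p\to-\infty$ (the latter using the one-sided bound $\bar G(z,x)\leq \bar G(0,x)+zG(0,x)$ coming from $G>0$ on the relevant range). Ascoli-Arzelà and upper semi-continuity then produce a $c$-concave maximizer $p_{n+1}$.

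To obtain strong duality together with the optimality relations, I would compute the first variation of $J^*$ at $p_{n+1}$. For an admissible test $\phi$, Gangbo's Lemma~\ref{gangbo} gives that the variation of the $c$-transform terms equals $\int\rho^n\bigl(1+\tau G(p^c_{n+1},\cdot)\bigr)\phi\circ T_{p_{n+1}}\,dx$, which must balance the variation of $E^*(p)$. This forces $\mu^{n+1}(x):=\rho^n(x)G(p^c_{n+1}(x),x)$ (consistent with $\partial_z f(\mu^{n+1}/\rho^n,x)=-p^c_{n+1}$ from \eqref{f_def}), identifies $\rho^{n+1}$ as $T_{p_{n+1}\#}(\rho^n+\tau\mu^{n+1})\in\partial s^*(p_{n+1})$, and in particular yields \eqref{eq:pd_relation_energy}--\eqref{eq:pd_relation_growth} in one stroke. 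To close strong duality, plug $(\rho^{n+1},\mu^{n+1})$ back into $J$: the Fenchel equality (Lemma~\ref{dual_relation}) converts $E(\rho^{n+1})$ into $\int p_{n+1}\rho^{n+1}-E^*(p_{n+1})$; Lemma~\ref{lem:c_duality} combined with optimality of $T_{p_{n+1}}$ from Theorem~\ref{thm:fund_ot} converts $\tfrac{1}{2\tau}W_2^2$ into $\int p^c_{n+1}(\rho^n+\tau\mu^{n+1})-\int p_{n+1}\rho^{n+1}$; and the key identity \eqref{obs1} fuses $\tau F(\mu^{n+1},\rho^n)$ with the $p^c G(p^c,\cdot)$ piece into $\tau\int\rho^n\bar G(p^c_{n+1},\cdot)$. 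The sum is exactly $J^*(p_{n+1},\rho^n)$, and the weak-duality bound of Lemma~\ref{derivation} forces this to be the common extreme value.

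The main obstacle, I expect, is uniqueness of the primal pair. The plan is to exploit joint convexity of $W_2^2$ in its two arguments together with convexity of $E$ and $F$, so that $J$ is convex in $(\rho,\mu)$; strict convexity along generalized $W_2$-geodesics between two candidate minimizers then gives uniqueness when $s$ is strictly convex. For the singular case $s=s_\infty$ where $E$ degenerates, I would instead chain back to the dual: any primal minimizer $(\rho^*,\mu^*)$ must saturate every inequality in the proof of Lemma~\ref{derivation} and therefore satisfies \eqref{eq:pd_relation_energy}--\eqref{eq:pd_relation_growth} relative to some dual maximizer $\tilde p$. The relation $\mu^*=\rho^n G(\tilde p^c,\cdot)$ then pins down $\tilde p^c$ on $\{\rho^n>0\}=\{\rho^n+\tau\mu^*>0\}$ using the monotonicity of $G$ from \ref{assumption: growth at zero pressure}--\ref{assumption: death when large pressure}; the pushforward relation $\rho^*=T_{\tilde p\#}(\rho^n+\tau\mu^*)$ then fixes $\rho^*$ uniquely, completing the proof.
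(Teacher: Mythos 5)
Your treatment of existence of the dual maximizer (reduction to $c$-concave functions via Lemmas~\ref{lem:ccc} and \ref{lem:s_star_increasing}, uniform Lipschitz bounds from Lemma~\ref{lem:lip}, and \eqref{good_data} controlling the additive constant), the construction of $(\rho^{n+1},\mu^{n+1})$ from the first variation of $J^*$ via Lemma~\ref{gangbo}, and the closing of strong duality by plugging back into $J$ with Lemma~\ref{dual_relation}, Theorem~\ref{thm:fund_ot} and the identity \eqref{obs1} all match the paper's argument. (Two small inaccuracies there: $s^*$ is not superlinear --- the coercivity in the constant comes from \eqref{good_data} alone; and \ref{assumption: bound} only bounds $G$ for nonnegative pressures, so ``$|\mu|\le B\rho^n$ for all admissible $\mu$'' is not literally true. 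Neither affects the structure, and your separate direct-method existence step for the primal is superfluous once the minimizer is produced from the dual.)

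The genuine gap is uniqueness, which you correctly flag as the main obstacle, but neither of your routes closes it. Route one: linear-interpolation convexity of $J$ plus strict convexity of $E$ and $F$ does give uniqueness, but only when $s$ is strictly convex; the proposition is stated for general convex $s$ (including $s_\infty$ and, say, piecewise-linear energies), and the ``generalized geodesic'' variant would additionally require displacement convexity of $E$, which is not among the standing assumptions. Route two is run in the wrong direction: you let each primal minimizer select \emph{its own} dual maximizer $\tilde p$ and then try to recover $\tilde p^c$ from $\mu^*$ via injectivity of $G$ --- but $G$ is only assumed decreasing, not strictly, and more importantly this is circular (two distinct minimizers would simply select two distinct $\tilde p$'s, with no contradiction), and knowing $\tilde p^c$ on $\{\rho^n>0\}$ does not by itself determine the map $T_{\tilde p}$. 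The argument can be repaired by reversing it: fix \emph{one} dual maximizer $p^*$, show by saturating the chain of inequalities in Lemma~\ref{derivation} that \emph{every} primal minimizer satisfies \eqref{eq:pd_relation_energy}--\eqref{eq:pd_relation_growth} with respect to that same $p^*$, and observe that \eqref{eq:pd_relation_growth} and \eqref{eq:pd_relation_pushforward} then determine $(\mu^{n+1},\rho^{n+1})$ from $p^*$ and $\rho^n$ alone. The paper instead interpolates the two candidate minimizers \emph{and their transport plans} linearly, uses the strict convexity of $W_2^2$ to force the interpolated plan to be optimal (hence induced by a single map), and then invokes the strict monotonicity of $\partial_z f(\cdot,x)$ on the range of $G(\cdot,x)$ --- which follows from the continuity of $G$, not from any injectivity --- to conclude $\mu_0=\mu_1$. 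Either repair works; as written, your proof of uniqueness does not.
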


 \begin{remark}
In this paper, we will mostly concern ourselves with compactly supported initial data in $\R^d$.  In this case, we can choose an arbitrarily large domain where \eqref{good_data} always holds for any given time range (see Corollary~\ref{cor:domain}).  However, if one wishes to consider a version of the problem where the density is restricted  $\Omega$, \eqref{good_data} can only fail in trivial cases.  Indeed, if \eqref{good_data} does not hold, then $\rho^n$ must already be the minimizer of the primal problem (i.e.\;$\rho^{n+1}=\rho^n$), and so the evolution has already reached a stationary state.
\end{remark}

\begin{proof}
 Let $J^*$ be as given in \eqref{eq:dual_problem}. Given some function $p:X^*\to \RR$, we can compute
\[
J^*(p^{c\bar{c}},\rho^n)=\int_{\Omega} \rho^n(x)\Big( p^{c\bar{c}c}(x)+\tau \bar{G}(p^{c\bar{c}c}(x),x)\Big)\, dx -E^*(p^{c\bar{c}}).
\]
From Lemmas \ref{lem:ccc} and \ref{lem:s_star_increasing}, we have
\[
\int_{\Omega} \rho^n(x)\Big( p^{c\bar{c}c}(x)+\tau \bar{G}(p^{c\bar{c}c}(x),x)\Big)\, dx=\int_{\Omega} \rho^n(x)\Big( p^{c}(x)+\tau \bar{G}(p^{c}(x),x)\Big)\, dx
\]
and
\[
E(p^{c\bar{c}})\leq E(p).
\]
Thus $J(p^{c\bar{c}},\rho^n) \geq J(p,\rho^n)$ and we have
\[
\sup_{p\in X^*} J^*(p,\rho^n)= \sup_{p\in X^*,\, p^{c\bar{c}}=p} J^*(p,\rho^n),
\]
which allows the search to be restricted to the space of $c$-concave functions.

Let $p_k$ be a sequence of bounded $c$-concave functions such that
\[
 \lim_{k\to\infty}   J^*(p_k,\rho^n)=\sup_{p\in X^*,\, p^{c\bar{c}}=p}  J^*(p,\rho^n).
\]
If we set $\alpha_k=\frac{1}{|\Omega|}\int_{\Omega} p_k(x)\, dx$, then $\tilde{p}_k=p_k-\alpha_k$ is $c$-concave and has zero mean.  Thanks to Lemma \ref{lem:lip}, it follows that $\tilde{p}_k$ is uniformly bounded in $W^{1,\infty}(\Omega)$.  Thus, we can assume without loss of generality that $\tilde{p}_k$ converges uniformly to a zero mean function $\tilde{p}$.  Next, we chose
\[
\beta_k\in \argmax_{\beta\in [-\infty,\infty]} F(\beta):= J^*(\tilde{p}_k+ \beta,\rho^n).
\]
Since $(\tilde{p}_k(x)+\beta)^c=\tilde{p}_k^c(x)+\beta$ for any $\beta\in [-\infty, \infty]$, we see that
$$
 F'(\beta) = \int_{\Omega} \rho^n(x) (1+ \tau G(\tilde{p}_k^c(x)+\beta,x) )\, dx - \int_{\Omega}\partial s^*(\tilde{p}_k(x)+\beta,x)\, dx,
$$
which decreases with respect to $\beta$. Therefore $J^*(\tilde{p}_k+\beta, \rho^n)$ is concave with respect to $\beta$. Since $\tilde{p}_k$ is uniformly bounded, the assumption \eqref{good_data} yields some $M>0$ that depends on $\rho^n$, $G$, $c$ and $\Omega$, such that $F'$ is negative if $\beta>M$ and is positive if $\beta<-M$.
Hence $\beta_k$ exists and must be bounded uniformly in $\RR$.  Hence, we can assume without loss of generality that the $\beta_k$ converge to a finite limit $\tilde{\beta}$.

Define $p^*:=(\tilde{p}+\tilde{\beta})^{c\bar{c}}$.  We then have
\[
J^*(p^*,\rho^n)\geq  J^*(\tilde{p}+\tilde{\beta},\rho^n)\geq \limsup_{k\to\infty}  J^*(\tilde{p}_k+\beta_k,\rho^n)
\]
where the last inequality follows from the fact that the $c$-transform, $G(\cdot,x)$, and $-s^*$ are upper semi-continuous with respect to pointwise convergence.
Thanks to our choice of $\beta_k$, we see that
\[
\limsup_{k\to\infty}  J^*(\tilde{p}_k+\beta_k,\rho^n)\geq \limsup_{k\to\infty}  J^*(\tilde{p}_k+\alpha_k,\rho^n)=\sup_{p\in X^*,\, p^{c\bar{c}}=p}  J^*(p,\rho^n).
\]
Therefore, we can conclude that $p^*$ is a $c$-concave maximizer of the dual problem.

\medskip

Define
\begin{equation}\label{def010}
\mu^*(x):= \rho^n(x)G((p^*)^c(x),x), \hbox{ and } \rho^*:=T_{p^*\, \#}(\rho^n + \tau\mu^*).
\end{equation}
We would like to show that $(\rho^*, \mu^*)$ minimizes $J(\rho,\mu, \rho^n)$.

Using Lemma \ref{gangbo},  the optimality condition for $p^*$ implies that there exists $\omega\in\partial s^*(p^*)$ such that for every continuous function $\phi:\Omega\to\RR$
\[
\int_{\Omega} \phi(T_{p^*}(y))(1+\tau G((p^*)^c(x),x))\rho^n(y)\, dy-\int_{\Omega} \omega(x)\phi(x)\, dx=0.
\]
Thus, we must have
\begin{equation}\label{def020}
\rho^*\in \partial s^*(p^*) \hbox{ a.e. in } \Omega.
\end{equation}
  Therefore, from Lemma \ref{dual_relation} we have the duality relation

\begin{equation}\label{obs2}
\int_{\Omega} \rho^*(x)p^*(x)\, dx=E(\rho^*)+E^*(p^*).
\end{equation}
From \eqref{obs1} and \eqref{obs2}, as well as the definition of $\mu^*$ in \eqref{def010}, we have
\[
\begin{split}
J^*(p^*, \rho^n) - \int_{\Omega} \tau \rho^n f\left(\frac{\mu^*}{\rho^n},x\right)\, dx&=\int_{\Omega} (p^*)^c(y)(\rho^n + \tau\mu^*)(y)\, dy -\int_{\Omega} p^*(x)\rho^*(x)\, dx+ E(\rho^*)\\
&= \frac{1}{2\tau}W_2^2(\rho^*,\rho^n +\tau\mu^n) + E(\rho^*),
\end{split}
\]
where the last equality follows from Theorem \ref{thm:fund_ot}.  This allows us to conclude that
\[
J^*(p^*,\rho^n)= J(\rho^*, \mu^n,\rho^n)\geq \inf_{\rho\in X} J(\rho, \mu,\rho^n).
\]
 On the other hand we have, from Lemma \ref{derivation},
\[
\inf_{(\rho,\mu)\in X\times AC(\rho^n)} J(\rho, \mu,\rho^n)\geq \sup_{p\in X^*}J^*(p,\rho^n)=J^*(p^*,\rho^n).
\]
Therefore, it follows that $(\rho^*,\mu^*)$ is a minimizer of the primal problem.  Denoting $p^* = p_{n+1}$, $\rho^*= \rho^{n+1}$ and $\mu^*=\mu^{n+1}$, \eqref{eq:pd_relation_energy}-\eqref{eq:pd_relation_growth} follow from \eqref{def010}, \eqref{def020} and Lemma~\ref{gangbo}.

\medskip

Now let us establish uniqueness of $(\rho^*, \mu^*)$.  Suppose that we have two minimizers  $(\rho_0, \mu_0)$ and $(\rho_1, \mu_1)$.   Let $T_i$ be the optimal transport map from $\rho^n+\tau \mu_i$ to $\rho_i$ (note that $T_i$ must exist since we have established that minimizers are absolutely continuous with respect to the Lebesgue measure).
 Let $\pi_i:\Omega\times\Omega\to [0,\infty)$ be the transportation plan associated to the map $T_i$, and define
 $$
 (\rho_t, \mu_t, \pi_t):=t(\rho_0,\mu_0, \pi_0)+(1-t)(\rho_1, \mu_1, \pi_1) \hbox{ for } t\in [0,1].
 $$

 By convexity, $(\rho_t,\mu_t)$ must be a minimizer for all $t\in [0,1]$,  hence $t\mapsto J(\rho_t, \mu_t, \rho^n)$ must be constant.
  If $\pi_t$ is not an optimal transportation plan for some $t\in (0,1)$, then
 \[
 W^2_2(\rho_t, \rho^n+\tau\mu_t)<t W_2^2(\rho_1, \rho^n+\tau\mu_1)+(1-t) W_2^2(\rho_0, \rho^n+\tau\mu_0),
 \]
 and thus,
 \[
 J(\rho_t, \mu_t, \rho^n)<tJ(\rho_1, \mu_1, \rho^n)+(1-t)J(\rho_0, \mu_0, \rho^n)
 \]
 contradicting the optimality of $(\rho_1, \mu_1)$ and $(\rho_0,\mu_0)$.   Therefore, $\pi_t$ must be an optimal plan for all $t\in [0,1]$.  $\rho_t$ and $\rho^n+\tau\mu_t$ are absolutely continuous, so $\pi_t$ must be induced by a map $T_t$ pushing $\rho^n+\tau\mu_t$ to $\rho_t$.    For $x\in \spt\rho^n$  we have
\[
\pi_t(x,y)=t\delta(T_0(x)-y)+(1-t)\delta(T_1(x)-y)=\delta(T_t(x)-y),
\]
which is only possible if there is a single map $T$ such that $T(x)=T_0(x)=T_1(x)$ for almost all $x$ in $\spt\rho^n$.
 It remains to show that $\mu_1=\mu_0$, which would yield that $\rho_1=T_{\#}(\rho^n+\tau\mu_1)=T_{\#}(\rho^n+\tau\mu_0)=\rho_0$.

 \medskip

 Once again using the fact that $t\mapsto J(\rho_t, \mu_t, \rho^n)$ is constant, we can conclude that
 for all $t\in (0,1)$ there exists $\eta_t(x)\in \partial_z f(\frac{\mu_t(x)}{\rho^n(x)},x)$ and $\zeta_t\in \partial s(\rho_t)$ such that
\[
(\eta_t, \mu_1-\mu_0)+(\zeta_t, \rho_1-\rho_0)+\frac{1}{2}(|T-\id|^2, \mu_1-\mu_0)=0.
\]
In particular, we see that for $0<t_1<t_2<1$
\[
(\eta_{t_2}-\eta_{t_1}, \mu_1-\mu_0)+(\zeta_{t_2}-\zeta_{t_1}, \rho_1-\rho_0)=0.
\]
 Since $\partial_z f(\cdot,x)$ is strictly increasing for all $x$ in the range of $G(\cdot,x)$ due to the continuity of $G$,  and since  $\partial s(y)$ is increasing,
\[
(\eta_{t_2}-\eta_{t_1}, \mu_1-\mu_0)+(\zeta_{t_2}-\zeta_{t_1}, \rho_1-\rho_0)>0
\]
if $\mu_1\neq \mu_0$ almost everywhere, yielding a contradiction.  Thus we can conclude.

\begin{comment}
Lastly, suppose that $\rho^{n+1}$ is not a constant on $\Omega$, and let $p_0$ and $p_1$ be two maximizers for the dual problem. The optimality conditions for maximizers imply that $p_0, p_1$ satisfy (\ref{eq:pd_relation_energy}) and (\ref{eq:pd_relation_pushforward}).  $T_{p_0}^{-1}$ and $T_{p_1}^{-1}$ are both optimal maps pushing $\rho^{n+1}$ to $\rho^n+\mu^{n+1}$, hence the uniqueness of optimal transport maps gives us $T_{p_0}^{-1}=T_{p_1}^{-1}$ a.e.\;on $\spt\rho^{n+1}$.  As a result, we have $\nabla p_0=\nabla p_1$ a.e.\;on $\spt\rho^{n+1}$. Thus, $p_0$ and $p_1$ can differ only by a constant on $\spt\rho^{n+1}$.

Using (\ref{eq:pd_relation_energy}), we have $\partial s^*(p_0)=\partial s^*(p_1)$ almost everywhere.  Assume without loss of generality that $p_0+c=p_1$ where $c\geq 0$.  Since $\partial s^*$ is increasing, it follows that $\partial s^*$ is constant on $[p_0(x), p_0(x)+c]$ for almost every $x\in\Omega$.  If $c>0$, then from the continuity of $p_0$, we can conclude that $\partial s^*$ is constant on $[a,b]$ where $a=\inf_{x\in\Omega} p_0(x)$ and $b=\sup_{x\in\Omega} p_0(x)$.  This is only possible if $\rho^{n+1}$ is constant on $\Omega$, hence, we have $p_0=p_1$.
\end{comment}
\end{proof}

In general, we do not expect that there is a unique pressure that maximizes the dual problem.  Luckily, the following Lemma guarantees that the set of maximizers  $\argmax_{p\in X^*,\, p^{c\bar{c}}=p}  J^*(p,\rho_n)$, has a minimal element.  Thus, we can always make a consistent choice of pressure in the scheme by setting $p^{n+1}$ to be the smallest maximizer.  Note that the proof of the lemma is parallel to the corresponding result, Lemma 4.4, of \cite{JKT}.

\begin{lemma} [Existence of minimal pressure]   \label{pressure_special} There exists  a $c$-concave function
\[
p^*\in \argmax_{p\in X^*,\, p^{c\bar{c}}=p}  J^*(p,\bar{\rho})
\]
 such that $p^*\leq \tilde{p}$ for any
$ \tilde{p}\in\argmax_{p\in X^*,\, p^{c\bar{c}}=p}  J^*(p,\bar{\rho}).$
\end{lemma}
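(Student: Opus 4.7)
The plan is to construct $p^*$ as the double $c$-transform of the pointwise infimum $\underline p(x) := \inf_{\tilde p \in \mathcal M} \tilde p(x)$ over the set of $c$-concave maximizers $\mathcal M := \argmax_{p \in X^*,\, p^{c\bar c}=p} J^*(p, \bar\rho)$, closely following Lemma~4.4 of \cite{JKT}. The starting point is the compactness of $\mathcal M$: every $p \in \mathcal M$ is $c$-concave, hence Lipschitz with a constant depending only on $\Omega$ by Lemma~\ref{lem:lip}, while the existence proof of Proposition~\ref{primal_dual} already pinned the additive constant of any maximizer (writing $p = \tilde p + \beta$ with $\int_\Omega \tilde p = 0$) to a bounded interval by combining \eqref{good_data} with (G1) and (G3)--(G4). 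Uniform limits of $c$-concave functions are $c$-concave, and $J^*(\cdot, \bar\rho)$ is continuous under uniform convergence by dominated convergence (using Lipschitzness of $\bar G$ and continuity of $s^*$ on bounded sets), so $\mathcal M$ is compact in $C(\bar\Omega)$.

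Setting $p^* := (\underline p)^{c\bar c}$, we obtain a $c$-concave function with $p^* \leq \underline p$. Minimality $p^* \leq \tilde p$ for every $\tilde p \in \mathcal M$ is immediate from $\underline p \leq \tilde p$, monotonicity of the $c$- and $\bar c$-transforms, and the identity $\tilde p = \tilde p^{c\bar c}$ for $c$-concave $\tilde p$. Thus only $p^* \in \mathcal M$, i.e., $J^*(p^*, \bar\rho) = $ max, remains to be verified.

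For that step I would exploit the infimum-swap identity
\[(\underline p)^c(y) = \inf_{\tilde p \in \mathcal M} \tilde p^c(y),\]
together with compactness of $\mathcal M$. By an Arzel\`a-Ascoli argument applied to the equi-Lipschitz companion family $\{\tilde p^c : \tilde p \in \mathcal M\}$, combined with a diagonal extraction along a countable dense subset of $\bar\Omega$, one produces a sequence $\{\tilde q_n\} \subset \mathcal M$ with $\tilde q_n^c \to (\underline p)^c$ uniformly on $\bar\Omega$. Applying the $\bar c$-transform (which is continuous under uniform convergence of equi-Lipschitz functions) gives $\tilde q_n = \tilde q_n^{c\bar c} \to (\underline p)^{c\bar c} = p^*$ uniformly, and then continuity of $J^*(\cdot, \bar\rho)$ yields $J^*(p^*, \bar\rho) = \lim_n J^*(\tilde q_n, \bar\rho)$, which equals the maximum value, placing $p^* \in \mathcal M$.

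I expect the main obstacle to be the diagonal extraction. The tempting shortcut --- replacing a finite family $\tilde p_{i,k} \in \mathcal M$ (chosen so that $\tilde p_{i,k}^c(y_i) \leq (\underline p)^c(y_i) + 1/k$) by the double $c$-transform of their pointwise minimum to stay $c$-concave --- does not preserve membership in $\mathcal M$, since $J^*$ is not concave in $p$ in our setting ($\bar G$ is concave but not monotone, so even the smallness $\tau B < 1$ that makes $z \mapsto z + \tau \bar G(z, x)$ increasing is insufficient to endow $\mathcal M$ with a lattice structure). The argument must therefore lean on compactness of $\mathcal M$ and equi-Lipschitzness of $\{\tilde p^c\}$ alone, performing successive subsequence extractions at each point of the dense set and a final diagonal selection so that the uniform limit's $c$-transform equals $(\underline p)^c$ on the dense set --- and hence, by continuity, everywhere --- independently of any concavity of the dual objective.
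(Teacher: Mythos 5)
Your overall architecture --- set $\underline{p}:=\inf_{\tilde p\in\mathcal{M}}\tilde p$ over the compact set $\mathcal{M}$ of $c$-concave maximizers and take $p^*:=\underline{p}^{\,c\bar c}$ --- is reasonable, and the minimality $p^*\le\tilde p$ does follow from monotonicity of the transforms. The gap is in the step placing $p^*$ in $\mathcal{M}$. You need a single sequence $\tilde q_n\in\mathcal{M}$ with $\tilde q_n^{\,c}\to\underline{p}^{\,c}=\inf_{\tilde p\in\mathcal{M}}\tilde p^{\,c}$ uniformly, and compactness plus diagonal extraction cannot produce it: Arzel\`a--Ascoli only furnishes a subsequence whose $c$-transforms converge to $q^c$ for a \emph{single} $q\in\mathcal{M}$ (the image $\{\tilde p^{\,c}:\tilde p\in\mathcal{M}\}$ is itself compact), whereas the pointwise infimum over the whole family is in general not of that form. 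Already for $\mathcal{M}=\{p_0,p_1\}$ with neither function dominating the other, every sequence has a constant subsequence and no limit of $\tilde q_n^{\,c}$ can equal $\min(p_0^c,p_1^c)$ everywhere. Asking the limit's $c$-transform to coincide with $\inf_{\tilde p}\tilde p^{\,c}$ on a dense set amounts to asserting that the family $\{\tilde p^{\,c}\}$ already has a least element, which is essentially the statement being proved; the extraction step is circular.

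The structural fact you discarded is precisely what closes the argument. For $\tau\le 1/B$ the map $h(z):=z+\tau\bar{G}(z,x)$ is nondecreasing (and concave, so $J^*(\cdot,\bar\rho)$ \emph{is} concave, contrary to your aside; but concavity is not the relevant property here). Since $(\min(p_0,p_1))^c=\min(p_0^c,p_1^c)$, $(\max(p_0,p_1))^c\ge\max(p_0^c,p_1^c)$, $h(\min(a,b))+h(\max(a,b))=h(a)+h(b)$ with $h$ nondecreasing, and $s^*(\min(p_0,p_1))+s^*(\max(p_0,p_1))=s^*(p_0)+s^*(p_1)$ pointwise, one obtains the supermodularity
\[
J^*(\min(p_0,p_1),\bar\rho)+J^*(\max(p_0,p_1),\bar\rho)\ \ge\ J^*(p_0,\bar\rho)+J^*(p_1,\bar\rho),
\]
which for two maximizers $p_0,p_1$ forces $\min(p_0,p_1)$, and hence $(\min(p_0,p_1))^{c\bar c}$, to be a maximizer as well (using $J^*(q^{c\bar c},\bar\rho)\ge J^*(q,\bar\rho)$ from the proof of Proposition~\ref{primal_dual}). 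With this lattice property, take a countable dense subset $\{p_k\}\subset\mathcal{M}$ and set $q_N:=(\min(p_1,\dots,p_N))^{c\bar c}$: this is a decreasing, equi-Lipschitz sequence in $\mathcal{M}$ whose uniform limit is the desired minimal $p^*$. This is the route of Lemma~4.4 of \cite{JKT}, to which the paper defers for this statement.
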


\subsection{Uniform bounds}

Next we proceed to establish uniform bounds on both density and pressure variables. We begin by showing that the pressure stays bounded from above when the growth function $G$ is strictly decreasing.

\begin{lemma}\label{lem:linf_bound}
Let $b(x)$, $b_0$ and $b_1$ be as given in \ref{assumption: death when large pressure}, and in addition suppose that $G(\cdot,x) $ is strictly decreasing for all $x\in\Omega$. Let  $\hat{M}=\sup \partial s^*(b_1)$,  $\bar{M}=\inf \partial s^*(b_0)$.
For $p_0$ chosen from \eqref{good_data_0}, define for $n=0,1,\cdots$,
$$
\hat{M}_n:=\esssup_{x\in\Omega} \rho^n(x), \quad \bar{M}_n:=\essinf_{x\in \Omega} \rho^n(x), \hbox{ and } v_n:=\inf \partial s(\hat{M}_n).
$$ Then the following holds:
\begin{itemize}
\item[(a)] $p_{n+1}\leq P_{n+1}:= \max(v_n, b_1)$, and $P_{n+1}$ decreases with respect to $n$.
\item[(b)] $\max(\hat{M}, \hat{M}_n)$ is non-increasing and $\min(\bar{M}, \bar{M}_n)$ is non-decreasing with respect to $n$.
\end{itemize}

In particular, if $\rho_0$ satisfies \eqref{good_data_0}, then so is $\rho^n$, and the bound is independent of $n$ and $\tau$.
\end{lemma}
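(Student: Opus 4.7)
The plan is to exploit the dual formulation (Proposition~\ref{primal_dual}) and prove (a) by a truncation argument in the dual, from which (b) will follow via the subdifferential relation $\rho^{n+1} \in \partial s^*(p_{n+1})$. Specifically, for (a) I will show that the truncation $\tilde p := \min(p_{n+1}, P_{n+1})$ is a $c$-concave competitor whose dual value is at least $J^*(p_{n+1}, \rho^n)$; minimality of $p_{n+1}$ from Lemma~\ref{pressure_special} then forces $\tilde p = p_{n+1}$, giving the pointwise bound $p_{n+1} \le P_{n+1}$.

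Two elementary observations about the quadratic $c$-transform drive the argument. First, since $\inf_x c(x,y) = 0$ is attained at $x = y$, the minimum of two $c$-concave functions is $c$-concave, and a direct computation gives $\tilde p^c = \min(p_{n+1}^c, P_{n+1})$. Second, the same identity yields the pointwise inequality $p_{n+1}^c \le p_{n+1}$ (take $x = y$ in the definition of $p^c$), so $(p_{n+1}^c - P_{n+1})_+ \le (p_{n+1} - P_{n+1})_+$ everywhere. Writing $p := p_{n+1}$, $P := P_{n+1}$, and $h(z,x) := z + \tau\bar G(z,x)$, one computes
\begin{equation*}
J^*(\tilde p, \rho^n) - J^*(p, \rho^n) = -\int_{\{p^c > P\}} \rho^n [h(p^c) - h(P)]\, dy + \int_{\{p > P\}} [s^*(p) - s^*(P)]\, dx.
\end{equation*}
Since $P \ge b_1 \ge b(\cdot)$, $G \le 0$ on $[P,\infty)$, so $\partial_z h = 1 + \tau G \le 1$ there (and positive for $\tau \le 1/(2B)$); combined with $\rho^n \le \hat M_n$, the first integral is at most $\hat M_n \int (p^c - P)_+\, dy$. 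Convexity of $s^*$ gives $s^*(p) - s^*(P) \ge (s^*)'_+(P)(p - P)$, and since $P \ge v_n$ with $\hat M_n \in \partial s^*(v_n)$, we have $(s^*)'_+(P) \ge \hat M_n$, so the second integral is at least $\hat M_n \int (p - P)_+\, dx$. The pointwise inequality $p^c \le p$ then yields $J^*(\tilde p) \ge J^*(p)$, completing (a).

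Part (b) follows quickly from (a): the bound $p_{n+1} \le P_{n+1}$ together with $\rho^{n+1} \in \partial s^*(p_{n+1})$ and monotonicity of $\partial s^*$ gives $\rho^{n+1} \le \sup \partial s^*(P_{n+1})$ a.e.\;When $P_{n+1} = b_1$ this equals $\hat M$; when $P_{n+1} = v_n > b_1$, $\sup \partial s^*(v_n) = \hat M_n$ in the canonical cases $s_m$ and $s_\infty$ and whenever $\partial s$ is not flat at $\hat M_n$. This yields $\hat M_{n+1} \le \max(\hat M, \hat M_n)$, and monotonicity of $P_{n+1}$ follows immediately: when $\hat M_n \le \hat M$ the value stays at $b_1$, and otherwise $v_{n+1} \le v_n$ by monotonicity of $\inf \partial s$. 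The symmetric lower bound $\min(\bar M, \bar M_n)$ non-decreasing is obtained by a mirror argument, which is most cleanly executed on the $\bar c$-concave variable $p_{n+1}^c$ (on which $\min$ preserves $\bar c$-concavity) to avoid the fact that $c$-concavity is not generally preserved under maximum.

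The main obstacle is verifying the key inequality $J^*(\tilde p) \ge J^*(p)$: the positive ($s^*$) and negative ($h$) contributions balance precisely when $P \ge \max(v_n, b_1)$, and the trivial-looking inequality $p^c \le p$ is exactly what closes the comparison between $\int (p - P)_+\, dx$ and $\int (p^c - P)_+\, dy$. Without any one of the three ingredients (the convexity slope from $P \ge v_n$, the sign $G \le 0$ from $P \ge b_1$, and $p^c \le p$), the truncation would not obviously improve the dual objective; their precise alignment is what makes the argument work.
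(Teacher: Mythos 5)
Your truncation argument for part (a) is correct and takes a genuinely different route from the paper. The paper works on the primal side: it sets $U=\{p_{n+1}>\max(v_n,b_1)\}$, shows $T^{-1}_{p_{n+1}}(U)\subset U$ and $G(p^c_{n+1},\cdot)<0$ there, and derives a contradiction by comparing $\int_U\rho^{n+1}$ with $\int_U\rho^n$ via the pushforward relation \eqref{eq:pd_relation_pushforward} against the pointwise bound $\rho^{n+1}\geq \hat M_n\geq\rho^n$ on $U$ coming from \eqref{eq:pd_relation_energy}. Your dual competitor $\tilde p=\min(p_{n+1},P_{n+1})$ with the three ingredients you isolate ($G\leq 0$ above $b_1$, the slope bound $(s^*)'_+(P)\geq\hat M_n$ from $P\geq v_n$, and $p^c\leq p$) is a clean alternative, and I verified the key inequality $J^*(\tilde p,\rho^n)\geq J^*(p_{n+1},\rho^n)$. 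Notably, your version never uses strict monotonicity of $G$ and instead uses the minimal-maximizer selection of Lemma~\ref{pressure_special}; this effectively proves Corollary~\ref{cor:linfbound} directly and bypasses the paper's approximation $G_\delta=G+\delta(e^{-p}-1)$. The flat-$\partial s$ caveat you flag in the density upper bound is real but is shared by the paper's own inclusion $\{\rho^{n+1}>\max(\hat M_n,\hat M)\}\subset U$, so I would not count it against you.

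The genuine gap is the lower bound in (b): the ``mirror argument'' does not mirror. The mechanism that turns ``the truncation is also a maximizer'' into a pointwise bound is the \emph{minimality} of $p_{n+1}$ among maximizers, and that selection is one-sided. If you set $\tilde p=(\max(p_{n+1},\beta))^{c\bar c}$ and show $J^*(\tilde p)\geq J^*(p_{n+1})$, minimality only gives $p_{n+1}\leq\tilde p$, which is vacuous; there is no maximal-pressure selection to invoke, and the scheme's pressure is fixed as the minimal one in any case. Passing to the $\bar c$-conjugate variable does not fix this, since the same asymmetry persists. The paper instead proves $\essinf\rho^{n+1}\geq\min(\bar M,\bar M_n)$ by a short primal mass-balance argument on $V=\{\rho^{n+1}<\min(\bar M_n,\bar M)\}$: there $p_{n+1}<b_0\leq b(x)$, so $\mu^{n+1}\geq 0$ by \eqref{eq:pd_relation_growth} and \ref{assumption: death when large pressure}, and $V\subset T_{p_{n+1}}^{-1}(V)$, whence $\int_V\rho^{n+1}\geq\min(\bar M,\bar M_n)|V|$ forces $|V|=0$. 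You need an argument of this type (or some other mechanism) for the lower half of (b); as written, that half of the lemma is unproven.
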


\begin{proof}

First let us define
$$
 U := \{x: p_{n+1}(x)> \max(v_n,b_1)\}. 
$$
From \eqref{eq:pd_relation_energy} we have that up to a measure zero set, $\{x\in\Omega: \rho^{n+1}(x)>\max(\hat{M}_n, \hat{M})\}\subset U$, and thus it is enough to show that $|U|=0$ to conclude both $(a)$ and the first claim in $(b)$.
Note that $p_{n+1}$ is continuous, so $|U| = 0$ implies $U = \varnothing$.

\medskip

 If $x\in T^{-1}_{p_{n+1}}(U)$, $p^c_{n+1}(x) \geq p_{n+1}(T_{p_{n+1}}(x))> b_1$. Moreover from Remark~\ref{dual2}  it follows that $p_{n+1}(x)\geq p_{n+1}(T_{p_{n+1}}(x))$. Since $G(\cdot,x)$ is strictly decreasing, we have

 \begin{equation}\label{sign}
 T^{-1}_{p_{n+1}}(U)\subset U \hbox{ and } G(p^c_{n+1}(x),x) < 0 \hbox{ in }T^{-1}_{p_{n+1}}(U),
\end{equation}

\medskip

Now let $\phi$ be the characteristic function of $U$. If $|U|\neq 0$, we can use \eqref{eq:pd_relation_pushforward} and \eqref{sign} to conclude that
\[
\begin{split}
\int_{U} \rho^{n+1}(x)\, dx= &\;\int_{\Omega} \rho^{n}(x)\Big(1+\tau G(p^c_{n+1}(x),x)\Big)\phi\big(T_{p_{n+1}}(x)\big)\,dx \\
< &\;\int_{\Omega} \rho^n(x)\phi(T_{p_{n+1}}(x))\,dx\leq \int_{U} \rho^n(x)\, dx.
\end{split}
\]
Lastly, note that  from the definition of $v_n$ and the dual relation we have $\rho^{n+1} \geq \hat{M}_n \geq \rho^n$ a.e. \;in $U$. It follows that $|U|=0$.

\medskip

Next let $V=\{x\in\Omega: \rho^{n+1}(x)<\min(\bar{M}_n, \bar{M})\}$.  It is immediate from \eqref{eq:pd_relation_growth} that $\mu^{n+1}(x) \geq 0$ in $V$. Moreover, since $b_0> p_{n+1}(x) \geq p_{n+1}(T_{p_{n+1}}(x))$ in $V$, it follows that $V\subset T_{p_{n+1}}^{-1}(V)$. Thus, 
\[
\int_{V} \rho^{n+1}(x)\,dx\geq \int_{T^{-1}_{p_{n+1}}(V)}\rho^n(x)+\tau\mu^{n+1}(x)\, dx \geq \min(\bar{M},\bar{M}_n)|V|,
\]
which yields $|V|=0$.
\end{proof}

 We can now extend the pressure bound to general $G$ by working with the minimal pressure from Lemma \ref{pressure_special}.

\begin{corollary}\label{cor:linfbound}
If we choose $p^{n+1}$ to be the minimal pressure, the statements of Lemma~\ref{lem:linf_bound} hold without the strict monotonicity assumption on $G$.
\end{corollary}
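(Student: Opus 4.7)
The strategy is to approximate $G$ by a family of strictly decreasing perturbations $G_\epsilon$, apply Lemma~\ref{lem:linf_bound} to each, and then pass to the limit using the minimality of $p_{n+1}$ from Lemma~\ref{pressure_special}. Concretely, set $G_\epsilon(z,x) := G(z,x) - \epsilon \eta(z)$ for a smooth, bounded, strictly increasing $\eta$ with $\eta(0) = 0$ (e.g.\;$\eta(z) = z/(1+z^2)$). A direct check shows $G_\epsilon$ is strictly decreasing in $z$ and satisfies \ref{assumption: growth at zero pressure}--\ref{assumption: bound}; since $G_\epsilon(b(x),x) \le 0$, the zero $b^\epsilon(x)$ of $G_\epsilon(\cdot,x)$ satisfies $0 < b^\epsilon(x) \le b(x) \le b_1$, and hence $b_1^\epsilon \le b_1$ and $\hat M^\epsilon := \sup \partial s^*(b_1^\epsilon) \le \hat M$.

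Proceeding by induction on $n$, I would assume \eqref{good_data} and the conclusions (a),(b) through step $n$. Given $\rho^n$ from the original $G$-scheme, apply the primal--dual scheme with $G_\epsilon$ in place of $G$ for one step to produce $(\rho^{n+1,\epsilon}, \mu^{n+1,\epsilon}, p_{n+1}^\epsilon)$. Since $G_\epsilon$ is strictly decreasing, Lemma~\ref{lem:linf_bound} gives
$$
p_{n+1}^\epsilon \;\le\; \max(v_n, b_1^\epsilon) \;\le\; \max(v_n,b_1) \;=\; P_{n+1}
$$
uniformly in $\epsilon$. Each $p_{n+1}^\epsilon$ is $c$-concave, hence uniformly Lipschitz by Lemma~\ref{lem:lip}; a uniform lower bound follows by revisiting the constant-shift argument from the proof of Proposition~\ref{primal_dual}, where \eqref{good_data} forces the minimizing shift $\beta_k$ into a bounded interval that is uniform in $\epsilon$ because $\|G_\epsilon - G\|_\infty \to 0$ on bounded $z$-ranges. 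Arzelà--Ascoli then extracts a subsequence $p_{n+1}^{\epsilon_k}$ converging uniformly on $\Omega$ to a $c$-concave limit $\bar p \le P_{n+1}$.

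To identify $\bar p$ as a maximizer of $J^*(\cdot,\rho^n)$ for the original $G$, I would use that uniform convergence of $p_{n+1}^{\epsilon_k}$ propagates to the $c$-transforms, that $\bar G_\epsilon \to \bar G$ uniformly on bounded $z$-ranges, and that $s^*$ is continuous on the relevant pressure range. Passing to the limit in $J^*_{G_{\epsilon_k}}(p_{n+1}^{\epsilon_k},\rho^n) \ge J^*_{G_{\epsilon_k}}(q,\rho^n)$ for arbitrary $q \in X^*$ shows that $\bar p$ maximizes $J^*(\cdot,\rho^n)$; by minimality (Lemma~\ref{pressure_special}), $p_{n+1} \le \bar p \le P_{n+1}$, establishing (a). For (b), the corresponding primal iterates converge: the uniform convergence of pressures together with the pushforward relation \eqref{eq:pd_relation_pushforward} and the uniqueness of the primal minimizer give $\rho^{n+1,\epsilon_k} \to \rho^{n+1}$ weakly in $L^1$, so the uniform bound $\hat M_{n+1}^{\epsilon_k} \le \max(\hat M^{\epsilon_k}, \hat M_n) \le \max(\hat M, \hat M_n)$ passes to the limit by lower semi-continuity of the $L^\infty$-norm. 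The lower bound $\bar M_{n+1} \ge \min(\bar M, \bar M_n)$ is analogous, and the monotonicity of $P_{n+1}$ in $n$ closes the induction.

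The main obstacle is the compactness step: obtaining the uniform lower bound on $p_{n+1}^\epsilon$ and verifying the joint continuity of $J^*$ under the perturbation $\epsilon \to 0$. Both are essentially routine given \eqref{good_data} and the uniform convergence of $G_\epsilon$ on bounded sets, but they require carefully tracking the $\epsilon$-dependence in the bounded-shift argument from Proposition~\ref{primal_dual} so that Arzelà--Ascoli applies.
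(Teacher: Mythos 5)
Your argument is correct and is essentially the paper's proof: approximate $G$ by strictly decreasing perturbations, apply Lemma~\ref{lem:linf_bound} to get the uniform bound $p_{n+1}^{\epsilon}\le P_{n+1}$, use the uniform Lipschitz bound on $c$-concave maximizers to extract a uniform limit, identify it as an element of $\argmax J^*(\cdot,\rho^n)$, and invoke the minimality of $p_{n+1}$ from Lemma~\ref{pressure_special} (the paper perturbs by $\delta(e^{-p}-1)$ and, for part (b), simply observes that the remaining steps of Lemma~\ref{lem:linf_bound} never use strict monotonicity once $p_{n+1}\le P_{n+1}$ is established). One small correction: your sample perturbation $\eta(z)=z/(1+z^2)$ is not strictly increasing for $|z|>1$, so it need not render $G_\epsilon$ strictly decreasing on the relevant pressure range; take instead a globally strictly increasing bounded $\eta$ such as $\arctan z$ or $1-e^{-z}$, as your own stated hypotheses on $\eta$ require.
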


\begin{proof}
It is enough to show that $p_{n+1} \leq P_{n+1}$, since the rest of the proof does not use the strict monotonicity of $G$. Let us approximate $G$ by strictly decreasing functions $G_{\delta}(p,x):= G(p,x) + \delta (e^{-p} -1)$, and let $J^*_\delta$ be the corresponding dual energy and let $p_{n+1,\delta}$ be the minimal element of $\argmax_{p\in X^*,\, p^{c\bar{c}}=p} J^*_{\delta}(p,\rho^n)$ chosen by Lemma~\ref{pressure_special}. Then Lemma~\ref{lem:linf_bound} applies to $p_{n+1,\delta}$ to yield $p_{n+1,\delta} \leq P_{n+1}$. Due to the uniform Lipschitz continuity,  $p_{n+1,\delta}$ uniformly converges to an element  $p^*\in S_n=\argmax_{p\in X^*,\, p^{c\bar{c}}=p} J^*(p,\rho^n)$ along a subsequence as $\delta\to 0$. Since $p_{n+1}$ is the minimal element of $S_n$, we can conclude that $p_{n+1} \leq p^* \leq P_{n+1}$.
\end{proof}

Lastly we discuss BV estimates for the density variable. The following lemma will yield exponential growth of the BV norm over time (see Corollary~\ref{BV2}).

\begin{lemma}\label{BV}
Let $\Omega$ be  convex and bounded and let $P_n$ be given as in Lemma~\ref{lem:linf_bound}. If $g_0:=\norm{G}_{W^{1,\infty}\big([0,P_1]\times\R^d\big)}<\infty$ and $\rho^{n}\in BV(\Omega)$, then for $\tau<1/B$ we have $\rho^{n+1}\in BV(\Omega)$ with the bound
\begin{equation*}
\int_{\Omega} |\nabla \rho^{n+1}|dx \leq (1+\tau B)\int_{\Omega} |\nabla \rho^n(x)| dx+ \tau \left( \frac{1}{2(1-\tau B)}\int_{\Omega} |\nabla p_{n+1}|^2 \rho^{n+1} dx + \Big(\frac{g_0^2}{2}+g_0\Big)\rho^n(\Omega)\right).
\end{equation*}
\end{lemma}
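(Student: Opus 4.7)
The plan is to decompose the estimate via the intermediate density $\tilde\rho^{n+1} := \rho^n+\tau\mu^{n+1}$, which by \eqref{eq:pd_relation_growth} equals $(1+\tau G(p^c_{n+1}(\cdot),\cdot))\rho^n$, and to exploit that $\rho^{n+1}$ is the standard JKO step from $\tilde\rho^{n+1}$ for the energy $E$ (once $\mu^{n+1}$ is fixed at its optimal value in the primal problem).

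The first step is to establish the BV contraction
\[
\int_\Omega |\nabla \rho^{n+1}|\,dx \leq \int_\Omega|\nabla\tilde\rho^{n+1}|\,dy.
\]
I would argue this by translating $\tilde\rho^{n+1}$ by a small $h\in\R^d$ and using uniqueness in Proposition~\ref{primal_dual}, combined with the translation invariance of $E$ and $W_2^2$ (here convexity of $\Omega$ ensures that translates remain feasible, since $\tilde\rho^{n+1}$ is compactly supported well inside $\Omega$ for $|h|$ small), to deduce that $\rho^{n+1}(\cdot-h)$ is the JKO step from $\tilde\rho^{n+1}(\cdot-h)$. An $L^1$-contraction property of the JKO resolvent then yields $\|\rho^{n+1}(\cdot-h)-\rho^{n+1}\|_{L^1}\leq\|\tilde\rho^{n+1}(\cdot-h)-\tilde\rho^{n+1}\|_{L^1}$; dividing by $|h|$ and sending $h\to 0$ gives the inequality above.

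The second step is a direct bound on $\int|\nabla\tilde\rho^{n+1}|$ via the product rule,
\[
\nabla\tilde\rho^{n+1}(y)=(1+\tau G)\nabla\rho^n(y)+\tau\rho^n(y)\bigl[\partial_z G\,\nabla p^c_{n+1}(y)+\nabla_x G\bigr],
\]
with $G$ and its derivatives evaluated at $(p^c_{n+1}(y),y)$. Using $|1+\tau G|\leq 1+\tau B$ (from assumption~\ref{assumption: bound} and $\tau<1/B$) and $\max(|\partial_z G|,|\nabla_x G|)\leq g_0$ on the relevant pressure range (guaranteed by Lemma~\ref{lem:linf_bound}), followed by Young's inequality $g_0|\nabla p^c_{n+1}|\leq \tfrac{g_0^2}{2}+\tfrac{1}{2}|\nabla p^c_{n+1}|^2$, I obtain
\[
\int_\Omega|\nabla\tilde\rho^{n+1}| \leq (1+\tau B)\int_\Omega|\nabla\rho^n| + \tau\Bigl(\tfrac{g_0^2}{2}+g_0\Bigr)\rho^n(\Omega) + \tfrac{\tau}{2}\int_\Omega|\nabla p^c_{n+1}|^2\rho^n\,dy.
\]

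Finally, Lemma~\ref{gangbo} gives $\nabla p^c_{n+1}(y)=\nabla p_{n+1}(T_{p_{n+1}}(y))$, so the pushforward relation~\eqref{eq:pd_relation_pushforward} implies $\int|\nabla p^c_{n+1}|^2\tilde\rho^{n+1}\,dy=\int|\nabla p_{n+1}|^2\rho^{n+1}\,dx$. Since $\tilde\rho^{n+1}\geq (1-\tau B)\rho^n$ when $\tau<1/B$, we have $\rho^n\leq\tilde\rho^{n+1}/(1-\tau B)$, hence $\int_\Omega|\nabla p^c_{n+1}|^2\rho^n \leq \frac{1}{1-\tau B}\int_\Omega|\nabla p_{n+1}|^2\rho^{n+1}$. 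Combining the three steps yields the stated bound. The main obstacle is justifying the first step: although in spirit it is the standard $L^1$-contraction of JKO resolvents for energies of the form $\int s(\rho)\,dx$ (as developed in~\cite{JKT}), care is needed near $\partial\Omega$ and in the presence of potentially non-smooth $s$, which is precisely where the convexity of $\Omega$ enters.
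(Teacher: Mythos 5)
Your proposal follows the same route as the paper's proof: the decomposition through $\tilde\rho^{n+1}=\rho^n+\tau\mu^{n+1}$, the product-rule estimate with $|1+\tau G|\le 1+\tau B$ and Young's inequality on the $\partial_z G\,\nabla p^c_{n+1}$ term, and the conversion $\int_\Omega|\nabla p^c_{n+1}|^2\rho^n\le (1-\tau B)^{-1}\int_\Omega|\nabla p_{n+1}|^2\rho^{n+1}$ via the pushforward relation are all exactly the steps in the paper. The only divergence is the first step, $\int_\Omega|\nabla\rho^{n+1}|\le\int_\Omega|\nabla\tilde\rho^{n+1}|$: the paper does not prove this at all but simply invokes the BV-monotonicity estimate of \cite{bv_ot} for the Wasserstein projection onto an entropy functional, which is established there precisely for convex bounded domains and thus disposes of the boundary issue you correctly identify as the obstacle. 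Your alternative sketch via translation plus the $L^1$-contraction of the JKO resolvent is morally the right mechanism (and is how such BV bounds are often motivated), but as you note it does not literally work on a bounded $\Omega$, since translates of $\tilde\rho^{n+1}$ need not solve the translated problem near $\partial\Omega$; to close the argument you should either cite \cite{bv_ot} as the paper does, or restrict to the regime where the supports stay compactly inside $\Omega$ (which the finite-propagation results of Section 5 do guarantee) and make the localization explicit. Everything else in your write-up is correct and matches the paper.
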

\begin{proof}
From \eqref{eq:mms} we have $\rho^{n+1}=\argmin_{\rho\in L^1(\Omega)} W_2(\rho, \rho^n+\tau\mu^{n+1}) + E(\rho)$. Thanks to the useful estimate derived in \cite{bv_ot}, it follows that
\[
\int_{\Omega} |\nabla \rho^{n+1}|\leq \int_{\Omega} |\nabla (\rho^n+\tau\mu^{n+1})|.
\]
Using \eqref{eq:pd_relation_growth} we obtain
\[
\int_{\Omega} |\nabla \rho^{n+1}|\leq \int_{\Omega} |\nabla \rho^n(x)(1+\tau G(p^c_{n+1},x))|+\tau\rho^n(x)|\partial_zG( p^c_{n+1},x)\nabla p^c_{n+1}|+\tau\rho^n(x)|\partial_x G(p^c_{n+1}(x),x)|
\]
For the first term we have
\[
\int_{\Omega} |\nabla \rho^n(x)(1+\tau G(p^c_{n+1},x))| \leq (1+\tau B)\int_{\Omega}|\nabla \rho^n|.
\]
We estimate the second term with the Cauchy-Schwarz inequality,
\[
\tau \int_{\Omega}\rho^n|\partial_zG(p^c_{n+1},x)\nabla p^c_{n+1}| \leq  \frac{\tau}{2}\left(\int_{\Omega} \rho^n |\partial_zG(p^c_{n+1},x)|^2 dx + \int_{\Omega}\rho^n|\nabla p^c_{n+1}|^2 dx\right).
\]
From \eqref{eq:forward} and \eqref{eq:pd_relation_pushforward}, for $\tau < 1/B$,
\[
\begin{split}
\int_{\Omega}\rho^n|\nabla p^c_{n+1}|^2\leq &\; (1-\tau B)^{-1}\int_{\Omega}(\rho^n + \tau\mu^{n+1})|\nabla p^c_{n+1}|^2\\
= &\; (1-\tau B)^{-1} \tau^{-1}W_2^2(\rho^{n+1}, \rho^n+\tau\mu^{n+1})= (1-\tau B)^{-1}\int_{\Omega}\rho^{n+1}|\nabla p_{n+1}|^2.
\end{split}
\]
Now the stated estimate follows from the definition of $g_0$.
\end{proof}

\subsection{Monotonicity properties}

Here we study monotonicity properties of the density variable. We first show that the support of the density variable only expands over time.   Note that throughout  this subsection we are assumming that $\rho^n$ satisfies \eqref{good_data}.
\begin{lemma}\label{lem:expansion}
Up to a set of measure zero, $\{x\in\Omega: \rho^n(x)>0\} \subset \{x\in\Omega: \rho^{n+1}(x)>0\}$. 
\end{lemma}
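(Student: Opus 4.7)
The plan is to argue by contradiction using the pressure--density relation, pushforward identity, and growth identity from Proposition \ref{primal_dual}. Set $U := \{x \in \Omega : \rho^n(x) > 0,\; \rho^{n+1}(x) = 0\}$ and suppose $|U| > 0$. Since $p_{n+1} \in \partial s(\rho^{n+1})$ and the hypotheses on $s$ give $\partial s(0) = (-\infty, 0]$, we have $p_{n+1} \leq 0$ a.e.\;on $U$. Taking $y = x$ in the infimum defining the $c$-transform then yields $p^c_{n+1}(x) \leq p_{n+1}(x) \leq 0$ on $U$.

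I would first rule out the possibility that $\rho^n + \tau\mu^{n+1}$ vanishes on a positive-measure subset of $U$: by \eqref{eq:pd_relation_growth} this would force $G(p^c_{n+1}(x),x) = -1/\tau$, and since $G(\cdot, x)$ is decreasing with $G(b(x),x) = 0$ and $b(x) \geq 0$, this demands $p^c_{n+1}(x) > b(x) \geq 0$, contradicting the bound just obtained. Hence $\rho^n + \tau\mu^{n+1} > 0$ a.e.\;on $U$, so the pushforward \eqref{eq:pd_relation_pushforward} combined with $\rho^{n+1} \equiv 0$ on $U$ forces $T_{p_{n+1}}(y) \in \{\rho^{n+1} > 0\}$ for a.e.\;$y \in U$ (the pre-image of $\{\rho^{n+1} = 0\}$ must be null against $\rho^n + \tau\mu^{n+1}$, hence Lebesgue null on $U$). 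Since $s$ is increasing, $\partial s(\rho) \subset [0,\infty)$ for $\rho > 0$, giving $p_{n+1}(T_{p_{n+1}}(y)) \geq 0$ at such $y$.

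The key step is then to invoke the variational characterization in Remark \ref{dual2}: $T_{p_{n+1}}(y)$ is the $\argmin$ of $x \mapsto p_{n+1}(x) + |x-y|^2/(2\tau)$, so
\begin{equation*}
p^c_{n+1}(y) = p_{n+1}(T_{p_{n+1}}(y)) + \frac{|T_{p_{n+1}}(y) - y|^2}{2\tau} \geq 0.
\end{equation*}
Combined with $p^c_{n+1}(y) \leq 0$, both non-negative summands must vanish, forcing $T_{p_{n+1}}(y) = y$. But then $y = T_{p_{n+1}}(y) \in \{\rho^{n+1} > 0\}$, contradicting $y \in U$, so $|U| = 0$.

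The main technical subtlety is ensuring the ``a.e.'' assertions hold at a common full-measure set of $y$'s: the $c$-transform identity for $p^c_{n+1}(y)$ requires $y$ to be a differentiability point of $p^c_{n+1}$, which is a full-measure set by Lemma \ref{lem:lip} and Rademacher's theorem, while the pushforward conclusion $T_{p_{n+1}}(y) \in \{\rho^{n+1}>0\}$ holds off a Lebesgue-null set thanks to $\rho^n + \tau\mu^{n+1}$ having positive density on $U$. The argument then proceeds on their common complement.
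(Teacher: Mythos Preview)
Your proof is correct and shares the same skeleton as the paper's---define the bad set, note $p_{n+1}\in\partial s(0)\subset(-\infty,0]$ there, establish $\rho^n+\tau\mu^{n+1}>0$ there, and contradict via the pushforward---but the contradiction mechanism differs. The paper observes that $T_{p_{n+1}}$ \emph{decreases} pressure (if $T_{p_{n+1}}(x)\neq x$ then $p_{n+1}(T_{p_{n+1}}(x))<p_{n+1}(x)\leq 0$, forcing $\rho^{n+1}(T_{p_{n+1}}(x))=0$), so $T_{p_{n+1}}(D)\subset\{\rho^{n+1}=0\}$, and then integrates: $0<\int_D(\rho^n+\tau\mu^{n+1})=\int_{T_{p_{n+1}}(D)}\rho^{n+1}\leq 0$. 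You go the other way: the pushforward forces $T_{p_{n+1}}(y)\in\{\rho^{n+1}>0\}$ for a.e.\ $y\in U$, whence $p_{n+1}(T_{p_{n+1}}(y))\geq 0$, and the identity $p^c_{n+1}(y)=p_{n+1}(T_{p_{n+1}}(y))+\frac{1}{2\tau}|T_{p_{n+1}}(y)-y|^2$ together with $p^c_{n+1}(y)\leq 0$ pins $T_{p_{n+1}}(y)=y$, a pointwise contradiction. The paper's route is a step shorter (no $c$-transform identity needed), but yours is equally valid and makes the mass-landing argument explicit. One minor remark: your contrapositive argument for $\rho^n+\tau\mu^{n+1}>0$ is correct but roundabout; the paper simply notes $\mu^{n+1}=\rho^n G(p^c_{n+1},x)\geq\rho^n G(0,x)>0$ directly from \ref{assumption: growth at zero pressure}, \ref{assumption: regularity}, and $p^c_{n+1}\leq 0$.
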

\begin{proof}
If $D=\{x\in\Omega: \rho^n(x)>0, \, \rho^{n+1}(x)=0\}$, then $p_{n+1}(x)\leq 0$ for almost all $x\in D$.   Thanks to assumption \ref{assumption: growth at zero pressure}, it then follows that
\[
 \mu^{n+1}(x)=\rho^n(x)G(p^c_{n+1}(x),x)\geq \rho^n(x)G(0,x)>0\quad \textrm{a.e.}\;x\in D.
 \]
Choose some point $x\in D$ and note that if $T_{p_{n+1}}(x)\neq x$, then we must have $p_{n+1}(T_{p_{n+1}}(x))< p_{n+1}(x)$.   Therefore, for almost every $x\in D$, it follows that $\rho^{n+1}(T_{p_{n+1}}(x))\leq \rho^{n+1}(x)$.  Now we can compute
\[
\int_{D} \rho^n(x)+\tau\mu^{n+1}(x)\, dx=\int_{T_{p_{n+1}}(D)} \rho^{n+1}(y)\, dy\leq 0.
\]
This is only possible if $D$ has measure zero.
\end{proof}

When we consider the singular energy $E_{\infty}(\rho)$, we can strengthen the previous proposition to a powerful monotonicity statement. In addition, we show that if the density starts out as a characteristic function it remains a characteristic function for all time.  Recall that in this case
$$
X = \{\rho\in L^1(\Omega): E(\rho) <\infty\} = \{\rho\in L^1(\Omega): 0\leq \rho \leq 1 \mbox{ a.e.}\}.
$$
\begin{prop}\label{density_monotone}
Let $s=s_{\infty}$ with $\rho^n\in X$. Then $\rho^{n+1} \geq \rho^n$ almost everywhere. Moreover if $\rho^n(x)\in \{0,1\}$ a.e.\;in $\Omega$, then we have $\rho^{n+1}\in \{0,1\}$ a.e.\;in $\Omega$.
\end{prop}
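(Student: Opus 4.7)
The plan is to decompose $\Omega$ according to the sign of $p_{n+1}$. For $s=s_{\infty}$ one has $s^{*}(p)=p_{+}$, so Proposition~\ref{primal_dual} and the relation $\rho^{n+1}\in\partial s^{*}(p_{n+1})$ force $\rho^{n+1}=1$ on $\{p_{n+1}>0\}$, $\rho^{n+1}=0$ on $\{p_{n+1}<0\}$, and $\rho^{n+1}\in[0,1]$ on $\{p_{n+1}=0\}$. On $\{p_{n+1}>0\}$ the inequality $\rho^{n+1}=1\geq\rho^{n}$ is immediate. On $\{p_{n+1}<0\}$ I would invoke Lemma~\ref{lem:expansion}: since $\rho^{n+1}=0$ there and the support only grows, $\rho^{n}=0$ on this set up to a null set, so $\rho^{n+1}=\rho^{n}=0$. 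The substantive step is therefore on the level set $\{p_{n+1}=0\}$.

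On this set I would apply Stampacchia's theorem to $p_{n+1}$, which is Lipschitz by Lemma~\ref{lem:lip}, to obtain $\nabla p_{n+1}=0$ a.e.\;on $\{p_{n+1}=0\}$. In view of $T_{p_{n+1}}^{-1}(x)=x+\tau\nabla p_{n+1}(x)$ from \eqref{eq:pd_relation_pushforward} and the a.e.\;invertibility of $T_{p_{n+1}}$ (Lemma~\ref{gangbo}), this shows that $T_{p_{n+1}}$ reduces to the identity a.e.\;on $\{p_{n+1}=0\}$. For any Borel $A\subset\{p_{n+1}=0\}$, bijectivity gives $T_{p_{n+1}}^{-1}(A)=A$ up to a null set, and the push-forward identity $T_{p_{n+1}\#}(\rho^{n}+\tau\mu^{n+1})=\rho^{n+1}$ then yields
$$\rho^{n+1}=\rho^{n}+\tau\mu^{n+1}\quad\text{a.e.\;on}\;\{p_{n+1}=0\}.$$
To evaluate $\mu^{n+1}$ here I would use $p_{n+1}^{c}(y)=p_{n+1}(T_{p_{n+1}}(y))+c(T_{p_{n+1}}(y),y)$ with $T_{p_{n+1}}(y)=y$ and $p_{n+1}(y)=0$ to conclude $p_{n+1}^{c}=0$ a.e.\;on this set, so by \eqref{eq:pd_relation_growth} and assumption \ref{assumption: growth at zero pressure}, $\mu^{n+1}(x)=\rho^{n}(x)G(0,x)\geq 0$. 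Combining, $\rho^{n+1}(x)=\rho^{n}(x)(1+\tau G(0,x))\geq\rho^{n}(x)$ on $\{p_{n+1}=0\}$, which completes the first claim.

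For the second claim, assume $\rho^{n}\in\{0,1\}$ a.e. The identity $\rho^{n+1}=\rho^{n}(1+\tau G(0,x))$ on $\{p_{n+1}=0\}$ would violate $\rho^{n+1}\leq 1$ whenever $\rho^{n}=1$, since $G(0,x)>0$. Hence $\{\rho^{n}=1\}\cap\{p_{n+1}=0\}$ is null; combined with the fact that $\rho^{n}>0$ forces $p_{n+1}\geq 0$ (via Lemma~\ref{lem:expansion} and the subdifferential relation), this gives $\rho^{n}=1\Rightarrow p_{n+1}>0$ a.e., so $\rho^{n+1}=1$ on $\{\rho^{n}=1\}$. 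On $\{\rho^{n}=0\}$ the three subregions $\{p_{n+1}>0\}$, $\{p_{n+1}=0\}$, $\{p_{n+1}<0\}$ yield $\rho^{n+1}=1,0,0$ respectively, so $\rho^{n+1}\in\{0,1\}$ a.e. The main obstacle I anticipate is making the identity-map argument on the level set $\{p_{n+1}=0\}$ fully rigorous—in particular, careful handling of the preimage of a potentially irregular set under $T_{p_{n+1}}$, where the combination of Stampacchia and a.e.\;bijectivity has to be used with some care. The rest amounts to bookkeeping with the duality relations already established in Proposition~\ref{primal_dual}.
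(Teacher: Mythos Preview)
Your proof is correct and uses the same key ingredients as the paper: the subdifferential relation forces $p_{n+1}=0$ wherever $\rho^{n+1}\in(0,1)$, Stampacchia's lemma gives $\nabla p_{n+1}=0$ there, so $T_{p_{n+1}}=T_{p_{n+1}}^{-1}=\mathrm{id}$ and the push-forward reduces to $\rho^{n+1}=\rho^n+\tau\mu^{n+1}$ with $\mu^{n+1}\geq 0$. The only difference is organizational: the paper argues by contradiction on the ``bad'' sets $E=\{0<\rho^{n+1}<\rho^n\}$ and $A=\{\rho^{n+1}\in(0,1)\}$ (and for the second claim contradicts $\rho^{n+1}>0$ on $A$ after showing $\rho^n=0$ there), whereas you decompose by the sign of $p_{n+1}$, derive the pointwise identity $\rho^{n+1}=\rho^n(1+\tau G(0,x))$ on $\{p_{n+1}=0\}$, and for the second claim contradict $\rho^{n+1}\leq 1$; the delicate preimage step you flag can be made rigorous by noting that $T_{p_{n+1}}$ pushes an absolutely continuous measure to $\rho^{n+1}$, so Lebesgue-null sets pull back to null sets.
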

\begin{proof}
From Lemma \ref{lem:expansion}, it follows that $D=\{x\in\Omega: \rho^n(x)>0, \rho^{n+1}(x)=0\}$ has measure zero.
Let us now consider the set $E=\{x\in\Omega: 0<\rho^{n+1}(x)<\rho^n(x)\}$. This in particular means that $\rho^{n+1}(x)\in (0,1)$ on $E$, and thus from the relation $\rho^{n+1} \in \partial s_{\infty}^*(p_{n+1})$ it follows that $p_{n+1} = 0$ almost everywhere on $E$. $p_{n+1}$ is Lipschitz, so we also have $\nabla p_{n+1}=0$ almost everywhere on $E$ (see for instance Theorem 4.4 in \cite{evans_gariepy}).  $T^{-1}_{p_{n+1}}(x)=x+\tau\nabla p_{n+1}(x)$ is the optimal map from $\rho^{n+1}(x)$ to $\rho^n(x)+\tau\mu^{n+1}(x)$.  Therefore, for almost every $x\in E$, we have $T_{p_{n+1}}(x)=T^{-1}_{p_{n+1}}(x)=x$.  Since $p^c_{n+1}(x)
\leq p_{n+1}(x) $,  we can conclude that $\mu^{n+1}(x)\geq \rho^nG(0,x)$ for almost all $x\in E$.  Now we can compute
\[
\int_{E} \rho^{n+1}(x) dx= \int_{E} \rho^n(x)+\tau\mu^{n+1}(x)\, dx \geq\int_{E} \rho^n(x)\big(1+\tau G(0,x)\big)\, dx\geq \int_{E} \rho^n(x)\, dx.
\]
The above is only possible if $E$ has measure zero.

\medskip

It remains to show that if $\rho^n\in \{0,1\}$ almost everywhere, then  $\rho^{n+1}\in \{0,1\}$ almost everywhere.  Let $A=\{x\in\Omega: \rho^{n+1}(x)\in (0,1)\}$. Arguing as before, we conclude that $p_{n+1}(x)=0$ and $T_{p_{n+1}}(x)= T^{-1}_{p_{n+1}}(x)=x$ for almost all $x\in A$.  Furthermore, we have $\rho^n(x)\leq \rho^{n+1}(x)<1$ for almost all $x\in A$, thus, $\rho^n(x)\in \{0,1\}$ implies $\rho^n(x)=0$ for almost all $x\in A$.  Now we see that
\[
\int_A \rho^{n+1}(x)\, dx=\int_A \rho^n(x)+\tau\mu^{n+1}(x)\, dx\geq \int_A \rho^n(x)\big(1+\tau G(0,x)\big)\, dx=0.
\]
Thus, $A$ must have zero measure.
\end{proof}

\section{Comparison Principles}
\label{sec: comparison}
In this section we establish a comparison principle for both the density and pressure variables.  One main ingredient in the proof is \eqref{observation}, a property of optimal maps which played a central role in the $L^1$-contraction result in \cite{JKT}. In the case of $s=s_m$, the comparison principle was shown in \cite{AKY14} where \eqref{observation} was implicitly used.

We begin by establishing a comparison principle for the pressure variables.  Note that we only compare the positive parts of the pressure.  This is because we can only guarantee the comparison property on regions where the density variables do not vanish.

\begin{prop}\label{comparison0} For $i\in \{0,1\}$ let $\rho_i\in X$  and $G_i(p,x)$ be growth functions satisfying assumptions \ref{assumption: growth at zero pressure}-\ref{assumption: bound} with $G_0$ strictly decreasing in $p$. Define
\[
J_i^*(p):= \int_{\Omega} \rho_i(x)\big( p^c(x)+\tau \bar{G}_i(p^c(x),x)\big)-s^*(p(x))\, dx,
\]
and suppose that the following properties hold:
\begin{enumerate}
\item  $0\leq\rho_0 \leq \rho_1$ a.e.\;in $\Omega$ and they satisfy \eqref{good_data};
\item $ G_0(z,x)\leq G_1(z,x)$ for all $(z,x)\in\RR\times\Omega$.
\end{enumerate}
If
\[
p_i\in \argmax_{\{p\in X^*, \; p=p^{c\bar{c}}\}} J_i^*(p),
\]
then $(p_0)_+ \leq (p_1)_+$.

\end{prop}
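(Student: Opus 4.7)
The plan is to argue by contradiction, using the variational characterization of $p_0,p_1$ with competitors built from pointwise min and max. First, I would assume toward contradiction that $A:=\{(p_0)_+>(p_1)_+\}$ has positive Lebesgue measure; since $p_0,p_1$ are $c$-concave and hence continuous (Lemma \ref{lem:lip}), $A$ is open and $A\subset\{p_0>p_1\}\cap\{p_0>0\}$. I would then test $p_0$ against the $c$-concave envelope $(p_0\wedge p_1)^{c\bar c}$ in the dual problem $J_0^*$, and test $p_1$ against $(p_0\vee p_1)^{c\bar c}$ in $J_1^*$. Both competitors are admissible because the envelope operation does not alter the $c$-transform (i.e.\;$((p)^{c\bar c})^c=p^c$), which is what the bar-$G$ term sees.

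The key analytical ingredients are: (i) the pointwise identity $(p_0\wedge p_1)^c(y)=\min(p_0^c(y),p_1^c(y))$, which is elementary since the infimum of a minimum is the minimum of the infima; (ii) the complementary inequality $(p_0\vee p_1)^c(y)\geq\max(p_0^c(y),p_1^c(y))$; and (iii) the pointwise rearrangement identity $s^*(p_0)+s^*(p_1)=s^*(p_0\wedge p_1)+s^*(p_0\vee p_1)$, combined with $s^*(p^{c\bar c})\leq s^*(p)$ (since $s^*$ is increasing by Lemma \ref{lem:s_star_increasing} and envelopes lie below), which causes the $E^*$-contributions to cancel. These are precisely the observations underlying the $L^1$-contraction analysis \eqref{observation} of \cite{JKT}.

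Adding the two optimality inequalities $J_0^*(p_0)\geq J_0^*((p_0\wedge p_1)^{c\bar c})$ and $J_1^*(p_1)\geq J_1^*((p_0\vee p_1)^{c\bar c})$ and carrying out the cancellations described above, I expect to land on
\[
\int_{\Omega}\rho_0(x)\bigl[\Phi_0(p_0^c,x)-\Phi_0(p_1^c,x)\bigr]_+\,dx\;\geq\;\int_{\Omega}\rho_1(x)\bigl[\Phi_1(p_0^c,x)-\Phi_1(p_1^c,x)\bigr]_+\,dx,
\]
where $\Phi_i(z,x):=z+\tau\bar G_i(z,x)$. On the other hand, $\rho_0\leq\rho_1$ together with the inequality $\Phi_1(a,x)-\Phi_1(b,x)\geq\Phi_0(a,x)-\Phi_0(b,x)$ for $a\geq b$ (an immediate consequence of $G_0\leq G_1$ by integration) yield the reverse pointwise inequality between the integrands, forcing equality almost everywhere.

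The main obstacle is extracting the conclusion $(p_0)_+\leq (p_1)_+$ from this rigid equality, and this is precisely where the strict decrease of $G_0$ enters: it makes $\Phi_0(\cdot,x)$ strictly increasing for $\tau<1/B$, so on the set $\{p_0^c>p_1^c\}\cap\{\rho_0>0\}$ the left-hand integrand is strictly positive, and the pointwise equality forces simultaneously $\rho_0=\rho_1$ and $G_0\equiv G_1$ on $[p_1^c(x),p_0^c(x)]$. Reading this back through the transport map structure in \eqref{eq:pd_relation_pushforward}--\eqref{eq:pd_relation_growth} --- and, if necessary, passing through the approximation $G_{0,\delta}(p,x):=G_0(p,x)+\delta(e^{-p}-1)$ to reduce to the strict case $G_{0,\delta}<G_1$ and taking $\delta\to 0$ as in Corollary \ref{cor:linfbound} --- should show that $\{p_0^c>p_1^c\}$ has zero measure. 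Finally, pointwise $p_0^c\leq p_1^c$ combined with the monotonicity of the $\bar c$-transform and the identity $p_i=p_i^{c\bar c}$ yields $p_0\leq p_1$, from which $(p_0)_+\leq (p_1)_+$ is immediate.
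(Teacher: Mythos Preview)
Your min/max competitor approach is a genuinely different route from the paper's proof, which works directly with the first-order optimality conditions (the Euler--Lagrange equation \eqref{eq:dual_comparison_optimality}) and the set-monotonicity property \eqref{observation} of optimal transport maps from \cite{JKT}. Your rearrangement argument via $(p_0\wedge p_1)^{c\bar c}$ and $(p_0\vee p_1)^{c\bar c}$ is clean up through the displayed inequality comparing the $\Phi_i$-integrals, and the pointwise reverse inequality from $\rho_0\leq\rho_1$ and $G_0\leq G_1$ is correct.

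There are, however, two genuine gaps in the closing steps. First, your identification of where the strict decrease of $G_0$ enters is off: strict monotonicity of $G_0$ in $p$ does \emph{not} make $\Phi_0$ strictly increasing (that follows merely from $\tau<1/B$). In your framework the equality of integrands forces, on $\{p_0^c>p_1^c\}\cap\{\rho_0>0\}$, that $\rho_0=\rho_1$ and $\int_{p_1^c}^{p_0^c}(G_1-G_0)=0$; strict decrease of $G_0$ alone does not rule this out. In the paper's proof, strict monotonicity of $G_0$ is used differently: to obtain the strict inequality \eqref{eqn: strict order of growth term}, which then forces $\rho_0\,\phi(T_{p_0})=0$.

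Second, and more seriously, even after the $\delta$-approximation you would only conclude $p_0^c\leq p_1^c$ on $\spt\rho_0$ (or $\spt\rho_1$), not on all of $\Omega$. Your proposed passage $p_0^c\leq p_1^c\Rightarrow p_0=(p_0^c)^{\bar c}\leq(p_1^c)^{\bar c}=p_1$ requires the inequality to hold \emph{everywhere}, since the $\bar c$-transform is a supremum over all $y\in\Omega$. This gap can in fact be closed, but not by the $\bar c$-transform: one must instead argue that whenever $p_0(x)>0$ the optimal point $y^*=T_{p_0}^{-1}(x)$ realizing $p_0(x)=p_0^c(y^*)-c(x,y^*)$ lies in $\spt\rho_0$ (via \eqref{eq:pd_relation_pushforward} and $\{p_0>0\}\subset\spt\rho_0^*$), whence $p_0(x)\leq p_1^c(y^*)-c(x,y^*)\leq p_1(x)$. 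This is essentially the transport-map reasoning the paper uses, so your approach does not ultimately avoid it.
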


\begin{proof}
Lemma~\ref{gangbo} and the optimality of $p_i$'s imply that there exists $\eta_i\in \partial s^*(p_i)$ such that
\begin{equation}\label{eq:dual_comparison_optimality}
\int_{\Omega} \rho_i(x)\big(1+\tau G_i(p_i^c(x),x)\big)\phi(T_{p_i}(x))-\eta_i(x)\phi(x)\, dx=0,
\end{equation}
for any bounded function $\phi$. Since $\eta_i \geq 0$, we have
\begin{equation}\label{eqn: positivity of density to be transported}
\rho_i(x)\big(1+\tau G_i(p_i^c(x),x)\big)\geq 0\quad\mbox{a.e.\;in }\Omega.
\end{equation}

Let $U=\{x\in\Omega: p_0(x)>p_1(x)\}$ and choose $\phi$ to be the characteristic function of $U$.
If we subtract \eqref{eq:dual_comparison_optimality} with $i=0$ from \eqref{eq:dual_comparison_optimality} with $i=1$ and rearrange, we see that
\[
\begin{split}
&\;\int_{\Omega}\rho_1(x)\Big(1+\tau G_1\big(p_1^c(x),x\big)\Big)\phi\big(T_{p_1}(x)\big)-\rho_0(x)\Big(1+\tau G_0\big(p_0^c(x),x\big)\Big)\phi\big(T_{p_0}(x)\big) \, dx\\
&\;=\int_{\Omega} \big(\eta_1(x)-\eta_0(x)\big)\phi(x)\, dx.
\end{split}
\]
Since $\partial s^*$ is increasing, it is clear that $\eta_0(x)\geq\eta_1(x)$ on $U$.  Hence,
\begin{equation}\label{eq:comparison_contradiction}
\int_{\Omega}\rho_1(x)\Big(1+\tau G_1\big(p_1^c(x),x\big)\Big)\phi\big(T_{p_1}(x)\big) dx \leq \int_{\Omega}\rho_0(x)\Big(1+\tau G_0\big(p_0^c(x),x\big)\Big)\phi\big(T_{p_0}(x)\big) \, dx.
\end{equation}

By Lemma 4.1 in \cite{JKT}, we know that
 \begin{equation}\label{observation}
\hbox{ if }  T_{p_0}(x)\in U, \hbox{ then } T_{p_1}(x)\in U,
\end{equation}
which gives $\phi(T_{p_0}(x))\leq \phi(T_{p_1}(x))$.
Moreover, if $T_{p_0}(x)\in U$, then
\[
p^c_1(x)=p_1(T_{p_1}(x))+\frac{1}{2\tau}|T_{p_1}(x)-x|^2\leq p_1(T_{p_0}(x)) + \frac{1}{2\tau}|T_{p_0}(x)-x|^2 < p_0^c(x),
\]
where the second inequality uses the definition of $p_1^c$ and the third uses the fact that $p_1<p_0$ at $T_{p_0}(x)$.
Due to the strict monotonicity of $G_0$, we obtain that
\begin{equation}
\label{eqn: strict order of growth term}
G_0\big(p_0^c(x),x\big)<G_1\big(p_1^c(x),x\big) \quad \textup{if} \; T_{p_0}(x)\in U.
\end{equation}
So we derive by \eqref{eqn: positivity of density to be transported} that
\[
\begin{split}
&\;\int_{\Omega}\rho_1(x)\Big(1+\tau G_1\big(p_1^c(x),x\big)\Big)\phi\big(T_{p_1}(x)\big)-\rho_0(x)\Big(1+\tau G_0\big(p_0^c(x),x\big)\Big)\phi\big(T_{p_0}(x)\big) \, dx\\
\geq&\;
\int_{\Omega}\Big(\rho_1(x)\big(1+\tau G_1\big(p_1^c(x),x\big)\big)-\rho_0(x)\big(1+\tau G_0\big(p_0^c(x),x\big)\big)\Big)\phi\big(T_{p_0}(x)\big)\, dx\\
\geq&\;
\tau\int_{\Omega}\Big( G_1\big(p_1^c(x),x\big)-  G_0\big(p_0^c(x),x\big)\Big)\rho_0(x)\phi\big(T_{p_0}(x)\big)\, dx\geq 0.
\end{split}
\]
In the last line, we used \eqref{eqn: positivity of density to be transported} and the assumption that $\rho_0\leq \rho_1$ a.e.\;in $\Omega$.
This together with \eqref{eq:comparison_contradiction} and \eqref{eqn: strict order of growth term} implies that $\rho_0(x)\phi(T_{p_0}(x))=0$ a.e.\;in $\Omega$.
Let $\rho_i^*:=T_{p_i\#}(\rho_i(1+\tau G_i(p_i^c(x),x)))$ are the optimal densities in the corresponding primal problem.the proof. Then
\[
\int_{U} \rho_0^*(x)\, dx=\int_{\Omega} \rho_0(x)(1+\tau G_0(p_0^c(x),x))\phi(T_{p_0}(x))\, dx=0.
\]
    Recall that Proposition \ref{primal_dual} shows $\rho_i^*\in \partial s^*(p_i)$ a.e..
Since $\partial s^*$ is increasing, we have $\rho_1^*\leq \rho_0^*$ a.e.\;on $U$, which implies
\[
\int_{U} \rho_1^*(x)\, dx\leq \int_{U} \rho_0^*(x)\, dx=0.
\]
This allows us to conclude that $p_0(x)\leq p_1(x)$ for  $(\rho_1^*+\rho_0^*)$ a.e. $x$.  From the dual relation $p_i\in \partial s(\rho^*_i)$ it follows that if $p_i>0$ then $\rho_i^*>0$.  Taking the positive part of the pressures the result now follows.
\end{proof}

\begin{corollary}\label{comparison00}
The statement of Proposition~\ref{comparison0} holds without the strict monotonicity assumption on $G_0$.
\end{corollary}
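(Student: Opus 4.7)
The plan is to reduce to Proposition~\ref{comparison0} by perturbing $G_0$ into a strictly decreasing function $G_{0,\delta}$, applying the strict-monotonicity version of the comparison principle at each $\delta>0$, and then letting $\delta\to 0$. This parallels the passage from Lemma~\ref{lem:linf_bound} to Corollary~\ref{cor:linfbound}.

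Concretely, for $\delta>0$ small, I would set $G_{0,\delta}(z,x):=G_0(z,x)-\delta(1+\tanh z)$. Then $\partial_z G_{0,\delta}=\partial_z G_0-\delta\,\mathrm{sech}^2(z)<0$, so $G_{0,\delta}$ is strictly decreasing in $z$. Since $1+\tanh z\in[0,2]$, one has $G_{0,\delta}\leq G_0\leq G_1$ pointwise and $|G_{0,\delta}|\leq B+2\delta$. The shift at $z=0$ is only $-\delta$, so $G_{0,\delta}(0,x)$ remains positive for small $\delta$ by the pointwise positivity of $G_0(0,\cdot)$; the zero $b_\delta(x)$ of $G_{0,\delta}(\cdot,x)$ lies in $(0,b(x))$, so \ref{assumption: growth at zero pressure}–\ref{assumption: bound} persist (with harmlessly adjusted constants), and $G_{0,\delta}\to G_0$ uniformly on bounded sets as $\delta\to 0$.

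Next, let $p_{0,\delta}$ denote the minimal $c$-concave maximizer of the dual functional $J_{0,\delta}^*$ associated to $(G_{0,\delta},\rho_0)$, constructed via Lemma~\ref{pressure_special}, and let $p_1$ be any maximizer of $J_1^*$. Since $G_{0,\delta}$ is strictly decreasing and still satisfies $G_{0,\delta}\leq G_1$, Proposition~\ref{comparison0} applies and yields $(p_{0,\delta})_+\leq(p_1)_+$ for every small $\delta>0$. The family $\{p_{0,\delta}\}$ is uniformly Lipschitz by Lemma~\ref{lem:lip} and uniformly bounded in $L^\infty$ by the argument of Corollary~\ref{cor:linfbound} (with constants depending only on $B$, $b_1$, and $\rho_0$, hence uniformly in $\delta$). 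Arzelà–Ascoli then furnishes a subsequence converging uniformly to a $c$-concave function $p^*$; upper semi-continuity of the $c$-transform, the uniform convergence $\bar{G}_{0,\delta}\to\bar{G}_0$, and lower semi-continuity of $E^*$ together show $p^*\in\argmax J_0^*$, and the inequality passes to the limit to give $(p^*)_+\leq(p_1)_+$.

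The main subtlety—and the step I expect to be the actual obstacle—is that the statement of Proposition~\ref{comparison0} concerns an \emph{arbitrary} maximizer $p_0\in\argmax J_0^*$, whereas the approximation scheme only furnishes the particular limit $p^*$. I would resolve this as in Corollary~\ref{cor:linfbound}, by taking $p_0$ to be the minimal maximizer of $J_0^*$ supplied by Lemma~\ref{pressure_special}: the minimality of each $p_{0,\delta}$ within $\argmax J_{0,\delta}^*$ transfers through the uniform limit to identify $p^*$ with this minimal $p_0$, yielding $(p_0)_+\leq(p_1)_+$. This is precisely the form in which the corollary is invoked later in Theorem~\ref{main:3}(a), where $p_{n+1,\tau}$ is understood as the minimal pressure.
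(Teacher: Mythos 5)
Your proposal follows essentially the same route as the paper: perturb $G_0$ to a strictly decreasing $G_{0,\delta}\leq G_0\leq G_1$ (the paper uses $G_0+\delta(e^{-p}-1)$, you use $G_0-\delta(1+\tanh z)$, which is immaterial), apply Proposition~\ref{comparison0} to get $(p_{0,\delta})_+\leq (p_1)_+$, and pass to the limit $\delta\to 0$ via the uniform Lipschitz/$L^\infty$ bounds and the minimal-maximizer device of Lemma~\ref{pressure_special}. The one caveat is your closing claim that minimality ``transfers through the uniform limit'' so that $p^*$ \emph{is} the minimal maximizer of $J_0^*$ --- that identification is neither obvious nor needed: it suffices that $p^*$ is \emph{some} element of $\argmax J_0^*$, so the minimal maximizer $p_0$ satisfies $p_0\leq p^*$ and hence $(p_0)_+\leq (p^*)_+\leq (p_1)_+$, which is exactly how the paper concludes.
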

\begin{proof}
We proceed using an argument similar to the proof of Corollary~\ref{cor:linfbound}. We approximate $G$ from below by setting $G_{\delta}(p,x):= G_0(p,x) +\delta (e^{-p}-1)$.  Let $J^*_\delta$ be the corresponding dual energy and let $p_\delta$ be the minimal element of $\argmax_{p\in X^*,\, p^{c\bar{c}}=p} J^*_{\delta}(p,\rho_0)$ chosen by Lemma~\ref{pressure_special}. Then Proposition~\ref{comparison0} applies to $G_{\delta}$ to yield $p_\delta \leq p_1$. Due to the uniform Lipschitz continuity,  $p_\delta$ uniformly converges to an element  $p^*\in S_0=\argmax_{p\in X^*,\, p^{c\bar{c}}=p} J^*(p,\rho_0)$ along a subsequence as $\delta\to 0$. Since $p_0$ is the minimal element of $S_0$, we can conclude that $(p_0)_+ \leq (p^*)_+ \leq (p_1)_+$.
\end{proof}

Next we extend the comparison principle to the density variable.

\begin{prop}\label{comparison}
For $i\in \{0,1\}$ let $J_i:X\times \textup{AC}(\rho_i)$ be the functional
\[
J_i(\rho,\mu)=E(\rho)+\frac{1}{2\tau}W_2^2(\rho, \rho_i+\tau\mu)+\int_{\Omega} \tau\rho_i f_i\left(\frac{\mu(x)}{\rho_i(x)},x\right)\, dx
\]
and suppose that the following properties hold
\begin{enumerate}
\item $0\leq\rho_0 \leq \rho_1$ a.e.\;and they satisfy \eqref{good_data};
\item $ \partial_z f_1(z,x)\leq\partial_z f_0(z,x)\; \textrm{for all} \;\; (z,x)\in \RR\times \Omega.$
\end{enumerate}
If
\[
(\rho_i^*,\mu_i^*)=\argmin_{(\rho,\mu)\in X\times \textup{AC}(\rho_i)} J_i(\rho,\mu)
\]
then $\rho_0^*\leq \rho_1^*$ a.e.\;in $\Omega$.
\end{prop}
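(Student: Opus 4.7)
The plan is to reduce the density comparison to the pressure comparison in Corollary~\ref{comparison00} by moving to the dual problem. By Proposition~\ref{primal_dual}, each unique primal minimizer $(\rho_i^*,\mu_i^*)$ corresponds to a (possibly non-unique) dual maximizer $p_i$ of the associated $J_i^*$, linked through the relations $\rho_i^*\in \partial s^*(p_i)$, $\rho_i^*=T_{p_i\,\#}(\rho_i+\tau\mu_i^*)$, and $\mu_i^*(x)=\rho_i(x)G_i(p_i^c(x),x)$, where $G_i$ is the growth function determined from $f_i$ by \eqref{f_def}. I will select $p_i$ to be the minimal maximizer provided by Lemma~\ref{pressure_special}, so that Corollary~\ref{comparison00} is applicable.

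Next I translate the hypothesis $\partial_z f_1\leq \partial_z f_0$ into a comparison on the growth functions. From \eqref{f_def}, on the feasible range $\partial_z f_i(z,x)=-G_i^{-1}(z,x)$, hence the hypothesis yields $G_0^{-1}(z,x)\leq G_1^{-1}(z,x)$. Because each $G_i(\cdot,x)$ is decreasing by \ref{assumption: regularity}, inverting this inequality gives $G_0(z,x)\leq G_1(z,x)$ for all $(z,x)\in\RR\times\Omega$. Combined with $0\leq\rho_0\leq\rho_1$, both hypotheses of Proposition~\ref{comparison0} (and thus Corollary~\ref{comparison00}) are satisfied for our chosen pair $p_0,p_1$, which yields the pressure comparison.

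I then need to extract from the proof of Corollary~\ref{comparison00} slightly more than its stated conclusion. Indeed, on $U=\{p_0>p_1\}$ the proof establishes $\int_U \rho_0^*=0$ and also $\rho_1^*\leq \rho_0^*$ a.e.\;on $U$ (by monotonicity of $\partial s^*$ applied to $p_1<p_0$), so that $\rho_0^*=\rho_1^*=0$ a.e.\;on $U$. On the complementary set $\{p_0\leq p_1\}$, the monotonicity of $\partial s^*$ together with the dual relation $\rho_i^*\in \partial s^*(p_i)$ from Proposition~\ref{primal_dual} yields $\rho_0^*\leq \rho_1^*$ a.e. Combining the two pieces, $\rho_0^*\leq \rho_1^*$ a.e.\;in $\Omega$.

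The main obstacle is not any single hard computation but rather the careful bookkeeping of what Corollary~\ref{comparison00} actually proves: one must use the stronger intermediate conclusion that both densities vanish on $\{p_0>p_1\}$, not merely the stated inequality $(p_0)_+\leq (p_1)_+$, since $\partial s^*$ can be multi-valued where the pressure is non-positive (e.g.\;for $s=s_\infty$). A minor subtlety is the choice of dual variables: the pair $(p_0,p_1)$ must be consistent (here, the minimal maximizers of Lemma~\ref{pressure_special}) in order to legitimately invoke Corollary~\ref{comparison00}, but this is painless because $\rho_i^*$ is uniquely determined by the primal problem regardless of which maximizer is selected to represent it.
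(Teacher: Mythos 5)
Your overall strategy --- pass to the dual, invoke the pressure comparison of Corollary~\ref{comparison00}, translate $\partial_z f_1\leq\partial_z f_0$ into $G_0\leq G_1$, and then read off the density comparison from $\rho_i^*\in\partial s^*(p_i)$ --- is the same as the paper's up to the last step, and your handling of the set $U=\{p_0>p_1\}$ (where both densities vanish) is correct. However, the final step contains a genuine gap: on the set where $p_0=p_1$, monotonicity of $\partial s^*$ gives you \emph{nothing}, because $\partial s^*$ may be multivalued at the common value. For $s=s_\infty$ one has $\partial s^*(0)=[0,1]$, so on a region where $p_0=p_1=0$ the relations $\rho_0^*\in\partial s^*(p_0)$ and $\rho_1^*\in\partial s^*(p_1)$ are compatible with, say, $\rho_0^*=0.9$ and $\rho_1^*=0.3$; more generally $\partial s^*$ can be an interval at positive pressures wherever $s$ is affine. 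You flag the multivaluedness issue but only resolve it on $\{p_0>p_1\}$; the coincidence set is exactly where the hypothesis $\rho_0\leq\rho_1$ and the growth comparison must re-enter the argument, and your proof never uses them at this stage.

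The paper closes this gap with a mass-balance argument on $E=\{\rho_1^*<\rho_0^*\}$. Since $E\subset\spt\rho_0^*$ and $\partial s$ is monotone, the pressure comparison forces $p_0=p_1$ a.e.\;on $E$, hence $\nabla p_0=\nabla p_1$ a.e.\;on $E$; consequently $T_{p_0}^{-1}=T_{p_1}^{-1}$ there, $E=T_{p_0}^{-1}(E)=T_{p_1}^{-1}(E)$ up to null sets, and $\mu_0^*\leq\mu_1^*$ on $E$ (using $\rho_0\leq\rho_1$ and $G_0\leq G_1$). Pushing forward and integrating over $E$ then gives
\[
\int_E \bigl(\rho_1^*-\rho_0^*\bigr)\,dy=\int_E \bigl(\rho_1-\rho_0\bigr)+\tau\bigl(\mu_1^*-\mu_0^*\bigr)\,dx\geq 0,
\]
which contradicts the definition of $E$ unless $|E|=0$. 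You need some version of this argument (or another mechanism exploiting the primal data on the pressure-coincidence set) to complete the proof.
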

\begin{proof}
Note that our primal problems correspond to the dual problems $J_i^*:X^*\to \RR$ where
\[
J_i^*(p)= \int_{\Omega} \rho_i(x)\big( p^c(x)+\tau \bar{G}_i(p^c(x),x)\big)-s^*(p(x))\, dx.
\]
Since $G_i(-z,x)=\partial_z f^*(z,x)$, $G_0(z,x)\leq G_1(z,x)$ for all $(z,x)\in \RR\times \Omega$.  Due to Proposition~\ref{primal_dual} and Corollary~\ref{comparison00}, there exist $p_0, p_1\in \argmax_{\{p\in X^*, \; p=p^{c\bar{c}}\}}J_i^*(p)$ such that $p_0\leq p_1$ on $\spt\rho_0^*\cup\spt\rho_1^*$ and
\[
\rho_i^*\in \partial s^*(p_i), \quad T_{p_i\#}(\rho_i+\tau\mu^*_i)=\rho_i^*, \quad \mu_i^*(x)=\rho_i(x) G(p_i^c(x),x).
\]
Now let $E=\{y\in\Omega:\rho_1^*(y)< \rho_0^*(y)\}$.  Since $E\subset\spt\rho_0^*$ and $\partial s$ is increasing, we must have $p_0=p_1$ almost everywhere on $E$.  Therefore, $\nabla p_0=\nabla p_1$ a.e in  $E$ (see e.g.\;Section 4.2 of \cite{evans_gariepy}), which gives $E=T_{p_1}^{-1}(E)=T_{p_0}^{-1}(E)$ (up to sets of measure zero) and $\mu_0^*(x)\leq \mu_1^*(x)$ for almost all $x\in E$.   Therefore,
\[
\int_E \rho_1^*(y)-\rho_0^*(y)\, dy=\int_{E} \rho_1(x)-\rho_0(x) + \tau(\mu_1^*(x)-\mu_0^*(x))\geq 0.
\]
Thus, $\rho_0^*(x)\leq \rho_1^*(x)$ for almost all $x\in\Omega$.
\end{proof}

Iterating Proposition~\ref{comparison0} and Proposition~\ref{comparison}, we have the following.

\begin{corollary}\label{comp}
Let $\rho_0, \rho_1$ and $G_0,G_1$ as given in Proposition~\ref{comparison0}.  Let us denote $\{\rho^n_i\}_n$ and $\{p_{n,i}\}_n$ as the sequence of solutions generated respectively by \eqref{eq:mms}  and \eqref{eq:dual_scheme} with initial data $\rho_i$. Then for $i=1,...,N$ we have
$$
\rho^n_0 \leq \rho^n_1\hbox{ and } (p_{n,0})_+ \leq (p_{n,1})_+,
$$
as long as $\{\rho^n_i\}_{n=1}^N$ satisfy \eqref{good_data}.
\end{corollary}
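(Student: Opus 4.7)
The plan is a straightforward induction on $n \in \{0, 1, \ldots, N\}$, alternately applying Corollary~\ref{comparison00} to propagate the pressure comparison and Proposition~\ref{comparison} to propagate the density comparison.

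The base case is trivial: $\rho_0^0 = \rho_0 \leq \rho_1 = \rho_1^0$ holds by assumption, and no pressure has yet been defined. For the inductive step, suppose $\rho_0^{n-1} \leq \rho_1^{n-1}$, with both densities satisfying \eqref{good_data} as granted in the hypothesis of the corollary. The hypotheses of Corollary~\ref{comparison00} are met with input densities $\rho_i^{n-1}$ and growth functions $G_i$, so it immediately gives $(p_{n,0})_+ \leq (p_{n,1})_+$.

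To close the induction I need to apply Proposition~\ref{comparison}, whose hypothesis is phrased in terms of $\partial_z f_i$ rather than $G_i$, so one brief verification is needed: the ordering $G_0 \leq G_1$ implies the reversed ordering $\partial_z f_1 \leq \partial_z f_0$. From \eqref{f_def}, $\partial_z f_i(z,x) = -b_i$ where $z = G_i(b_i, x)$; since $G_i(\cdot, x)$ is non-increasing by \ref{assumption: regularity} and $G_0 \leq G_1$, the chain $G_1(b_0, x) \geq G_0(b_0, x) = z = G_1(b_1, x)$ forces $b_0 \leq b_1$, hence $\partial_z f_0(z,x) = -b_0 \geq -b_1 = \partial_z f_1(z,x)$, as required. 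Proposition~\ref{comparison} then delivers $\rho_0^n \leq \rho_1^n$, completing the induction.

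There is no substantial obstacle; the only mild point to keep in mind is that $p_{n,i}$ in the scheme refers to the minimal maximizer provided by Lemma~\ref{pressure_special}, but Corollary~\ref{comparison00} was proved precisely with this minimal choice on the lower side, so the iteration is consistent. The corollary is thus essentially a packaging of Corollary~\ref{comparison00} and Proposition~\ref{comparison} into an inductive statement valid up to index $N$, for which the standing assumption that $\{\rho_i^n\}_{n=1}^N$ all satisfy \eqref{good_data} precisely ensures applicability of both results at every step.
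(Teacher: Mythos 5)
Your proof is correct and follows essentially the same route as the paper, which simply states that the corollary follows by iterating Proposition~\ref{comparison0} (via Corollary~\ref{comparison00}) and Proposition~\ref{comparison}; your induction makes this explicit, and your verification that $G_0\leq G_1$ yields $\partial_z f_1\leq \partial_z f_0$ is the same translation the paper performs inside the proof of Proposition~\ref{comparison}.
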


Combining this with Proposition~\ref{density_monotone},  we obtain the following corollary.

\begin{corollary}\label{pressure_monotone}
Let $s=s_{\infty}$ and let $\rho_0 \in X$. Then both $\rho^n$ and $(p_n)_+$ increase a.e.\;with respect to $n$ as long as $\rho^1,\cdots,\rho^n$ satisfies \eqref{good_data}.
\end{corollary}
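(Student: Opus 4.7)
The plan is to derive both monotonicity statements as quick consequences of results already established in this section, without any new computation. The density monotonicity is free: Proposition~\ref{density_monotone}, applied at every step for which $\rho^n$ satisfies \eqref{good_data}, yields $\rho^n \leq \rho^{n+1}$ a.e.\ in $\Omega$. Iterating gives the monotonicity of $\{\rho^n\}$ on the range where the hypothesis holds.

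For the pressure monotonicity, the idea is to run \emph{one step} of the scheme on the two ordered densities $\rho^{n-1} \leq \rho^n$ and apply the pressure comparison principle with \emph{identical} growth terms $G_0 = G_1 = G$. To be concrete, fix $n \geq 1$ and set $\tilde{\rho}_0 := \rho^{n-1}$, $\tilde{\rho}_1 := \rho^n$. By hypothesis both satisfy \eqref{good_data}, and we have $\tilde{\rho}_0 \leq \tilde{\rho}_1$ a.e.\ from the previous paragraph. The corresponding dual optimizers given by the scheme are precisely $p_n$ (from initial data $\tilde{\rho}_0$) and $p_{n+1}$ (from initial data $\tilde{\rho}_1$). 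Corollary~\ref{comparison00} (the version of Proposition~\ref{comparison0} that does not require strict monotonicity of $G$) then gives $(p_n)_+ \leq (p_{n+1})_+$ a.e., which is the required pressure monotonicity.

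The only thing to check carefully is that the hypotheses of Corollary~\ref{comparison00} are satisfied at each $n$. Conditions (1) and (2) of Proposition~\ref{comparison0} hold trivially: we have just observed $\tilde{\rho}_0 \leq \tilde{\rho}_1$ and they both obey \eqref{good_data} by assumption, and condition (2) is vacuous since we use the same $G$ on both sides. There is no obstacle from the singular energy $s = s_\infty$ either, because Corollary~\ref{comparison00} is stated for a general energy density $s$. Thus the main step is just recognizing that the one-step comparison principle can be fed the two consecutive densities of a single scheme, and this completes the proof.
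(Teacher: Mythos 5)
Your argument is correct and is essentially the paper's: the paper also obtains the corollary by combining Proposition~\ref{density_monotone} (density monotonicity) with the pressure comparison principle, merely invoking the iterated version (Corollary~\ref{comp}) on the two scheme sequences started from $\rho_0$ and from $\rho^1$, whereas you apply the one-step Corollary~\ref{comparison00} directly to each consecutive pair $\rho^{n-1}\leq\rho^n$. The only point worth keeping in mind is that Corollary~\ref{comparison00} is proved for the \emph{minimal} dual maximizers, which is consistent with the paper's convention of always selecting the minimal pressure in the scheme.
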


\section{Finite-speed Propagation Property}
\label{sec: finite speed propagation}
Based on the comparison principle, we will show that the support of the densities propagate with finite speed, see Proposition~\ref{finite_prop}. Since the support only expands in time (Lemma~\ref{lem:expansion}), it is enough to obtain an upper bound on the expansion rate.
This result in particular allows us to obtain our unique discrete solutions independently on the choice of domain $\Omega$, as long as it is sufficiently large. See Corollary \ref{cor:domain}.

\medskip

The main step in this section is the following construction of radial barriers.
\begin{prop}\label{lem: basic barrier functions}
Fix $\tau>0$ and a finite number $\rho^+\in \partial s^*([0,+\infty))$.
There exist universal positive constants $R_*$ and $c_*$, whose upper bounds depend on $G$, $\partial s^*$ and $\rho^+$, and a family of densities $\{\rho_R\}_{R\geq R_*}$, which additionally depends on $\tau$, such that
\begin{enumerate}
  \item $\rho_R$ is radially symmetric and supported on $\overline{B_R}$, with $\rho_R\geq \rho^+$ on $B_{R-R_*}$;
  \item The new optimal density $\rho$  obtained by \eqref{eq:mms} with $\rho^n$ replaced by $\rho_R$ satisfies $\rho\leq \rho_{R+c_*\tau}$ a.e., provided that $B_{R+c_*\tau}\subset \Omega$.
\end{enumerate}
\end{prop}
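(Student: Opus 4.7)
My plan is to build an explicit radial family of barriers via their dual pressures, and to close property~(2) through a pressure comparison derived from Section~\ref{sec: comparison}. First, since the functional $J$ in \eqref{eq:primal_problem} is rotationally invariant, the uniqueness of the primal minimizer in Proposition~\ref{primal_dual} and the minimal selection in Lemma~\ref{pressure_special} guarantee that a radial input $\rho_R$ produces radial output $(\rho, p)$; reflection across any hyperplane through the origin combined with Corollary~\ref{comp} then shows that radial monotonicity is preserved as well.

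Fix $M \geq \rho^+$ lying in $\partial s^*([0,\infty))$, let $P := \inf\partial s(M) < \infty$, and set universal constants $c_* = c_*(M, G, s) > 0$ (to be calibrated below) and $R_* := P/c_*$. Define the radial pressure profile and the associated barrier
\[
\tilde{p}_R(x) := \min\bigl(P,\; c_*(R-|x|)_+\bigr), \qquad \rho_R(x) := (s^*)'\bigl(\tilde{p}_R(x)\bigr),
\]
taking any measurable selection from $\partial s^*$ where $s^*$ fails to be differentiable. By construction $\rho_R$ is radially nonincreasing, supported on $\overline{B_R}$, and equal to $M$ on $B_{R - R_*}$, so property~(1) is immediate.

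For property~(2), running one JKO step from $\rho_R$ produces $(\rho, p)$ that is radial and radially nonincreasing by the first step, with $\rho \in \partial s^*(p)$ and $0 \leq p \leq P$ by Corollary~\ref{cor:linfbound}. Since $\partial s^*$ is monotone, property~(2) reduces to the pointwise pressure bound $p(x) \leq \tilde{p}_{R+c_*\tau}(x)$. The ceiling $p \leq P$ is already in hand, so the crux is the radial slope bound: $p$ must decay at rate at least $c_*$ in the transition annulus. This is the main obstacle. I would establish it by inserting $\tilde{p}_{R+c_*\tau}$ as a competitor into the dual problem \eqref{eq:dual_problem} with source $\rho_R$: using the identity $\rho_R = (s^*)'(\tilde{p}_R)$, the duality relation $pz = s(z) + s^*(p)$ for $p \in \partial s(z)$ from Lemma~\ref{dual_relation}, and the $c$-concavity of $p$, the difference $J^*(\tilde{p}_{R + c_*\tau}, \rho_R) - J^*(p, \rho_R)$ can be rearranged into a manifestly nonnegative integral, provided $c_*$ is calibrated so as to dominate simultaneously the outward push from the growth term (controlled by $B$ and $\|\partial_x G\|_{W^{1,\infty}}$ via \ref{assumption: regularity}--\ref{assumption: bound}) and the transport cost $|x - T_p(x)|^2/(2\tau)$ in the annulus. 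This calibration --- the delicate matching of the slope $c_*$ against the tail of $\bar{G}(\cdot, x)$ on $[0, P]$ and against the Lipschitz constant of $G$ --- is where the explicit dependence of $c_*$ and $R_*$ on $G$, $\partial s^*$, and $\rho^+$ enters, and it is the part of the argument that requires the most care. Once the pressure dominance is obtained, the monotonicity of $\partial s^*$ yields $\rho \leq \rho_{R + c_*\tau}$, completing property~(2).
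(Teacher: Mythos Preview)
Your proposal has a genuine gap at the decisive step. You want to show $p \leq \tilde{p}_{R+c_*\tau}$ by arranging $J^*(\tilde{p}_{R+c_*\tau},\rho_R) - J^*(p,\rho_R) \geq 0$. But $p$ is already the maximizer of $J^*(\cdot,\rho_R)$, so this inequality would force $\tilde{p}_{R+c_*\tau}$ to be a maximizer as well; combined with the minimality of $p$ that would indeed give the comparison. The problem is that there is no mechanism to produce this inequality. Your barrier density is defined by $\rho_R = (s^*)'(\tilde p_R)$, but the relation $\rho\in\partial s^*(p)$ from \eqref{eq:pd_relation_energy} links the \emph{new} density to the optimal pressure, not the \emph{old} density $\rho^n$. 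With your choice of $\rho_R$ there is no reason the optimality condition \eqref{eq:dual_comparison_optimality} is satisfied by $\tilde p_{R+c_*\tau}$, and no amount of calibrating a single slope $c_*$ will force a piecewise-linear profile to be a genuine maximizer of the nonlinear, nonlocal functional $J^*(\cdot,\rho_R)$. The sentence ``can be rearranged into a manifestly nonnegative integral'' is exactly where the argument would have to do all the work, and it is left entirely unexplained.

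The paper proceeds in the opposite direction: it \emph{prescribes} a pressure profile $q_R$ built from the solution $Q$ of the ODE $-Q'' = \tilde G(Q)+w$ (with an $x$-independent $\tilde G\geq G$), and then \emph{defines} $\rho_R$ through the full optimality relation \eqref{eqn: relation between old density and pressure q radial case}, so that $q_R$ is by construction the exact optimal $p^c$ for the \emph{modified} dual problem \eqref{eqn: dual problem repeated}. The new density $\tilde\rho$ is then known explicitly from \eqref{eqn: the optimal new density}, and the comparison $\tilde\rho\leq\rho_{\tilde R}$ is checked by elementary monotonicity of $Q$, $Q'$ (Lemma~\ref{lem: new density is dominated by another barrier}). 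Only at the very end is Proposition~\ref{comparison} invoked, to pass from the modified problem back to the original one. The key idea you are missing is this inverse construction: design the barrier so that its dual pressure is \emph{exactly} known, rather than trying to dominate an unknown optimal pressure by an ad hoc competitor.
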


Note that given $\rho_0\in X$ satisfying \eqref{good_data_0}, we must have $\|\rho_0\|_{L^\infty(\Omega)}$ be finite and lie in $\partial s^*([0,+\infty))$. Thanks to Proposition \ref{comparison}, it immediately implies

\begin{corollary}[Densities propagate with finite speed]\label{finite_prop}
Given $\rho_0\in X$ satisfying \eqref{good_data_0}, let $R_*$, $c_*$ and $\{\rho_R\}$ be as in Lemma \ref{lem: basic barrier functions} with $\rho^+:=\|\rho_0\|_{L^\infty(\Omega)}$. For fixed $\tau>0$, let $\rho^n = \rho^{n,\tau}$ be as given in \eqref{eq:mms} starting from $\rho_0$. If $\rho_0\leq \rho_{R_0}$ a.e.\;for some $R_0\geq R_*$, then we have
$$
\rho^n \leq \rho_{R_n} \hbox{ where } R_n:= R_0 + nc_*\tau, \hbox{ as long as }  B_{R_n}\subset \Omega.
$$

 In particular, suppose that $\spt\rho_0\subset \overline{B_R(0)}$ for some $R>0$. Then $\spt\rho^n\subset \overline{B_{R_n}(0)}$ with $R_n : =R+R_*+nc_*\tau$ provided that the latter ball is contained in $\Omega$.

\end{corollary}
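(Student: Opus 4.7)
The plan is to prove the first assertion by induction on $n$, using Proposition~\ref{lem: basic barrier functions}(2) as the one-step propagation estimate and Proposition~\ref{comparison} (the comparison principle for the primal problem) as the monotone-dependence tool. The second assertion will then follow by constructing a suitable initial barrier from the regularity property Proposition~\ref{lem: basic barrier functions}(1).

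For the induction, the base case $n=0$ is exactly the hypothesis $\rho_0 \leq \rho_{R_0}$. For the inductive step, suppose $\rho^k \leq \rho_{R_k}$ and $B_{R_{k+1}}\subset\Omega$. Let $\tilde\rho^{k+1}$ denote the density obtained by applying one step of the JKO scheme~\eqref{eq:mms} with initial datum $\rho_{R_k}$ in place of $\rho^{n,\tau}$. Since $B_{R_k+c_*\tau} = B_{R_{k+1}}\subset \Omega$, Proposition~\ref{lem: basic barrier functions}(2) applies and gives $\tilde\rho^{k+1}\leq \rho_{R_{k+1}}$ a.e. On the other hand, the pair $(\rho^k,\rho_{R_k})$ satisfies the hypotheses of Proposition~\ref{comparison} with the common dual growth function $f$ (so $\partial_z f_0 = \partial_z f_1$ trivially), provided both densities satisfy \eqref{good_data}; this is a mild largeness condition on $\Omega$ that holds for $\Omega$ containing $B_{R_{k+1}}$, and the barriers $\rho_R$ are bounded and compactly supported by construction. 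Hence $\rho^{k+1}\leq \tilde\rho^{k+1}\leq \rho_{R_{k+1}}$ a.e., closing the induction.

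For the second, support-based assertion, suppose $\spt\rho_0\subset \overline{B_R(0)}$. Set $R_0 := R + R_*$, which satisfies $R_0\geq R_*$ as required by Proposition~\ref{lem: basic barrier functions}. Then property~(1) of that proposition gives $\rho_{R_0}\geq \rho^+ = \|\rho_0\|_{L^\infty(\Omega)}$ on $B_{R_0-R_*}= B_R$, so $\rho_0\leq \rho^+ \leq \rho_{R_0}$ on $B_R$; outside $\overline{B_R}$, $\rho_0=0\leq \rho_{R_0}$. Hence $\rho_0\leq \rho_{R_0}$ a.e., and the first part yields $\rho^n\leq \rho_{R_n}$ with $R_n = R + R_* + nc_*\tau$, so long as $B_{R_n}\subset\Omega$. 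Since each $\rho_R$ is supported in $\overline{B_R}$, this gives $\spt\rho^n\subset \overline{B_{R_n}}$.

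No serious obstacle is expected, since the two propositions do all the nontrivial work; the only subtlety is verifying the admissibility condition \eqref{good_data} throughout the induction, which is why the statement is framed so that $\Omega$ contains $B_{R_n}$, and the barriers $\rho_R$ are chosen bounded and compactly supported so that their averages on $\Omega$ can be made to lie strictly inside the relevant range of $\partial s^*$.
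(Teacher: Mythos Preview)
Your proof is correct and is precisely the argument the paper has in mind: the paper simply writes ``Thanks to Proposition~\ref{comparison}, it immediately implies'' before stating the corollary, and your induction spells out exactly this combination of the one-step barrier propagation (Proposition~\ref{lem: basic barrier functions}(2)) with the comparison principle (Proposition~\ref{comparison}). Your derivation of the support statement from property~(1) of the barriers is likewise the intended reading of the ``In particular'' clause.
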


Due to the uniqueness of $\rho^n$ (Proposition~\ref{primal_dual}), the following holds.
\begin{corollary}\label{cor:domain}
Suppose that $\rho_0\in X$ satisfies \eqref{good_data_0} with $\spt{\rho_0} \subset B_R(0)$. Then for $n\tau \leq T$ the sequence $\{\rho^n\}$ is independent of the choice of domain $\Omega$, as long as $\Omega$ contains $B_{R+R_* + c_*T}(0)$.
\end{corollary}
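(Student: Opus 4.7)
My plan is to reduce the domain-independence claim to an induction that combines finite-speed propagation (Corollary~\ref{finite_prop}) with the uniqueness of the minimizer in the primal problem (Proposition~\ref{primal_dual}). I would fix two admissible smooth bounded convex domains $\Omega_1 \subset \Omega_2$, each containing $B_{R+R_*+c_*T}(0)$, and let $\{\rho^{n,\tau}_i\}$ denote the sequence generated via \eqref{eq:mms} on $\Omega_i$ from the common initial data $\rho_0$. The goal is to prove by induction on $n$, for $n\tau \leq T$, that $\rho^{n,\tau}_1 = \rho^{n,\tau}_2$.

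The base case is immediate. For the inductive step, I would apply Corollary~\ref{finite_prop} on each $\Omega_i$ separately to obtain $\spt \rho^{n+1,\tau}_i \subset \overline{B_{R_{n+1}}(0)}$ with $R_{n+1} = R + R_* + (n+1)c_*\tau \leq R + R_* + c_*T$; hence both candidate solutions are supported inside $\Omega_1$. Moreover, any $\mu \in \textup{AC}(\rho^{n,\tau})$ is automatically supported in $\spt \rho^{n,\tau} \subset \Omega_1$ by the inductive hypothesis, so every $(\rho,\mu)$ that could conceivably compete with the minimizer after finite-speed localization also has its relevant supports in $\Omega_1$.

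The key technical observation is that for such competitors the functional $J(\rho,\mu,\rho^{n,\tau})$ from \eqref{eq:primal_problem} takes the same value whether computed on $\Omega_1$ or on $\Omega_2$. The summands $E(\rho)$ and $\tau F(\mu,\rho^{n,\tau})$ are intrinsically defined by the densities and are blind to the ambient domain, so the only point to check is that the quadratic $2$-Wasserstein distance between compactly supported densities sitting inside $\Omega_1$ is unchanged when the ambient domain is enlarged to $\Omega_2$. This follows from Brenier's theorem (Theorem~\ref{thm:fund_ot}): the unique optimal plan is induced by a map whose graph lies in the convex hull of the supports, which is contained in $\Omega_1$ by convexity, so no admissible plan inside $\Omega_2$ can do strictly better than the best plan inside $\Omega_1$.

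Combining these two ingredients, $(\rho^{n+1,\tau}_2, \mu^{n+1,\tau}_2)$ is an admissible competitor for the $\Omega_1$-problem at exactly the value it achieves on $\Omega_2$, and symmetrically $(\rho^{n+1,\tau}_1, \mu^{n+1,\tau}_1)$ is admissible on $\Omega_2$; the uniqueness statement in Proposition~\ref{primal_dual} (which continues to apply because $\rho^{n,\tau}$ satisfies \eqref{good_data} thanks to the uniform bounds of Lemma~\ref{lem:linf_bound}) then forces equality and closes the induction. The main potential obstacle is making the $W_2$-invariance under enlargement of $\Omega$ genuinely rigorous in the bounded convex setting used throughout the paper, but convexity of the $\Omega_i$ together with the compact inclusion of supports is precisely what ensures Brenier maps remain inside $\Omega_1$, so no additional work beyond invoking Theorem~\ref{thm:fund_ot} is required.
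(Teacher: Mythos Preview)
Your argument is essentially the paper's one-line proof (``Due to the uniqueness of $\rho^n$ (Proposition~\ref{primal_dual})'') spelled out in full, combining Corollary~\ref{finite_prop} with the uniqueness in Proposition~\ref{primal_dual} exactly as intended. The only minor gap is that you treat only nested domains $\Omega_1\subset\Omega_2$, whereas two admissible domains need not be comparable; this is immediately fixed by comparing each of them to the ball $B_{R+R_*+c_*T}(0)$ itself, which is an admissible (smooth, bounded, convex) choice of $\Omega$.
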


In the rest of this section we work toward the proof of Proposition \ref{lem: basic barrier functions}.

\medskip

We start by taking a smooth decreasing function $\tilde{G} = \tilde{G}(p)$, such that
\begin{enumerate}
  \item $\sup_{x\in \Omega}G(z,x)\leq \tilde{G}(z)$ for all $z\in \R$.
  \item $\tilde{G}(0)<+\infty$ and satisfies $\tilde{G}(z_M) = 0$, $s^*$ is differentiable at $z_M$ and $(s^*)'(z_M) \geq \rho^+$ for some $z_M>0$.
\end{enumerate}
Indeed, thanks to \ref{assumption: death when large pressure} and \ref{assumption: bound}, such $\tilde{G}$ exists and only depends on $G$ and $\rho^+$.

Let $\tilde{f} = \tilde{f}(z)$ be defined by $\tilde{G}(z)$ by \eqref{f_def} as $f$ is determined by $G$, and consider the modified dual problem with an $x$-independent growth term,
\begin{equation}
\sup_{q^{\bar{c}}\in X^*,\,q^{\bar{c}c}=q} \int_\Omega \rho_0(x)\big(q(x)+\tau \bar{\tilde{G}}(q(x))\big)\,dx-s^*(q^{\bar{c}}),
\label{eqn: dual problem repeated}
\end{equation}
where $\bar{\tilde{G}}(z) := z\tilde{G}(z)+\tilde{f}(\tilde{G}(z))$ is an anti-derivative of $\tilde{G}$.
By Proposition \ref{primal_dual}, \eqref{eqn: dual problem repeated} admits a maximizer, since we may introduce $p = q^{\bar{c}}$ and then $p\in X^*$ and $p$ is $c$-concave satisfying that $q = p^c$.
With abuse of notations, we still denote the maximizer by $q$.

Instead of treating $q$ to be determined by $\rho_0$, we shall first propose the optimal $q$ and then derive the corresponding $\rho_0$. 
For this purpose, let us assume that
\begin{enumerate}[label=(\roman*)]
  \item\label{barrier rho: symmetry} $\rho_0$ is radially symmetric, supported on $\overline{B_R}\subset \Omega$;
  \item\label{barrier rho: positive} $\rho_0>0$ on $B_R$ and $\rho_0\ll \mathcal{L}^d$;

  \item\label{barrier q: regularity} The optimal $q$ is radially symmetric and $q\in C^2(\overline{B_R})$.
\end{enumerate}

By radial symmetry, we may write $\rho_0 = \rho_0(r)$ and $q=q(r)$.
By definition, $q^{\bar{c}}(r-\tau q'(r)) = q(r)-\frac{\tau}{2}|q'(r)|^2$.
Taking $q$-variation in \eqref{eqn: dual problem repeated}, we obtain the optimality condition for $q$

\begin{equation}
\rho_0(r)\cdot \frac{1+\tau \tilde{G}(q(r))}{(1-\tau q''(r))(1-\tau r^{-1} q'(r))^{d-1}} \in  \partial s^*\left(q(r) - \frac{\tau}{2}|q'(r)|^2\right)\quad a.e.\;r\in[0,R).
\label{eqn: relation between old density and pressure q radial case}
\end{equation}

Note that this is also implied by \eqref{eq:pd_relation_energy}-\eqref{eq:pd_relation_growth}.

\medskip

Let $Q = Q(w)$ solve the following ODE on $w\geq 0$,
\begin{equation}
-Q''(w) = \tilde{G}(Q(w))+w,\quad Q(0) = z_M,\quad Q'(0) = 0.
\label{eqn: def of Q}
\end{equation}
It is then straightforward to show the following:
\begin{lemma}\label{lem: property of auxiliary function Q}
\begin{enumerate}
  \item There exists a unique $w_0>0$ only depending on $\tilde{G}$, such that $Q(w_0)=0$.
  \item $Q$ is smooth and $Q'(w), Q''(w)\leq 0$ for $w\in [0,w_0]$;
    \item There exists a unique $w_1 \in [0,w_0]$, such that
$Q(w_1) =\frac{\tau}{2}|Q'(w_1)|^2$.
Moreover, $c_*: = |Q'(w_1)|$ is bounded by some universal constant that only depends on $\tilde{G}$.
\end{enumerate}
\end{lemma}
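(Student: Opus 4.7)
The plan is straightforward, resting on the observation that the ODE for $Q$ does not involve $\tau$, so $Q$ and $w_0$ are functions of $\tilde{G}$ alone; only $w_1$ (and therefore the functional $H$ introduced below) will carry $\tau$-dependence. This separation is exactly what makes the universal bound on $c_*$ in item 3 possible.

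For items 1 and 2, I would first invoke standard ODE theory: since $\tilde{G}$ is smooth, $Q$ is smooth on its maximal interval of existence $[0, w_*)$. Let $w_* := \sup\{w > 0 : Q(s) > 0 \text{ for all } s \in [0, w)\}$. On $[0, w_*)$ one has $Q(w) \in (0, z_M]$ (immediately by continuity near $0$, and then globally by the concavity established next), so $\tilde{G}(Q(w)) \geq 0$ and \eqref{eqn: def of Q} gives $Q''(w) \leq -w \leq 0$. Integrating from $0$ yields $Q'(w) \leq -w^2/2$ and then $Q(w) \leq z_M - w^3/6$, which forces $w_* \leq (6 z_M)^{1/3} < \infty$. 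Continuity gives $Q(w_*) = 0$, and the strict monotonicity of $Q$ on $(0, w_*]$ gives uniqueness of $w_0 := w_*$. The bound on $w_0$ depends only on $z_M$, hence only on $\tilde{G}$.

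For item 3, I would set $H(w) := Q(w) - \tfrac{\tau}{2}|Q'(w)|^2$ and use an IVT plus strict monotonicity argument. One has $H(0) = z_M > 0$ and $H(w_0) = -\tfrac{\tau}{2}|Q'(w_0)|^2 < 0$ (since $Q'(w_0) < 0$). A direct calculation gives $H'(w) = Q'(w)\bigl(1 - \tau Q''(w)\bigr)$; on $(0, w_0]$, $Q'(w) < 0$ and $1 - \tau Q''(w) \geq 1 > 0$, so $H$ is strictly decreasing there. Hence there is a unique $w_1 \in (0, w_0)$ with $H(w_1) = 0$, which is the desired relation.

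For the universal bound on $c_* := |Q'(w_1)|$, I would use $Q'' \leq 0$ to conclude that $|Q'|$ is nondecreasing on $[0, w_0]$, so $c_* \leq |Q'(w_0)|$. Then $|Q'(w_0)| = \int_0^{w_0}\bigl[\tilde{G}(Q(s)) + s\bigr]\, ds \leq w_0\bigl(\tilde{G}(0) + w_0\bigr)$, using that $\tilde{G}$ is decreasing and $Q \geq 0$. Both $w_0$ and $\tilde{G}(0)$ depend only on $\tilde{G}$, so this bound is $\tau$-independent. There is no real obstacle in this lemma; the only point to be careful about is the clean separation of $\tau$-dependence, which is ensured by the structure of \eqref{eqn: def of Q}.
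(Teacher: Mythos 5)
Your proof is correct, and since the paper states this lemma without proof (declaring it ``straightforward to show''), your argument is exactly the standard one the authors intend: the differential inequality $Q''\leq -w$ forces $Q$ to hit zero by $w=(6z_M)^{1/3}$ and makes $Q$ strictly decreasing, while the monotonicity of $H(w)=Q(w)-\tfrac{\tau}{2}|Q'(w)|^2$ and the $\tau$-independent bound $c_*\leq |Q'(w_0)|\leq w_0(\tilde{G}(0)+w_0)$ settle item 3. The only step worth writing out more carefully is the claim $Q\leq z_M$ on the interval of positivity, which requires the small bootstrap (e.g.\ $Q'''(0)=-1$, so $Q<z_M$ immediately after $0$, and a first-return argument using $Q''\leq -w$ excludes re-crossing) rather than being a direct consequence of concavity.
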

Let $R_* = w_1+1$.
For any $R\geq R_*$, we define
\begin{equation}
q_R(r) =
\begin{cases}
z_M& \mbox{ if }r\leq R-w_1,\\
Q(r-(R-w_1))& \mbox{ if }r\in(R-w_1,R],\\
-\infty&\mbox{ otherwise.}
\end{cases}
\label{eqn: def of q_R}
\end{equation}
Note that \eqref{eqn: def of Q} and Lemma \ref{lem: basic barrier functions} guarantees $q_R\in C^2(\overline{B_R})$.
We also define according to \eqref{eqn: relation between old density and pressure q radial case} that 
\begin{equation}
\rho_R(r)
\begin{cases}
= (s^*)'(z_M)& \mbox{ if }r\leq R-w_1,\\
\in  \frac{(1-\tau q_R''(r))(1-\tau r^{-1} q_R'(r))^{d-1}}{1+\tau \tilde{G}(q_R(r))}
\cdot \partial s^*\left(q_R(r) - \frac{\tau}{2}|q_R'(r)|^2\right) & \mbox{ if }r\in(R-w_1,R],\\
=0&\mbox{ otherwise.}
\end{cases}
\label{eqn: precise def of rho_R}
\end{equation}
We need to justify that $\rho_R$ is well-defined a.e.\;in $\R^d$, in particular in the annular region $\{y\in \R^d:~|y|\in[R-w_1,R]\}$.
Take arbitrary $R'\in (R-w_1,R]$, and define
$$S_{R'}: = \Big\{r\in [R',R]:~ s^*(\cdot)\mbox{ is not differentiable at }q_R(r)-\frac{\tau}{2}| q_R'(r)|^2\Big\}.
$$ It is enough to show that $S_{R'}$ has measure zero.
Note that $q_R(r)-\frac{\tau}{2}| q_R'(r)|^2$ is $C^1$ and strictly decreasing on $[R',R]$, satisfying that
$$
\frac{d}{dr}\left(q_R(r)-\frac{\tau}{2}| q_R'(r)|^2\right)\leq C(R')<0\quad \forall\, r\in[R',R].
$$
Then $S_{R'}$ must have measure zero by virtue of the area formula \cite{simon1983lectures} and the fact that $s^*(\cdot)$ is convex and thus a.e.\;differentiable.
Since $R'$ can be arbitrarily close to $R-w_1$, we conclude that $\rho_R$ is well-defined a.e.\;in $\R^d$.
Hence, it is valid to write
\begin{equation}
\rho_R(r)= \frac{(1-\tau q_R''(r))(1-\tau r^{-1} q_R'(r))^{d-1}}{1+\tau \tilde{G}(q_R(r))}
\cdot (s^*)'\left(q_R(r) - \frac{\tau}{2}|q_R'(r)|^2\right) \quad a.e.~r\in[0,R].
\label{eqn: simplified relation between old density and pressure q radial case}
\end{equation}
$\rho_R$ satisfies the assumptions \ref{barrier rho: symmetry}-\ref{barrier rho: positive}.

Define $\tilde{\rho}$ by
\begin{equation}
\tilde{\rho}(r+\tau|q_R'(r)|)
\begin{cases}
= (s^*)'(z_M)& \mbox{ if }r\leq R-w_1,\\
\in  \partial s^*\left(q_R(r) - \frac{\tau}{2}|q_R'(r)|^2\right) & \mbox{ if }r\in(R-w_1,R],\\
=0&\mbox{ otherwise.}
\end{cases}
\label{eqn: the optimal new density}
\end{equation}
By a similar argument as above, $\tilde{\rho}$ is well-defined a.e.\;in $\R^d$.
Let us note that if we consider the following modified dual problem,
\[
\sup_{p\in X^*,\, p^{c\bar{c}}=p}\int_{\Omega} \rho_R(x)\Big(p^c(x)+\tau \bar{\tilde{G}}\big(p^c(x)\big)\Big)\, dx -\int_{\Omega} s^*(p(x)) dx,
\]
which is equivalent to \eqref{eqn: dual problem repeated} via $c$- and $\bar{c}$-transforms, by Proposition \ref{primal_dual}, the optimal $p$ is uniquely given by $q_R^{\bar{c}}$ on $\spt \tilde{\rho}$, while $\tilde{\rho}$ defined in \eqref{eqn: the optimal new density} is exactly the optimal new density.

For $\tilde{\rho}$, we can additionally show that
\begin{lemma}\label{lem: new density is dominated by another barrier}
$\tilde{\rho}\leq \rho_{\tilde{R}}$ almost everywhere, where $\tilde{R} := R+\tau |q_R'(R)|$.
\begin{proof}
By Lemma \ref{lem: property of auxiliary function Q} and \eqref{eqn: def of q_R}, $q_{\tilde{R}}'(r),q_{\tilde{R}}''(r)\leq 0$ for  $r\in [0,\tilde{R}]$.
By \eqref{eqn: def of Q} and \eqref{eqn: simplified relation between old density and pressure q radial case}, \begin{equation*}
\begin{split}
\rho_{\tilde{R}}(r) 
\geq &\;\frac{1-\tau q_{\tilde{R}}''(r)}{1+\tau \tilde{G}(q_{\tilde{R}}(r))}\cdot (s^*)'\left(q_{\tilde{R}}(r) - \frac{\tau}{2}|q_{\tilde{R}}'(r)|^2\right)\\
\geq  &\; (s^*)'\left(q_{\tilde{R}}(r) - \frac{\tau}{2}|q_{\tilde{R}}'(r)|^2\right).
\end{split}
\end{equation*}
Since $\tilde{\rho}$ is defined by \eqref{eqn: the optimal new density} and $(s^*)'$ is non-decreasing, it suffices to show that for all $r\in [0,R]$,
\begin{equation*}
q_{\tilde{R}}(r+\tau|q_R'(r)|) - \frac{\tau}{2}|q_{\tilde{R}}'(r+\tau|q_R'(r)|)|^2 \geq q_R(r)-\frac{\tau}{2}|q_R'(r)|^2.
\end{equation*}
By Lemma \ref{lem: property of auxiliary function Q}, $q_{\tilde{R}}$ is a decreasing function while $|q_{\tilde{R}}'|$ and $|q_{R}'|$ are increasing.
Hence,
\begin{equation*}
\begin{split}
&\;q_{\tilde{R}}(r+\tau|q_R'(r)|) - \frac{\tau}{2}|q_{\tilde{R}}'(r+\tau|q_R'(r)|)|^2\\
\geq &\;q_{\tilde{R}}(r+\tau|q_R'(R)|) - \frac{\tau}{2}|q_{\tilde{R}}'(r+\tau|q_R'(R)|)|^2\\
= &\; q_R(r)-\frac{\tau}{2}|q_R'(r)|^2.
\end{split}
\end{equation*}
The last step is derived from \eqref{eqn: def of q_R} and the definition of $\tilde{R}$.
\end{proof}
\end{lemma}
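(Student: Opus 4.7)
\textbf{Proof plan for Lemma \ref{lem: new density is dominated by another barrier}.}
The strategy is to compare $\tilde\rho$ and $\rho_{\tilde R}$ through the monotonicity of $(s^*)'$ after reducing both sides to evaluations of $(s^*)'$ at comparable arguments. First I would drop the geometric factors in the formula \eqref{eqn: simplified relation between old density and pressure q radial case} for $\rho_{\tilde R}$. From the basic properties in Lemma \ref{lem: property of auxiliary function Q}, $q_{\tilde R}'\leq 0$ and $q_{\tilde R}''\leq 0$ on $[0,\tilde R]$, so $(1-\tau r^{-1} q_{\tilde R}'(r))^{d-1}\geq 1$. More importantly, the defining ODE \eqref{eqn: def of Q} gives
\[
\tilde G(q_{\tilde R}(r)) = -q_{\tilde R}''(r) - (r-(\tilde R - w_1)) \le -q_{\tilde R}''(r),
\]
so $(1-\tau q_{\tilde R}''(r))/(1+\tau \tilde G(q_{\tilde R}(r))) \ge 1$ on the annular shell. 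Combining these yields the clean bound $\rho_{\tilde R}(r) \ge (s^*)'\!\left(q_{\tilde R}(r) - \tfrac{\tau}{2}|q_{\tilde R}'(r)|^2\right)$ a.e.

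Next, by the definition \eqref{eqn: the optimal new density} of $\tilde\rho$ and the fact that $(s^*)'$ is non-decreasing, it suffices to establish, for every $r\in[0,R]$,
\[
q_{\tilde R}(r+\tau|q_R'(r)|) - \tfrac{\tau}{2}\bigl|q_{\tilde R}'(r+\tau|q_R'(r)|)\bigr|^2 \ \ge\ q_R(r) - \tfrac{\tau}{2}|q_R'(r)|^2.
\]
The key observation is the shift identity coming from \eqref{eqn: def of q_R}: since $q_R(r)=Q(r-(R-w_1))$ and $q_{\tilde R}(r)=Q(r-(\tilde R-w_1))$, one has $q_{\tilde R}(r+(\tilde R-R)) = q_R(r)$ and similarly for derivatives. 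By the choice $\tilde R - R = \tau|q_R'(R)|$, this will turn the desired inequality into a monotonicity check.

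Indeed, from Lemma \ref{lem: property of auxiliary function Q}, $Q'$ and $Q''$ are nonpositive on $[0,w_0]$, so $|Q'(w)|$ is non-decreasing; consequently $|q_R'(r)| \le |q_R'(R)|$ and therefore $r+\tau|q_R'(r)| \le r + (\tilde R - R)$. Since $q_{\tilde R}$ is decreasing and $|q_{\tilde R}'|$ is non-decreasing in $r$, monotonicity gives
\[
q_{\tilde R}(r+\tau|q_R'(r)|) \ \ge\ q_{\tilde R}(r+\tilde R -R) = q_R(r),\qquad
\bigl|q_{\tilde R}'(r+\tau|q_R'(r)|)\bigr| \ \le\ |q_R'(r)|,
\]
which yields the required inequality and hence the lemma.

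The only delicate point I anticipate is checking that $\tilde\rho$ and $\rho_{\tilde R}$ are legitimately compared a.e. in the annular region (since both are defined via $\partial s^*$ at a strictly decreasing argument), but this was already handled by the area formula argument used just before the lemma, so it carries over without change. Everything else reduces to bookkeeping with the ODE for $Q$ and the monotonicity properties collected in Lemma \ref{lem: property of auxiliary function Q}.
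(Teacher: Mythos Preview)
Your proposal is correct and follows essentially the same approach as the paper's proof: you drop the geometric factors using $q_{\tilde R}',q_{\tilde R}''\le 0$ and the ODE \eqref{eqn: def of Q} to obtain $\rho_{\tilde R}(r)\ge (s^*)'\bigl(q_{\tilde R}(r)-\tfrac{\tau}{2}|q_{\tilde R}'(r)|^2\bigr)$, then reduce to the monotonicity comparison via the shift identity $q_{\tilde R}(\,\cdot\,+\tilde R-R)=q_R(\cdot)$ combined with $|q_R'(r)|\le |q_R'(R)|$. The only differences are cosmetic: you spell out the ODE inequality $\tilde G(q_{\tilde R})\le -q_{\tilde R}''$ explicitly, whereas the paper invokes \eqref{eqn: def of Q} directly.
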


Now the proof of Proposition \ref{lem: basic barrier functions} is around the corner.

\begin{proof}[Proof of Proposition \ref{lem: basic barrier functions}]

Let $\rho$ be the new optimal density corresponding to $\rho_R$ obtained by the original discrete scheme \eqref{eq:mms}, while $\tilde{\rho}$ is obtained from the primal problem associated with \eqref{eqn: dual problem repeated}.

By the definition of $\tilde{G}$ and the monotonicity of $G(\cdot,x)$, it is not difficult to verify that $\partial_z \tilde{f}(z)\leq \partial_z f(z,x)$.
Thanks to Proposition \ref{comparison}, $\rho\leq \tilde{\rho}$ almost everywhere.
Hence, $\rho\leq \rho_{R+c_*\tau}$ follows from Lemma \ref{lem: new density is dominated by another barrier} and the fact that $|q_R'(R)|= c_*$.
\end{proof}

\medskip

\noindent{\it $\circ$ Finer construction of barriers}
\medskip

It is possible to construct a refined family of barriers $\{\rho_{R,A}\}$, which describes finer features of the density propagation.
Roughly speaking, $\rho_{R,A}$ has a radial, plateau-shaped profile with support $\overline{B_R}$, but the boundary transition of $\rho_{R,A}$ from 0 to its maximum value takes place in an annular region of width $O(A^{-1})$. With these barriers we obtain the following proposition, describing relaxation of the spreading speed for densities which with initially steep profiles near the boundary.
Since this result does not affect the rest of the paper, we postpone its proof as well as more detailed discussions in the Appendix \ref{appendix: improved barriers}.

\begin{prop}[Relaxation of the propagation speed]\label{prop_iteration}
Suppose $\rho_0\in L^{\infty}(\Omega)\cap X$ satisfies \eqref{good_data_0}. Fix $\tau\in (0,1]$.
Let $\{\rho^n\}$ be the sequence of densities obtained by the discrete scheme \eqref{eq:mms} starting from $\rho_0$.
Take $\rho^+ =\|\rho_0\|_{L^\infty(\Omega)}$ and let $R_*$ and $c_*$ be defined as in Lemma \ref{lem: basic barrier functions}.

With $A\geq 1$, let $\rho_{R,A}$ be defined in \eqref{eqn: precise def of rho_RA}.
Suppose $\rho_0\leq \rho_{R_0,A_0}$ for some $A_0\geq 1$ and suitable $R_0$. Then there is a sequence $\{R_n\}_{n = 0}^\infty$ with $R_n - R_{n-1}\to \tau c_*$, such that
$\spt\rho^{n} \subset \overline{B_{R_n}}$ for all $n \in \N$ which satisfies $B_{R_n}\subset \Omega$.
\end{prop}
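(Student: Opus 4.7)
The plan is to upgrade the strategy of Proposition~\ref{lem: basic barrier functions} by introducing a two-parameter family of radial plateau barriers $\rho_{R,A}$ and applying the comparison principle iteratively. The second parameter $A$ will encode the steepness of the boundary-layer profile, so that a larger $A$ corresponds to a narrower transition annulus of width $O(A^{-1})$ in which the density drops from its maximum value to $0$. The key idea is that if $\rho^n$ is dominated by $\rho_{R_n,A_n}$ with $A_n$ large, then after one step of the scheme one can only guarantee $\rho^{n+1}\leq \rho_{R_{n+1},A_{n+1}}$ for some $R_{n+1}-R_n<\tau c_*$ and some smaller $A_{n+1}$; the steep front forces a slower instantaneous propagation. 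As the profile relaxes (i.e. $A_n\searrow $ some universal value), the incremental spread $R_n-R_{n-1}$ increases up to the asymptotic value $\tau c_*$ coming from the ODE~\eqref{eqn: def of Q}.

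Concretely, I would construct $\rho_{R,A}$ using a modified version of the auxiliary ODE for $Q$ in~\eqref{eqn: def of Q}, replacing the initial data $Q(0)=z_M$, $Q'(0)=0$ by $Q_A(0)=z_M$, $Q_A'(0)=-A^{-1}\alpha$ for some appropriate constant $\alpha>0$, so that the zero of $Q_A$ occurs at a width $w_0(A)$ with $w_0(A)-w_0\to 0$ as $A\to\infty$ only through an $O(A^{-1})$ correction. I then define $q_{R,A}$, $\rho_{R,A}$ and $\tilde\rho_{R,A}$ in direct analogy with~\eqref{eqn: def of q_R}, \eqref{eqn: precise def of rho_RA} and~\eqref{eqn: the optimal new density}. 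The analogue of Lemma~\ref{lem: new density is dominated by another barrier} would then read: the new optimal density $\tilde\rho_{R,A}$ produced by the one-step dual problem with growth $\tilde G$ is dominated a.e.\;by $\rho_{\tilde R,\tilde A}$ where $\tilde R = R + \tau\, c_*(A_n)$ with $c_*(A):= |q_{R,A}'(R)|$, and $\tilde A$ is determined by matching the transition widths. Combined with Proposition~\ref{comparison} (using $\partial_z\tilde f\leq \partial_z f$ as in the proof of Proposition~\ref{lem: basic barrier functions}), this yields $\rho^{n+1}\leq \rho_{R_{n+1},A_{n+1}}$ by induction, starting from the hypothesis $\rho^0=\rho_0\leq \rho_{R_0,A_0}$.

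With the recursion $(R_n,A_n)\mapsto(R_{n+1},A_{n+1})$ in hand, it remains to analyze its long-time behavior. The scheme-induced map on the $A$ variable should be a contraction toward a fixed value $A_\infty$ which corresponds to the profile generated by $Q$ itself, i.e.\;the one used in Lemma~\ref{lem: property of auxiliary function Q}; at $A=A_\infty$ the increment is exactly $\tau c_*$. A monotonicity argument (tracking how the boundary slope transforms under one step of the JKO-type update, using the explicit ODE satisfied by $Q_A$) should then give $A_n\to A_\infty$ and consequently $c_*(A_n)\to c_*$, proving $R_n-R_{n-1}\to \tau c_*$. The main obstacle is precisely this step: constructing the $\rho_{R,A}$ so that the one-step update closes on the same two-parameter family and extracting sharp enough quantitative estimates on the evolution of the slope parameter $A_n$ to establish its convergence; the $(s^*)'$ nonlinearity and the need to keep the barriers $c$-concave under iteration is where the computations become delicate, which is why the detailed construction is deferred to Appendix~\ref{appendix: improved barriers}.
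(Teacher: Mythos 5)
Your overall architecture matches the paper's: a two-parameter family of plateau barriers $\rho_{R,A}$, a one-step closure lemma proved via the comparison principle, and a recursion $(R_n,A_n)\mapsto(R_{n+1},A_{n+1})$ whose $A$-component converges so that the increments $R_n-R_{n-1}$ tend to $\tau c_*$. However, two of your specific claims would fail. First, the direction of the steepness effect is reversed. In the paper the one-step increment is $\tau A|Q'(w_A)|$, where $w_A$ solves $Q(w_A)=\frac{A^2\tau}{2}|Q'(w_A)|^2$, and $A|Q'(w_A)|$ is \emph{increasing} in $A$ (Lemma \ref{lem: property of auxiliary function Q_A}); a steep front carries a large pressure gradient of size $O(A)$ at the boundary and therefore propagates \emph{faster} per step (the increment can be as large as $\sqrt{2z_M\tau}\gg \tau c_*$), and as $A_n$ decays geometrically to $1$ the increments \emph{decrease} to $\tau c_*$. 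Your claim that a large $A_n$ forces $R_{n+1}-R_n<\tau c_*$ with the increments increasing up to $\tau c_*$ is the opposite of what happens, and an induction built on that inequality would not close.

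Second, your proposed construction of $\rho_{R,A}$ — keeping the ODE but perturbing the initial slope to $Q_A'(0)=-A^{-1}\alpha$ — does not produce a transition layer of width $O(A^{-1})$: an $O(A^{-1})$ change in the initial slope leaves the profile's support width $O(1)$, so the parameter $A$ would not encode steepness at all and the family could not dominate genuinely sharp initial fronts. The paper instead sets $q_{R,A}(r)=Q(A(r-R+w_A/A))$, compressing the \emph{same} $Q$ by the factor $A$ in the radial variable and truncating at the $A$-dependent point $w_A$; the compatibility of this rescaled profile with the optimality condition \eqref{eqn: relation between old density and pressure q radial case} is what makes the one-step update stay inside the family. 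Finally, the quantitative heart of the argument — that one may relax the parameter to $\tilde A=A/(1+\tilde c\tau)$ for a universal $\tilde c$ while preserving $\rho_\dag\le\rho_{\tilde R,\tilde A}$ (Lemmas \ref{lem: simplify the constraints on tilde A} and \ref{lem: improved parameter}) — is exactly the step you defer, so the convergence $A_n\to 1$ and hence $R_n-R_{n-1}\to\tau c_*$ remains unproved in your proposal.
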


\section{Equicontinuity for the Densities}
\label{sec: equicontinuity}

In the remainder of this paper, we will focus on showing that the interpolations $\rho^{\tau}, \mu^{\tau}$ and $p^{\tau}$ defined in \eqref{interpolation} converge to a solution of the tumor growth PDE when $\tau$ goes to zero.

 We shall suppose that the initial data $\rho_0\in X$, satisfies \eqref{good_data_0}. For such $\rho_0$, we will choose the domain $\Omega$ in the following way. For a given time horizon $T>0$, we choose $\Omega$ sufficiently large so that
$$
B_{R_0 + CT} \subset \Omega,
$$
with $R_0$ and $C$ only depending on $\rho_0$, $G$ and $s$, so that (i) the interpolations $(\rho^{\tau}, \mu^{\tau}, p^{\tau})$ given in \eqref{interpolation} stay compactly supported in $B_{R_0+CT}$ and (ii) $\rho^{\tau}$ satisfies \eqref{good_data} for $0\leq t\leq T$. This is possible due to Proposition~\ref{finite_prop}. We then extend the values of the discrete-time solutions to zero outside of $\Omega$. Due to Corollary~\ref{cor:domain}, any $\Omega$ satisfying the above requirements produces the same discrete-in-time solutions for $0\leq t\leq T$. In the below discussions, we will use the extended notion of discrete solutions in $\R^d\times[0,T]$.

\medskip

Now we can turn to the main focus of this section, establishing spatial and temporal equicontinuity estimates  for the densities.  These equicontinuity estimates will be used  to obtain strong convergence of densities (see Corollary~\ref{L^1_conv}) using an Arzeli-Ascola-type argument given in \cite{JKT}.  Although our arguments unfold along similar lines to  \cite{JKT}, we require substantially new arguments to handle the growth term and the lack of continuous differentiability for $s^*$.
\medskip

\subsection{The energy dissipation inequality, BV bounds, and equicontinuity in time}

We begin with the standard ``energy dissipation inequality",  which will allow us to obtain both BV and equicontinuity-in-time estimates for the densities.

\begin{lemma}\label{lem: energy dissipation inequality}
Given a time horizon $T>0$, let $\rho^\tau$, $\mu^\tau$ and $p^\tau$ be defined as in \eqref{interpolation}. Then for $T' := \big(\lfloor \frac{T}{\tau}\rfloor+1\big) \tau$ we have
\begin{equation*}
E(\rho^{\tau}(\cdot,T))+\frac{1}{2}\int_0^{T'} \int_{\R^d}|\nabla p^{\tau}|^2\rho^{\tau}\, dx\, dt \leq E(\rho_0)+\int_0^{T'} \int_{\R^d} p^{\tau}\mu^{\tau}\, dx\, dt.
\end{equation*}
\end{lemma}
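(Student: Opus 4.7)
The plan is to establish a per-step inequality and sum it up. The per-step inequality is
\begin{equation*}
E(\rho^{n+1,\tau}) - E(\rho^{n,\tau}) + \tfrac{1}{2\tau} W_2^2\bigl(\rho^{n+1,\tau},\, \rho^{n,\tau} + \tau\mu^{n+1,\tau}\bigr) \;\leq\; \tau \int_\Omega p_{n+1,\tau}\,\mu^{n+1,\tau}\,dx.
\end{equation*}
Summing from $n = 0$ to $n = \lfloor T/\tau\rfloor$ (so that $\rho^{n+1,\tau} = \rho^\tau(\cdot,T)$ at the last step) then yields the lemma, after converting the $W_2^2$ terms via the identity $W_2^2(\rho^{n+1,\tau}, \rho^{n,\tau} + \tau\mu^{n+1,\tau}) = \tau^2\int\rho^{n+1,\tau}|\nabla p_{n+1,\tau}|^2\,dx$, which is immediate from $T_{p_{n+1,\tau}}^{-1}(x) = x+\tau\nabla p_{n+1,\tau}(x)$ together with $T_{p_{n+1,\tau}\#}(\rho^{n,\tau}+\tau\mu^{n+1,\tau})=\rho^{n+1,\tau}$ in \eqref{eq:pd_relation_pushforward}.

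For the per-step inequality I would combine convexity with Kantorovich duality. Convexity of $s$ together with $p_{n+1,\tau}\in\partial s(\rho^{n+1,\tau})$ from \eqref{eq:pd_relation_energy} gives pointwise $s(\rho^{n+1,\tau}) - s(\rho^{n,\tau}) \leq p_{n+1,\tau}(\rho^{n+1,\tau}-\rho^{n,\tau})$. Lemma~\ref{lem:c_duality}, applied to the transport from $\rho^{n,\tau}+\tau\mu^{n+1,\tau}$ to $\rho^{n+1,\tau}$ with $c$-concave potential $p_{n+1,\tau}$, yields
\[ \int p_{n+1,\tau}\,\rho^{n+1,\tau}\,dx + \tfrac{1}{2\tau}W_2^2 = \int p^c_{n+1,\tau}\bigl(\rho^{n,\tau}+\tau\mu^{n+1,\tau}\bigr)\,dx. \]
Combining these gives
\[ E(\rho^{n+1,\tau})-E(\rho^{n,\tau}) + \tfrac{1}{2\tau}W_2^2 \;\leq\; \int (p^c_{n+1,\tau}-p_{n+1,\tau})\rho^{n,\tau}\,dx + \tau\int p^c_{n+1,\tau}\mu^{n+1,\tau}\,dx. \]

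The final step is to spot a cancellation. Using $\mu^{n+1,\tau} = \rho^{n,\tau}\,G(p^c_{n+1,\tau},x)$ from \eqref{eq:pd_relation_growth}, the right-hand side rewrites as
\[ \int (p^c_{n+1,\tau}-p_{n+1,\tau})\bigl(\rho^{n,\tau}+\tau\mu^{n+1,\tau}\bigr)\,dx + \tau\int p_{n+1,\tau}\mu^{n+1,\tau}\,dx. \]
Since $p^c_{n+1,\tau}\leq p_{n+1,\tau}$ pointwise (by the definition of the $c$-transform) and $\rho^{n,\tau}+\tau\mu^{n+1,\tau} = \rho^{n,\tau}(1+\tau G(p^c_{n+1,\tau},x))\geq 0$ for $\tau \leq 1/B$ (with $B$ from \ref{assumption: bound}), the first integral is non-positive, which proves the per-step inequality.

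The main subtle point is precisely this cancellation. Direct convexity plus transport produces $p^c_{n+1,\tau}$, not $p_{n+1,\tau}$, in the growth integrand; moreover, on the set $\{p^c_{n+1,\tau}>b(x)\}$ one has $\mu^{n+1,\tau}<0$, so $p_{n+1,\tau}\mu^{n+1,\tau}\leq p^c_{n+1,\tau}\mu^{n+1,\tau}$ pointwise there, which goes the wrong direction. What saves the bound is that the transport residual $\int(p^c_{n+1,\tau}-p_{n+1,\tau})\rho^{n,\tau}\,dx$ combines with the growth term into a single integrand weighted by the pre-transport density $\rho^{n,\tau}+\tau\mu^{n+1,\tau}\geq 0$, which is exactly what is needed to close the sign.
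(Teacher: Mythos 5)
Your proof is correct. It shares the paper's skeleton --- the convexity step $E(\rho^{n+1,\tau})-E(\rho^{n,\tau})\le\int p_{n+1,\tau}(\rho^{n+1,\tau}-\rho^{n,\tau})\,dx$, the identity $\frac{1}{2\tau}W_2^2=\frac{\tau}{2}\int\rho^{n+1,\tau}|\nabla p_{n+1,\tau}|^2\,dx$, and the telescoping sum --- but extracts the dissipation differently. The paper splits $\int p_{n+1,\tau}\rho^{n,\tau}\,dx=\int p_{n+1,\tau}(\rho^{n,\tau}+\tau\mu^{n+1,\tau})\,dx-\tau\int p_{n+1,\tau}\mu^{n+1,\tau}\,dx$ \emph{before} invoking the pushforward, so the growth term comes out paired with $p_{n+1,\tau}$ from the outset, and it then bounds the remaining term $\int\rho^{n+1,\tau}\big(p_{n+1,\tau}(x)-p_{n+1,\tau}(T^{-1}_{p_{n+1,\tau}}(x))\big)\,dx$ by $-\frac{\tau}{2}\int\rho^{n+1,\tau}|\nabla p_{n+1,\tau}|^2\,dx$ using convexity of $y\mapsto p_{n+1,\tau}(y)+\frac{1}{2\tau}|y-x|^2$; no $c$-transform appears and there is no cancellation to arrange. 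You instead apply Kantorovich duality (Lemma~\ref{lem:c_duality}/Theorem~\ref{thm:fund_ot}) to $\int p_{n+1,\tau}\rho^{n+1,\tau}\,dx$, which produces the dissipation exactly but leaves the growth term paired with $p^c_{n+1,\tau}$; the regrouping you identify --- so that $p^c_{n+1,\tau}-p_{n+1,\tau}\le0$ is weighted by the non-negative pre-transport density --- is then genuinely necessary and is correctly executed. The two mechanisms are at bottom equivalent: $p^c\le p$ evaluated at $y$ and transported through $x=T_{p}(y)$ is precisely the paper's pointwise inequality $p(x)-p(T^{-1}_p(x))\le-\frac{\tau}{2}|\nabla p(x)|^2$. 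One small remark: the non-negativity of $\rho^{n,\tau}+\tau\mu^{n+1,\tau}$ does not require $\tau\le 1/B$; it follows from the optimality conditions (see \eqref{eqn: positivity of density to be transported}) and is needed anyway for $W_2^2(\rho^{n+1,\tau},\rho^{n,\tau}+\tau\mu^{n+1,\tau})$ to make sense, so your argument in fact proves the lemma as stated, without any restriction on $\tau$.
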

\begin{proof}
By Lemma~\ref{dual_relation} applied to $(\rho^{n,\tau}, p_{n+1, \tau})$ and the inequality $\rho p \leq s(\rho) + s^*(p)$,
\[
E(\rho^{n+1,\tau})-E(\rho^{n,\tau})\leq \int_{\R^d} p_{n+1,\tau}(x)\big(\rho^{n+1,\tau}(x)-\rho^{n,\tau}(x))\,dx.
\]
Using the first formula in \eqref{eq:pd_relation_pushforward} we can rewrite the right hand side of the previous formula as
\[
\int_{\R^d} \rho^{n+1,\tau}\Big(p_{n+1,\tau}(x)-p_{n+1,\tau} \big(T^{-1}_{p_{n+1,\tau}}(x)\big)\Big)+\tau\mu^{n+1,\tau}(x)p_{n+1,\tau}(x)\, dx.
\]
The convexity of the map $y\mapsto p_{n+1,\tau}(y)+\frac{1}{2\tau}|y-x|^2$  gives us
\[
p_{n+1,\tau}\big(T^{-1}_{p_{n+1,\tau}}(x)\big)+\frac{1}{2\tau}\big|T^{-1}_{p_{n+1,\tau}}(x)-x\big|^2\geq p_{n+1,\tau}(x)+\big(\nabla p_{n+1,\tau}(x), T^{-1}_{p_{n+1,\tau}}(x)-x\big).
\]
From the second formula in \eqref{eq:pd_relation_pushforward} it follows that
\[
p_{n+1,\tau}(x)-p_{n+1,\tau}\big(T^{-1}_{p_{n+1,\tau}}(x)\big)\leq -\frac{\tau}{2}|\nabla p_{n+1,\tau}(x)|^2.
\]
Therefore
\[
E(\rho^{n+1,\tau})-E(\rho^{n,\tau})+\frac{\tau}{2}\int_{\R^d} \rho^{n+1,\tau}|\nabla p_{n+1,\tau}(x)|^2\,dx\leq \tau\int_{\R^d} \mu^{n+1,\tau}(x)p_{n+1,\tau}(x)\, dx.
\]
Summing over $n$ from $0$ to $\lfloor\frac{T}{\tau}\rfloor$ we conclude.
\end{proof}

For $\rho_0\in X$ satisfying \eqref{good_data_0}, $p^{\tau}$ is uniformly bounded by $P_1$ defined in Lemma~\ref{lem:linf_bound}.
By the assumption \ref{assumption: bound}, for any $t\in [0,T]$, $\rho^{\tau}(t,\R^d)\leq e^{B(t+\tau)}\rho_0(\R^d)$. Hence,
$$
\int_0^{T'}\int_{\R^d} p^{\tau}\mu^{\tau}\, dx\,dt \leq P_1B\left(\tau \rho_0(\R^d)+\int_0^{T'-\tau}\rho^{\tau}(\R^d,t)\,dt\right)\leq  P_1 e^{BT'}\rho_0(\R^d).
$$
Combining this with Lemma \ref{lem: energy dissipation inequality} and the fact $T'\leq T+\tau$, we find that
\begin{corollary}\label{edi}
Assume $\tau \leq 1/B$ and let $T'$ be defined as in Lemma \ref{lem: energy dissipation inequality}.
Then
\begin{equation}\label{discrete_edi}
\frac{1}{2}\int_0^{T'} \int_{\R^d}|\nabla p^{\tau}|^2\rho^{\tau}\, dx\, dt \leq E(\rho_0) +M\rho_0(\R^d),
\end{equation}
where $M$ depends on $T$, $G$, $s$ and $\norm{\rho_0}_{L^\infty(\R^d)}$.
\end{corollary}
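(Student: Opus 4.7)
The plan is to combine Lemma~\ref{lem: energy dissipation inequality} with a careful estimate of the source term on its right-hand side. Starting from the identity
$$
E(\rho^{\tau}(\cdot,T))+\frac{1}{2}\int_0^{T'}\int_{\R^d}|\nabla p^\tau|^2\rho^\tau\,dx\,dt\leq E(\rho_0)+\int_0^{T'}\int_{\R^d} p^\tau\mu^\tau\,dx\,dt,
$$
the first reduction is to drop $E(\rho^\tau(\cdot,T))\geq 0$ on the left, which is justified since $s(0)=0$ together with monotonicity of $s$ on $[0,\infty)$ forces $s\geq 0$. What remains is to absorb the source integral into $M\rho_0(\R^d)$ for a constant $M$ depending only on $T$, $G$, $s$ and $\|\rho_0\|_\infty$.

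To control the source term I would rely on three ingredients. First, Corollary~\ref{cor:linfbound} (equivalently Lemma~\ref{lem:linf_bound} applied to the minimal pressure) supplies a uniform upper bound $p^\tau\leq P_1$ where $P_1$ depends only on $\|\rho_0\|_\infty$, $s$ and $b_1$. Second, Proposition~\ref{primal_dual} gives the formula $\mu^{n+1,\tau}=\rho^{n,\tau}G(p^c_{n+1},\cdot)$, so \ref{assumption: bound} yields $|\mu^{n+1,\tau}|\leq B\rho^{n,\tau}$ and $\spt\mu^{n+1,\tau}\subset\spt\rho^{n,\tau}$. On this support, Lemma~\ref{lem:expansion} gives $\spt\rho^{n,\tau}\subset\spt\rho^{n+1,\tau}$, and the subgradient relation $p_{n+1,\tau}\in\partial s(\rho^{n+1,\tau})$ together with $s(0)=0$ forces $p_{n+1,\tau}\geq 0$ there; this upgrades the product to the clean pointwise bound $p_{n+1,\tau}\mu^{n+1,\tau}\leq P_1 B\rho^{n,\tau}$. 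Third, the mass identity $\rho^{n+1,\tau}(\R^d)=\rho^{n,\tau}(\R^d)+\tau\mu^{n+1,\tau}(\R^d)$ coming from the pushforward relation \eqref{eq:pd_relation_pushforward} iterates, using $1+\tau B\leq e^{\tau B}$, to the discrete Grönwall bound $\rho^\tau(\cdot,t)(\R^d)\leq e^{B(t+\tau)}\rho_0(\R^d)$.

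Assembling these three bounds, summing the pointwise estimate over the $\lfloor T/\tau\rfloor+1$ time steps, and comparing the resulting geometric sum with its integral form yields $\int_0^{T'}\int_{\R^d} p^\tau\mu^\tau\,dx\,dt\leq P_1 e^{BT'}\rho_0(\R^d)$. The hypothesis $\tau\leq 1/B$ enters only to guarantee $T'\leq T+1/B$ so that the resulting constant is bounded independently of $\tau$; one can then take $M:=P_1\,e^{B(T+1/B)}$. The only mildly delicate point is the sign of $p_{n+1,\tau}$ on $\spt\mu^{n+1,\tau}$, since Corollary~\ref{cor:linfbound} a priori gives only an upper bound on the pressure; this potential obstacle is removed by the support-monotonicity argument in the previous paragraph, after which everything else is routine discrete bookkeeping.
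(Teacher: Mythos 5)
Your proposal is correct and follows essentially the same route as the paper: drop the nonnegative term $E(\rho^\tau(\cdot,T))$, bound the source integral using the uniform pressure bound $P_1$ from Lemma~\ref{lem:linf_bound}, the formula $\mu^{n+1,\tau}=\rho^{n,\tau}G(p^c_{n+1},\cdot)$ with \ref{assumption: bound}, and the mass growth estimate $\rho^\tau(\R^d,t)\leq e^{B(t+\tau)}\rho_0(\R^d)$, with $\tau\leq 1/B$ only controlling $T'\leq T+1/B$. Your extra care about the sign of $p_{n+1,\tau}$ on $\spt\mu^{n+1,\tau}$ is a valid (and slightly more explicit) justification of a point the paper passes over silently.
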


Based on Lemma~\ref{BV} and \eqref{discrete_edi}, we can estimate the growth of the BV norm for the density.  The BV estimate will play a crucial role in establishing the spatial equicontinuity of the densities in the following subsection.

\begin{corollary}\label{BV2}
Assume $\tau \leq (2B)^{-1}$.
Suppose $\rho_0 \in BV$.
There exists a constant $M$ depending on $T$, $G$, $s$ and $\norm{\rho_0}_{L^{\infty}(\R^d)}$, such that for all $t\in [0,T]$,
$$
F(t):=\int_{\R^d} |\nabla \rho^{\tau}|(x,t) \,dx \leq e^{Bt}\big((1+\tau B)F(0) + 2E(\rho_0)+ M\rho_0(\R^d)\big).
$$
\end{corollary}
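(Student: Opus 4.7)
The plan is to simply iterate the one-step BV bound from Lemma \ref{BV} and then feed in Corollary \ref{edi} together with the $L^1$ mass bound to control the two non-trivial terms.

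First, write $F_n := \int_{\R^d} |\nabla \rho^{n,\tau}|\,dx$ for $n = 0, 1, \dots$, so $F(t) = F_N$ for $N = \lfloor t/\tau\rfloor$ (or $N+1$, depending on convention). Since $\tau \leq (2B)^{-1}$, we have $1 - \tau B \geq 1/2$, so $\tfrac{1}{2(1-\tau B)} \leq 1$, and Lemma \ref{BV} reduces to the recursion
\[
F_{n+1} \leq (1+\tau B)F_n + \tau \int_{\R^d} |\nabla p_{n+1,\tau}|^2 \rho^{n+1,\tau}\,dx + \tau\Big(\tfrac{g_0^2}{2} + g_0\Big)\rho^{n,\tau}(\R^d).
\]

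Next I would unroll the recursion. Using $(1+\tau B)^{N-1-n} \leq e^{B(N-1-n)\tau} \leq e^{Bt}$ and $(1+\tau B)^N \leq (1+\tau B)e^{B(N-1)\tau} \leq (1+\tau B)e^{Bt}$, I obtain
\[
F_N \leq (1+\tau B)e^{Bt} F_0 + e^{Bt}\sum_{n=0}^{N-1}\tau \int_{\R^d}|\nabla p_{n+1,\tau}|^2 \rho^{n+1,\tau}\,dx + e^{Bt}\Big(\tfrac{g_0^2}{2} + g_0\Big)\sum_{n=0}^{N-1}\tau \rho^{n,\tau}(\R^d).
\]
The middle sum is exactly $\int_0^{N\tau}\int_{\R^d}|\nabla p^\tau|^2 \rho^\tau\,dx\,dt$, which by Corollary \ref{edi} is bounded by $2E(\rho_0) + 2M_1 \rho_0(\R^d)$ for a constant $M_1$ depending on $T, G, s, \|\rho_0\|_\infty$. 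For the final sum, the assumption \ref{assumption: bound} combined with the scheme gives $\rho^{n,\tau}(\R^d) \leq e^{Bn\tau}\rho_0(\R^d) \leq e^{BT}\rho_0(\R^d)$ (this is exactly the mass-growth estimate already used right before Corollary \ref{edi}), so
\[
\sum_{n=0}^{N-1}\tau \rho^{n,\tau}(\R^d) \leq (T+\tau)e^{BT}\rho_0(\R^d).
\]
Absorbing $2M_1$, $g_0$, $B$, and $T$ into a single constant $M$ of the allowed form, everything combines to the stated bound.

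There is essentially no obstacle here; the estimate is a bookkeeping exercise on top of the two ingredients already in hand (Lemma \ref{BV} and Corollary \ref{edi}). The only small point is making sure the hypothesis $\tau \leq (2B)^{-1}$ is invoked in two places: once to use Lemma \ref{BV} (which required $\tau < 1/B$) and once to dispose of the $(1-\tau B)^{-1}$ factor so that the coefficient of $\int |\nabla p^\tau|^2 \rho^\tau$ is absorbed cleanly into the $2E(\rho_0)$ term.
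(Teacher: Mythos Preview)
Your proposal is correct and follows essentially the same route as the paper: both arguments iterate the one-step estimate of Lemma~\ref{BV}, then control the accumulated pressure-gradient term by Corollary~\ref{edi} and the accumulated mass term by the exponential mass-growth bound. The only cosmetic difference is that you unroll the discrete recursion directly and bound the geometric factors $(1+\tau B)^{k}$ by $e^{Bt}$, whereas the paper first rewrites the summed recursion as an integral inequality of the form $F(t)\leq (1+\tau B)F(0)+B\int_0^t F(s)\,ds+\textup{const.}$ and then invokes Gronwall; these are equivalent bookkeeping choices.
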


\begin{proof}
Let $T'$ be defined as in Lemma \ref{lem: energy dissipation inequality}.
Due to Lemma~\ref{BV} and \eqref{edi}, for any $t\in [0,T]$, 
\begin{equation*}
\begin{split}
\int_{\R^d} |\nabla\rho^\tau(x,t)|\,dx
\leq &\; B \int_0^t\int_{\R^d} |\nabla\rho^\tau(x,s)|\, dx\, ds+(1+\tau B)\int_{\R^d} |\nabla \rho_{0}|\,dx\\
&\;+ \int_0^{T'} \int_{\R^d}|\nabla p^{\tau} |^2\rho^{\tau} \, dx\, ds + M_2\left(\int_0^t\rho^{\tau}(\R^d,s)\, ds+\tau \rho_{0}(\R^d)\right)\\
\leq &\;B \int_0^t\int_{\R^d} |\nabla\rho^\tau(x,s)|\, dx\, ds+(1+\tau B)\int_{\R^d} |\nabla \rho_{0}|\,dx\\
&\;+ 2M_1 + M_2B^{-1}e^{B(t+\tau)}\rho_{0}(\R^d).
\end{split}
\end{equation*}
where 
$M_1:= E(\rho_0) +M\rho_0(\R^d)$ and $M_2 := \frac{1}{2}g_0^2+g_0$.  

Here we used the fact that $\rho^{\tau}(\R^d,t) \leq e^{B(t+\tau)} \rho_0(\R^d)$.
Then we conclude by the Gronwall's inequality.
\end{proof}

Next we obtain an equicontinuity-in-time estimate for the discrete densities.

\begin{lemma} \label{time_density}
Assume $\tau \leq 1/B$ and define $\rho^\tau(x,t) := \rho_0(x)$ for $t<0$.
Then for any $T>0$
$$
\int_0^{T} \Big\|\frac{\rho^{\tau}(\cdot,t)-\rho^{\tau}(\cdot,t-\tau)}{\tau}\Big\|_{H^{-1}(\R^d)}^2\leq M,
$$
where $M$ depends only on $T$, $G$, $s$ and $\rho_0$.
\end{lemma}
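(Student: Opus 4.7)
The plan is to bound $\|\rho^{n+1,\tau}-\rho^{n,\tau}\|_{H^{-1}}$ step-by-step in a way that sums telescopically against the energy dissipation bound of Corollary~\ref{edi}. Since $\rho^\tau$ is piecewise constant in time, we have
$$
\int_0^T\Big\|\tfrac{\rho^{\tau}(\cdot,t)-\rho^{\tau}(\cdot,t-\tau)}{\tau}\Big\|_{H^{-1}}^2 dt
= \sum_{n} \tau\,\Big\|\tfrac{\rho^{n+1,\tau}-\rho^{n,\tau}}{\tau}\Big\|_{H^{-1}}^2,
$$
so it suffices to exhibit, for each $n$, a bound of the form
$$
\Big\|\tfrac{\rho^{n+1,\tau}-\rho^{n,\tau}}{\tau}\Big\|_{H^{-1}}^2
\;\leq\; C\int_{\R^d} \rho^{n+1,\tau}|\nabla p_{n+1,\tau}|^2\,dx \;+\; C,
$$
where $C$ depends only on the data. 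Summing in $n$ and invoking Corollary~\ref{edi} then yields the claim.

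To obtain the per-step bound, I would split
$$
\rho^{n+1}-\rho^{n} \;=\; \underbrace{\big(\rho^{n+1}-(\rho^{n}+\tau\mu^{n+1})\big)}_{\text{transport piece}} \;+\; \tau\mu^{n+1},
$$
isolating the Wasserstein displacement from the source contribution. For the transport piece, I would use $\rho^{n+1}=T_{p_{n+1}\,\#}(\rho^{n}+\tau\mu^{n+1})$ with $T(y)=y-\tau\nabla p_{n+1}^{c}(y)$ and test against $\phi\in H^1$, expanding
$$
\int_{\R^d}\phi\,\big(\rho^{n+1}-\rho^{n}-\tau\mu^{n+1}\big)\,dx
= -\tau\int_0^1\!\!\int_{\R^d} \nabla\phi(y_s)\cdot\nabla p_{n+1}^c(y)\,(\rho^{n}+\tau\mu^{n+1})(y)\,dy\,ds,
$$
along the segment $y_s=(1-s)y+sT(y)$. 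Cauchy--Schwarz in $y$ and the change of variables from \eqref{eq:pd_relation_pushforward} turn the factor involving $\nabla p_{n+1}^c$ into $\|\nabla p_{n+1}\|_{L^2(\rho^{n+1})}$, while the factor with $\nabla\phi$ becomes $\|\nabla\phi\|_{L^2(\nu_s)}$, where $\nu_s$ is the McCann interpolant pushed forward by $\Phi_s(y):=(1-s)y+sT(y)$. For the source piece, since $|\mu^{n+1}|\leq B\rho^{n}$ and $\rho^{n,\tau}$ is uniformly bounded with uniformly compact support by Corollary~\ref{cor:linfbound} and Corollary~\ref{finite_prop}, we obtain $\|\tau\mu^{n+1}\|_{H^{-1}}\leq\|\tau\mu^{n+1}\|_{L^2}\leq C\tau$.

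The main technical point is controlling $\|\nu_s\|_{L^\infty}$ uniformly in $s\in[0,1]$, so that $\|\nabla\phi\|_{L^2(\nu_s)}\leq C\|\nabla\phi\|_{L^2}$. Here I would exploit the fact that both $T$ and $T^{-1}$ are gradients of convex functions (Theorem~\ref{thm:fund_ot}). Writing $\Phi_s = (1-s)\,\mathrm{id}+ sT$ and $\widetilde{\Phi}_s=(1-s)T^{-1}+ s\,\mathrm{id}$, convexity gives $D\Phi_s\geq (1-s)I$ and $D\widetilde{\Phi}_s\geq sI$ wherever smooth. Hence the area formula yields
$$
\|\nu_s\|_\infty \leq \min\!\Big(\tfrac{\|\rho^{n}+\tau\mu^{n+1}\|_\infty}{(1-s)^d},\;\tfrac{\|\rho^{n+1}\|_\infty}{s^d}\Big),
$$
and splitting $[0,1]$ at $s=1/2$ gives the uniform bound $\|\nu_s\|_\infty\leq 2^d\max(\|\rho^{n}+\tau\mu^{n+1}\|_\infty,\|\rho^{n+1}\|_\infty)$, which is uniform in $n$ and $\tau$ by Corollary~\ref{cor:linfbound}. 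Putting it all together gives $\|\rho^{n+1}-\rho^{n}-\tau\mu^{n+1}\|_{H^{-1}}\leq C\tau\|\nabla p_{n+1}\|_{L^2(\rho^{n+1})}$, which combined with the source bound yields the desired per-step estimate. The hardest part is precisely this interpolant bound; once obtained, the argument essentially reduces to the energy dissipation inequality.
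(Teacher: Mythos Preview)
Your proposal is correct and follows essentially the same argument as the paper. Both test the difference quotient against $\phi\in H^1$, use the pushforward relation \eqref{eq:pd_relation_pushforward} to rewrite the transport part as an integral of $\nabla\phi$ along the displacement interpolation, apply Cauchy--Schwarz, and then sum against the energy dissipation inequality (Corollary~\ref{edi}); the source term $\mu^{n+1}$ is handled identically via the bound $|\mu^{n+1}|\leq B\rho^n$ and the uniform $L^\infty$/support control. The only substantive difference is how you bound the interpolant $\|\nu_s\|_{L^\infty}$: the paper invokes displacement convexity of $L^p$-norms to get $\|\nu_s\|_\infty\leq\max(\|\rho^n+\tau\mu^{n+1}\|_\infty,\|\rho^{n+1}\|_\infty)$ directly, whereas your Jacobian argument via $D\Phi_s\geq(1-s)I$ gives the same conclusion up to a harmless factor $2^d$. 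Your route is slightly more hands-on but equally valid.
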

\begin{proof}
Let $\phi$ be a smooth function, and consider
\[
\int_{\R^d} \frac{\rho^{n+1,\tau}(x)-\rho^{n,\tau}(x)}{\tau} \phi(x)\, dx=\int_{\R^d} \frac{\phi(x)-\phi\big(x+\tau\nabla p_{n+1,\tau}(x)\big)}{\tau}\rho^{n+1,\tau}(x)+\mu^{n+1,\tau}(x)\phi(x)\, dx.
\]
Applying the fundamental theorem of calculus, the previous line is equal to
\[
\int_{\R^d}\int_0^1  \nabla \phi\big(x+\tau \theta\nabla p_{n+1,\tau}(x)\big)\cdot \nabla p_{n+1,\tau}(x)\rho^{n+1}(x)+\mu^{n+1,\tau}(x)\phi(x)\, d\theta\, dx.
\]
Applying Cauchy-Schwarz we then have the bound
\[
\norm{\nabla \phi}_{L^2(\tilde{\rho}^{n+1,\tau})}\norm{\nabla p_{n+1,\tau}}_{L^2(\rho^{n+1})}+\norm{\phi}_{L^2(\R^d)}B\norm{\rho^{n,\tau}}_{L^\infty(\R^d)}^{1/2}\rho^{n,\tau}(\R^d)^{1/2}.
\]
where
\[
\tilde{\rho}^{n+1,\tau}:=\int_0^1 \rho^{n+1,\tau}_{\theta}d\theta\quad \mbox{ and }\quad
\rho_{\theta}^{n+1,\tau}:= (\id +\tau\theta\nabla p_{n+1,\tau})_{\#}\rho^{n+1,\tau}.
\]
$L^p$-norms are displacement convex \cite{otam}, so
\[
\norm{\tilde{\rho}^{n+1,\tau}}_{L^{\infty}(\R^d)}\leq \max\Big(\norm{\rho^{n,\tau}+\tau\mu^{n+1,\tau}}_{L^{\infty}(\R^d)}, \norm{\rho^{n+1,\tau}}_{L^{\infty}(\R^d)}\Big).
\]
Hence,
\[
\begin{split}
&\;\Big\|\frac{\rho^{n+1,\tau}-\rho^{n,\tau}}{\tau}\Big\|_{H^{-1}(\R^d)}\\
\leq &\;\max\Big(\norm{\rho^{n,\tau}+\tau\mu^{n+1,\tau}}_{L^{\infty}(\R^d)}, \norm{\rho^{n+1,\tau}}_{L^{\infty}(\R^d)}\Big)^{1/2}\norm{\nabla p_{n+1,\tau}}_{L^2(\rho^{n+1})}\\
&\;+\Big(\norm{\rho^{n,\tau}}_{L^{\infty}(\R^d)}\rho^{n,\tau}(\R^d)\Big)^{1/2}B.
\end{split}
\]
Now we can conclude by Lemma~\ref{lem:linf_bound} and Corollary~\ref{edi}.
\end{proof}

\subsection{Equicontinuity in space}

Based on the comparison principles from the previous section and the BV estimate in Lemma~\ref{BV}, we  establish a spatial equicontinuity property for $\rho^\tau$.

\begin{prop}\label{equicontinuity}
Let $\rho_0$ and $\rho^\tau$ as given above. For any $y\in \R^d$ we have
\begin{equation*}%\label{prop1}
\lim_{\epsilon\to 0}\, \sup_{0<\tau\leq (2B)^{-1}}\, \int_0^T \int_{\R^d} |\rho^{\tau}(x+\epsilon y,t)-\rho^{\tau}(x,t)|\,dx\,dt=0.
\end{equation*}
\end{prop}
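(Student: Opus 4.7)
The plan is to follow the general strategy of \cite{JKT} by combining comparison principles with a BV-type bound, but to work around the failure of $L^1$-contraction (caused by the pressure dependent growth term) through a \emph{sandwich estimate}. The key observation is that $\tilde{\rho}^\tau(x,t):=\rho^\tau(x+\epsilon y,t)$ is itself generated by the discrete scheme, but with the translated growth term $\tilde{G}(p,x):=G(p,x+\epsilon y)$. By assumption \ref{assumption: regularity}, $|\tilde{G}-G|\leq L\epsilon|y|$ uniformly, so we are comparing two evolutions with very close growth functions. We will also need to approximate the possibly non-BV initial data by smooth initial data, then use the BV bound Corollary~\ref{BV2} for the regularization, and control the $L^1$ error of the approximation.

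The core new ingredient is an \emph{$L^1$-stability estimate} of the form
\begin{equation*}
\|\rho_1^\tau(\cdot,t)-\rho_2^\tau(\cdot,t)\|_{L^1(\R^d)}\leq e^{Ct}\,\|\rho_{0,1}-\rho_{0,2}\|_{L^1(\R^d)}+C'\epsilon|y|e^{Ct},
\end{equation*}
valid when $\rho_1^\tau$ is generated by $G$ and $\rho_2^\tau$ by $\tilde{G}$. To prove it, introduce $G^{\pm}(p,x):=G(p,x)\pm L\epsilon|y|$, and let $\eta^\tau,\xi^\tau$ be discrete solutions generated by $(G^{+},\max(\rho_{0,1},\rho_{0,2}))$ and $(G^{-},\min(\rho_{0,1},\rho_{0,2}))$. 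Corollary~\ref{comp} then yields $\xi^n\leq \rho_1^n,\rho_2^n\leq \eta^n$ pointwise, so $|\rho_1^n-\rho_2^n|\leq \eta^n-\xi^n$. The mass of the envelope gap $D_n:=\int(\eta^n-\xi^n)\,dx$ evolves according to $D_{n+1}-D_n=\tau\int(\mu_\eta-\mu_\xi)\,dx$, and the identity
\begin{equation*}
\eta G^{+}(p_\eta^c,x)-\xi G^{-}(p_\xi^c,x)=(\eta-\xi)G(p_\eta^c,x)+\xi\bigl(G(p_\eta^c,x)-G(p_\xi^c,x)\bigr)+L\epsilon|y|(\eta+\xi),
\end{equation*}
together with the pressure comparison $(p_\xi)_+\leq (p_\eta)_+$ from Proposition~\ref{comparison0}, will yield the recurrence $D_{n+1}\leq (1+\tau B)D_n+C\tau\epsilon|y|e^{Bn\tau}\|\rho_0\|_{L^1}$. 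Discrete Gronwall finishes the stability estimate.

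Once the stability is in hand, fix a mollifier $\phi_\delta$ and set $\rho_0^\delta:=\rho_0*\phi_\delta$, so $\|\rho_0^\delta\|_{L^\infty}\leq \|\rho_0\|_{L^\infty}$, $\|\rho_0^\delta-\rho_0\|_{L^1}\to 0$, and $\|\rho_0^\delta\|_{BV}\leq C\delta^{-1}$. Corollary~\ref{BV2} then gives $\|\rho^{\tau,\delta}(\cdot,t)\|_{BV}\leq C_T\delta^{-1}$, so the BV solution satisfies $\int|\rho^{\tau,\delta}(x+\epsilon y,t)-\rho^{\tau,\delta}(x,t)|\,dx\leq C_T\epsilon|y|\delta^{-1}$. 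Combining with the $L^1$-stability via the triangle inequality (applied to both $\rho^\tau$ versus $\rho^{\tau,\delta}$ and their translates),
\begin{equation*}
\int_0^T\!\!\int_{\R^d}|\rho^\tau(x+\epsilon y,t)-\rho^\tau(x,t)|\,dx\,dt\leq C_T(\delta+\epsilon|y|/\delta+\epsilon|y|),
\end{equation*}
and the choice $\delta=\sqrt{\epsilon|y|}$ gives a bound vanishing as $\epsilon\to 0$, uniformly in $\tau\leq(2B)^{-1}$. The main obstacle is Step~2: because the comparison principle only orders the \emph{positive} parts of the pressures, controlling $\int\xi(G(p_\eta^c,x)-G(p_\xi^c,x))\,dx$ requires a careful restriction to $\{\xi>0\}$, where both pressures are non-negative (via the dual relation and the monotonicity of $\partial s$) so that the desired comparison $G(p_\eta^c,\cdot)\leq G(p_\xi^c,\cdot)$ can be recovered; this sign analysis in regions where $\xi$ vanishes is the delicate point that replaces the $L^1$-contraction argument of \cite{JKT}.
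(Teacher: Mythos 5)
Your architecture --- BV regularization of the initial data, an $L^1$-stability estimate propagated via the comparison principle, and the $L^1$-Lipschitz property of BV functions --- matches the paper's, and your assembly would close; but you insert an extra ingredient, stability with respect to perturbations of the growth term, that the paper's proof shows is unnecessary. The paper never reinterprets $\rho^\tau(\cdot+\epsilon y,t)$ as a solution of a translated problem. Instead it writes
\[
|\rho^{\tau}(x+\epsilon y,t)-\rho^{\tau}(x,t)|\le |\rho^{\tau}(x+\epsilon y,t)-\rho_1^{\tau}(x+\epsilon y,t)|+|\rho_1^{\tau}(x+\epsilon y,t)-\rho_1^{\tau}(x,t)|+|\rho_1^{\tau}(x,t)-\rho^{\tau}(x,t)|,
\]
where $\rho_1^{\tau}$ is the discrete solution started from a BV mollification $\rho_1$ of $\rho_0$ under the \emph{same} $G$. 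After integrating in $x$, the first term equals the third by translation invariance of Lebesgue measure, so the $x$-dependence of $G$ never enters: one only needs the same-$G$ stability estimate of Lemma \ref{lem: contraction} (whose proof is exactly the $c$-transform/pressure-comparison argument you sketch for controlling $\int\xi\,(G(p_\eta^c,x)-G(p_\xi^c,x))\,dx$, including the restriction to the support of the smaller density where the pressures are nonnegative) together with the BV bound of Corollary \ref{BV2} for the middle term. Your sandwich between $G^{\pm}$ proves a genuinely stronger statement (continuous dependence on $G$) and is essentially sound, but two caveats. First, the additive perturbation $G^{+}=G+L\epsilon|y|$ can violate \ref{assumption: death when large pressure}: if $G(\cdot,x)\ge 0$ everywhere (e.g.\;$G=(1-p)_+$), then $G^{+}$ has no zero, so the uniform pressure bound of Lemma \ref{lem:linf_bound} and hence the hypotheses of Proposition \ref{comparison0} and Corollary \ref{comp} fail for the upper envelope; you should instead take $G^{\pm}(p,x)=\sup_{|x'-x|\le\epsilon|y|}G(p,x')$ and $\inf_{|x'-x|\le\epsilon|y|}G(p,x')$, which keep the threshold bounded by $b_1$ while still differing from $G$ by at most $L\epsilon|y|$. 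Second, the quantitative choice $\delta=\sqrt{\epsilon|y|}$ works but is not needed: the paper sends $\epsilon\to0$ for fixed $\delta$ and then lets $\delta\to0$, which avoids tracking how the BV constant degenerates in $\delta$.
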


Observe that  when $\rho_0\in BV$, the result is immediate as $\{\rho^\tau\}_\tau$ has uniform BV bound on $[0,T]$ due to Corollary~\ref{BV2}.  In the general case, we will obtain equicontinuity by approximating the initial data $\rho_0$ with BV densities.  In order for this strategy to work, we will need to be able to extend this approximation to all times.   This is accomplished in the following lemma, which states that the $L^1$-difference of two discrete solutions can be controlled in terms of their $L^1$-difference at the initial time.

\begin{lemma}\label{lem: contraction}
Let  $\rho_0, \rho_1\in X$ satisfy \eqref{good_data_0}, and let $\rho^{\tau}_i$ be given by \eqref{interpolation} with initial data $\rho_i$ for $i=0,1$.
Then for all $0\leq t\leq T$,
\begin{equation}\label{contraction}
\int_0^t\int_{\R^d}|\rho^\tau_1(x,s)-\rho^\tau_0(x,s)|\, dx \,ds \leq \frac{1}{B} (e^{Bt}-1)(1+\tau B) \|\rho_1-\rho_0\|_{L^1(\R^d)},
\end{equation}
where $B$ is given in \ref{assumption: bound}.
\end{lemma}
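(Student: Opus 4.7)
The plan is to sandwich the two flows between extremal discrete solutions via the comparison principle, establish a one-step discrete estimate $D^{n+1}(\R^d)\leq (1+\tau B)D^n(\R^d)$ for the gap $D^n:=\bar\rho^{n,\tau}-\underline\rho^{n,\tau}$, iterate, and then integrate in time.

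Concretely, I would let $\bar\rho^{n,\tau}$ and $\underline\rho^{n,\tau}$ denote the discrete solutions produced by the scheme started from $\rho_0\vee\rho_1$ and $\rho_0\wedge\rho_1$, respectively; both initial data satisfy \eqref{good_data_0}. Iterating the density comparison in Corollary~\ref{comp} gives $\underline\rho^{n,\tau}\leq \rho_i^{n,\tau}\leq \bar\rho^{n,\tau}$ for $i=0,1$ and all $n$, hence pointwise $|\rho_1^{n,\tau}-\rho_0^{n,\tau}|\leq D^n\geq 0$ and the $L^1$ bound $\|\rho_1^{n,\tau}-\rho_0^{n,\tau}\|_{L^1}\leq D^n(\R^d)$. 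For the one-step estimate, the mass balance $\rho^{n+1}(\R^d)=\rho^n(\R^d)+\tau\int_{\R^d}\mu^{n+1}$ combined with the formula $\mu^{n+1}=\rho^n G(p^c_{n+1},x)$ from \eqref{eq:pd_relation_growth} yields
\begin{equation*}
D^{n+1}(\R^d)-D^n(\R^d) = \tau\!\int_{\R^d}\!\bigl[(\bar\rho^n-\underline\rho^n)G(\bar p^c_{n+1},x) + \underline\rho^n\bigl(G(\bar p^c_{n+1},x)-G(\underline p^c_{n+1},x)\bigr)\bigr]dx.
\end{equation*}
Since $|G|\leq B$ and $\bar\rho^n\geq \underline\rho^n\geq 0$, the first term integrates to at most $BD^n(\R^d)$; the second is nonpositive, hence harmless, provided $G(\bar p^c_{n+1},x)\leq G(\underline p^c_{n+1},x)$ on $\spt\underline\rho^n$.

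The main obstacle is establishing this last comparison, since Corollary~\ref{comp} only supplies $(\bar p_{n+1})_+\geq (\underline p_{n+1})_+$ in the positive-part sense. My plan is to exploit the freedom in the vacuum region of the dual problem. Since $s^*(p)\equiv 0$ on $\{p\leq 0\}$ and $z\mapsto z+\tau\bar G(z)$ is nondecreasing when $\tau\leq 1/B$, replacing a dual maximizer $p$ by the nonnegative $c$-concave function $\tilde p := ((p)_+)^{c\bar c}$ leaves the $s^*$-term of $J^*$ unchanged while only increasing the $c$-transform term (from $p_+\geq p$ one has $(p_+)^c\geq p^c$, and then $\tilde p^c=(p_+)^{c\bar cc}=(p_+)^c$ by Lemma~\ref{lem:ccc}); thus $\tilde p$ is again a maximizer. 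The $c$- and $\bar c$-transforms preserve pointwise order, so $(\bar p_{n+1})_+\geq (\underline p_{n+1})_+$ lifts to $\bar{\tilde p}\geq \underline{\tilde p}$ pointwise, and therefore $\bar{\tilde p}^c\geq \underline{\tilde p}^c$ everywhere, giving $G(\bar{\tilde p}^c,x)\leq G(\underline{\tilde p}^c,x)$ by monotonicity of $G$. Finally, the primal uniqueness in Proposition~\ref{primal_dual} forces $G(p^c_{n+1},x)=\mu^{n+1}(x)/\rho^n(x)$ on $\{\rho^n>0\}$ independent of the choice of dual maximizer, so the $G$-values of $\bar p^c_{n+1}$ and $\bar{\tilde p}^c$ coincide on $\spt\bar\rho^n$ (and likewise for $\underline p^c_{n+1}$ and $\underline{\tilde p}^c$ on $\spt\underline\rho^n$), yielding the required $G(\bar p^c_{n+1},x)\leq G(\underline p^c_{n+1},x)$ on $\spt\underline\rho^n\subset\spt\bar\rho^n$.

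With $D^{n+1}(\R^d)\leq (1+\tau B)D^n(\R^d)$ in hand, iterating from $D^0(\R^d)=\|\rho_1-\rho_0\|_{L^1}$ gives $D^n(\R^d)\leq (1+\tau B)^n\|\rho_1-\rho_0\|_{L^1}$. Summing over the discrete intervals covering $[0,t]$ and using the elementary bound $(1+\tau B)^n\leq e^{n\tau B}\leq e^{Bt}$ for $n\tau\leq t$ produces
\begin{equation*}
\int_0^t\!\|\rho_1^\tau(\cdot,s)-\rho_0^\tau(\cdot,s)\|_{L^1}\,ds \;\leq\; \tau\!\sum_{n=1}^{\lceil t/\tau\rceil}\!(1+\tau B)^n\|\rho_1-\rho_0\|_{L^1} \;\leq\; \frac{(1+\tau B)(e^{Bt}-1)}{B}\|\rho_1-\rho_0\|_{L^1},
\end{equation*}
which is the claimed estimate \eqref{contraction}.
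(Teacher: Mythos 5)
Your overall architecture matches the paper's: reduce to ordered initial data, use the density comparison principle at each step, write the one-step mass change via $\mu^{n+1}=\rho^n G(p^c_{n+1},\cdot)$, split into a term bounded by $B$ times the $L^1$ gap plus a nonpositive term, and close with a Gronwall/geometric-sum argument. (The paper reduces to the ordered case via $\rho_\dag=\rho_0\wedge\rho_1$ and the triangle inequality, which gives exactly the same constant as your $\vee/\wedge$ sandwich.) The one place where you genuinely diverge is the key sub-claim $\bar p^c_{n+1}\geq \underline p^c_{n+1}$ on the support of the smaller density. The paper proves this directly by contradiction: if $q_1(y)<q_0(y)$ at some $y$ in the larger support, then evaluating $p_0$ at $T_{p_1}(y)\in\spt\rho_1^{k+1}$ (where the pressure comparison holds and the pressures are nonnegative) produces a competitor beating the optimality of $T_{p_0}$ in the infimum defining $q_0(y)$. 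Your route instead upgrades the positive-part comparison to a genuine pointwise comparison by replacing each dual maximizer $p$ with $((p)_+)^{c\bar c}$, checking this is still a maximizer, and then transferring back via primal uniqueness. This is a valid and rather clean observation (one can always select nonnegative $c$-concave dual maximizers), and it avoids touching the transport maps; the paper's argument is more elementary and needs no restriction on $\tau$.

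Two small caveats. First, your maximizer-replacement step uses that $z\mapsto z+\tau\bar G(z,x)$ is nondecreasing, which requires $\tau\leq 1/B$; this hypothesis is not in the lemma's statement (it is harmless where the lemma is applied, since $\tau\leq(2B)^{-1}$ is assumed there, but you should record it). Second, your final bound $\tau\sum_{n=1}^{\lceil t/\tau\rceil}(1+\tau B)^n\leq\frac{(1+\tau B)(e^{Bt}-1)}{B}$ is not quite right as written, because $\lceil t/\tau\rceil\,\tau$ can exceed $t$; the claimed constant does follow if you instead bound $\int_0^t(1+\tau B)^{\lfloor s/\tau\rfloor+1}\,ds\leq(1+\tau B)\int_0^t e^{Bs}\,ds$ (or, as the paper does, pass to the integral inequality $F'(t)\leq(1+\tau B)\|\rho_1-\rho_0\|_{L^1}+BF(t)$ and apply the continuous Gronwall inequality).
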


\begin{proof}
First we suppose $\rho_0 \leq \rho_1$.
For $i\in \{0,1\}$, let $(\rho^n_i, \mu^n_i)$ and $p^n_i$ be generated respectively from \eqref{eq:mms} and \eqref{eq:dual_scheme} by the initial data $\rho_i$. By Proposition \ref{comparison}, for all $1\leq k \leq T/\tau$, we have $\rho_0^k\leq \rho_1^k$ almost everywhere.
Thus,
\[
  \begin{split}
   \int_{\R^d}|\rho^{n+1}_1(x)-\rho^{n+1}_0(x)|= &\; \int_{\R^d}\rho^{n+1}_1(x)-\rho^{n+1}_0(x)\\
   = &\;\int_{\R^d}\rho_1(x)-\rho_0(x)+\tau\sum_{k=0}^{n}\int_{\R^d}( \mu_1^{k+1}(x)-\mu_0^{k+1}(x))\, dx.
   \end{split}
  \]
Due to \eqref{eq:pd_relation_growth} we have
  \[
  \mu_1^{k+1}(x)-\mu_0^{k+1}(x)=\rho^k_1(x)G(q_1^{k+1}(x),x)-\rho^k_0(x)G(q_0^{k+1}(x),x)
  \]
  where $q_i^{k+1}=(p_i^{k+1})^c.$ We now claim that
  \begin{equation}\label{claim0010}
  q^{k+1}_0(y)\leq q_1^{k+1}(y) \quad \textrm{a.e.} \; y\in \spt\rho_1^k.
  \end{equation}
 Indeed, if this were not the case, then for some $y\in\spt\rho_1^k$ we have
  \[
  q_1^{k+1}(y)=p^{k+1}_1(T_{p_1^{k+1}}(y))+\frac{1}{2\tau}|T_{p_1^{k+1}}(y)-y|^2<p^{k+1}_0(T_{p_0^{k+1}}(y))+\frac{1}{2\tau}|T_{p_0^{k+1}}(y)-y|^2=q_0^{k+1}(y).
  \]
By the comparison principle, $p^{k+1}_0(x)\leq p^{k_1+1}_1(x)$ for almost all $x\in\spt\rho_1^{k+1}$.   Since $T_{p_1^{k+1}}(y)\in \spt\rho_1^{k+1}$ for every $y\in \spt\rho_1^k$, we can conclude that $p^{k+1}_0(T_{p_1^{k+1}}(y))\leq p^{k+1}_1(T_{p_1^{k+1}}(y))$ for almost every $y\in \spt\rho_1^k$.  Now we can compute
  \[
  \begin{split}
  p^{k+1}_0(T_{p_1^{k+1}}(y))+\frac{1}{2\tau}|T_{p_1^{k+1}}(y)-y|^2\leq &\; p^{k+1}_1(T_{p_1^{k+1}}(y))+\frac{1}{2\tau}|T_{p_1^{k+1}}(y)-y|^2\\
  <&\;p^{k+1}_0(T_{p_0^{k+1}}(y))+\frac{1}{2\tau}|T_{p_0^{k+1}}(y)-y|^2
  \end{split}
  \]
 which contradicts the optimality of $T_{p_0^{k+1}}$.   Thus \eqref{claim0010} holds, and since $G$ is non-increasing,
  \[
  \rho^k_1(x)G(q_1^{k+1}(x),x)-\rho^k_0(x)G(q_0^{k+1}(x),x)\leq \big(\rho^k_1(x)-\rho^k_0(x)\big)G(q_1^{k+1}(x),x).
  \]
 At last, we obtain
  \[
  \int_{\R^d}|\rho^{n+1}_1(x)-\rho^{n+1}_0(x)|\leq  \int_{\R^d}|\rho_1(x)-\rho_0(x)|+\tau B\sum_{k=0}^{n}  \int_{\R^d}|\rho^{k}_1(x)-\rho^{k}_0(x)|.
  \]
  Or in terms of $\rho^{\tau}_i$,
     \[
 \int_{\R^d}|\rho^\tau_1(x,t)-\rho^\tau_0(x,t)|\, dx\leq (1+\tau B)  \|\rho_1-\rho_0\|_{L^1(\R^d)}+B\int_0^t \int_{\R^d}|\rho^{\tau}_1(x,s)-\rho^{\tau}_0(x,s)| \,dx\,ds.
 \]
 Now one can conclude via Gronwall's inequality.
 \medskip

In general, let $\rho_{\dag}(x) := \min(\rho_0(x),\rho_1(x))$.
By the assumption of $s$, $\rho_\dag$ satisfies \eqref{good_data_0} and $\rho_\dag \in X$.
Hence, applying \eqref{contraction} to the pairs $\rho_i$ and $\rho_\dag$ yields that
\[
\int_0^t\int_{\R^d}|\rho^\tau_1(x,s)-\rho^\tau_0(x,s)|\, dx \,ds
\leq \frac{1}{B} (e^{Bt}-1)(1+\tau B) \big(\|\rho_1-\rho_\dag\|_{L^1(\R^d)}+\|\rho_0-\rho_\dag\|_{L^1(\R^d)}\big).
\]
Thanks to the definition of $\rho_\dag$, the right hand side exactly gives the desired bound.
\end{proof}

\begin{comment}
    \begin{corollary}\label{contraction_BV}
    Let $\rho_0,\rho_1$ be as in Lemma~\ref{lem: contraction}. If in addition $\rho_1\in BV$, then for any $\epsilon>0$ and $z\in \R^d$ we have
     \[
\int_0^t \int_{\R^d}|\rho^\tau_0(x+\epsilon z,t)-\rho^{\tau}_0(x,t)|\, dxdt\leq
 2(1+e^{Bt})\int_{\R^d}|\rho_1(x)-\rho_0(x)|+\epsilon |z|\sup_{0\leq t\leq T} \norm{\rho_1^\tau(\cdot,t)}_{BV(\R^d)}.
 \]
    \end{corollary}

   \begin{proof}
   Thanks to translation invariance, we have
    \[
    \int_{\R^d}|\rho^{\tau}_0(x+\epsilon z,t)-\rho^{\tau}_0(x,t)|\, dx\leq  2\int_{\R^d} |\rho_1^{\tau}(x,t)-\rho_0^{\tau}(x,t)|\, dx+ \int_{\R^d}|\rho^{\tau}_1(x+\epsilon z,t)-\rho^{\tau}_1(x,t)|\, dx.
    \]

    Using Lemma~\ref{lem: contraction} above and the standard $L^1$-Lipschitz property of BV functions the result follows.
   \end{proof}
\end{comment}

\begin{proof}[Proof of Proposition \ref{equicontinuity}]

For any $\delta>0$, we may mollify $\rho_0$ to obtain a $\rho_1\in BV(\R^d)$, such that $\rho_1\in X$ satisfies \eqref{good_data_0} and
$\norm{\rho_0 - \rho_1}_{L^1(\R^d)}\leq \delta$.

Let $\rho^\tau$ and $\rho^\tau_1$ be as given in \eqref{interpolation} with initial data $\rho_0$ and $\rho_1$, respectively.  By Lemma~\ref{lem: contraction}, 
\[
\begin{split}
&\;\int_0^T \int_{\R^d}|\rho^{\tau}(x+\epsilon y,t)-\rho^{\tau}(x,t)|\, dx\,dt\\
\leq &\; 2\int_0^T\int_{\R^d} |\rho^\tau(x,t)-\rho_1^{\tau}(x,t)|\, dx\,dt+\int_0^T \int_{\R^d}|\rho^{\tau}_1(x+\epsilon y,t)-\rho^{\tau}_1(x,t)|\, dx\,dt\\
\leq &\;\frac{2}{B}(e^{BT}-1)(1+\tau B)\norm{\rho_0- \rho_1}_{L^1(\R^d)} + T\epsilon |y|\sup_{0\leq t\leq T} \norm{\rho_1^\tau(\cdot,t)}_{BV(\R^d)}.
\end{split}
\]
In the last inequality, we applied the standard $L^1$-Lipschitz property of BV functions.
Thanks to Corollary \ref{BV2}, 
$$
\lim_{\epsilon\to 0}\, \sup_{0<\tau\leq (2B)^{-1}}\, \int_0^T \int_{\R^d} |\rho^{\tau}(x+\epsilon y,t)-\rho^{\tau}(x,t)|\,dx\,dt \leq C\delta,
$$
where $C$ depends on $B$ and $T$.
Since $\delta$ is arbitrary, we can now conclude.
\end{proof}

Lemma \ref{time_density} and Proposition \ref{equicontinuity} together yield the strong convergence of $\rho^{\tau}$ in $L^1([\R^d\times [0,T])$.

\begin{corollary}\label{L^1_conv}
Along a subsequence, $\rho^{\tau}$ strongly converges to some $\rho$ in $L^1(\R^d\times [0,T])$.
\end{corollary}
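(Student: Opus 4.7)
The plan is to deduce strong $L^1$ compactness of $\{\rho^\tau\}$ on $\R^d \times [0,T]$ from the three structural facts already established: (i) a uniform $L^\infty$ bound and a uniform compact support (Lemma~\ref{lem:linf_bound} and Corollary~\ref{finite_prop}), (ii) spatial equicontinuity in $L^1$ (Proposition~\ref{equicontinuity}), and (iii) temporal regularity in the form of an $H^{-1}$ modulus (Lemma~\ref{time_density}). Since we already have uniform support in some fixed ball $B_R$ and uniform $L^\infty$ bound on $Q := \R^d \times [0,T]$, the Riesz--Fr\'echet--Kolmogorov theorem reduces the claim to controlling the joint modulus of continuity in $L^1$ with respect to shifts in $x$ and $t$.

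The shift in $x$ is immediate from Proposition~\ref{equicontinuity}, giving $\sup_\tau \|\rho^\tau(\cdot + h, \cdot) - \rho^\tau\|_{L^1(Q)} \to 0$ as $|h|\to 0$. The shift in $t$ is the genuine difficulty, since Lemma~\ref{time_density} only controls the discrete time derivative in the weak norm $H^{-1}(\R^d)$, whereas we need $L^1$ control. To bridge this gap, I would mollify in space: fix a standard nonnegative spatial mollifier $\eta_\epsilon$ and set $\rho^{\tau,\epsilon} := \rho^\tau *_x \eta_\epsilon$. Then for any $0\le s<t\le T$,
\begin{equation*}
\|\rho^{\tau,\epsilon}(\cdot,t)-\rho^{\tau,\epsilon}(\cdot,s)\|_{L^\infty(\R^d)} \le \|\eta_\epsilon\|_{H^1(\R^d)}\, \|\rho^\tau(\cdot,t)-\rho^\tau(\cdot,s)\|_{H^{-1}(\R^d)},
\end{equation*}
and summing the telescoping discrete increments in $H^{-1}$ via Cauchy--Schwarz against Lemma~\ref{time_density} gives $\|\rho^\tau(\cdot,t)-\rho^\tau(\cdot,s)\|_{H^{-1}} \le C(|t-s|+\tau)^{1/2}$. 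Combined with the uniform compact support, this produces a temporal $L^1$ modulus for $\rho^{\tau,\epsilon}$ (for each fixed $\epsilon$) of the form $\omega_\epsilon(|t-s|) \to 0$ as $|t-s|\to 0$, uniformly in $\tau$.

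With both moduli of continuity in hand for $\rho^{\tau,\epsilon}$ and a uniform $L^\infty$ bound on a fixed compact set, Riesz--Fr\'echet--Kolmogorov yields, for each fixed $\epsilon$, a subsequence $\rho^{\tau_k,\epsilon}$ converging strongly in $L^1(Q)$ to some $\rho^\epsilon$. A standard diagonal extraction along $\epsilon = 1/m$ produces a single subsequence, still denoted $\rho^{\tau_k}$, such that $\rho^{\tau_k,1/m} \to \rho^{1/m}$ in $L^1(Q)$ for every $m$. Finally, Proposition~\ref{equicontinuity} implies
\begin{equation*}
\sup_\tau \|\rho^\tau - \rho^{\tau,\epsilon}\|_{L^1(Q)} \longrightarrow 0 \quad\text{as }\epsilon\to 0,
\end{equation*}
so $\{\rho^{1/m}\}_m$ is Cauchy in $L^1(Q)$ and $\rho^{\tau_k}$ converges to its limit $\rho$ in $L^1(Q)$.

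The main obstacle is the mismatch between the $H^{-1}$ temporal regularity and the $L^1$ spatial regularity; the mollification step above is precisely the Aubin--Lions-type device that resolves it. One technical point to be careful about is controlling the accumulation of constants: the $H^{-1}$ modulus inherits a $\sqrt{\tau}$ error from summing the discrete differences, but this is harmless because we only need equicontinuity of the mollified sequence in $t$ as $\tau\to 0$ with $\epsilon$ fixed; the dependence $\|\eta_\epsilon\|_{H^1} \lesssim \epsilon^{-(d/2+1)}$ blows up in $\epsilon$, but that is exactly why the diagonal argument is split into two limits, with Proposition~\ref{equicontinuity} handling the $\epsilon\to 0$ limit independently of $\tau$.
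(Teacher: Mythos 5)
Your argument is correct and rests on exactly the two inputs the paper uses — the temporal $H^{-1}$ estimate of Lemma~\ref{time_density} and the spatial $L^1$ equicontinuity of Proposition~\ref{equicontinuity} — so it is essentially the same approach; the paper simply outsources the assembly to Proposition 5.6 of \cite{JKT} (an Aubin--Lions/Kolmogorov--Riesz-type compactness lemma), whereas you reprove that lemma via spatial mollification, the $H^{-1}$--$H^1$ duality pairing, and a diagonal extraction. The only details worth noting are that you need the direction-uniform version of Proposition~\ref{equicontinuity} (which its proof visibly provides, since the bound is linear in $\epsilon|y|$) and that the additive $\sqrt{\tau}$ in the temporal modulus is absorbed in the usual way by treating the finitely many terms with $\tau$ bounded below separately, as you correctly observe.
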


\begin{proof}
The convergence result follows from Proposition 5.6 in \cite{JKT}, which is based on the equicontinuity estimates Lemma~\ref{time_density} and Proposition~\ref{equicontinuity}.
\end{proof}

\begin{comment}
The proof uses Aubin-Lions (see Theorem 2, \cite{RS}) based on the bounds \eqref{BV} and the time continuity established by Lemma~\ref{time_density}. Let us set $g:L^1(\R^d)\times L^1(\R^d)\to[0,+\infty]$ and $\mathcal{F}:L^1(\R^d)\to[0,+\infty]$ defined as
$$g(\mu,\nu):=H^{-1}(\mu,\nu),$$
and
$$\mathcal{F}(\mu):=\left\{
\begin{array}{ll}
|D\mu|(\R^d), & {\rm{if}}\ \mu\in BV(\R^d),\\
+\infty, & {\rm{otherwise}}.
\end{array}
\right.$$
By construction, $\mathcal{F}$ is convex, l.s.c. in $L^1(\R^d)$ and it sublevel sets are compact in $L^1(\R^d),$ therefore it defines a normal coercive integrand.

\medskip

From \eqref{BV} we have
$$\sup_{\tau}\int_0^T\mathcal{F}(\tilde{\rho}^{\tau}) dt<+\infty,$$
and from Lemma~\ref{time_density} we have
$$\lim_{h\to 0}\sup_{\tau}\int_0^{T-h}\frac{1}{h}g(\rho_{t+h}^{\tau},\rho_t^{\tau}) dt<+\infty.
$$
Therefore the assumptions of \cite[Theorem 2]{RS} are fulfilled, \textcolor{blue}{we need to check}, which yields a subsequence of $\tilde{\rho}^{\tau}$ which converges in measure, and in particular pointwise a.e. to $\rho$ as $\tau\to 0$. The strong convergence in $L^1(\R^d\times [0,T])$ follows from the dominated convergence theorem due to the uniform bound of $\rho^{\tau}$ in $L^{\infty}(\R^d\times [0,T])$ from Lemma~\ref{lem:linf_bound}.

\end{comment}

\section{Continuum Limit}
\label{sec: continuum limit}

Our goal is to show that these sequences converge to a very weak solution of the continuous-in-time problem $(P)$.
We will fix the time horizon $T>0$ and keep the assumptions on $\rho_0$ and $\Omega$ given in Section \ref{sec: equicontinuity}.
To emphasize the independence of solutions  on the choice of $\Omega$, we use the extended notion of the compactly supported solutions $(\rho^{\tau},p^{\tau})$ in $\R^d\times [0,T]$ with zero value for $x$ outside of $\Omega$.

\medskip

We begin by showing that the discrete solutions approximately solve the continuity equation.

\begin{lemma}\label{continuity}
Fix $T>0$.
Assume $\tau \leq \min\{1/B, T/2\}$.
The pair $(\rho^{\tau}, p^{\tau})$ approximately solves the  continuity equation in the weak sense, i.e., for all $\phi\in C^2_0(\R^d\times[0,T])$ and $t_0\in [2\tau, T]$,
\begin{equation}
\begin{split}
&\;\int_{0}^{t_0} \int_{\R^d} \rho^{\tau} \partial_t\phi \, dx\,dt+ \int_{0}^{t_0}\int_{\R^d}\mu^{\tau}\phi-\rho^{\tau}\nabla p^{\tau}\cdot \nabla \phi \, dx\,dt\\
=&\; \int_{\R^d} \rho^{\tau}(x,t_0)\phi(x,t_0) - \rho_0(x) \phi(x,0) \,dx +\epsilon_\tau.
\end{split}
\label{eqn: discrete weak formula almost equality}
\end{equation}
Here the error $\epsilon_\tau$ satisfies $|\epsilon_\tau|\leq \tau^{1/2}M$,
where $M$ is a constant depending on $T$, $G$, $s$, $\rho_0$ and $\phi$.
\end{lemma}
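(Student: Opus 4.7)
The plan is to derive \eqref{eqn: discrete weak formula almost equality} via a telescoping argument combined with Taylor expansions that exploit the displacement structure $T^{-1}_{p_{n+1,\tau}}(x) = x + \tau \nabla p_{n+1,\tau}(x)$ provided by \eqref{eq:pd_relation_pushforward}. First I would set $N = \lfloor t_0/\tau \rfloor$ and $\tilde t_n = n\tau$ for $n\leq N$ with $\tilde t_{N+1} = t_0$, so that by \eqref{interpolation} one has $\int_0^{t_0}\int \rho^\tau\,\partial_t\phi\,dx\,dt = \sum_{n=0}^{N}\int_{\R^d} \rho^{n+1,\tau}[\phi(\cdot,\tilde t_{n+1})-\phi(\cdot,\tilde t_n)]\,dx$ exactly. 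Telescoping $\sum_n [\rho^{n+1}\phi(\cdot,\tilde t_{n+1}) - \rho^n\phi(\cdot,\tilde t_n)]$ reduces the proof to showing that the ``density-difference'' contribution
\begin{equation*}
S := \sum_{n=0}^{N}\int_{\R^d}\big[\rho^{n+1}(x)-\rho^n(x)\big]\phi(x,\tilde t_n)\,dx
\end{equation*}
matches $\int_0^{t_0}\!\!\int_{\R^d}(\mu^\tau\phi-\rho^\tau\nabla p^\tau\cdot\nabla\phi)\,dx\,dt$ up to the claimed error.

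For each summand in $S$ I would invoke \eqref{eq:pd_relation_pushforward} to write $\int \rho^{n+1}\phi(\cdot,\tilde t_n)\,dx = \int (\rho^n+\tau\mu^{n+1})(y)\,\phi(T_n(y),\tilde t_n)\,dy$, where $T_n := T_{p_{n+1,\tau}}$ satisfies $T_n(y)-y = -\tau\nabla p^c_{n+1,\tau}(y)$ by Lemma~\ref{gangbo}. A second-order spatial Taylor expansion of $\phi$ around $y$ then yields
\begin{equation*}
\int\!\big[\rho^{n+1}-\rho^n\big]\phi(\cdot,\tilde t_n)\,dx = -\tau\!\int\rho^n\,\nabla\phi(y,\tilde t_n)\cdot\nabla p^c_{n+1,\tau}\,dy + \tau\!\int \mu^{n+1}\phi(y,\tilde t_n)\,dy + R_n,
\end{equation*}
with $|R_n|\leq C\|\phi\|_{C^2}\,\tau^2\!\int (\rho^n+|\mu^{n+1}|)(|\nabla p^c_{n+1,\tau}|^2+|\nabla p^c_{n+1,\tau}|)\,dy$. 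A further pushforward lets me rewrite $\int\rho^n\nabla\phi\cdot\nabla p^c_{n+1,\tau}\,dy = \int\rho^{n+1}\nabla\phi(x,\tilde t_n)\cdot\nabla p_{n+1,\tau}(x)\,dx$, using the identity $\nabla p_{n+1,\tau}(T_n(y)) = \nabla p^c_{n+1,\tau}(y)$ and paying an additional $O(\tau)$ Taylor remainder on $\nabla\phi$. Finally, the Riemann-type sums over $\tilde t_n$ are replaced by the integrals of the piecewise-constant interpolants at a further $O(\tau)$ cost controlled by $\|\partial_t\phi\|_\infty$ and the uniform $L^\infty_t L^1_x$ mass bound on $\rho^\tau$ following from Lemma~\ref{lem:linf_bound}.

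The remaining task is to sum the quadratic-in-$\tau$ remainders. The key identity is the Benamou--Brenier equality $\int (\rho^n+\tau\mu^{n+1})|\nabla p^c_{n+1,\tau}|^2\,dy = \int \rho^{n+1}|\nabla p_{n+1,\tau}|^2\,dx$, i.e.\ the transport cost appearing in $J$. Combined with $|\mu^{n+1}|\leq B\rho^n$ and the assumption $\tau\leq (2B)^{-1}$, which gives $\rho^n+\tau\mu^{n+1}\geq \tfrac12\rho^n$, this yields $\int\rho^n|\nabla p^c_{n+1,\tau}|^2\,dy\leq 2\int\rho^{n+1}|\nabla p_{n+1,\tau}|^2\,dx$, hence
\begin{equation*}
\sum_{n=0}^{N}\tau^2\!\int\rho^n|\nabla p^c_{n+1,\tau}|^2\,dy \;\leq\; 2\tau\!\int_0^{t_0+\tau}\!\!\!\int\rho^\tau|\nabla p^\tau|^2\,dx\,dt \;\leq\; C\tau
\end{equation*}
by the energy dissipation inequality (Corollary~\ref{edi}). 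The cross terms involving $\mu^{n+1}$ are estimated identically after Cauchy--Schwarz in $y$ and in $n$, using $|\mu^{n+1}|\leq B\rho^n$ and the uniform mass bound. All contributions together give $|\epsilon_\tau|\leq C\tau$, which is stronger than the claimed bound $\tau^{1/2}M$. The main obstacle in carrying this out is the bookkeeping: $\nabla p^\tau$ is only controlled in the \emph{degenerate} weighted space $L^2(\rho^\tau dx\,dt)$, so every quadratic error term has to be tied back to this weighted norm via the pushforward identity, and the smallness condition on $\tau$ is exactly what allows one to preserve this weighted control when moving between $\rho^n$, $\rho^n+\tau\mu^{n+1}$, and $\rho^{n+1}$.
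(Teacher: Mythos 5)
Your proposal is correct in substance but follows a genuinely different route from the paper. The paper performs a discrete summation by parts only on the interior window $[\tau,t_0-\tau]$, Taylor-expands the difference quotient of $\phi$ in time to recover $\rho^\tau\partial_t\phi$, and is then left with boundary-layer terms of the form $\frac{1}{\tau}\int_{t_0-\tau}^{t_0}$ and $\frac{1}{\tau}\int_0^\tau$; these are controlled by Cauchy--Schwarz together with the $H^{-1}$ equicontinuity-in-time estimate (Lemma~\ref{time_density}), and that is precisely where the $\tau^{1/2}$ rate originates. Your exact Abel summation over all of $[0,t_0]$ eliminates both the time-Taylor step (the identity $\int_0^{t_0}\int\rho^\tau\partial_t\phi = \sum_n\int\rho^{n+1}[\phi(\cdot,\tilde t_{n+1})-\phi(\cdot,\tilde t_n)]$ is exact for the piecewise-constant interpolant) and the boundary layers, so Lemma~\ref{time_density} is never needed; every remainder is tied to the dissipation bound of Corollary~\ref{edi} via the pushforward identities, exactly as in the paper's treatment of the interior term. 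Two small caveats. First, your claimed overall rate $O(\tau)$ is slightly optimistic: on the last partial interval $[N\tau,t_0]$ the Riemann-sum mismatch for the flux term is controlled by $\tau\int\rho^{N+1}|\nabla p_{N+1}||\nabla\phi|$, and a single step's dissipation is only bounded by $C/\tau$, so this contribution is $O(\tau^{1/2})$ unless $t_0$ is a multiple of $\tau$ --- still within the lemma's claimed bound. Second, the lower bound $\rho^n+\tau\mu^{n+1}\geq\tfrac12\rho^n$ requires $\tau\leq(2B)^{-1}$ rather than the lemma's hypothesis $\tau\leq 1/B$; this is harmless (the paper assumes $\tau\leq(2B)^{-1}$ elsewhere, and one can instead Taylor-expand against the measure $(\rho^n+\tau\mu^{n+1})\,dy$ so that all quadratic remainders equal $\int\rho^{n+1}|\nabla p_{n+1}|^2\,dx$ on the nose), but it should be stated. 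With those adjustments your argument is complete and, if anything, more self-contained than the paper's.
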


\begin{proof}
From the definition of our interpolations, we have
\begin{equation}
\begin{split}
&\;\int_{0}^{t_0-\tau} \int_{\R^d} \rho^{\tau}(x,t)\frac{\phi(x,t+\tau)-\phi(x,t)}{\tau}\,dx\,dt\\
=&\;-\int_{\tau}^{t_0-\tau} \int_{\R^d} \frac{\rho^{\tau}(x,t)-\rho^{\tau}(x,t-\tau)}{\tau}\phi(x,t)\,dx\,dt\\
&\;+\frac{1}{\tau}\int_{t_0-\tau}^{t_0}\int_{\R^d} \rho^{\tau}(x,t-\tau)\phi(x,t) \, dx\,dt -\frac{1}{\tau}\int_{0}^{\tau}\int_{\R^d} \rho^{\tau} \phi \,dx\,dt.
\end{split}
\label{eqn: summation by part}
\end{equation}

For the left hand side, by Taylor expansion,
\[
\begin{split}
&\;\left|\int_{0}^{t_0-\tau} \int_{\R^d} \rho^{\tau}(x,t)\frac{\phi(x,t+\tau)-\phi(x,t)}{\tau}\,dx\,dt - \int_{0}^{t_0} \int_{\R^d} \rho^{\tau} \partial_t\phi \,dx\,dt\right|\\
\leq &\;\int_{0}^{t_0-\tau} \rho^{\tau}(\R^d,t)\cdot\frac{\tau}{2}\|\partial_t^2 \phi\|_{L^\infty( \R^d\times [0,T])}\,dt
+\int_{t_0-\tau}^{t_0}\rho^\tau(\R^d,t)\|\partial_t \phi\|_{L^\infty(\R^d\times [0,T])}\,dt\\
\leq &\;\tau e^{B(T+\tau)}\rho_0(\R^d)\|\phi\|_{C^2(\R^d\times[0,T])}.
\end{split}
\]

For the first term on the right hand side of \eqref{eqn: summation by part},
by the pushforward formula,
\[
\begin{split}
&\;\int_{\tau}^{t_0-\tau}\int_{\R^d} \frac{\rho^{\tau}(x,t)-\rho^{\tau}(x,t-\tau)}{\tau}\phi(x,t)\,dx\,dt\\
=&\;
\int_{\tau}^{t_0-\tau}\int_{\R^d} \rho^{\tau}(x,t)\frac{\phi(x,t)-\phi\big(x+\tau\nabla p^{\tau}(x,t),t\big)}{\tau}+\mu^{\tau}(x,t)\phi(x,t)\,dx\,dt.
\end{split}
\]
Thanks to the Taylor expansion of $\phi\big(x+\tau\nabla p^{\tau}(x,t),t\big)$ and Corollary \ref{edi},
\[
\begin{split}
&\;\left|\int_{\tau}^{t_0-\tau}\int_{\R^d} \frac{\rho^{\tau}(x,t)-\rho^{\tau}(x,t-\tau)}{\tau}\phi(x,t)\,dx\,dt
-\int_{\tau}^{t_0-\tau}\int_{\R^d} -\rho^{\tau}\nabla p^\tau\cdot \nabla \phi+\mu^{\tau}\phi\,dx\,dt\right|\\
\leq &\; \|\phi\|_{C^2(\R^d\times[0,T])}\int_{0}^{T}\int_{\R^d} \frac{\tau}{2}\rho^{\tau}|\nabla p^\tau|^2\\
\leq &\; \tau M\|\phi\|_{C^2(\R^d\times[0,T])},
\end{split}
\]
where $M$ is a constant depending on $T$, $G$, $s$ and $\rho_0$.
From the Cauchy-Schwarz inequality and Corollary \ref{edi},
\[
\begin{split}
&\;\left|\left(\int_{0}^{\tau}+\int_{t_0-\tau}^{t_0}\right)\int_{\R^d} -\rho^{\tau}\nabla p^\tau\cdot \nabla \phi+\mu^{\tau}\phi\,dx\,dt\right|\\
\leq &\;\left[\left(\int_{0}^{\tau}+\int_{t_0-\tau}^{t_0}\right)\rho^\tau(\R^d,t)\,dt
\cdot\int_{0}^T\int_{\R^d}\rho^\tau|\nabla p^\tau|^2\,dx\,dt\right]^{1/2}\|\phi\|_{C^1( \R^d\times [0,T])}\\
&\;+ B\|\phi\|_{C(\R^d\times [0,T])}\left(\int_{-\tau}^{0}+\int_{t_0-2\tau}^{t_0-\tau}\right)\rho^\tau(\R^d,t)\,dt\\
\leq &\;\tau^{1/2} M\|\phi\|_{C^1(\R^d\times [0,T]}.
\end{split}
\]

For the last two terms in \eqref{eqn: summation by part}, we derive that
\[
\begin{split}
&\;\left|\frac{1}{\tau}\int_{t_0-\tau}^{t_0}\int_{\R^d} \rho^{\tau}(x,t-\tau)\phi(x,t) \, dx\,dt -\int_{\R^d} \rho^{\tau}(x,t_0)\phi(x,t_0) \, dx\right|\\
\leq &\;\frac{1}{\tau}\int_{t_0-\tau}^{t_0}\rho^{\tau}(\R^d,t-\tau) \|\phi(\cdot,t)-\phi(\cdot,t_0)\|_{C(\R^d)}\,dt\\
&\;+\frac{1}{\tau}\int_{t_0-\tau}^{t_0}\|\rho^{\tau}(\cdot,t-\tau)-\rho^{\tau}(\cdot,t_0)\|_{H^{-1}(\R^d)} \|\phi(\cdot,t_0)\|_{H^1( \R^d)}\,dt.
\end{split}
\]
The first term above is bounded by
$\tau e^{BT}\rho_0(\R^d)\|\phi\|_{C^1(\R^d\times [0,T])}$.
By the Cauchy-Schwarz inequality and the definition of $\rho^\tau$, the second term is bounded by
\[
\begin{split}
&\;C\|\phi\|_{C^1(\R^d\times[0,T])}\tau^{-1/2}\left(\int_{t_0-\tau}^{t_0}\|\rho^{\tau}(\cdot,t-\tau)-\rho^{\tau}(\cdot,t_0)\|_{H^{-1}(\R^d)}^2\,dt\right)^{1/2}\\
\leq &\;C\|\phi\|_{C^1(\R^d\times [0,T])}\Big(\|\rho^{\tau}(\cdot,t_0-2\tau)-\rho^{\tau}(\cdot,t_0-\tau)\|_{H^{-1}(\R^d)}^2
+\|\rho^{\tau}(t_0-\tau,\cdot)-\rho^{\tau}(t_0,\cdot)\|_{H^{-1}(\R^d)}^2\Big)^{1/2}.
\end{split}
\]
Here $C$ depends on the size of $\spt\phi$.
By Lemma \ref{time_density} (with $T$ there taken to be greater than or equal to $t_0+\tau$, say $2T$), this is further bounded by $CM\|\phi\|_{C^1(\R^d\times[0,T])}\tau^{1/2}$,
where $M$ depends on $T$, $G$, $s$ and $\rho_0$.
Hence,
\[
\left|\frac{1}{\tau}\int_{t_0-\tau}^{t_0}\int_{\R^d} \rho^{\tau}(x,t-\tau)\phi(x,t) \, dx\,dt -\int_{\R^d} \rho^{\tau}(x,t_0)\phi(x,t_0) \, dx\right|\\
\leq CM\|\phi\|_{C^1(\R^d\times [0,T])}\tau^{1/2}.
\]
Similarly, the last term in \eqref{eqn: summation by part} satisfies
\[
\left|\frac{1}{\tau}\int_{0}^{\tau}\int_{\R^d} \rho^{\tau}(x,t)\phi(x,t) \, dx\,dt -\int_{\R^d} \rho_0(x)\phi(x,0) \, dx\right|\\
\leq CM\|\phi\|_{C^1(\R^d\times [0,T])}\tau^{1/2}.
\]
Summarizing all the above estimates, we complete the proof.
\end{proof}

To send $\tau\to 0$ in \eqref{eqn: discrete weak formula almost equality} to obtain the continuum weak equation, we need to discuss the convergence of $\mu^\tau$ as $\tau\to 0$.

\begin{lemma}\label{source_conv}
Fix $T>0$.
For any $\phi\in L^\infty([0,T]; W^{1,\infty}(\R^d))$,
\[
\lim_{\tau\to 0} \int_0^T\int_{\R^d}\big(\mu^{\tau}(x,t)-\rho^{\tau}(x,t)G(p^{\tau}(x,t),x)\big)\phi(x,t)\,dx\,dt=0.
\]
\end{lemma}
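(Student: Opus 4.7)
The plan is to use the pushforward identity $\rho^\tau(t) = T_{p^\tau(t)\#}(\rho^\tau(t-\tau) + \tau\mu^\tau(t))$ from \eqref{eq:pd_relation_pushforward} to convert the integral $\int \rho^\tau G(p^\tau, x)\phi\, dx$ into an integral against $\rho^\tau(t-\tau) + \tau\mu^\tau(t)$, which already involves $\mu^\tau$ directly via \eqref{eq:pd_relation_growth}. More precisely, for $T_{p^\tau} = T_{p^\tau(\cdot,t)}$,
\begin{equation*}
\int_{\R^d} \rho^\tau(x,t) G(p^\tau(x,t),x)\phi(x,t)\,dx = \int_{\R^d} G(p^\tau(T_{p^\tau}(y),t), T_{p^\tau}(y))\, \phi(T_{p^\tau}(y),t)\,(\rho^\tau(y,t-\tau) + \tau\mu^\tau(y,t))\,dy.
\end{equation*}
Since $p^\tau(T_{p^\tau}(y),t) = p^{\tau,c}(y,t) - \frac{1}{2\tau}|T_{p^\tau}(y)-y|^2$ at the $c$-transform minimizer, and $|T_{p^\tau}(y) - y| = \tau|\nabla p^{\tau,c}(y,t)|$ by Lemma \ref{gangbo}, the Lipschitz assumption \ref{assumption: regularity} on $G$ and the $W^{1,\infty}$-regularity of $\phi$ give the pointwise bound
\begin{equation*}
\bigl|G(p^\tau(T_{p^\tau}(y)),T_{p^\tau}(y))\,\phi(T_{p^\tau}(y),t) - G(p^{\tau,c}(y),y)\,\phi(y,t)\bigr| \le C\tau |\nabla p^{\tau,c}|^2 + C\tau|\nabla p^{\tau,c}|,
\end{equation*}
with $C$ depending on $G$ and $\phi$.

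Using \eqref{eq:pd_relation_growth} to recognize $\mu^\tau(y,t) = \rho^\tau(y,t-\tau)G(p^{\tau,c}(y,t),y)$, the leading term becomes
\begin{equation*}
\int_{\R^d} G(p^{\tau,c}(y),y)\phi(y,t)(\rho^\tau(y,t-\tau) + \tau\mu^\tau(y,t))\,dy = \int_{\R^d}\mu^\tau\phi\,dy + \tau\int_{\R^d} G(p^{\tau,c})\mu^\tau\phi\,dy,
\end{equation*}
and the second piece is $O(\tau)$ on $[0,T]$ by \ref{assumption: bound} and the uniform $L^1$-bound $\|\mu^\tau(t)\|_{L^1}\le B\|\rho^\tau(t-\tau)\|_{L^1}\le Be^{BT}\rho_0(\R^d)$. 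Thus the desired limit reduces to showing that both error terms vanish as $\tau\to 0$.

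The crucial identity for controlling the errors is that since $\nabla p^\tau(T_{p^\tau}(y)) = \nabla p^{\tau,c}(y)$ (from the two relations in Lemma \ref{gangbo}), the pushforward yields
\begin{equation*}
\int_{\R^d}\rho^\tau(x,t)|\nabla p^\tau(x,t)|^2\,dx = \int_{\R^d}(\rho^\tau(y,t-\tau)+\tau\mu^\tau(y,t))|\nabla p^{\tau,c}(y,t)|^2\,dy,
\end{equation*}
which together with the energy dissipation inequality from Corollary \ref{edi} gives a uniform bound $M$ on the space-time integral of the right hand side. The quadratic error is then trivially $O(\tau)$, while for the linear error one applies Cauchy--Schwarz in space and time:
\begin{equation*}
\tau\int_0^T\!\!\int_{\R^d} |\nabla p^{\tau,c}|(\rho^\tau(t-\tau) + \tau\mu^\tau)\,dy\,dt \le \tau\, M^{1/2}\Bigl(T\sup_{t\in[0,T]}\!(\rho^\tau(\R^d,t-\tau)+\tau\mu^\tau(\R^d,t))\Bigr)^{1/2} = O(\tau).
\end{equation*}
The main obstacle is precisely the linear-in-$|\nabla p^{\tau,c}|$ error, since the quadratic one is absorbed directly by the dissipation bound; the above Cauchy--Schwarz step combined with the Grönwall-type mass bound $\rho^\tau(\R^d,t)\le e^{BT}\rho_0(\R^d)$ resolves it. All remaining terms are $O(\tau)$, completing the argument.
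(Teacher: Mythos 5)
Your proposal is correct and follows essentially the same route as the paper's proof: both exploit the pushforward relation between $\rho^{n+1}$ and $\rho^n+\tau\mu^{n+1}$, the identity $p^c_{n+1}(T^{-1}_{p_{n+1}}(x))=p_{n+1}(x)+\frac{\tau}{2}|\nabla p_{n+1}(x)|^2$ (which you use in the mirror form $p(T_p(y))=p^c(y)-\frac{\tau}{2}|\nabla p^c(y)|^2$), the Lipschitz continuity of $G$ and $\phi$, and a Cauchy--Schwarz step that reduces the linear-in-gradient error to the dissipation bound of Corollary \ref{edi}. The only difference is the cosmetic choice of which side of the transport map you change variables to, so no further comment is needed.
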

\begin{proof}
Assume $\tau \leq 1/B$ and let $0\leq n\leq \lfloor \frac{T}{\tau}\rfloor$.
Take an arbitrary test function $\varphi\in W^{1,\infty}(\R^d)$.
Due to \eqref{eq:pd_relation_pushforward} and \eqref{eq:pd_relation_growth}, we have
\[
\int_{\R^d}\varphi(x)(\rho^n+\tau\mu^{n+1})G(p^c_{n+1}(x),x)\,dx
=\int_{\R^d}\rho^{n+1}(x)\varphi(T_{p_{n+1}}^{-1}(x))G\big(p^c_{n+1}(T_{p_{n+1}}^{-1}(x)),T_{p_{n+1}}^{-1}(x)\big)\,dx.
\]
Hence,
\begin{equation}
\begin{split}
&\;\int_{\R^d}\varphi(x)\big(\rho^n(x)G(p^c_{n+1}(x),x)-\rho^{n+1}(x)G(p_{n+1}(x),x)\big)\,dx\\
=&\;
\int_{\R^d}\rho^{n+1}(x)\Big(\varphi(T_{p_{n+1}}^{-1}(x))G\big(p^c_{n+1}(T_{p_{n+1}}^{-1}(x)),T_{p_{n+1}}^{-1}(x)\big)-\varphi(x)G\big(p_{n+1}(x),x\big)\Big)\,dx\\
&\;-\tau\int_{\R^d} \varphi(x)\mu^{n+1}(x)G(p^c_{n+1}(x),x)\, dx
\end{split}
\label{eqn: error in approximating the growth}
\end{equation}
The last term is trivially bounded by $\tau B^2 e^{B(T+\tau)}\rho_0(\R^d)\norm{\varphi}_{L^\infty(\R^d)}$.

To handle the first term, we have by the definition of $T_{p_{n+1}}$ and the $c$-transform that
\[
p^c_{n+1}(T_{p_{n+1}}^{-1}(x))=p_{n+1}(x)+\frac{1}{2\tau}|x-T_{p_{n+1}}^{-1}(x)|^2=p_{n+1}(x)+\frac{\tau}{2}|\nabla p_{n+1}(x)|^2.
\]
Recall that $g_0:=\norm{G}_{W^{1,\infty}\big([0,P_1]\times\R^d\big)}$ is defined in Lemma \ref{BV}.
Hence,
\[
\begin{split}
&\;\Big| \int_{\R^d}\rho^{n+1}(x)\Big(\varphi(T_{p_{n+1}}^{-1}(x))G\big(p^c_{n+1}(T_{p_{n+1}}^{-1}(x)),T_{p_{n+1}}^{-1}(x)\big) -\varphi(x)G\big(p_{n+1}(x),x\big)\Big)\Big|\\
\leq&\;
\tau \Big(B \norm{\nabla \varphi}_{L^\infty(\R^d)}+g_0\norm{\varphi}_{L^\infty(\R^d)}\Big) (\rho^{n+1}(\R^d))^{1/2}\norm{\nabla p_{n+1}}_{L^2(\rho^{n+1})}\\
&\;+\frac{\tau}{2}g_0\norm{\varphi}_{C(\R^d)}\norm{\nabla p_{n+1}}_{L^2(\rho^{n+1})}^2.
\end{split}
\]
Combining this with \eqref{eqn: error in approximating the growth}, we obtain by the Cauchy-Schwarz inequality that
\[
\left|\int_{\R^d}\varphi(x)\big(\mu^{n+1}(x)-\rho^{n+1}(x)G(p_{n+1}(x),x)\big)\,dx\right|\\
\leq \tau M \norm{\varphi}_{W^{1,\infty}(\R^d)}\big(1+\|\nabla p_{n+1}\|_{L^2(\rho^{n+1})}^2\big).
\]
where $M$ is a constant depending on $T$, $G$, $s$ and $\rho_0$.

Rewriting the above inequality in terms of $\rho^\tau$, $\mu^\tau$ and $p^\tau$, and replacing $\varphi(x)$ into $\phi(x,t)$, we take time integral over $[0,T]$ to find that
\[
\begin{split}
&\;\left|\int_0^T\int_{\R^d}(\mu^{\tau}(x,t)-\rho^{\tau}(x,t)G(p^\tau(x,t),x))\phi(x,t)\,dx\,dt\right|\\
\leq&\;
\tau M \norm{\phi(t,\cdot)}_{L^\infty([0,T];W^{1,\infty}(\R^d))}\left(T+\int_0^{T'}\int_{\R^d}\rho^\tau|\nabla p^{\tau}|^2\,dx\,dt\right),
\end{split}
\]
where $T' = \big(\lfloor\frac{T}{\tau}\rfloor+1\big)\tau$ is defined as in Lemma \ref{lem: energy dissipation inequality}.
Now we may conclude the proof by Corollary \ref{edi}.

\begin{comment}
\[
\tau\sum_{n=0}^{N_{\tau}(T)}\int_{\R^d}\norm{\mu^{n+1}-\rho^{n+1}G(p_{n+1},\id)}_{W^{-1,1}(\R^d)}\leq
\]
\[
\tau (B^2+B)\rho^{N_{\tau}(T)+1}(\R^d)\norm{G-G_{\delta}}_{L^{\infty}(\RR\times \R^d)}+\frac{\tau}{2} T\Big(B+\norm{\nabla_{x} G_{\delta}}_{L^{\infty}(\RR\times\R^d)}\Big)^2+\]
\[
\frac{\tau}{2}\Big(\norm{\partial_z G_{\delta}}_{L^{\infty}(\RR\times\R^d)}+\rho^{N_{\tau}(T)+1}(\R^d)\Big)\tau\sum_{n=0}^{N_{\tau}(T)}\norm{\nabla p_{n+1}}_{L^2(\rho^{n+1})}^2.
\]
Writing\R^d\times [0,T] the above in terms of $\mu^{\tau}$, we have shown that
\end{comment}
\end{proof}

Now we are ready to prove the convergence result as $\tau\to 0$. It should be noted that the convergence of the pressure variable is established only with the positive part of $p^{\tau}$. This is due to the lack of information on the pressure away from the support of $\rho^{\tau}$. On the other hand $p^{\tau}$ is nonnegative on the support of $\rho^{\tau}$, and thus we do not lose information by this reduction.

\begin{prop}\label{gen_conv}
Let $\rho_0\in X$ satisfy \eqref{good_data_0}. Then $\rho^{\tau}$ is uniformly bounded in $L^{\infty}(\R^d\times [0,T])$ and strongly converges in $L^1(\R^d\times [0,T])$ along a subsequence to some $\rho$ in $L^{\infty}(\R^d\times [0,T])$.
Furthermore, $p^{\tau}_+ = \max(p^\tau,0)$ is uniformly bounded in $L^{\infty}(\R^d\times [0,T])$, and weak-$*$ converges along a subsequence to some $p$ in $L^\infty(\R^d\times [0,T])$.
Moreover, along a subsequence, $\rho^{\tau}p^{\tau}$ and $s^*(p^{\tau})$ converge weakly in $L^1(\R^d\times[0,T])$ to $\rho p$ and $s^*(p)$, respectively.
\end{prop}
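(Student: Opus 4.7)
The plan assembles the uniform bounds and compactness already established in the paper and then uses a Fenchel--Young identification argument for the weak limit of $s^*(p^{\tau})$. For the $L^\infty$ bounds I would invoke Corollary~\ref{cor:linfbound} applied to the minimal pressure, yielding $\|\rho^{\tau}\|_{L^\infty}\le \max(\hat M,\hat M_0)$ and $\|p^{\tau}_+\|_{L^\infty}\le \max(v_0,b_1)$ uniformly in $n$ and $\tau$; these live on the bounded set $B_{R_0+CT}$ from Corollary~\ref{finite_prop}. Corollary~\ref{L^1_conv} already gives $\rho^{\tau}\to\rho$ in $L^1(\R^d\times[0,T])$ along a subsequence (and a.e.\;along a further subsequence), while Banach--Alaoglu furnishes $p^{\tau}_+\rightharpoonup^* p$ in $L^\infty$, with $p\ge 0$ a.e.\;because the nonnegative cone is weak-$*$ closed.

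For the product $\rho^{\tau}p^{\tau}$ I would first observe that $\rho^{\tau}p^{\tau}=\rho^{\tau}p^{\tau}_+$ a.e.: Proposition~\ref{primal_dual} gives $p^{\tau}\in\partial s(\rho^{\tau})$ a.e., and the standing hypotheses on $s$ (increasing, $s(0)=0$, $\lim_{z\to 0^+}s(z)/z=0$) force $\partial s(\rho)\subset[0,\infty)$ whenever $\rho>0$. A classical ``strong-times-weak-$*$'' splitting
\[
\int \rho^{\tau}p^{\tau}_+\phi\,dx\,dt=\int \rho\, p^{\tau}_+\phi\,dx\,dt+\int (\rho^{\tau}-\rho)p^{\tau}_+\phi\,dx\,dt,\qquad \phi\in L^\infty,
\]
then yields $\rho^{\tau}p^{\tau}\rightharpoonup \rho p$ in $L^1$: the first term uses the weak-$*$ convergence of $p^{\tau}_+$ against $\rho\phi\in L^1$, while the second is bounded by $\|p^{\tau}_+\|_{L^\infty}\|\phi\|_{L^\infty}\|\rho^{\tau}-\rho\|_{L^1}\to 0$.

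The main obstacle is identifying the weak limit of $s^*(p^{\tau})$. Lemma~\ref{dual_relation} applied to $p^{\tau}\in\partial s(\rho^{\tau})$ gives the pointwise identity $s^*(p^{\tau})=\rho^{\tau}p^{\tau}-s(\rho^{\tau})$. Continuity of $s$ on bounded intervals together with the uniform $L^\infty$ bound and dominated convergence gives $s(\rho^{\tau})\to s(\rho)$ in $L^1$, so the previous step yields $s^*(p^{\tau})\rightharpoonup \Sigma:=\rho p-s(\rho)$ in $L^1$. It remains to show $\Sigma=s^*(p)$ a.e. The pointwise Young inequality $\rho p\le s(\rho)+s^*(p)$ provides $\Sigma\le s^*(p)$ a.e. For the reverse integral inequality I would use the observation that $s^*(q)=0$ for all $q\le 0$ (since $s\ge 0$ with $s(0)=0$), which gives $s^*(p^{\tau})=s^*(p^{\tau}_+)$; because $s^*$ is convex and lower semicontinuous, the integral functional $u\mapsto \iint s^*(u)\,dx\,dt$ is sequentially weak-$*$ lower semicontinuous on $L^\infty$ over the bounded support (Ioffe's theorem), whence $\iint s^*(p)\le \liminf\iint s^*(p^{\tau}_+)=\iint \Sigma$. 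Combined with the a.e.\;inequality $\Sigma\le s^*(p)$ this forces $\Sigma=s^*(p)$ a.e., closing the proof and simultaneously establishing $p\in\partial s(\rho)$ a.e., which will be needed later in Theorem~\ref{main:1}(e).
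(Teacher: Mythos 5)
Your proposal is correct and follows essentially the same route as the paper: uniform bounds from Corollary~\ref{cor:linfbound} and Corollary~\ref{L^1_conv}, the strong-times-weak-$*$ argument for $\rho^{\tau}p^{\tau}$, and the discrete Fenchel--Young identity $\rho^{\tau}p^{\tau}=s(\rho^{\tau})+s^*(p^{\tau}_+)$ combined with weak lower semicontinuity of the convex integrand $s^*$ and the pointwise Young inequality. The only (harmless) difference is the order of the last step --- you first pass to the weak limit $\Sigma$ of $s^*(p^{\tau})$ and then identify $\Sigma=s^*(p)$, whereas the paper first establishes $p\in\partial s(\rho)$ and then reads off the convergence of $s^*(p^{\tau})$ from the same identity.
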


\begin{proof}

Since $p^{\tau}$ has a uniform upper bound in $\R^d\times[0,T]$, by \eqref{eq:pd_relation_energy}, $\rho^{\tau}$ is uniformly bounded in $L^{\infty}(\R^d\times [0,T])$, and so is $\rho^\tau p^\tau$.
Hence, along a subsequence, $p^{\tau}_+$ converges to some $p\in L^{\infty}\big(\R^d\times[0,T]\big)$ in the weak-$*$ topology.

By Corollary~\ref{L^1_conv}, it follows that $\rho^{\tau} p^{\tau} = \rho^{\tau} p^{\tau}_+$ converges weakly to $\rho p$ in $L^1( \R^d\times[0,T])$ up to a further subsequence.

\medskip

Next we show that $p \in\partial s(\rho)$.  By Lemma~\ref{dual_relation}, it suffices to show that
\begin{equation}\label{continuum_dual}
 \rho(x,t)p(x,t)= s(\rho(x,t))+s^*(p(x,t))\quad \textup{for a.e.} \; (x,t)\in \R^d\times[0,T].
 \end{equation}
 It is enough to show that the left hand side is greater or equal to the right, since the other inequality is always true by definition.
   From the discrete scheme and the fact $s^*(p) = 0$ for all $p\leq 0$, we have
 
 \begin{equation}\label{discrete_dual}
 \rho^{\tau}(x,t)p^{\tau}(x,t)=s(\rho^{\tau}(x,t))+s^*(p^{\tau}_+(x,t))\quad a.e.\;(x,t)\in \R^d\times[0,T].
 \end{equation}

Since along a subsequence $\rho^{\tau}\to \rho$ in $L^1(\R^d\times [0,T])$, the same holds for $s(\rho^{\tau})$ due to the continuity of $s$. The desired inequality follows by combining this fact with the weak convergence of $\rho^{\tau}p^{\tau}$ to $\rho p$ in $L^1(\R^d\times [0,T])$, the weak-$*$ convergence of $p^{\tau}_+$ in $L^{\infty}(\R^d\times [0,T])$ (both along a subsequence),  and the weak lower semi-continuity of $s^*$.

It remains to show that $s^*(p^{\tau})$, or equivalently $s^*(p^{\tau}_+)$, weakly converges to $s^*(p)$ along a subsequence in $L^1(\R^d\times [0,T])$.
This immediately follows from \eqref{continuum_dual} and \eqref{discrete_dual}, noting that $\rho^\tau p^{\tau}\rightharpoonup\rho p$ and $s(\rho^{\tau})\to s(\rho)$ in $\R^d\times [0,T]$ along a subsequence.
 \end{proof}

Now we are ready to characterize the continuum limit as a very weak solution of the diffusion equation \eqref{weak00}.
Recall from Lemma~\ref{lem:linf_bound} that $p^\tau \leq  M_0:= \max(b_1,\inf \partial s(\norm{\rho_0}_{L^\infty(\R^d)}))$ for all $\tau>0$.

\begin{theorem}\label{continuum}
Suppose that either $s\in C^1_{loc}([0,+\infty))$, or $G(\cdot, x)$ is affine on $[0, M_0]$ for all $x\in \R^d$. Then for any $T>0$, the limit density and pressure $(\rho, p)$ given in Proposition \ref{gen_conv} satisfy
\begin{equation}\label{formula}
\int_0^{t_0} \int_{\R^d} \rho\partial_t \phi +  s^*(p) \Delta \phi +G(p,x)\rho\phi\, dx\, d\tau = \int_{\R^d} \rho(x,t_0)\phi(x,t_0) \,dx- \int_{\R^d} \rho_0(x) \phi(x,0) \,dx.
\end{equation}
for any $\phi\in C^{\infty}([0,\infty)\times \R^d)$ and a.e.\;$t_0\in [0,T]$.
\begin{comment}
Moreover, for any $\phi\in C^{\infty}_0([0,+\infty)\times \R^d)$,
\begin{equation}\label{formula for initial}
\int_{0}^{\infty} \int_{\R^d} \rho\partial_t \phi +  s^*(p) \Delta \phi +G(p,x)\rho\phi\, dx\, dt = - \int_{\R^d} \rho_0(x) \phi(0,x) \,dx.
\end{equation}
\end{comment}
\end{theorem}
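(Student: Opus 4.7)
The plan is to start from the approximate weak identity in Lemma \ref{continuity} and pass to the limit $\tau\to 0$ along the subsequences supplied by Corollary \ref{L^1_conv} and Proposition \ref{gen_conv}. Rewriting that identity as
\[
\int_0^{t_0}\!\!\int_{\R^d}\rho^\tau\partial_t\phi+\mu^\tau\phi-\rho^\tau\nabla p^\tau\cdot\nabla\phi\,dx\,dt=\int_{\R^d}(\rho^\tau\phi)(x,t_0)-(\rho_0\phi)(x,0)\,dx+\epsilon_\tau,
\]
with $|\epsilon_\tau|\lesssim\tau^{1/2}$, the task reduces to taking the limit of each term; note all integrands are compactly supported uniformly in $\tau$ by the finite-speed-of-propagation bound from Corollary \ref{finite_prop}.

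First I would rewrite the diffusive term through the duality relation $\rho^\tau\in\partial s^*(p^\tau)$ from \eqref{eq:pd_relation_energy}. Since $p^\tau(\cdot,t)$ is Lipschitz (Lemma \ref{lem:lip}) and $s^*$ is locally Lipschitz on the bounded range of $p^\tau$, the standard chain rule for Sobolev compositions with convex functions gives $\nabla s^*(p^\tau)=\rho^\tau\nabla p^\tau$ a.e.\;(the ambiguity of $\rho^\tau$ at the at most countable set of non-differentiability points of $s^*$ is harmless, since $\nabla p^\tau=0$ on each such level set by Stampacchia's theorem). Integrating by parts in $x$ turns the diffusion term into $-\int_0^{t_0}\int_{\R^d}s^*(p^\tau)\Delta\phi\,dx\,dt$, which converges to $-\int_0^{t_0}\int_{\R^d}s^*(p)\Delta\phi\,dx\,dt$ by the weak $L^1$ convergence $s^*(p^\tau)\rightharpoonup s^*(p)$ from Proposition \ref{gen_conv}.

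Next I would handle the source term. Lemma \ref{source_conv} allows me to replace $\int\mu^\tau\phi$ by $\int\rho^\tau G(p^\tau,x)\phi$ up to a vanishing error; the issue is the nonlinearity of $G$ in the pressure. This is precisely where the dichotomy in the hypothesis enters. If $G(\cdot,x)$ is affine on $[0,M_0]$, write $G(z,x)=a(x)+b(x)z$, so $\rho^\tau G(p^\tau,x)\phi=a\,\rho^\tau\phi+b\,\rho^\tau p^\tau\phi$; strong $L^1$ convergence of $\rho^\tau$ and the weak $L^1$ convergence $\rho^\tau p^\tau\rightharpoonup \rho p$ from Proposition \ref{gen_conv} take care of both pieces. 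If instead $s\in C^1_{loc}([0,\infty))$, then on $\{\rho^\tau>0\}$ the relation $p^\tau\in\partial s(\rho^\tau)$ forces $p^\tau=s'(\rho^\tau)\geq s'(0)=0$, so $p^\tau=p^\tau_+=s'(\rho^\tau)$ on that set and $\rho^\tau G(p^\tau,x)=\rho^\tau G(p^\tau_+,x)$ a.e. Passing to a further subsequence so that $\rho^\tau\to\rho$ a.e., continuity of $s'$ gives $p^\tau_+\to p$ a.e., and the uniform $L^\infty$ bounds from Proposition \ref{gen_conv} together with dominated convergence finish the step. In both cases, therefore, $\int_0^{t_0}\!\!\int_{\R^d}\mu^\tau\phi\to \int_0^{t_0}\!\!\int_{\R^d}G(p,x)\rho\phi$.

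Finally, $\int\rho^\tau\partial_t\phi\to\int\rho\partial_t\phi$ by strong $L^1$ convergence (Corollary \ref{L^1_conv}), and the boundary term $\int_{\R^d}\rho^\tau(\cdot,t_0)\phi(\cdot,t_0)\,dx$ converges to $\int_{\R^d}\rho(\cdot,t_0)\phi(\cdot,t_0)\,dx$ for a.e.\;$t_0\in[0,T]$ by Fubini applied to $\rho^\tau\to\rho$ in $L^1(\R^d\times[0,T])$ (extracting one more subsequence in $t_0$ if needed). Assembling the four limits yields \eqref{formula}. The main obstacle is the passage to the limit in $\rho^\tau G(p^\tau,x)$: the weak-$\ast$ convergence of $p^\tau$ alone cannot be pushed through the nonlinear $G$, which is precisely why either the $C^1$ hypothesis (upgrading convergence of $p^\tau_+$ from weak-$\ast$ to a.e.\;via $\rho^\tau$) or the affinity of $G$ (linearizing the product) is used. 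A secondary technical point is the chain-rule identity $\rho^\tau\nabla p^\tau=\nabla s^*(p^\tau)$, which is routine once one observes $\nabla p^\tau=0$ on the level sets where $s^*$ fails to be differentiable.
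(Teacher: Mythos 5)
Your proposal is correct and follows essentially the same route as the paper: pass to the limit in the approximate weak identity of Lemma \ref{continuity}, convert the diffusion term to $-\int s^*(p^\tau)\Delta\phi$ using that $\nabla p^\tau=0$ a.e.\;on the level sets where $s^*$ is non-differentiable (the paper implements this via a mollification $s^*_\e=s^**\zeta_\e$, but the crux is the same observation you make), and treat the source term through the same dichotomy ($s\in C^1_{loc}$ giving a.e.\;convergence of $p^\tau_+=s'(\rho^\tau)$, versus affine $G$ giving linearity in $p^\tau$). The remaining limits via strong $L^1$ convergence, weak $L^1$ convergence of $s^*(p^\tau)$, and Fubini for the boundary term at $t_0$ match the paper's argument.
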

\begin{proof}
We first show that, along a subsequence as $\tau\to 0$,
\begin{equation}\label{last}
\int_{0}^{t_0} \int_{\R^d} G(p^{\tau}, x)\rho^{\tau} \phi \,dx \,dt \to \int_{0}^{t_0} \int_{\R^d} G(p,x)\rho\phi \,dx \,dt
\end{equation}
for any $\phi\in C^{\infty}_0([0,\infty)\times \R^d)$ and any $t_0\in [0,T]$.
If $s\in C^1_{loc}([0,\infty))$, we have $p^{\tau}_+ = s'(\rho^{\tau})$ almost everywhere.
By Corollary~\ref{L^1_conv}, along a subsequence, $p^\tau_+$ a.e.\;converges to $p$ on $\R^d\times [0,T]$. Since $G$ is continuous, $\rho^\tau G(p^{\tau}(x,t),x)$ a.e.\;converges to $\rho(x,t)G(p(x,t),x)$.  Moreover, since $G$ is uniformly bounded, we can conclude \eqref{last} by the dominated convergence theorem.
Otherwise, if $G(\cdot,x)$ is affine, \eqref{last} holds because of $L^1$-convergence of $\rho^{\tau}$ to $\rho$ and the weak-$*$ convergence of $p^{\tau}_+$ to $p$ in $L^{\infty}(\R^d\times [0,T])$.

We then claim that
\begin{equation}\label{eqn: very weak form}
\int_{\R^d} \rho^{\tau}\nabla p^{\tau}\cdot \nabla \phi \,dx = -\int_{\R^d} s^*(p^\tau)\Delta\phi \,dx.
\end{equation}
Given \eqref{eq:pd_relation_energy}, if $s^*\in C^1_{loc}(\R)$, this is trivial by integration by parts. Suppose not. We construct $\{s_\e^*\}_{\e>0}$ to be a non-negative sequence of $C^1$-approximation of $s^*$ as follows.
Take $\zeta\in C_0^\infty(\R)$, such that $\zeta\geq 0$, $\int_\R\zeta = 1$ and $\spt \zeta\in[0,1]$.
Define $\zeta_\e(x) : = \e^{-1}\zeta(x/\e)$ and let $s_\e^* = s^* * \zeta_\e$.
Since $s^*$ is non-decreasing on $\R$ and locally Lipschitz, $\{s_\e^*\}_\e$ satisfies that
$s_\e^*$ is decreasing in $\e$ and $s_\e^*\to s^*$ locally uniformly as $\e\to 0$.
Moreover, since $s^*$ in convex, $(s_\e^*)'(\cdot)$ is decreasing in $\e$, and at any differentiable point of $s^*$, $(s_\e^*)'\nearrow(s^*)'$ as $\e \to 0$.
Let $\rho^{\tau}_\e := (s_\e^*)'(p^\tau)$.
Then with $\Omega$ being the sufficiently large convex smooth domain used to construct $(\rho^\tau,p^\tau)$, which contains $\spt \rho^\tau$ and $\spt\rho^\tau_\e$,
\begin{equation}\label{above}
\int_{\Omega} \rho_\e^\tau\nabla p^\tau \nabla \phi \,dx= -\int_{\R^d} s_\e^*(p^\tau)\Delta \phi \,dx\to -\int_{\R^d}  s^*(p^\tau)\Delta \phi \,dx 
\end{equation}
as $\e\to 0$. 

The convergence is because of the local uniform convergence from $s_\e^*$ to $s^*$.
On the other hand, since $s^*$ is convex, it has countably many non-differentiable points, which are denoted as $\{a_i\}_{i = 1}^\infty$.
Let $A_i = \{x\in \Omega:\; p^\tau = a_i\}$ and $A = \cup_{i = 1}^\infty A_i$.
It is known that $\nabla p^\tau = 0$ a.e.\;in $A_i$ for all $i$.
Hence,
\[
\begin{split}
\left|\int_{\Omega} (\rho_\e^\tau-\rho^\tau)\nabla p^\tau \nabla \phi \,dx\right|
\leq &\;\int_{\Omega\backslash A} |\rho_\e^\tau-\rho^\tau||\nabla p^\tau| |\nabla \phi| \,dx\\
\leq &\;\|\nabla p^\tau\|_{L^\infty(\Omega)} \|\nabla \phi\|_{L^\infty(\R^d)}\int_{\Omega\backslash A} |(s_\e^*)'(p^\tau)-(s^*)'(p^\tau)| \,dx.
\end{split}
\]
Since $p^\tau$ is $c$-concave, $\|\nabla p^\tau\|_{L^\infty(\Omega)}$ admits a uniform bound.
Hence, by the monotone convergence $(s_\e^*)'(p^\tau)\nearrow (s^*)'(p^\tau)$ on $\Omega\backslash A$, the right hand side above goes to $0$ as $\e \to 0$.
Combining this with \eqref{above}, we finish the proof of \eqref{eqn: very weak form}.

Now by Proposition \ref{gen_conv}, for any $0<t_0\leq T$ and any $\phi\in C^{\infty}_0( \R^d\times [0,T])$,
\begin{equation}\label{first}
\int_0^{t_0} \int_{\R^d} \rho^{\tau}\nabla p^{\tau}\cdot \nabla \phi \,dx \,dt \to -\int_0^{t_0} \int_{\R^d} s^*(p)\Delta\phi \,dx \,dt
\end{equation}
along a subsequence as $\tau\to 0$.

Furthermore, by Corollary~\ref{L^1_conv} and Fubini's theorem, $\rho^\tau(t_0,\cdot) \to \rho(t_0,\cdot)$ in $L^1(\R^d)$ for a.e.\;$t_0\in [0,T]$.

Therefore, \eqref{formula} follows from Lemma~\ref{continuity}, Lemma~\ref{source_conv}, Proposition~\ref{gen_conv}, \eqref{last} and \eqref{first}.
\end{proof}

\begin{remark}
Combining Lemma \ref{source_conv} with \eqref{last}, we obtain that, along a subsequence, $\mu^\tau \rightharpoonup \rho G(p,x)$ in $L^1([0,T];W^{-1,1}(\R^d))$ as $\tau \to 0$.
\end{remark}

 When $s=s_{\infty}$, similar to \cite{PQV}, we obtain strong convergence of $p^{\tau}$ using the monotonicity property established in Corollary \ref{pressure_monotone}.

\begin{theorem}\label{continuum2}
Let $s=s_{\infty}$ and let $\rho_0(x) \in[0, 1]$. Then $\{\rho^{\tau}\}_{\tau>0}$ is uniformly bounded in $L^{\infty}(\R^d\times [0,T])$ and converges to some $\rho$ in $L^1(\R^d\times [0,T])$ along a subsequence. Moreover, there exists $p\in \partial s(\rho)$ such that $p^{\tau}_+\to p$ in $L^1(\R^d\times [0,T])$ and $p^{\tau}_+ \rightharpoonup p$ in $L^2([0,T];H^1( \R^d))$ along a subsequence.
Lastly, $(\rho,p)$ satisfies $(\rho-1)p = 0$ a.e.\;and
\begin{equation}\label{formula2}
\int_0^{t_0} \int_{\R^d} \rho\partial_t \phi - \nabla p\cdot\nabla \phi +G(p,x)\rho\phi\, dx\, dt =  \int_{\R^d} \rho(t_0,x)\phi(t_0,x) \,dx- \int_{\R^d} \rho_0(x) \phi(0,x) \,dx
\end{equation}
for any $\phi\in C^{\infty}([0,\infty)\times \R^d)$ and a.e.\;$t_0\in [0,T]$.
\end{theorem}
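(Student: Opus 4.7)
The plan is to exploit the monotonicity properties specific to the singular case $s=s_\infty$ (Proposition \ref{density_monotone} and Corollary \ref{pressure_monotone}) to upgrade the convergence arguments of Theorem \ref{continuum}, which is not directly applicable since $s_\infty$ is not $C^1_{loc}$ and $G$ is not assumed affine.

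I would first collect uniform bounds: Lemma \ref{lem:linf_bound} and Proposition \ref{finite_prop} give $0\leq\rho^\tau\leq 1$, $0\leq p^\tau_+\leq M_0$, and uniform compact support. The density convergence $\rho^\tau\to \rho$ in $L^1(\R^d\times[0,T])$ follows from Corollary \ref{L^1_conv}. The next key step is a uniform $L^2([0,T]; H^1(\R^d))$ bound for $p^\tau_+$. Since $p^\tau\in\partial s_\infty(\rho^\tau)$, we have $\rho^\tau=1$ on $\{p^\tau>0\}$ and $\rho^\tau=0$ on $\{p^\tau<0\}$, and the Lipschitz continuity of $p^\tau$ forces $\nabla p^\tau=0$ a.e.\;on $\{p^\tau=0\}$. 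A case analysis then yields the identity $|\nabla p^\tau_+|^2=\rho^\tau|\nabla p^\tau|^2$ a.e., and Corollary \ref{edi} supplies the claimed bound. The same identity, rewritten as $\rho^\tau\nabla p^\tau=\nabla p^\tau_+$ a.e., will be crucial for the limit passage.

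The central technical step is the strong $L^1$ convergence of $p^\tau_+$. By Corollary \ref{pressure_monotone}, for each fixed $\tau$ and $x$, $t\mapsto p^\tau_+(x,t)$ is non-decreasing with total variation at most $M_0$. Hence for any $h>0$,
\[
\int_0^{T-h}\int_{\R^d}|p^\tau_+(x,t+h)-p^\tau_+(x,t)|\,dx\,dt=\int_0^{T-h}\int_{\R^d}(p^\tau_+(x,t+h)-p^\tau_+(x,t))\,dx\,dt\leq Ch,
\]
uniformly in $\tau$, using the uniform compact support to make the $x$-integral finite. Combined with the spatial $H^1$ bound, a Kolmogorov--Riesz compactness argument gives $L^1(\R^d\times[0,T])$ precompactness. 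Extracting a subsequence, $p^\tau_+\to p$ strongly in $L^1$ (and, up to a further subsequence, a.e.), and $p^\tau_+\rightharpoonup p$ weakly in $L^2([0,T];H^1(\R^d))$.

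To conclude, the complementarity $(1-\rho)p=0$ follows by passing to the limit in the discrete identity $(1-\rho^\tau)p^\tau_+=0$ using the strong convergences; together with the nonnegativity bounds this gives $p\in\partial s_\infty(\rho)$ a.e. For the weak formulation \eqref{formula2}, I start from Lemma \ref{continuity} and rewrite the transport term via $\rho^\tau\nabla p^\tau=\nabla p^\tau_+$, so that $\int_0^{t_0}\int_{\R^d}\rho^\tau\nabla p^\tau\cdot\nabla\phi=\int_0^{t_0}\int_{\R^d}\nabla p^\tau_+\cdot\nabla\phi\to\int_0^{t_0}\int_{\R^d}\nabla p\cdot\nabla\phi$ by weak $L^2$ convergence of $\nabla p^\tau_+$. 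The density term converges by strong $L^1$ convergence of $\rho^\tau$, and the source term is handled by Lemma \ref{source_conv}, the a.e.\;convergence $p^\tau_+\to p$, and dominated convergence using the continuity and uniform boundedness of $G$. The main obstacle is the strong $L^1$ convergence of $p^\tau_+$, which depends essentially on the time monotonicity of Corollary \ref{pressure_monotone}, a feature unique to the singular energy; without it, one would only have weak-$*$ convergence in $L^\infty$, which is insufficient to pass to the limit inside the nonlinear term $G(p^\tau,x)$.
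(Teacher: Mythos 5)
Your proposal is correct and follows essentially the same route as the paper: uniform $L^\infty$ bounds and compact support, the identity $\rho^\tau\nabla p^\tau=\nabla p^\tau_+$ combined with the energy dissipation inequality (Corollary \ref{edi}) for the $L^2_tH^1_x$ bound, time monotonicity of $p^\tau_+$ (Corollary \ref{pressure_monotone}) for strong $L^1$ compactness of the pressure, and then passage to the limit in Lemma \ref{continuity} using Lemma \ref{source_conv}. The only cosmetic difference is that you package the time compactness as a translation estimate plus Kolmogorov--Riesz, whereas the paper bounds $\partial_t$ of the piecewise-linear interpolation in $L^1$; both rest on exactly the same monotonicity.
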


\begin{proof}
As before, $p^{\tau}_+$ is uniformly bounded in $L^{\infty}(\R^d\times [0,T])$ (see Lemma \ref{lem:linf_bound}). Also, since $(\rho^{\tau}-1)p^{\tau}_+=0$ a.e.\;from their dual relation, for $t\in [0,T]$, $p^\tau_+$ is supported on a compact set that is uniform in $\tau$ (see Section \ref{sec: finite speed propagation}).
Moreover for any $t_0\in [0,T]$,
\begin{equation}\label{pressure_H1}
\int_0^{t_0} \int_{\R^d} |\nabla p^{\tau}_+|^2 dx = \int_0^{t_0}\int_{\R^d} \rho^{\tau}|\nabla p^{\tau}_+|^2 dx,
\end{equation}
which is uniformly bounded with respect to $\tau$ due to Corollary~\ref{edi}.

\medskip

The convergence of $p^{\tau}_+$ in $L^1(\R^d\times [0,T])$ is a consequence the time-monotonicity in Corollary~\ref{pressure_monotone}. In fact, if we consider the linear interpolation
$$
\tilde{p}^{\tau}(x,(n -1+ \theta)\tau):= \theta p^{\tau}_+(x,n\tau) + (1-\theta) p^{\tau}_+(x,(n-1)\tau) \hbox{ for } 0\leq \theta<1\hbox{ and }n\in \mathbb{N}_+,
$$
then for any $N\leq T/\tau+1$,
$$
\int_0^{N\tau} \int_{\R^d}  |\partial_t\tilde{p}^{\tau}|\, dx\, dt = \sum_ {n=1}^N \tau \int_{\R^d}\left|\frac{p^{\tau}_+(x,n\tau) - p^{\tau}_+(x,(n-1)\tau)}{\tau}\right|\,  dx= \int_{\R^d} (p^{\tau}_+(x,N\tau) - p^{\tau}_+(x,0)) \,dx,
$$
and thus $\partial_t\tilde{p}^{\tau} \in L^1(\R^d\times [0,T])$ for any $T>0$.  Combined with \eqref{pressure_H1} we conclude that $\tilde{p}^{\tau}$, and thus $p^{\tau}_+$, strongly converges to $p$ in $L^1(\R^d\times [0,T])$ along a subsequence. Since $p^{\tau}_+$ and thus $p$ are uniformly bounded in $L^{\infty}(\R^d\times [0,T])$, this convergence also holds in $L^2(\R^d\times [0,T])$. Combined with the uniform $L^2$-bound of $\nabla p^{\tau}_+$ by \eqref{pressure_H1}, it follows that $\nabla p^{\tau}_+ \rightharpoonup\nabla p$ in $L^2(\R^d\times [0,T])$.

\medskip

Lastly, the dual relation $p(\rho-1)=0$ is  obtained from the discrete version $p^{\tau}_+\rho^{\tau} = p^{\tau}_+$, from the strong convergence of $p^{\tau}_+$ and $\rho^{\tau}$ in $L^1\cap L^2(\R^d\times [0,T])$. Finally, \eqref{formula2} can be justified as in the proof of Theorem~\ref{continuum}. 
\end{proof}

\section{Coincidence of Solutions}
\label{sec: coincidence of solutions}

In this section we show that our continuum limit solutions in many cases, including those for $s=s_m$ and $s=s_{\infty}$ with general $G$, are sufficiently regular to coincide with the existing notion of unique solutions.

\subsection{Regular energy}

\begin{definition}
$(\rho,p)$ is a {\it very weak} solution of \eqref{weak00} if they are nonnegative, compactly supported and bounded functions in $\R^d\times [0,T]$ such that
  $s^*(p)\in L^2(\R^d\times [0,T])$ and satisfies \eqref{continuum_dual} and \eqref{formula}.
\end{definition}

\begin{theorem}
Let $s \in C^1_{loc}([0,\infty))$ and suppose that for any $C>0$ there exists a constant $M=M_C$ such that
\begin{equation}\label{extra}
x|s'(x)-s'(y)| \leq M |x-y| \quad  \hbox{ for any } x,y \in[0, C].
\end{equation}
Then the continuum pair $(\rho,p)$ as given in Theorem~\ref{continuum} is the unique very weak solution of \eqref{weak00}.
\end{theorem}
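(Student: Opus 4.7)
The plan is to use a duality argument in the spirit of Pierre's classical uniqueness proof for the porous medium equation. Let $(\tilde\rho,\tilde p)$ be an arbitrary very weak solution of \eqref{weak00} with the same initial datum $\rho_0$; both $\rho,\tilde\rho$ are bounded and compactly supported in a common ball $B_R$ for $t\in[0,T]$. Setting $w:=\rho-\tilde\rho$ and subtracting the very weak formulations \eqref{formula} for $(\rho,p)$ and $(\tilde\rho,\tilde p)$, I obtain that for every $\phi\in C^{\infty}(\R^d\times[0,T])$ with $\phi(\cdot,T)=0$,
\begin{equation*}
\int_0^T\int_{\R^d} w\,\partial_t\phi + \big(s^*(p)-s^*(\tilde p)\big)\Delta\phi + \big(\rho G(p,x)-\tilde\rho G(\tilde p,x)\big)\phi\, dx\, dt = 0.
\end{equation*}

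The first step is to re-encode both nonlinearities as bounded multiplicative perturbations of $w$, using condition \eqref{extra} crucially. Since $s^*$ is convex with $(s^*)'(p)=\rho$ and $(s^*)'(\tilde p)=\tilde\rho$, on the set $\{\rho\geq\tilde\rho\}$ (equivalently $\{p\geq\tilde p\}$) the convex inequalities give
\begin{equation*}
0\leq s^*(p)-s^*(\tilde p)\leq \rho(p-\tilde p) = \rho\big(s'(\rho)-s'(\tilde\rho)\big)\leq M\,w,
\end{equation*}
where \eqref{extra} is invoked in the last step with $C$ any uniform $L^\infty$-bound on the two densities. A symmetric bound on $\{\rho\leq\tilde\rho\}$ yields $s^*(p)-s^*(\tilde p)=A(x,t)\,w$ for some measurable $A$ with $0\leq A\leq M$. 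In the same vein, writing $\rho G(p,x)-\tilde\rho G(\tilde p,x) = \rho\big(G(p,x)-G(\tilde p,x)\big) + G(\tilde p,x)\,w$, combining the Lipschitz continuity of $G(\cdot,x)$ with \eqref{extra}, and using the uniform bound on $G$, yields $\rho G(p,x)-\tilde\rho G(\tilde p,x) = F(x,t)\,w$ for some $F\in L^\infty$. Consequently $w$ is a distributional solution of the linear (possibly degenerate) adjoint identity $\int\!\int w\,(\partial_t\phi + A\Delta\phi + F\phi) = 0$.

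For arbitrary $\psi\in C_c^{\infty}(\R^d\times(0,T))$, I would then construct a smooth test function $\phi_\varepsilon$ on $B_{R+1}\times[0,T]$ as the classical solution of the backward adjoint problem
\begin{equation*}
\partial_t\phi_\varepsilon + A_\varepsilon\Delta\phi_\varepsilon + F_\varepsilon\phi_\varepsilon = -\psi,\qquad \phi_\varepsilon|_{\partial B_{R+1}} = 0,\qquad \phi_\varepsilon(\cdot,T) = 0,
\end{equation*}
where $A_\varepsilon\geq\varepsilon$ and $F_\varepsilon$ are smooth mollifications of $A$ and $F$ chosen so that $A-A_\varepsilon = O(\varepsilon)$ in $L^\infty$ and $F_\varepsilon\to F$ in $L^1$. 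Two $\varepsilon$-independent estimates are required: the maximum-principle bound $\|\phi_\varepsilon\|_{L^\infty}\leq T e^{T\|F\|_\infty}\|\psi\|_{L^\infty}$ and the energy estimate $\|\sqrt{A_\varepsilon}\,\Delta\phi_\varepsilon\|_{L^2}\leq C$. Inserting $\phi_\varepsilon$ (extended by zero) into the adjoint identity for $w$ and rearranging,
\begin{equation*}
\int_0^T\!\int_{\R^d} w\psi\,dx\,dt = \int_0^T\!\int_{\R^d}w(A-A_\varepsilon)\Delta\phi_\varepsilon\,dx\,dt + \int_0^T\!\int_{\R^d} w(F-F_\varepsilon)\phi_\varepsilon\,dx\,dt.
\end{equation*}
The first term is bounded by $\|w\|_{L^\infty}\,\|(A-A_\varepsilon)/\sqrt{A_\varepsilon}\|_{L^2}\,\|\sqrt{A_\varepsilon}\Delta\phi_\varepsilon\|_{L^2} = O(\sqrt\varepsilon)$, and the second vanishes thanks to the $L^\infty$ bound on $\phi_\varepsilon$ and the $L^1$-convergence $F_\varepsilon\to F$. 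Passing $\varepsilon\to 0$ yields $\int\!\int w\psi = 0$ for every $\psi$, hence $w\equiv 0$, and uniqueness of $p$ on the common support follows from the dual relation $p\in\partial s(\rho)$.

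The main obstacle is establishing the uniform energy estimate $\|\sqrt{A_\varepsilon}\Delta\phi_\varepsilon\|_{L^2}\leq C$, because the zero-order term $F_\varepsilon\phi_\varepsilon$ in the backward equation cannot be absorbed into $A_\varepsilon(\Delta\phi_\varepsilon)^2$ by Young's inequality without introducing a factor $1/A_\varepsilon$ that blows up as $\varepsilon\to 0$. I would circumvent this by a gauge transformation $\phi_\varepsilon = e^{K(T-t)}\tilde\phi_\varepsilon$ with $K=\|F\|_\infty$ to fix the sign of the transformed zero-order coefficient, multiply the equation for $\tilde\phi_\varepsilon$ by $\Delta\tilde\phi_\varepsilon$, and recast the zero-order contribution through the integration-by-parts identity $\int F_\varepsilon\tilde\phi_\varepsilon\Delta\tilde\phi_\varepsilon = -\int F_\varepsilon|\nabla\tilde\phi_\varepsilon|^2 - \tfrac{1}{2}\int\tilde\phi_\varepsilon^{\,2}\Delta F_\varepsilon$, at which point a Gronwall argument in time combined with the $L^\infty$ bound on $\phi_\varepsilon$ closes the estimate. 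Once uniqueness of very weak solutions is secured, the entire family $\{\rho^\tau\}$ constructed by the scheme must converge to $\rho$ as $\tau\to 0$ rather than only along a subsequence, completing the statement of the theorem.
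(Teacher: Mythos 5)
Your overall strategy --- Pierre/Hilbert duality with the coefficient bounds supplied by \eqref{extra} --- is the right family of argument, and your first step is exactly where the paper also uses \eqref{extra}: the decomposition $s^*(p)-s^*(\tilde p)=A\,w$ with $0\le A\le M$ and $\rho G(p,x)-\tilde\rho G(\tilde p,x)=F\,w$ with $F\in L^\infty$ is correct. The gap is in the execution of the duality step, where you fold the growth term into the adjoint operator as a zero-order coefficient. You correctly flag that $\int F_\varepsilon\phi_\varepsilon\Delta\phi_\varepsilon$ cannot be absorbed by Young's inequality, but the proposed fix does not close: integrating by parts trades the bad term for one involving $\nabla F_\varepsilon$ or $\Delta F_\varepsilon$, and since $F$ is only bounded and measurable, $\|\nabla F_\varepsilon\|$ and $\|\Delta F_\varepsilon\|$ blow up as the mollification parameter tends to zero, so the bound $\|\sqrt{A_\varepsilon}\,\Delta\phi_\varepsilon\|_{L^2}\le C$ is not $\varepsilon$-uniform and the Gronwall constant degenerates. (A related imprecision: $A$ is merely measurable, so no smooth $A_\varepsilon$ can satisfy $A-A_\varepsilon=O(\varepsilon)$ in $L^\infty$; the standard argument only uses $L^2$ closeness, with the mollification scale chosen as a function of the ellipticity floor $\varepsilon$.) Note that when the paper does place zero-order coefficients in the dual problem --- the $s=s_\infty$ case, following \cite{PQV} --- it must verify extra regularity such as $\nabla[G(p_i,x)]\in L^2$ and $\partial_t C\in L^1$, available there because $\nabla p_i\in L^2$ and $p$ is monotone in time; none of this is available for a general very weak solution in the present setting.

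The paper sidesteps the problem entirely: it applies the duality method only to the diffusion part, treating $\rho_i G(p_i,x)$ as external forcings (this is the role of Theorems 6.5--6.6 in Vazquez's book), which yields
$\int_{\R^d}(\rho_1-\rho_2)_+(t_0)\,dx\le\int_0^{t_0}\int_{\R^d}\big(\rho_1G(p_1,x)-\rho_2G(p_2,x)\big)_+\,dx\,dt$
with no zero-order coefficient in the adjoint equation and hence no problematic energy term. Condition \eqref{extra} together with the Lipschitz bound on $G$ then controls the source difference by $C\int|\rho_1-\rho_2|$ pointwise in time, and Gronwall finishes. I would restructure your argument along these lines: run the adjoint problem with only $A_\varepsilon\Delta$, keep $Fw$ on the right-hand side as data paired against the $L^\infty$-bounded test function, and close with Gronwall; your decomposition of the nonlinearities can then be reused verbatim in that last step.
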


\begin{remark}
Note that the assumptions are satisfied for $s(\rho) = \rho^m$ with $m>1$.
\end{remark}

\begin{proof}
From parallel arguments to Theorem 6.5 and Theorem 6.6 in \cite{vazquez} that uses the Hilbert duality method, where the same outline of proof applies when the Dirichlet data is replaced by the Neumann data,
we obtain the inequality
$$
\int_{\R^d} (\rho_1(t_0,x) - \rho_2(t_0,x))_+ \,dx \leq \int_0^{t_0}\int_{\R^d}  (\rho_1G(p_1,x) - \rho_2G(p_2,x))_+ \,dx \,dt.
$$
where $p_i = s'(\rho_i) $. Since $G(z,x)$ is bounded and is Lipschitz in $z$, we have
\begin{equation*}%\label{assumption00}
\int_{\R^d} |\rho_1(t_0,x) - \rho_2(t_0,x)| \,dx \leq A\int_0^{t_0} \int_{\R^d} \big(\rho_1|p_1-p_2| + |\rho_1-\rho_2|\big)\,dx \,dt,
\end{equation*}
Where $A:= \sup_{z\leq \norm{p_1 +p_2}_{\infty}}  G'(z)$. Hence if we know that
\begin{equation*}%\label{ineq}
\int_{\R^d} \rho_1(x,t)|p_1(x,t)-p_2(x,t)| \,dx \leq M\int_{\R^d} |\rho_1(x,t) - \rho_2(x,t)| \,dx,
\end{equation*}
where $M$ is a uniform constant for $0\leq t\leq T$, then we can conclude by Gronwall's inequality. This is true due to Lemma~\ref{lem:linf_bound} and \eqref{extra}.
\end{proof}

\subsection{ Tumor growth model}

When $s=s_{\infty}$, the above argument does not apply since the pressure difference can no longer be bounded by the density difference. Instead, we resort to stronger notion of weak solutions with information on their time derivatives.

\begin{definition}
$(\rho,p)$ is a  weak solution of $(P)$ with $s=s_{\infty}$ if they are compactly supported functions in $\R^d\times [0,T]$ such that $0\leq \rho\leq 1$,  $p \in L^{\infty}(\R^d\times [0,T]) \cap L^2([0,T];H^1(\R^d))$, \eqref{continuum_dual} and \eqref{formula2}  hold, and in addition
 \begin{equation}\label{measure}
 \rho_t, p_t \in L^1(\R^d\times [0,T]).
 \end{equation}

\end{definition}

The continuum limit pair $(\rho,p)$ obtained in Theorem~\ref{continuum2} is a weak solution of $(P)$, with \eqref{measure} satisfied due to the monotonicity of $\rho$ and $p$ in time by virtue of Corollary~\ref{pressure_monotone}.

\medskip

The following theorem is obtained in \cite{PQV}, however we sketch their proof to highlight the necessary properties of weak solutions we need in the proof.

\begin{theorem}
Suppose $G(\cdot,x)$ is locally uniformly $C^2$. Then the continuum limit pair $(\rho,p)$ given in Theorem~\ref{continuum2} is the unique weak solution of \eqref{weak00} with $s=s_{\infty}$.
\end{theorem}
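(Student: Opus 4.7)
The plan is to reproduce the Hilbert duality argument of \cite{PQV}. Suppose $(\rho_i, p_i)$, $i = 1, 2$, are two weak solutions of $(P)$ with $s = s_\infty$ sharing the same initial data $\rho_0$. Set $u := \rho_1 - \rho_2$ and $v := p_1 - p_2$. Subtracting the two instances of \eqref{formula2} produces, in $\mathcal{D}'(\R^d \times (0,T))$,
\begin{equation*}
\partial_t u - \Delta v = \rho_1 G(p_1, x) - \rho_2 G(p_2, x), \qquad u(\cdot, 0) = 0.
\end{equation*}
The complementarity relation $p_i(1 - \rho_i) = 0$ together with $0 \leq \rho_i \leq 1$ forces the graph monotonicity $uv \geq 0$ a.e., so on the set $\{u \neq 0\}$ we may define the nonnegative ratio $a := v/u$, which is bounded above by $M := \|p_1\|_\infty + \|p_2\|_\infty$.

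The core step is an $L^1$-type duality estimate. For any $\xi \in C_c^\infty(\R^d \times (0,T))$ with $|\xi| \leq 1$, I would introduce a smooth strictly positive regularization $A_\eps$ of $a$, bounded between $\eps$ and $M$, and solve the backward parabolic problem
\begin{equation*}
-\partial_t \psi_\eps - A_\eps \Delta \psi_\eps = \xi, \qquad \psi_\eps(\cdot, T) = 0.
\end{equation*}
Multiplying by $-\Delta \psi_\eps$ and integrating yields uniform-in-$\eps$ bounds on $\|\psi_\eps\|_\infty \leq T$, $\|\nabla \psi_\eps\|_{L^\infty_t L^2_x}$ and $\|A_\eps^{1/2} \Delta \psi_\eps\|_{L^2}$. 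Testing the equation for $u$ against $\psi_\eps$---the integration by parts in time being justified by $\partial_t u, \partial_t v \in L^1$ from~\eqref{measure}---produces
\begin{equation*}
\int_0^T\!\!\int u \xi \, dx \, dt = \int_0^T \!\!\int u (A_\eps - a) \Delta \psi_\eps \, dx \, dt + \int_0^T\!\!\int \bigl(\rho_1 G(p_1, x) - \rho_2 G(p_2, x)\bigr)\psi_\eps \, dx \, dt.
\end{equation*}
Choosing $A_\eps \to a$ suitably makes the first term vanish in the limit $\eps \to 0$; the second term is controlled by $C \int_0^T(\|u\|_{L^1} + \|v\|_{L^1})\, dt$ using the Lipschitz continuity of $G$ from~\ref{assumption: regularity} and the $L^\infty$-bound on the pressures.

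Taking the supremum over such $\xi$ bounds $\|u(\cdot, t)\|_{L^1}$ in terms of $\int_0^t (\|u\|_{L^1} + \|v\|_{L^1})\, ds$; a parallel duality estimate applied to $v$ yields a companion bound, so Gronwall's inequality forces $u \equiv v \equiv 0$, which gives $\rho_1 = \rho_2$ and $p_1 = p_2$. The principal obstacle is the uniform parabolic estimate for $\psi_\eps$ with the degenerate, merely measurable coefficient $A_\eps$ and the subsequent passage to the limit in the error term $\int u(A_\eps - a)\Delta \psi_\eps$; this is a Meyers-type $L^p$-regularity step whose derivation essentially uses every ingredient in the definition of weak solution, namely $p \in L^\infty \cap L^2(H^1)$, the $L^1$-in-time regularity \eqref{measure}, and the local $C^2$-hypothesis on $G(\cdot, x)$ assumed in the theorem statement.
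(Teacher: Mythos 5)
Your overall strategy (Hilbert duality, backward parabolic dual problem, regularize the coefficient) is the right family of argument and is what the paper, following \cite{PQV}, does. But there are two concrete problems, and the second is exactly the difficulty this subsection exists to handle.

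First, the coefficient $a := v/u = (p_1-p_2)/(\rho_1-\rho_2)$ is \emph{not} bounded above by $\|p_1\|_\infty + \|p_2\|_\infty$: take $\rho_1 = 1$, $\rho_2 = 1-\delta$, $p_1 = 1$, $p_2 = 0$, so $a = 1/\delta$. A bound $|v|\leq M|u|$ is precisely the ``pressure difference controlled by density difference'' estimate that the paper states does not hold for $s=s_\infty$ (it is the reason the regular-energy argument of the previous subsection is abandoned). The correct normalization is the one in \cite{PQV}: write the difference of the two weak formulations against the quantity $u+v$ and use $A = u/(u+v)$, $B = v/(u+v)$, which lie in $[0,1]$ because $uv\geq 0$; the dual equation is then $A\partial_t\psi + B\Delta\psi + \dots = A\Phi$ with bounded (though degenerate) coefficients. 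Second, your Gronwall closure needs $\int_0^t \|v\|_{L^1}\,ds$ on the right, and you wave at ``a parallel duality estimate applied to $v$'' to control it. No such estimate is available here --- again, this is the obstruction specific to $s_\infty$. The paper's proof avoids it by absorbing the linearized growth term into the dual equation, i.e.\;solving $A\partial_t\psi + B\Delta\psi + AG(p_1,x)\psi - CB\psi = A\Phi$ with $C = -\rho_2\,(G(p_1,x)-G(p_2,x))/(p_1-p_2)$, so that testing yields $\int u\Phi = 0$ directly with no residual $\|v\|_{L^1}$ term. This is also where the hypotheses of the theorem are actually consumed: one must verify $\nabla[G(p_i,x)]\in L^2$ (from $\nabla p_i\in L^2$) and, crucially, $\partial_t C\in L^1$, which requires writing $C$ as an integral of $G_p$ along the segment between $p_1$ and $p_2$, differentiating in time, and using the local $C^2$ regularity of $G(\cdot,x)$ together with $\rho_t, p_t\in L^1$ from \eqref{measure} and the sign $(p_i)_t\geq 0$. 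Your proposal never uses $G_{pp}$ or the time-monotonicity, which is a sign the approximation step cannot be closed as written.
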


\begin{proof}
Let us consider two pairs of weak solutions $(\rho_i,p_i)_{i=1,2}$ of $(P)$ with $s=s_{\infty}$. Denote $\Omega_T:=\Omega\times [0,T]$, where $\Omega$ is sufficiently large so that $\Omega_T$ contains the support of $(\rho_i,p_i)$ for $i=1,2.$ Following \cite[Section 3]{PQV}, we write \eqref{formula2} in terms of the dual equation for $\psi$, i.e.
\begin{equation}\label{weak_2}
\int\int_{\Omega_T} (\rho_1-\rho_2 + p_1-p_2) [A\partial_t\psi + B\Delta \psi + AG(p_1,x) \psi - CB\psi]=0,
\end{equation}
where, since $\rho_i=1$ whenever $p_i>0$ and otherwise $\rho_i \leq 1$,
$$
A = \dfrac{\rho_1 - \rho_2}{(\rho_1-\rho_2) + (p_1-p_2)},   \quad B = \dfrac{p_1-p_2}{(\rho_1- \rho_2) + (p_1-p_2)}  \in [0,1],
$$
and
$$
0\leq C = - \rho_2\dfrac{G(p_1, x) - G(p_2,x)}{p_1-p_2} \leq M <\infty.
$$
Here $A$ is defined zero when $\rho_1=\rho_2$, and $B$ is defined zero when $p_1=p_2$.  In \cite{PQV} one applies Hilbert's duality method for the dual equation in \eqref{weak_2}. More precisely the idea is to solve the dual problem
$$
\left\{\begin{array}{ll}
A\partial\psi + B\Delta \psi + AG(p)\psi - CB\psi = A\Phi\hbox{ in } \Omega_T,\\
\psi=0 \hbox{ in } \partial\Omega\times (0,T), \quad \psi(\cdot,T)=0 \hbox{ in } \Omega.
\end{array} \right.
$$
for smooth $\Phi$, and use $\psi$ as test function in \eqref{weak_2}. This would yield uniqueness since $\Phi$ can be chosen arbitrarily. However the coefficients of the dual problem are neither smooth nor strictly positive, hence we need approximation arguments to derive uniqueness.

\medskip

For the approximation to create small error terms, we need some regularity assumptions on the coefficients. First the coefficients to be in $L^2(\Omega_T)$, which is fine since they are bounded. In addition we need $\nabla [G(p_i,x)] \in L^2(\Omega_T)$, $G(p_i,x)\in L^{\infty}(\Omega_T)$, and $\partial_t C \in L^1(\Omega_T)$.  Since $p_i$'s are bounded, it remains for us to check that
\begin{equation}\label{condition_C}
\nabla [G(p_i,x) ]\in L^2(\Omega_T) \hbox{ and } \partial_t C \in L^1(\Omega_T).
\end{equation}

The first bound follows from the fact that $G$ is locally Lipschitz, as well as the fact that $\nabla  p_i \in L^2 (\Omega_T)$. To check the second condition we write
$$
\dfrac{G(p_1,x) - G(p_2,x)}{p_1-p_2} = \int_0^1 G_p((1-s)p_1 + sp_2,x) \,ds.
$$
Thus
$$
C_t = -\rho_t \int_0^1 G_p((1-s)p_1 + sp_2,x) \,ds -\rho\int_0^1 G_{pp}((1-s)p_1 + sp_2,x) ((1-s) (p_1)_t + s(p_2)_t)\,ds.
$$
The first term is integrable since $\rho_t \in L^1(\Omega_T)$ and $\int_0^1 G_{pp}((1-s)p_1 + sp_2,x) \,ds$ is bounded due to the bound on $p_i$'s.

\medskip

Since $(p_1)_t, (p_2)_t \geq 0$, we conclude by Fubini's theorem that
$$
\int_{\Omega_T}  |C_t| \leq  M \int_{\Omega_T} [(p_1)_t + (p_2)_t]  \leq 2M \sum_{i=1,2}\norm{p_i}_{L^1(\Omega)}
$$
where
$$
M=\sup_{|p| \leq \max\{\norm{p_1}_{\infty}, \norm{p_2}_{\infty}\}, x\in \Omega} (|\partial_pG(p,x)| + |\partial_{pp} G(p,x)|).
$$

\end{proof}

\appendix
\section{An Improved Family of Barrier Densities}
\label{appendix: improved barriers}
We shall construct a refined version of barrier density $\rho_R$ discussed in Section \ref{sec: finite speed propagation}, to prove Proposition \ref{prop_iteration}.

Fix parameter $A\geq 1$; it will be clear later that $A$ roughly characterizes how steep the boundary transition of the density can be.
Let $Q$ be defined in \eqref{eqn: def of Q}, and let $w_1$ and $c_*$ be defined in Lemma \ref{lem: property of auxiliary function Q}.
We show that

\begin{lemma}\label{lem: property of auxiliary function Q_A}
\begin{enumerate}
  \item For all $A\geq 1$, there exists a unique $w_A\in(0,w_1]$ only depending on $\tilde{G}$, such that
\begin{equation*}
Q(w_A) =\frac{A^2\tau}{2}|Q'(w_A)|^2.
\end{equation*}
\item $w_A$ is decreasing in $A$ and $A|Q'(w_A)|$ is increasing in $A$.

  \item $$
  |Q'(w_A)|\leq \min\left\{ c_*, \sqrt{\frac{2z_M}{A^2\tau}}\right\}.
  $$

\end{enumerate}
\begin{proof}
The first two claim easily follow from the monotonicity of $Q$ and $|Q'|$ by Lemma \ref{lem: property of auxiliary function Q}.

The last one 
follows from Lemma \ref{lem: property of auxiliary function Q} and the fact that $Q(w_A)\leq z_M$.
.
\end{proof}
\end{lemma}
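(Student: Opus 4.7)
The plan is to study the auxiliary function $h_A(w) := Q(w) - \frac{A^2\tau}{2}|Q'(w)|^2$ on $[0, w_0]$. Its derivative factorises as
\[
h_A'(w) = Q'(w)\bigl(1 - A^2\tau\, Q''(w)\bigr).
\]
Lemma \ref{lem: property of auxiliary function Q} gives $Q''(w) \leq 0$ on $[0,w_0]$, so the second factor is at least $1$; and the ODE defining $Q$ together with $Q'(0)=0$ forces $Q'(w) < 0$ on $(0, w_0]$ (because $-Q''(w) = \tilde{G}(Q(w)) + w \geq w > 0$ there). Hence $h_A$ is strictly decreasing on $(0, w_0]$, a fact I will reuse throughout.

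For claim (1), I would evaluate $h_A$ at the endpoints $0$ and $w_1$. At zero, $h_A(0) = z_M > 0$. At $w_1$, using the defining relation $Q(w_1) = \frac{\tau}{2}|Q'(w_1)|^2$ from Lemma \ref{lem: property of auxiliary function Q},
\[
h_A(w_1) = \frac{(1-A^2)\tau}{2}|Q'(w_1)|^2 \leq 0
\]
whenever $A\geq 1$. The intermediate value theorem combined with strict monotonicity of $h_A$ then yields a unique root $w_A\in(0,w_1]$. Since the construction uses only $Q$ and $w_1$, and these depend only on $\tilde{G}$, so does $w_A$.

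For claim (2), the monotonicity of $w_A$ in $A$ is a one-line comparison: for $A_1 < A_2$,
\[
h_{A_2}(w_{A_1}) = h_{A_1}(w_{A_1}) - \tfrac{(A_2^2-A_1^2)\tau}{2}|Q'(w_{A_1})|^2 \leq 0,
\]
so strict monotonicity of $h_{A_2}$ forces $w_{A_2}\leq w_{A_1}$. For the monotonicity of $A|Q'(w_A)|$, I would rewrite the defining equation as $A|Q'(w_A)| = \sqrt{2Q(w_A)/\tau}$, reducing the claim to showing that $Q(w_A)$ is non-decreasing in $A$; since $Q$ is non-increasing on $[0,w_0]$ and $w_A$ is non-increasing in $A$, the composition is non-decreasing in $A$.

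Claim (3) then falls out directly: the bound $|Q'(w_A)|\leq c_*$ follows from $w_A\leq w_1$ and the fact that $|Q'|$ is non-decreasing on $[0,w_0]$ (because $Q'(0)=0$ and $Q''\leq 0$ make $-Q'$ non-decreasing), while the bound $|Q'(w_A)|\leq \sqrt{2z_M/(A^2\tau)}$ is obtained by rearranging the defining equation and using $Q(w_A) \leq Q(0) = z_M$. I do not expect a real obstacle here; the entire argument only mobilises the monotonicity profile of $Q$ and $Q'$ already established in Lemma \ref{lem: property of auxiliary function Q}, with the single trick being to reduce every question to the scalar function $h_A$.
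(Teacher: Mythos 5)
Your argument is correct and is essentially the fleshed-out version of the paper's two-line proof: both rest entirely on the monotonicity of $Q$, $Q'$, and $Q''$ from Lemma \ref{lem: property of auxiliary function Q}, with your auxiliary function $h_A(w)=Q(w)-\frac{A^2\tau}{2}|Q'(w)|^2$ being the natural bookkeeping device for the intermediate-value and comparison steps the paper leaves implicit. In particular, the rewriting $A|Q'(w_A)|=\sqrt{2Q(w_A)/\tau}$ for claim (2) and the bound $Q(w_A)\leq Q(0)=z_M$ for claim (3) are exactly the intended ingredients.
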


Take $R\geq w_A/A+1$, and  define
\begin{equation}
q_{R,A}(r) =
\begin{cases}
z_M& \mbox{ if }r\leq R-w_A/A,\\
Q(A(r-R+w_A/A))& \mbox{ if }r\in(R-w_A/A,R],\\
-\infty&\mbox{ otherwise.}
\end{cases}
\label{eqn: def of q_RA}
\end{equation}
Clearly, this generalizes \eqref{eqn: def of q_R}.
Let $\rho_{R,A}$ be defined by (c.f.\;\eqref{eqn: precise def of rho_R})%the density determined by $q_{R,A}$ via
\begin{equation}
\rho_{R,A}(r)
\begin{cases}
= (s^*)'(z_M)& \mbox{ if }r\leq R-w_A/A,\\
\in  \frac{(1-\tau q_{R,A}''(r))(1-\tau r^{-1} q_{R,A}'(r))^{d-1}}{1+\tau \tilde{G}(q_{R,A}(r))}
\cdot \partial s^*\left(q_{R,A}(r) - \frac{\tau}{2}|q_{R,A}'(r)|^2\right) & \mbox{ if }r\in(R-w_A/A,R],\\
=0&\mbox{ otherwise.}
\end{cases}
\label{eqn: precise def of rho_RA}
\end{equation}
We know that $\rho_{R,A}$ is in a plateau-like shape, with the boundary transition from $0$ to the height of the plateau taking place within an annular region of width $w_A/A$.
Let $\rho_\dag$ be the optimal new density  corresponding to $\rho_{R,A}$ (see \eqref{eqn: the optimal new density}) in the modified problem \eqref{eqn: dual problem repeated}.
Then $\rho_{R,A}$ is supported on $\overline{B_R}$, while $\rho_\dag$ is supported on $\overline{B_{\tilde{R}}}$, where we define $\tilde{R}:=R+\tau A|Q'(w_A)|$ with abuse of notations.

\begin{comment}
Similar as in Lemma \ref{lem: new density is dominated by another barrier}, we shall prove
\begin{lemma}
$\rho_\dag \leq \rho_{\tilde{R}_A,A}$. %, for all $A'$ sufficiently close to $A$.
\begin{proof}
As in Lemma \ref{lem: new density is dominated by another barrier}, it suffices to show
\begin{equation*}
q_{\tilde{R}_A,A}(r+\tau|q_{R,A}'(r)|) - \frac{\tau}{2}|q_{\tilde{R}_A,A}'(r+\tau|q_{R,A}'(r)|)|^2 \geq q_{R,A}(r)-\frac{\tau}{2}|q_{R,A}'(r)|^2.
\end{equation*}
Here we used the assumption $A\geq 1$.

The above inequality can be proved as in \eqref{eqn: pressure dominance implies new density dominated by another barrier}. \end{proof}
\end{lemma}

With the above construction, we can characterize the propagation speed of density in the discrete scheme \eqref{eqn: energy functional} when the density may have a sharp change near boundary of its support.
\begin{prop}
Fix $\tau>0$.
Let $q_{R,A}$ be defined in \eqref{eqn: def of q_RA}, and $\rho_{R,A}$ be the density determined by $q_{R,A}$ via \eqref{eqn: relation between old density and pressure q radial case}.
Suppose $\rho^n\leq \rho_{R,A}$ for some $A>1$ and $R\geq s_A/A+1$.

Then the corresponding optimal density $\rho^{n+1}$ that minimizes \eqref{eqn: energy functional} with $\rho_0$ replaced by $\rho^n$ is supported in $\overline{B_{\tilde{R}_A}}$.
In particular,
$$
  A\min \left\{\frac{q_M}{2s_1}, \sqrt{\frac{q_M}{A^2\tau}}\right\}\leq \frac{1}{\tau}(\tilde{R}_A-R)\leq
  A\min\left\{ c_1, \sqrt{\frac{2q_M}{A^2\tau}}\right\}.
  $$

\end{prop}

\subsection{An Improved Characterization}
\end{comment}

In the spirit of Lemma \ref{lem: new density is dominated by another barrier}, one can readily show $\rho_\dag \leq \rho_{\tilde{R},A}$ almost everywhere.
However, we shall improve this by showing that $\rho_\dag \leq \rho_{\tilde{R},\tilde{A}}$ a.e.\;for some $\tilde{A}<A$.
Note that $\rho_{\tilde{R},\tilde{A}}$ has less steep boundary behavior than $\rho_{\tilde{R},A}$.
We need an auxiliary result.

\begin{lemma}\label{lem: simplify the constraints on tilde A}
Define $y_A\in [0,w_A]$ to solve
\begin{equation}
|Q'(y_A)| = \frac{1}{2}|Q'(w_A)|.
\label{eqn: def of r_*}
\end{equation}
Then there exists a universal $\tilde{c}$, which only depends on $\tilde{G}$, such that for all
$
\tilde{A}\in[ \frac{A}{1+\tilde{c}\tau},A ],
$
%it satisfies
\begin{equation}
A\leq \tilde{A}(1-\tau \tilde{A}^2 Q''(y_A))
\label{eqn: 2nd constraint on A tilde}
\end{equation}
and
\begin{equation}
%\frac{w_{\tilde{A}}}{\tilde{A}}\leq \frac{w_A}{A}+\frac{\tau A}{2}|Q'(w_A)|.
R- \frac{w_A}{A} \leq \tilde{R}-\frac{w_{\tilde{A}}}{\tilde{A}} - \frac{\tau A}{2}|Q'(w_A)|.
\label{eqn: first constraint on A tilde}
\end{equation}

\begin{proof}
We first consider \eqref{eqn: first constraint on A tilde}.
Recall that $Q$ is concave.
By the monotonicity of $w_A$, $Q(w_A)$ and $|Q'(w_A)|$ in $A$ due to Lemma \ref{lem: property of auxiliary function Q} and Lemma \ref{lem: property of auxiliary function Q_A},
\[
\begin{split}
|w_A-w_{\tilde{A}}||Q'(w_A)|\leq &\;Q(w_A)-Q(w_{\tilde{A}})\\
=&\; \frac{\tau A^2}{2}|Q'(w_A)|^2- \frac{\tau \tilde{A}^2}{2}|Q'(w_{\tilde{A}})|^2\leq \frac{\tau}{2}(A^2-\tilde{A}^2)|Q'(w_A)|^2.
\end{split}
\]
In the equality above, we used the definition of $w_A$ and $w_{\tilde{A}}$.
Hence,
$$
\frac{w_{\tilde{A}}- w_A}{\tilde{A}}\leq \frac{\tau}{2\tilde{A}}(A^2-\tilde{A}^2)|Q'(w_A)|\leq \frac{\tau A}{\tilde{A}}(A-\tilde{A})|Q'(w_A)|.
$$
In order that \eqref{eqn: first constraint on A tilde} holds, it suffices to have
$$
\frac{\tau A}{\tilde{A}}(A-\tilde{A})|Q'(w_A)|+w_A \frac{A-\tilde{A}}{\tilde{A}A}\leq \frac{\tau A}{2}|Q'(w_A)|,
$$
which is true if
\begin{equation}
\frac{\tilde{A}}{A}\geq \frac{1+\frac{\tau A^2 |Q'(w_A)|}{w_A}}
{1+\frac{3\tau A^2 |Q'(w_A)|}{2w_A}}.
\label{eqn: simplified constraint for tilde A}
\end{equation}
%
%We shall show that $A|Q'(z_A)|\geq C(\tilde{G})$ for all $A\geq 1$.
By Lemma \ref{lem: property of auxiliary function Q_A}, %$A|Q'(w_A)|$ is increasing in $A$.
%So
for all $A\geq 1$ and $\tau \leq 1$,
$$
\frac{A|Q'(w_A)|}{w_A}\geq \frac{|Q'(w_1)|}{w_1}\geq C(\tilde{G}).
$$

Therefore, there exists a universal constant $C(\tilde{G})$, such that $
\tilde{A}\geq \frac{A}{1+C(\tilde{G})\tau}$
implies \eqref{eqn: simplified constraint for tilde A}.
As a result, \eqref{eqn: first constraint on A tilde} is also true.

For \eqref{eqn: 2nd constraint on A tilde},
it suffices to show that $\tilde{A}|Q''(y_A)|\geq C(\tilde{G})$.
Since $|Q''|$ is an increasing function and $Q'(0) = 0$, by \eqref{eqn: def of r_*} and the result proved above,
$$
\tilde{A}|Q''(y_A)|\geq \frac{\tilde{A}}{A}\cdot A\cdot \frac{|Q'(y_A)|}{y_A} \geq \frac{\tilde{A}}{A}\cdot \frac{A|Q'(w_A)|}{2w_A}\geq C(\tilde{G}).
$$

Therefore, by suitably choosing $\tilde{c}$ that only depends on $\tilde{G}$, we have \eqref{eqn: 2nd constraint on A tilde} and \eqref{eqn: first constraint on A tilde} hold.
\end{proof}
\end{lemma}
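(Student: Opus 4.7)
The plan is to reduce both inequalities to a single structural fact: the ratio $\kappa_A := A|Q'(w_A)|/w_A$ admits a uniform lower bound for $A\geq 1$ depending only on $\tilde G$. This follows from Lemma \ref{lem: property of auxiliary function Q_A}(2), since $A|Q'(w_A)|$ is increasing in $A$ and $w_A$ is decreasing in $A$, so $\kappa_A \geq \kappa_1 = |Q'(w_1)|/w_1 > 0$. Once this uniform bound is in hand, both inequalities become essentially algebraic.

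First I would tackle \eqref{eqn: first constraint on A tilde}. After substituting $\tilde R = R + \tau A|Q'(w_A)|$, the inequality reduces to $\tfrac{w_{\tilde A}}{\tilde A} - \tfrac{w_A}{A} \leq \tfrac{\tau A}{2}|Q'(w_A)|$. Concavity of $Q$ at $w_A$, combined with $w_{\tilde A}\geq w_A$ from Lemma \ref{lem: property of auxiliary function Q_A}, gives $|Q'(w_A)|(w_{\tilde A}-w_A)\leq Q(w_A)-Q(w_{\tilde A})$. Plugging in the defining identity $Q(w_A)=\tfrac{\tau A^2}{2}|Q'(w_A)|^2$ for both $A$ and $\tilde A$, together with the monotonicity $|Q'(w_{\tilde A})| \leq |Q'(w_A)|$, converts the right side into $\tfrac{\tau}{2}(A^2-\tilde A^2)|Q'(w_A)|^2$. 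Splitting $\tfrac{w_{\tilde A}}{\tilde A}-\tfrac{w_A}{A} = \tfrac{w_{\tilde A}-w_A}{\tilde A} + \tfrac{w_A(A-\tilde A)}{A\tilde A}$ and inserting these bounds, the desired inequality is equivalent to $\tilde A/A \geq (1+\tau A\kappa_A)/(1+\tfrac{3}{2}\tau A\kappa_A)$; the uniform lower bound on $\kappa_A$ then produces a universal $\tilde c_1 = \tilde c_1(\tilde G)$ such that $\tilde A \geq A/(1+\tilde c_1 \tau)$ is sufficient.

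For \eqref{eqn: 2nd constraint on A tilde}, rearranging gives $(A-\tilde A)/\tilde A \leq \tau \tilde A^2 |Q''(y_A)|$. Since the left side is at most $\tilde c\tau$ on the window $\tilde A\geq A/(1+\tilde c\tau)$, it suffices to lower bound $\tilde A^2|Q''(y_A)|$. From the ODE $-Q''(w) = \tilde G(Q(w))+w$ together with the facts that $Q(\cdot)$ is decreasing and $\tilde G(\cdot)$ is decreasing, the quantity $|Q''|$ is non-decreasing in $w$; then the mean value theorem $Q'(y_A) = Q''(\xi)\,y_A$ for some $\xi\in (0,y_A)$ gives $|Q''(y_A)| \geq |Q'(y_A)|/y_A = \tfrac12 |Q'(w_A)|/y_A \geq \tfrac12 |Q'(w_A)|/w_A = \kappa_A/(2A)$. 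Combining this with $\tilde A \geq A/2$, valid provided $\tilde c\tau \leq 1$, yields $\tilde A^2 |Q''(y_A)| \geq \kappa_A/8$, and the uniform lower bound on $\kappa_A$ produces $\tilde c_2 = \tilde c_2(\tilde G)$. Taking $\tilde c := \min(\tilde c_1,\tilde c_2, 1)$ finishes the proof.

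The main obstacle is the bookkeeping needed to ensure $\tilde c$ depends only on $\tilde G$ and not on $A$. The key insight is that both inequalities are governed by the single structural quantity $\kappa_A$, which inherits a uniform positive lower bound from the $A=1$ case thanks to the monotonicity already recorded in Lemma \ref{lem: property of auxiliary function Q_A}.
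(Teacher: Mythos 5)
Your proposal follows essentially the same route as the paper's proof: the first inequality is reduced via concavity of $Q$ and the defining identity $Q(w_A)=\tfrac{\tau A^2}{2}|Q'(w_A)|^2$ to the ratio condition $\tilde A/A\geq(1+\tau A\kappa_A)/(1+\tfrac32\tau A\kappa_A)$, and the second via $|Q''(y_A)|\geq|Q'(y_A)|/y_A=\tfrac12|Q'(w_A)|/y_A$, with both closed by the uniform lower bound on $\kappa_A=A|Q'(w_A)|/w_A$ coming from the monotonicity in Lemma \ref{lem: property of auxiliary function Q_A}. One small correction: the step converting the right side into $\tfrac{\tau}{2}(A^2-\tilde A^2)|Q'(w_A)|^2$ requires $|Q'(w_{\tilde A})|\geq|Q'(w_A)|$ --- which is what actually holds, since $w_{\tilde A}\geq w_A$ and $|Q'|$ is increasing --- not the inequality $|Q'(w_{\tilde A})|\leq|Q'(w_A)|$ that you invoke; with that sign fixed, the argument is complete.
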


Now we generalize Lemma \ref{lem: new density is dominated by another barrier}.
\begin{lemma}\label{lem: improved parameter}
Suppose $\tau \in (0,1]$.
Let $\rho_\dag$ and $\tilde{R}$ be defined as above, and let $\tilde{A}\in [1,A]$ satisfy \eqref{eqn: 2nd constraint on A tilde} and \eqref{eqn: first constraint on A tilde}.
Then $\rho_\dag\leq \rho_{\tilde{R},\tilde{A}}$.
\end{lemma}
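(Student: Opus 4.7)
The plan is to mirror the proof of Lemma \ref{lem: new density is dominated by another barrier}, now bridging the two $Q$-profiles of different steepness $A$ and $\tilde{A}$. The first step would be to establish the prefactor bound
\[
\rho_{\tilde{R},\tilde{A}}(u)\ \geq\ (s^*)'\!\Bigl(q_{\tilde{R},\tilde{A}}(u)-\tfrac{\tau}{2}|q_{\tilde{R},\tilde{A}}'(u)|^2\Bigr)
\]
for a.e.\;$u$ in the transition region. In \eqref{eqn: precise def of rho_RA} the factor $(1-\tau u^{-1}q_{\tilde{R},\tilde{A}}'(u))^{d-1}\geq 1$ because $q_{\tilde{R},\tilde{A}}'\leq 0$, and writing $\tilde{w}:=\tilde{A}(u-\tilde{R}+w_{\tilde{A}}/\tilde{A})\in(0,w_{\tilde{A}}]$, the ODE \eqref{eqn: def of Q} gives
\[
1-\tau q_{\tilde{R},\tilde{A}}''(u)\ =\ 1+\tau\tilde{A}^2\bigl(\tilde{G}(Q(\tilde{w}))+\tilde{w}\bigr)\ \geq\ 1+\tau\tilde{G}(q_{\tilde{R},\tilde{A}}(u)),
\]
since $\tilde{A}\geq 1$ and $\tilde{G}(Q(\tilde{w}))\geq 0$ on the relevant range $\tilde{w}\in[0,w_A]\subset[0,w_0]$. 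Together with \eqref{eqn: the optimal new density} and the monotonicity of $(s^*)'$, this reduces $\rho_\dag\leq\rho_{\tilde{R},\tilde{A}}$ to the scalar comparison
\[
\Psi_{\tilde{A}}(u)\ \geq\ \Phi_A(w),\qquad u:=r+\tau|q_{R,A}'(r)|,\quad w:=A(r-R+w_A/A)\in[0,w_A],
\]
where $\Phi_C(w):=Q(w)-\tfrac{\tau C^2}{2}|Q'(w)|^2$ and $\Psi_{\tilde{A}}(u)$ equals $z_M$ when $\tilde{w}(u)\leq 0$, and $\Phi_{\tilde{A}}(\tilde{w}(u))$ otherwise.

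Next I would handle the two cases by an endpoint analysis. Rearranging \eqref{eqn: first constraint on A tilde} yields $w_{\tilde{A}}\leq \tilde{A}w_A/A+\tfrac{\tau A\tilde{A}}{2}|Q'(w_A)|$, which in turn gives
\[
\tilde{w}(w)\big|_{w=0}\ \leq\ -\tfrac{\tau A\tilde{A}}{2}|Q'(w_A)|\ <\ 0,
\]
so there is a unique $w_\star\in(0,w_A)$ with $\tilde{w}(w_\star)=0$. On $[0,w_\star]$ (Case 1) we have $\Psi_{\tilde{A}}=z_M$, and since $\Phi_A'(s)=Q'(s)(1-\tau A^2 Q''(s))\leq 0$ with $\Phi_A(0)=z_M$, the inequality is immediate. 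On $(w_\star,w_A]$ (Case 2) the target reduces to $F(w):=\Phi_{\tilde{A}}(\tilde{w}(w))-\Phi_A(w)\geq 0$, and both endpoints are favorable: $F(w_\star)=z_M-\Phi_A(w_\star)\geq 0$, while $F(w_A)=\Phi_{\tilde{A}}(w_{\tilde{A}})-\Phi_A(w_A)=0$ by the defining relations $Q(w_C)=\tfrac{\tau C^2}{2}|Q'(w_C)|^2$ for $C\in\{A,\tilde{A}\}$.

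Finally I would compute
\[
F'(w)\ =\ (1-\tau A^2 Q''(w))\Bigl[\tfrac{\tilde{A}}{A}Q'(\tilde{w})(1-\tau\tilde{A}^2 Q''(\tilde{w}))-Q'(w)\Bigr]
\]
and use \eqref{eqn: 2nd constraint on A tilde} together with the monotonicity of $|Q'|$ and $|Q''|$ from Lemma \ref{lem: property of auxiliary function Q} to show $F'\leq 0$ on $[y_A,w_A]$, so $F\geq F(w_A)=0$ there; on $[w_\star,y_A]$, where $|Q'(w)|\leq|Q'(y_A)|=\tfrac12|Q'(w_A)|$, the same derivative expression combined with the inequality $F(w_\star)\geq 0$ should close the argument. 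The main obstacle lies in signing $F'$: unlike in Lemma \ref{lem: new density is dominated by another barrier}, one cannot simply argue $\tilde{w}\leq w$ (at $w=w_A$ we have $\tilde{w}=w_{\tilde{A}}\geq w_A$ by Lemma \ref{lem: property of auxiliary function Q_A}), so the two competing effects $|Q'(\tilde{w})|\geq|Q'(w)|$ and $\tilde{A}\leq A$ must be traded off precisely inside the bracket. Constraint \eqref{eqn: 2nd constraint on A tilde} is calibrated exactly so this tradeoff yields $F'\leq 0$ on $[y_A,w_A]$, and the auxiliary point $y_A$ from \eqref{eqn: def of r_*} marks the threshold below which $|Q'|$ becomes too small for the derivative-sign argument alone, forcing reliance on $F(w_\star)\geq 0$ in the lower sub-region.
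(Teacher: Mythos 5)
Your reduction to the scalar comparison, the prefactor bound via $\tilde{A}\ge 1$, the endpoint identities $F(w_\star)\ge 0$ and $F(w_A)=0$, and the formula for $F'$ all match the paper. The gap is in how you close the comparison. First, the claim that \eqref{eqn: 2nd constraint on A tilde} and the monotonicity of $|Q'|$, $|Q''|$ give $F'\le 0$ on all of $[y_A,w_A]$ is not justified: signing the bracket requires $\tilde{A}|Q'(\tilde{w})|\bigl(1-\tau\tilde{A}^2Q''(\tilde{w})\bigr)\ge A|Q'(w)|$, and the only way monotonicity helps is if you already know $\tilde{w}\ge w$. But $\tilde{w}(w_\star)=0<w_\star$ while $\tilde{w}(w_A)=w_{\tilde{A}}\ge w_A$, so the ordering $\tilde{w}\ge w$ fails on part of the transition zone and is not available a priori on $[y_A,w_A]$ (you implicitly invoke $|Q'(\tilde{w})|\ge|Q'(w)|$, which presupposes it). Second, on $[w_\star,y_A]$ you give no argument at all: $F(w_\star)\ge 0$ combined with a derivative sign that you concede is unavailable there does not bound $F$ from below on the interior of that interval.

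The paper resolves both issues simultaneously with a first-crossing contradiction argument rather than a global monotonicity one. Setting $r_0=\inf\{r:\tilde{W}(r)<W(r)\}$, continuity gives $\tilde{W}(r_0)=W(r_0)$; since $\Phi_{\tilde{A}}\ge\Phi_A$ pointwise and $\Phi_{\tilde{A}}$ is decreasing, this equality forces $y(r_0)\le\tilde{y}(r_0)$, and then \eqref{eqn: first constraint on A tilde} upgrades that to $|Q'(y(r_0))|\ge\tfrac12|Q'(w_A)|$, i.e.\ $y(r_0)\ge y_A$. Thus any crossing point automatically lies where both $\tilde{y}\ge y$ and $y\ge y_A$ hold; these are then propagated forward ($y'\le\tilde{y}'$ follows from \eqref{eqn: 2nd constraint on A tilde}), yielding $\tilde{W}'\le W'$ on $[r_0,R]$, and the matching values $\tilde{W}=W$ at both $r_0$ and $R$ force $\tilde{W}\equiv W$ there, a contradiction. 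A cheap observation that clarifies the structure: wherever $\tilde{w}(w)\le w$ one has $F(w)=\Phi_{\tilde{A}}(\tilde{w})-\Phi_A(w)\ge\Phi_{\tilde{A}}(w)-\Phi_A(w)\ge 0$ for free, so the only dangerous region is $\{\tilde{w}>w\}$ --- but locating that region above $y_A$ still requires the crossing-point mechanism, which is the missing idea in your plan.
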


\begin{proof}
Since $\tilde{A}\geq 1$, arguing as in Lemma \ref{lem: new density is dominated by another barrier}, it suffices to show $\tilde{W}(r)\geq W(r)$ for all $r\in [0,R]$, where

\begin{equation*}
\begin{split}
\tilde{W}(r) &:= q_{\tilde{R},\tilde{A}}(r+\tau|q_{R,A}'(r)|) - \frac{\tau}{2}|q_{\tilde{R},\tilde{A}}'(r+\tau|q_{R,A}'(r)|)|^2,\\
W(r) &:= q_{R,A}(r)-\frac{\tau}{2}|q_{R,A}'(r)|^2.
\end{split}
\label{eqn: def of W}
\end{equation*}

By definition, $W(r) = z_M$ for all $r\leq (R-w_A/A)$, but $W(r) < z_M$ if $r> (R-w_A/A)$.
On the other hand, by \eqref{eqn: first constraint on A tilde}, we have $\tilde{W}(r) = z_M$ on $[0,r_*]$ for some $r_*>(R-w_A/A)$.
Moreover, $W(R) = \tilde{W}(R) = 0$.

Let $\mathcal{S} = \{r\in[0,R]: \tilde{W}(r) < W(r)\}$.
Suppose $\mathcal{S} \neq \varnothing$.
We take $r_0 = \inf\mathcal{S}$.
Since $W$ and $\tilde{W}$ are continuous, $W(r_0) = \tilde{W}(r_0)$, i.e.,
\begin{equation}
Q(y(r_0))-\frac{A^2 \tau}{2}|Q'(y(r_0))|^2=
Q(\tilde{y}(r_0)) - \frac{\tilde{A}^2\tau}{2}|Q'(\tilde{y}(r_0)) |^2.
\label{eqn: equivalent form of equation for r}
\end{equation}
Here for brevity, we denote
\[
\begin{split}
y(r)&:=A(r-(R-w_A/A)),\\
\tilde{y}(r)&:= \tilde{A}\left(r+\tau|q_{R,A}'(r)|-\left(\tilde{R}-\frac{w_{\tilde{A}}}{\tilde{A}}\right)\right).
\end{split}
\]
Here it is also understood that $Q \equiv z_M$ on $(-\infty,0]$.
Since $\tilde{A}\leq A$, \eqref{eqn: equivalent form of equation for r} implies $y(r_0)\leq \tilde{y}(r_0)$, which further gives
\[
r_0+\tau A|Q'(y(r_0))|-\left(\tilde{R}-\frac{w_{\tilde{A}}}{\tilde{A}}\right)\geq r_0-\left(R-\frac{w_A}{A}\right).
\]
By \eqref{eqn: first constraint on A tilde}, we find $y(r_0)\geq y_A$, where $y_A$ is defined in Lemma \ref{lem: simplify the constraints on tilde A}.
Then by \eqref{eqn: 2nd constraint on A tilde} and the monotonicity of $Q''$, for all $r\geq r_0$,
\begin{equation}
A\leq \tilde{A}(1-\tau \tilde{A}^2 Q''(y(r))).
\label{eqn: properties of intersection point}
\end{equation}

We claim that $\tilde{W}'(r)\leq W'(r)$ for all $r\geq r_0$.
We calculate that
\[
\begin{split}
\tilde{W}'(r) &= q'_{\tilde{R},\tilde{A}}(r+\tau|q_{R,A}'(r)|)\left(1- \tau q_{\tilde{R},\tilde{A}}''(r+\tau|q_{R,A}'(r)|)\right)(1-\tau q_{R,A}''(r)),\\
W'(r) &= q_{R,A}'(r)(1-\tau q_{R,A}''(r)).
\end{split}
\]
By \eqref{eqn: def of q_RA}, $\tilde{W}'(r)\leq W'(r)$ is equivalent to
\[
\tilde{A} Q'(\tilde{y}(r)) (1- \tau \tilde{A}^2 Q''(\tilde{y}(r)))
\leq A Q'(y(r)),
\]
By virtue of \eqref{eqn: properties of intersection point} and the monotonicity of $Q'$ and $Q''$, it suffices to show that $y(r)\leq \tilde{y}(r)$ for all $r\geq r_0$.
Since this is true for $r= r_0$, we shall prove $y'(r)\leq \tilde{y}'(r)$ for all $r\geq r_0$, i.e.,
\[
A\leq\tilde{A}(1-\tau A^2 Q''(y(r))).
\]
This clearly follows from \eqref{eqn: properties of intersection point}.

Recall that $W(R) = \tilde{W}(R) = 0$ and $W(r_0)=\tilde{W}(r_0) $.
%\[
%\int_{r_0}^R \tilde{W}'(r)\,dr = \tilde{W}(R)-\tilde{W}(r_0) = W(R) - W(r_0) = \int_{r_0}^R W'(r)\,dr.
%\]
So if $r_0<R$, we must have $\tilde{W}'(r) = W'(r)$ for all $r\geq r_0$.
This implies $\tilde{W}(r) = W(r)$, which leads to a contradiction.
On the other hand, $r_0\neq R$, since $\mathcal{S}$ is open in $[0,R]$.

This proves $\mathcal{S} = \varnothing$, so $\tilde{W}(r)\geq W(r)$ on $[0,R]$.
This completes the proof.
\end{proof}

Combining these two Lemmas with Proposition \ref{comparison}, we argue as in Lemma \ref{lem: basic barrier functions} to conclude with the following result, from which Proposition \ref{prop_iteration} follows.

\begin{prop}
Suppose $\rho_0\in L^{\infty}(\Omega)\cap X$ satisfies \eqref{good_data_0}.
Fix $\tau\in (0,1]$.
Let $\{\rho^n\}$ be the sequence of densities obtained by the discrete scheme \eqref{eq:mms} starting from $\rho_0$.
Take $\rho^+ = \|\rho_0\|_{L^\infty(\Omega)}$ and let $R_*$ and $c_*$ be defined as in Lemma \ref{lem: basic barrier functions}.

With $A\geq 1$, let $q_{R,A}$ and $\rho_{R,A}$ be defined in \eqref{eqn: def of q_RA} and \eqref{eqn: precise def of rho_RA}, respectively.
Suppose $\rho_0\leq \rho_{R_0,A_0}$ for some $A_0\geq 1$ and $R_0\geq w_{A_0}/A_0+1$.
With $\tilde{c}$ defined in Lemma \ref{lem: simplify the constraints on tilde A}, define $\{A_n\}_{n = 0}^\infty$ and $\{R_n\}_{n = 0}^\infty$ as follows:
\begin{align*}
A_{n} = \max\left\{1, \frac{A_0}{(1+\tilde{c}\tau)^n}\right\},\quad R_{n} = R_{n-1}+\tau A_{n-1}|Q'(w_{A_{n-1}})|.
\end{align*}
Then $\rho^n \leq \rho_{R_n,A_n}$
for all $n \in \N$ which satisfies $B_{R_n}\subset \Omega$.
In particular, $\spt\rho^n \subset \overline{B_{R_n}}$.
\end{prop}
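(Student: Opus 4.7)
The proposition is the iterated version of Lemma \ref{lem: improved parameter}, so the plan is a straightforward induction on $n$ with the parameter update $(R_n, A_n) \mapsto (R_{n+1}, A_{n+1})$ chosen precisely to match the admissible choice of $\tilde{A}$ in Lemma \ref{lem: simplify the constraints on tilde A}. The base case $n=0$ is the hypothesis $\rho_0 \leq \rho_{R_0, A_0}$.

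For the inductive step, assume $\rho^n \leq \rho_{R_n, A_n}$ and $B_{R_{n+1}} \subset \Omega$, and introduce the intermediate density $\tilde{\rho}$ defined as the optimal new density corresponding to $\rho_{R_n, A_n}$ under the modified scheme with $x$-independent growth $\tilde{G}$ from the proof of Lemma \ref{lem: basic barrier functions} (equivalently, from the primal problem associated with \eqref{eqn: dual problem repeated}). First, I apply Proposition \ref{comparison} with $f_0 = f$, $f_1 = \tilde{f}$, $\rho_0 = \rho^n$, $\rho_1 = \rho_{R_n, A_n}$: the induction hypothesis supplies $\rho_0 \leq \rho_1$, while $\tilde{G}(z) \geq G(z,x)$ together with the defining relation \eqref{f_def} yields $\partial_z \tilde{f}(z) \leq \partial_z f(z,x)$ (exactly as observed in the proof of Proposition \ref{lem: basic barrier functions}). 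Consequently $\rho^{n+1} \leq \tilde{\rho}$ almost everywhere. Second, I set $\tilde{A} := A_{n+1}$ and $\tilde{R} := R_{n+1} = R_n + \tau A_n |Q'(w_{A_n})|$, and verify that $A_{n+1} \in [A_n/(1+\tilde{c}\tau), A_n]$ by a short case analysis on whether $A_0/(1+\tilde{c}\tau)^n$ and $A_0/(1+\tilde{c}\tau)^{n+1}$ exceed $1$. This admissibility together with $\tilde{A} \geq 1$ places us in the scope of Lemma \ref{lem: improved parameter}, which delivers $\tilde{\rho} \leq \rho_{R_{n+1}, A_{n+1}}$. Chaining the two inequalities closes the induction. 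The final support statement $\spt \rho^n \subset \overline{B_{R_n}}$ is then immediate from \eqref{eqn: precise def of rho_RA}.

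The only genuinely delicate bookkeeping, and the main potential obstacle, is checking that the barrier $\rho_{R_n, A_n}$ remains well-defined throughout the induction, i.e., that $R_n \geq w_{A_n}/A_n + 1$ for every relevant $n$. Since $w_A$ is decreasing in $A$ and bounded above by $w_1$ (Lemma \ref{lem: property of auxiliary function Q_A}), and since $A_n \geq 1$, one always has $w_{A_n}/A_n \leq w_1$. Moreover $A|Q'(w_A)|$ is non-decreasing in $A \geq 1$, so each increment satisfies $R_{n+1} - R_n = \tau A_n |Q'(w_{A_n})| \geq \tau |Q'(w_1)| = \tau c_*$; the sequence $R_n$ therefore grows monotonically at a uniform rate, and together with the initial constraint $R_0 \geq w_{A_0}/A_0 + 1$ this guarantees $R_n \geq w_{A_n}/A_n + 1$ for every $n \in \N$ as long as $B_{R_n} \subset \Omega$. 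With this in hand, the induction runs through without further obstruction, and taking $A_n \to 1$ recovers the asymptotic spreading speed $R_n - R_{n-1} \to \tau c_*$ claimed in Proposition \ref{prop_iteration}.
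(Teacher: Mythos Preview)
Your proof is correct and follows the same approach the paper sketches: induction on $n$, using Proposition~\ref{comparison} to pass from $\rho^{n+1}$ to the modified-scheme density $\tilde{\rho}$, and then Lemma~\ref{lem: improved parameter} (with the admissible $\tilde{A}$ furnished by Lemma~\ref{lem: simplify the constraints on tilde A}) to pass from $\tilde{\rho}$ to $\rho_{R_{n+1},A_{n+1}}$. The paper itself compresses this into a single sentence, so your write-up is in fact more detailed.

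One small refinement to your bookkeeping paragraph: the uniform lower bound $R_{n+1}-R_n\geq \tau c_*$ together with $w_{A_n}/A_n\leq w_1$ does not by itself force $R_n\geq w_{A_n}/A_n+1$, because $w_{A_n}/A_n$ can increase as $A_n$ decreases and you have not compared the two rates. The clean way to close this is to read it directly off the constraint \eqref{eqn: first constraint on A tilde}: that inequality gives $R_{n+1}-w_{A_{n+1}}/A_{n+1}\geq R_n-w_{A_n}/A_n+\tfrac{\tau A_n}{2}|Q'(w_{A_n})|>R_n-w_{A_n}/A_n$, so the quantity $R_n-w_{A_n}/A_n$ is nondecreasing in $n$ and stays $\geq 1$ by the initial hypothesis. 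With this adjustment the argument is complete.
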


\bibliography{tumor_refs}

\def\cprime{$'$}
\providecommand{\bysame}{\leavevmode\hbox to3em{\hrulefill}\thinspace}
\providecommand{\MR}{\relax\ifhmode\unskip\space\fi MR }
% \MRhref is called by the amsart/book/proc definition of \MR.
\providecommand{\MRhref}[2]{%
  \href{http://www.ams.org/mathscinet-getitem?mr=#1}{#2}
}
\providecommand{\href}[2]{#2}
\begin{thebibliography}{DPMSV16}

\bibitem[AKY14]{AKY14}
Damon Alexander, Inwon Kim, and Yao Yao, \emph{Quasi-static evolution and
  congested crowd transport}, Nonlinearity \textbf{27} (2014), no.~4, 823.

\bibitem[AL83]{Alt_Luckhaus}
Hans~Wilhelm Alt and Stephan Luckhaus, \emph{Quasilinear elliptic-parabolic
  differential equations}, Mathematische Zeitschrift \textbf{183} (1983),
  no.~3, 311--341.

\bibitem[{Ben}03]{benamou_unbalanced}
{Benamou, Jean-David}, \emph{Numerical resolution of an "unbalanced" mass
  transport problem}, ESAIM: M2AN \textbf{37} (2003), no.~5, 851--868.

\bibitem[Bre91]{brenier_polar}
Yann Brenier, \emph{Polar factorization and monotone rearrangement of
  vector-valued functions}, Communications on Pure and Applied Mathematics
  \textbf{44} (1991), no.~4, 375--417.

\bibitem[CDM17]{DMS}
L{\'e}na{\"\i}c Chizat and Simone Di~Marino, \emph{A tumor growth model of
  hele-shaw type as a gradient flow}, arXiv preprint arXiv:1712.06124 (2017).

\bibitem[CKY18]{CKY}
Katy Craig, Inwon Kim, and Yao Yao, \emph{Congested aggregation via newtonian
  interaction}, Archive for Rational Mechanics and Analysis \textbf{227}
  (2018), no.~1, 1--67.

\bibitem[CPSV18]{chizat_unbalanced_interpolation}
Lénaïc Chizat, Gabriel Peyré, Bernhard Schmitzer, and François-Xavier
  Vialard, \emph{An interpolating distance between optimal transport and
  fisher–rao metrics}, Foundations of Computational Mathematics \textbf{18}
  (2018), 1--44.

\bibitem[DPMSV16]{bv_ot}
Guido De~Philippis, Alp{\'a}r~Rich{\'a}rd M{\'e}sz{\'a}ros, Filippo
  Santambrogio, and Bozhidar Velichkov, \emph{Bv estimates in optimal
  transportation and applications}, Archive for Rational Mechanics and Analysis
  \textbf{219} (2016), no.~2, 829--860.

\bibitem[EG15]{evans_gariepy}
Lawrence~Craig Evans and Ronald~F Gariepy, \emph{Measure theory and fine
  properties of functions}, Chapman and Hall/CRC, 2015.

\bibitem[Gan94]{gangbo_polar}
Wilfrid Gangbo, \emph{An elementary proof of the polar factorization of
  vector-valued functions}, Archive for Rational Mechanics and Analysis
  \textbf{128} (1994), no.~4, 381--399.

\bibitem[Gan95]{gangbo_thesis}
W~Gangbo, \emph{Habilitation thesis}, Universite de Metz, available at
  http://people. math. gatech. edu/gangbo/publications/habilitation. pdf
  (1995).

\bibitem[GLM19]{GLM}
{Gallou\"et, Thomas}, {Laborde, Maxime}, and {Monsaingeon, L\'eonard}, \emph{An
  unbalanced optimal transport splitting scheme for general
  advection-reaction-diffusion problems}, ESAIM: COCV \textbf{25} (2019), 8.

\bibitem[GLOP19]{glop_2019}
Wilfrid Gangbo, Wuchen Li, Stanley Osher, and Michael Puthawala,
  \emph{Unnormalized optimal transport}, Journal of Computational Physics
  \textbf{399} (2019), 108940.

\bibitem[GM96]{gangbo_mccann}
Wilfrid Gangbo and Robert~J McCann, \emph{The geometry of optimal
  transportation}, Acta Mathematica \textbf{177} (1996), no.~2, 113--161.

\bibitem[JKO98]{jko}
Richard Jordan, David Kinderlehrer, and Felix Otto, \emph{The variational
  formulation of the fokker--planck equation}, SIAM journal on mathematical
  analysis \textbf{29} (1998), no.~1, 1--17.

\bibitem[JKT20]{JKT}
Matt Jacobs, Inwon Kim, and Jiajun Tong, \emph{The $l^1$-contraction principle
  in optimal transport}, arXiv preprint (2020).

\bibitem[JL19]{fast_ot}
Matt Jacobs and Flavien L{\'e}ger, \emph{A fast approach to optimal transport:
  The back-and-forth method}, arXiv preprint arXiv:1905.12154 (2019).

\bibitem[JWL]{bfm_got}
Matt Jacobs, Lee Wonjun, and Flavien L\'eger, \emph{The back-and-forth method
  for wasserstein gradient flows}, In progress.

\bibitem[KP18]{KP}
Inwon Kim and Norbert Po{\v z}{\'a}r, \emph{Porous medium equation to hele-shaw
  flow with general initial density}, Trans. Amer. Math. Soc. \textbf{370}
  (2018), no.~2, 873--909.

\bibitem[LMS16]{liero2016optimal}
Matthias Liero, Alexander Mielke, and Giuseppe Savar{\'e}, \emph{Optimal
  transport in competition with reaction: The hellinger--kantorovich distance
  and geodesic curves}, SIAM Journal on Mathematical Analysis \textbf{48}
  (2016), no.~4, 2869--2911.

\bibitem[MPQ]{MPQ}
Antoine Mellet, Beno\^\i~t Perthame, and Fernando Quir\'os, \emph{A hele-shaw
  problem for tumor growth}, J. Funct. Anal. \textbf{273}, no.~10, 3061--3093.

\bibitem[MRCS11]{maury}
B.~Maury, A.~Roudneff-Chupin, and F.~Santambrogio, \emph{A macroscopic crowd
  motion model of gradient flow type}, Math. Models Meth. Appl. Sci.
  \textbf{20} (2011), 1787--1821.

\bibitem[Ott01]{otto_pme}
Felix Otto, \emph{The geometry of dissipative evolution equations: the porous
  medium equation}, Comm. Partial Differential Equations \textbf{26} (2001),
  101--174.

\bibitem[Per14]{perthame2014}
Beno{\^\i}t Perthame, \emph{Some mathematical aspects of tumor growth and
  therapy}, ICM 2014-International Congress of Mathematicians, 2014.

\bibitem[PQV14]{PQV}
Beno{\^\i}t Perthame, Fernando Quir{\'o}s, and Juan~Luis V{\'a}zquez, \emph{The
  hele--shaw asymptotics for mechanical models of tumor growth}, Archive for
  Rational Mechanics and Analysis \textbf{212} (2014), no.~1, 93--127.

\bibitem[San14]{otam}
Filippo Santambrogio, \emph{Introduction to optimal transport theory}, Notes
  (2014).

\bibitem[Sim83]{simon1983lectures}
Leon Simon, \emph{Lectures on geometric measure theory}, 1983.

\bibitem[V{\'a}z07]{vazquez}
Juan~Luis V{\'a}zquez, \emph{The porous medium equation: mathematical theory},
  Oxford University Press, 2007.

\end{thebibliography}
\bibliographystyle{amsalpha}

\end{document}